\newtheorem{theorem}{Theorem}[section]
\newtheorem{definition}[theorem]{Definition}
\newtheorem{lemma}[theorem]{Lemma}
\newtheorem{proposition}[theorem]{Proposition}
\newtheorem{corollary}[theorem]{Corollary}
\newtheorem{remark}[theorem]{Remark}
\newtheorem{conjecture}[theorem]{Conjecture}
{
\newtheorem{examplecore}[theorem]{Example}}
\newenvironment{proofof}[1]{\vspace{.2cm}\noindent\textsc{Proof of
    #1:}}{\hspace*{\fill} $\blacksquare$\par\vspace{.1cm}}
\newcommand{\op}{\operatorname}
\begin{document}

\title[Quillen's general conjecture is false for trivial reasons]{On the cohomology of $\op{GL}_3$ of elliptic curves and Quillen's conjecture}  

\author{Matthias Wendt}
\thanks{Matthias Wendt was partially supported by EPSRC grant EP/M001113/1.}

\date{September 2016}

\address{Matthias Wendt, Fakult\"at Mathematik und Physik, Leibniz Universit\"at Hannover, Welfengarten 1, 30167, Hannover, Germany} 
\email{wendt@math.uni-hannover.de}

\subjclass[2010]{20G10 (11F75,20E42)}
\keywords{group homology, Quillen conjecture, buildings, vector bundles, elliptic curves}

\begin{abstract}
The paper provides a computation of the additive structure as well as a partial description of the Chern-class module structure of the cohomology of $\op{GL}_3$ over the function ring of an elliptic curve over a finite field. The computation is achieved by a detailed analysis of the isotropy spectral sequence for the action of $\op{GL}_3$ on the associated Bruhat--Tits building. This provides insights into the function field analogue of Quillen's conjecture on the structure of cohomology rings of arithmetic groups. The computations exhibit a lot of explicit classes which are torsion for the Chern-class ring. In some examples, even the torsion-free quotient of cohomology fails to be free. A possible variation of Quillen's conjecture is also discussed.
\end{abstract}

\maketitle
\setcounter{tocdepth}{1}
\tableofcontents

\section{Introduction}
\label{sec:intro}

The present paper draws some consequences from the computations in \cite{ell-dilog} of equivariant cell structures of Bruhat--Tits buildings for the groups $\op{GL}_3(k[E])$ where $E=\overline{E}\setminus\{\op{O}\}$ is an affine elliptic curve over the base field $k$. 
In the present paper, we now consider the case of a finite field $k=\mathbb{F}_q$ and investigate the consequences for cohomology. The computations of equivariant cell structures in \cite{ell-dilog} provide exactly the input relevant for the isotropy spectral sequence for the action of $\op{GL}_3(k[E])$ on the associated Bruhat--Tits building $\mathfrak{B}(E/k,3)$. It is possible to completely compute all the relevant differentials, resulting in a complete calculation of the additive structure of the cohomology of $\op{GL}_3(k[E])$ with $\mathbb{F}_\ell$-coefficients where $\ell\geq 5$ is a prime with $\ell\mid q-1$. Partial information on the multiplicative structure, in particular the action of the universal Chern classes, can also be obtained, which allows to discuss a function field analogue of a conjecture of Quillen \cite{quillen:spectrum} on the structure of cohomology rings of $S$-arithmetic groups. In the present situation, Quillen's conjecture would predict that the cohomology $\op{H}^\bullet(\op{GL}_3(k[E]);\mathbb{F}_\ell)$ is a free module over the ring $\op{H}^\bullet_{\op{top}}(\op{GL}_3(\mathbb{C});\mathbb{F}_\ell)\cong \mathbb{F}_\ell[\op{c}_1,\op{c}_2,\op{c}_3]$ of universal Chern-classes, but the computations in the paper show that this is false in far greater generality than known previously. On the other hand, having complete computations of the cohomology  $\op{H}^\bullet(\op{GL}_3(k[E]);\mathbb{F}_\ell)$ allows to discuss possible reformulations of the conjecture, resulting in a further refinement and correction of the formulation attempted in \cite{quillen-note}. 

The formulation of the results is best guided by Quillen's analysis \cite{quillen:spectrum} of the cohomology of an arithmetic group $\Gamma$ in terms of detection on elementary abelian $\ell$-subgroups, cf. also the discussion in Section~\ref{sec:quillenconj}. For each elementary abelian $\ell$-subgroup $A\leq\Gamma$, there is a restriction map $\op{H}^\bullet(\Gamma;\mathbb{F}_\ell)\to \op{H}^\bullet(A;\mathbb{F}_\ell)$, and one of the central inventions in \cite{quillen:spectrum} was the description of a homomorphism 
\[
\op{H}^\bullet(\Gamma;\mathbb{F}_\ell)\to \lim_A\op{H}^\bullet(A;\mathbb{F}_\ell)
\]
from the cohomology of the arithmetic group to the limit of cohomology rings over the category of all elementary abelian $\ell$-subgroups of the arithmetic group. The Quillen conjecture, or more generally the structure of  $\op{H}^\bullet(\Gamma;\mathbb{F}_\ell)$ as module over the Chern-class ring, can now be investigated by studying kernel and image of the Quillen homomorphism. Quillen also showed in his paper that there is a strong link between the isotropy spectral sequence for the $\Gamma$-action on the associated symmetric space and the Quillen homomorphism. Translated to the present situation of $\op{GL}_3(k[E])$, if we denote by $\op{R}_\bullet\op{H}^\bullet(\op{GL}_3(k[E]);\mathbb{F}_\ell)$ the ascending filtration (by Chern-class ring submodules) of the group cohomology associated to the isotropy spectral sequence for the action of $\op{GL}_3$ on the building, then it is clear that  $\op{R}_0\op{H}^\bullet=0$, $\op{R}_3\op{H}^\bullet=\op{H}^\bullet$ and Quillen's results imply that $\op{R}_2\op{H}^\bullet$ is precisely the kernel of the Quillen homomorphism. 

This brings us to the first result of the paper which provides a detailed description of the kernel of the Quillen homomorphism, as graded module over the Chern-class ring, cf.~page~\pageref{pfkernel} in Section~\ref{pfkernel}. 

\begin{theorem}
\label{thm:kernel}
Let $k=\mathbb{F}_q$ be a finite field, let $\overline{E}$ be an elliptic curve over $k$ with $k$-rational point $\op{O}$ and set $E=\overline{E}\setminus\{\op{O}\}$. Let $\ell\geq 5$ be a prime with $\ell\mid q-1$. Denote by $\op{rk}_\ell(\overline{E})$ the $\ell$-rank of $\overline{E}$. 
\begin{enumerate}
\item The submodule $\op{R}_2\subset \op{H}^\bullet(\op{GL}_3(k[E]);\mathbb{F}_\ell)$ is precisely the sub-$\mathbb{F}_\ell[\op{c}_1,\op{c}_2,\op{c}_3]$-module of torsion elements.
\item 
The submodule $\op{R}_1$ is obtained by restriction of a free $\mathbb{F}_\ell[\op{c}_1]$-module via the natural projection $\mathbb{F}_\ell[\op{c}_1,\op{c}_2,\op{c}_3]\to\mathbb{F}_\ell[\op{c}_1]$. The Hilbert--Poincar{\'e} series of $\op{R}_1/\op{R}_0$ is 
\[
\frac{(1+T)(q^3-\op{rk}_\ell\overline{E})}{(1-T^2)}.
\]
\item The quotient $\op{R}_2/\op{R}_1$ is obtained by restriction of a free $\mathbb{F}_\ell[\op{c}_1,\op{c}_2]$-module via the natural projection $\mathbb{F}_\ell[\op{c}_1,\op{c}_2,\op{c}_3]\to \mathbb{F}_\ell[\op{c}_1,\op{c}_2]$. The Hilbert--Poincar{\'e} series of $\op{R}_2/\op{R}_1$ is 
\[
\frac{(N_a+\op{rk}_\ell(\overline{E}))T(1+T)^2}{(1-T^2)(1-T^4)}.
\]
Here $N_a$ denotes the number of unordered triples of pairwise distinct  isomorphism classes of degree $0$ line bundles on $\overline{E}$ whose tensor product is trivial, cf.~Definition~\ref{def:na}.
\end{enumerate}
In particular, the kernel of the Quillen homomorphism is always non-trivial and has a filtration whose subquotients are free over appropriate Chern-class rings.
\end{theorem}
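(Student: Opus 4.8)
The plan is to read off the whole statement from the isotropy (equivariant) spectral sequence for the action of $\op{GL}_3(k[E])$ on the Bruhat--Tits building $\mathfrak{B}(E/k,3)$, using as input the equivariant cell structure --- the $\op{GL}_3(k[E])$-orbits of cells, their stabilizers, and the face incidences --- computed in \cite{ell-dilog}. Since $\mathfrak{B}(E/k,3)$ is contractible of dimension $2$, this spectral sequence reads $E_1^{p,q}=\bigoplus_{\sigma}\op{H}^q(\op{Stab}\sigma;\mathbb{F}_\ell)\Rightarrow\op{H}^{p+q}(\op{GL}_3(k[E]);\mathbb{F}_\ell)$ with $p\in\{0,1,2\}$ and $\sigma$ ranging over the orbits of $p$-cells; reindexing its abutment filtration ascendingly yields exactly the filtration $\op{R}_\bullet$ of the statement, with $\op{R}_i/\op{R}_{i-1}\cong E_\infty^{3-i,\bullet}$, so that $\op{R}_1$ is built from the $2$-cells, $\op{R}_2/\op{R}_1$ from the $1$-cells, and $\op{R}_3/\op{R}_2$ from the vertices. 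The spectral sequence is a module over $\op{H}^\bullet_{\op{top}}(\op{GL}_3(\mathbb{C});\mathbb{F}_\ell)\cong\mathbb{F}_\ell[\op{c}_1,\op{c}_2,\op{c}_3]$, and $\op{c}_1,\op{c}_2,\op{c}_3$ act on the $p$-th column through their restrictions to the $p$-cell stabilizers. The identification $\op{R}_2=\ker(\text{Quillen homomorphism})$ used below is the one recalled in the introduction: $\op{R}_2$ is the kernel of restriction to all vertex stabilizers, and every elementary abelian $\ell$-subgroup is subconjugate to a vertex stabilizer since it fixes a cell of the contractible building.

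The computational heart is to assemble the $E_1$-page and to determine all differentials. Because $\ell\ge5$ one has $\ell\nmid|S_3|$, so the mod-$\ell$ cohomology of each stabilizer of \cite{ell-dilog} --- an automorphism group of a rank-$3$ bundle on $\overline E$, possibly with a chosen partial flag --- is the Weyl-invariant part of the cohomology of its maximal torus, the unipotent radicals being invisible as $\ell\mid q-1$ but $\ell\nmid q$; the basic block is $\op{H}^\bullet(\mathbb{G}_m(\mathbb{F}_q);\mathbb{F}_\ell)\cong\Lambda(x)\otimes\mathbb{F}_\ell[y]$ with $|x|=1$ and $|y|=2$, which produces the recurrent factors $(1+T)$ and $(1-T^2)^{-1}$. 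One then computes $d_1$ --- an alternating sum of restriction maps along the face inclusions of stabilizers, explicit once the cell structure is recorded --- and the remaining $d_2$, and reads off $E_\infty$. \emph{This is the step I expect to be the main obstacle}: the differentials among the cells supported on decomposable bundles (direct sums of line bundles of equal degree) are the delicate ones, and one must show not merely that a given number of classes survive but that in each column the survivors assemble into a \emph{free} module over the appropriate Chern-class subring; the arithmetic of $\overline E$ enters precisely here, via the defect measured by $\op{rk}_\ell\overline E$ (from $\ell$-torsion line bundles) and via the number $N_a$ of Definition~\ref{def:na} (unordered triples of distinct degree-$0$ line bundles with trivial tensor product).

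Granting the resulting $E_\infty$-page, parts (2) and (3) follow by tracking the restricted Chern classes. The differentials leave in the $2$-cell column only classes coming from stabilizers on which $\op{c}_2$ and $\op{c}_3$ restrict to $0$ while $\op{c}_1$ restricts without torsion, and the computation presents $\op{R}_1\cong E_\infty^{2,\bullet}$ as a direct sum of shifted copies of $\mathbb{F}_\ell[\op{c}_1]$ on which $\op{c}_2$ and $\op{c}_3$ act as zero --- that is, the restriction of a free $\mathbb{F}_\ell[\op{c}_1]$-module via $\mathbb{F}_\ell[\op{c}_1,\op{c}_2,\op{c}_3]\to\mathbb{F}_\ell[\op{c}_1]$, proving (2). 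Similarly, the $1$-cell column is left with classes on which only $\op{c}_3$ restricts to $0$, and $\op{R}_2/\op{R}_1\cong E_\infty^{1,\bullet}$ is the restriction of a free $\mathbb{F}_\ell[\op{c}_1,\op{c}_2]$-module via $\mathbb{F}_\ell[\op{c}_1,\op{c}_2,\op{c}_3]\to\mathbb{F}_\ell[\op{c}_1,\op{c}_2]$, proving (3). The Hilbert--Poincar{\'e} series are then the generating functions of these free modules: there are $q^3-\op{rk}_\ell\overline E$ free generators for $\op{R}_1$ and $N_a+\op{rk}_\ell\overline E$ for $\op{R}_2/\op{R}_1$, distributed over degrees following the $(1+T)$- and $(1+T)^2$-patterns of the relevant stabilizer cohomologies, which gives $\frac{(1+T)(q^3-\op{rk}_\ell\overline E)}{1-T^2}$ and $\frac{(N_a+\op{rk}_\ell\overline E)T(1+T)^2}{(1-T^2)(1-T^4)}$.

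Part (1) and the closing assertion are then formal. From (2) and (3), $\op{c}_2$ annihilates $\op{R}_1$ and $\op{c}_3$ carries $\op{R}_2$ into $\op{R}_1$, hence $\op{c}_2\op{c}_3$ annihilates all of $\op{R}_2$, so $\op{R}_2$ consists of torsion elements. Conversely, $\op{R}_3/\op{R}_2\cong E_\infty^{0,\bullet}$ is torsion-free, as one reads off from its explicit description: the vertices carrying it have a rank-$3$ torus in their stabilizer, on which $\op{c}_1,\op{c}_2,\op{c}_3$ restrict to a regular sequence, so those stabilizer cohomologies are free over $\mathbb{F}_\ell[\op{c}_1,\op{c}_2,\op{c}_3]$ and $E_\infty^{0,\bullet}$ embeds into their sum. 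Since a torsion element of $\op{H}^\bullet$ maps to a torsion element of the torsion-free quotient $\op{H}^\bullet/\op{R}_2$, the torsion submodule lies in $\op{R}_2$; together with the previous sentence, $\op{R}_2$ is exactly the torsion submodule, which with the identification $\op{R}_2=\ker(\text{Quillen homomorphism})$ gives (1). Finally, $\ell\ge5$ and $\ell\mid q-1$ force $q\ge\ell+1\ge6$, while $0\le\op{rk}_\ell\overline E\le2$, so $q^3-\op{rk}_\ell\overline E>0$ and the Hilbert--Poincar{\'e} series of $\op{R}_1/\op{R}_0$ is nonzero; hence $\ker(\text{Quillen homomorphism})=\op{R}_2\supseteq\op{R}_1\ne0$ is always non-trivial, and $0=\op{R}_0\subseteq\op{R}_1\subseteq\op{R}_2$ is a filtration of it whose subquotients $\op{R}_1$ and $\op{R}_2/\op{R}_1$ are free over $\mathbb{F}_\ell[\op{c}_1]$ and $\mathbb{F}_\ell[\op{c}_1,\op{c}_2]$ respectively, by (2) and (3).
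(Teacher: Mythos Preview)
Your outline is precisely the paper's approach: identify $\op{R}_i/\op{R}_{i-1}$ with $E_\infty^{3-i,\bullet}$ of the isotropy spectral sequence, compute the $E_\infty$-columns, and read off (1)--(3). The ``main obstacle'' you flag is indeed the bulk of the paper (Sections~\ref{sec:detection}--\ref{sec:d2}): the paper organizes the $d_1$- and $d_2$-computations via a \emph{detection filtration} on the $E_1$-page whose subquotients are free over the successive Chern-class rings, and the arithmetic input ($\op{rk}_\ell\overline{E}$, $N_a$) enters exactly as you anticipate, through the $d_1$-cohomology on the rank-$2$ layer (Propositions~\ref{prop:step1}--\ref{prop:step2}) and the $d_2$-correction (Lemma~\ref{lem:d2lower}). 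Your deduction of (1) from (2), (3) and the torsion-freeness of $E_\infty^{0,\bullet}$ is the argument of Proposition~\ref{prop:torsion}, and your closing non-triviality argument is correct. One small slip: the numerators $(1+T)(q^3-\op{rk}_\ell\overline{E})$ and $(N_a+\op{rk}_\ell\overline{E})T(1+T)^2$ encode $2(q^3-\op{rk}_\ell\overline{E})$ and $4(N_a+\op{rk}_\ell\overline{E})$ free generators respectively, not the numbers you state.
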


Note that all previously known examples of the failure of Quillen's conjecture were due to the inferred existence of torsion classes in the kernel, based on a failure of detection on elementary abelian $\ell$-subgroups. The above classes in the kernel of the Quillen homomorphism provide similar counterexamples to Quillen's conjecture in the function field case, but are actually very explicit. At the end of the day, one of the central underlying assumptions in Quillen's conjecture is that the cohomology of the arithmetic group is essentially controlled by elementary abelian $\ell$-subgroups of maximal rank, but this is not true in the present situation.\footnote{One could possibly say that the non-contractibility of subgroup complexes for arithmetic groups is one of the reasons for the existence of torsion classes.} The classes in $\op{R}_1$ are directly related to the non-trivial (cuspidal) cohomology in the quotient $\op{GL}_3(k[E])\backslash\mathfrak{B}$ of the building which has nothing to do with finite subgroups at all. The classes in $\op{R}_2/\op{R}_1$ are directly related to the non-triviality of a certain graph of semistable rank 3 vector bundles on $\overline{E}$. In a way, even under the simplifying assumptions made in Quillen's formulation of the conjecture, the category of elementary abelian $\ell$-subgroups, in particular the interactions between those of non-maximal rank, is still sufficiently complicated to produce many torsion classes in the cohomology of arithmetic groups.

An argument of Henn \cite{henn}, cf. Appendix~\ref{sec:henn}, shows that the failure of the Quillen conjecture propagates to higher-rank arithmetic groups, giving us the following consequence, cf.~page~\pageref{proof:henn} in Section~\ref{proof:henn}.

\begin{corollary}
\label{cor:henn}
Let $k=\mathbb{F}_q$ be a finite field, let $\overline{E}$ be an elliptic curve over $k$ with $k$-rational point $\op{O}$ and set $E=\overline{E}\setminus\{\op{O}\}$. Let $\ell\geq 5$ be a prime with $\ell\mid q-1$. Quillen's conjecture fails for $\op{H}^\bullet(\op{GL}_n(k[E]);\mathbb{F}_\ell)$ if and only if $n\geq 3$.
\end{corollary}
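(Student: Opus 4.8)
The plan is to prove the equivalence by treating the two implications separately.

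\emph{Failure for $n\geq3$.} For $n=3$ this is immediate from Theorem~\ref{thm:kernel}: part~(1) identifies $\op{R}_2\subset\op{H}^\bullet(\op{GL}_3(k[E]);\mathbb{F}_\ell)$ with the $\mathbb{F}_\ell[\op{c}_1,\op{c}_2,\op{c}_3]$-torsion submodule, and the theorem records that this kernel of the Quillen homomorphism is always non-trivial --- indeed $\op{R}_1/\op{R}_0$ already has positive Hilbert--Poincar\'e series, since $q^3>\op{rk}_\ell\overline{E}$. As a free module over a polynomial ring is torsion-free, $\op{H}^\bullet(\op{GL}_3(k[E]);\mathbb{F}_\ell)$ cannot be free over $\mathbb{F}_\ell[\op{c}_1,\op{c}_2,\op{c}_3]\cong\op{H}^\bullet_{\op{top}}(\op{GL}_3(\mathbb{C});\mathbb{F}_\ell)$, which is exactly the failure of Quillen's conjecture in rank $3$. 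For the passage from $n$ to $n+1$ I would invoke the argument of Henn recalled in Appendix~\ref{sec:henn}: via the block inclusion $\op{GL}_n\hookrightarrow\op{GL}_{n+1}$ and the behaviour of the universal Chern classes under it, failure of Quillen's conjecture for $\op{GL}_n(k[E])$ forces its failure for $\op{GL}_{n+1}(k[E])$. Induction on $n$ starting from $n=3$ then settles all $n\geq3$.

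\emph{Validity for $n\leq2$.} For $n=1$, a unit of $k[E]$ has divisor supported at $\op{O}$ and of degree zero, hence is a nonzero constant, so $\op{GL}_1(k[E])=k[E]^\times=\mathbb{F}_q^\times$ is cyclic of order $q-1$; since $\ell\mid q-1$ and $\ell$ is odd, $\op{H}^\bullet(\op{GL}_1(k[E]);\mathbb{F}_\ell)\cong\mathbb{F}_\ell[\op{c}_1]\otimes\Lambda(e)$ with $\deg e=1$, a free module of rank $2$ over $\mathbb{F}_\ell[\op{c}_1]\cong\op{H}^\bullet_{\op{top}}(\op{GL}_1(\mathbb{C});\mathbb{F}_\ell)$. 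For $n=2$, the Bruhat--Tits building $\mathfrak{B}(E/k,2)$ is a tree, so the isotropy spectral sequence has only the columns $p=0,1$ and degenerates at $E_2$. Up to their unipotent radicals --- elementary abelian $p$-groups with $p=\op{char}k\neq\ell$, hence mod-$\ell$ cohomologically trivial --- the cell stabilizers are subgroups of $\op{GL}_2(\mathbb{F}_q)$ (the full group, its Borel subgroups and its maximal tori), and for each the mod-$\ell$ cohomology is free over the pertinent Chern-class ring by Quillen's computation of $\op{H}^\bullet(\op{GL}_m(\mathbb{F}_q);\mathbb{F}_\ell)$ for $\ell\mid q-1$. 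Since $\ell\geq5$, the restriction maps occurring in the resulting graph-of-groups description are either split injective (the relevant index being prime to $\ell$, e.g.\ from $\op{GL}_2(\mathbb{F}_q)$ to a proper parabolic) or surjective; contracting the cuspidal rays of the quotient graph and running Mayer--Vietoris, one concludes that $\op{H}^\bullet(\op{GL}_2(k[E]);\mathbb{F}_\ell)$ is free over $\mathbb{F}_\ell[\op{c}_1,\op{c}_2]$ --- equivalently, the analogue of the torsion submodule $\op{R}_2$ from Theorem~\ref{thm:kernel} vanishes at rank $2$.

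Granting Theorem~\ref{thm:kernel} and Appendix~\ref{sec:henn}, the implication for $n\geq3$ is essentially formal, so the main obstacle is the rank-$2$ case, and within it the need to exclude a Chern-ring torsion contribution to $\op{H}^\bullet(\op{GL}_2(k[E]);\mathbb{F}_\ell)$. This forces one to use the one-dimensional reduction theory of $\op{GL}_2$ over $k[E]$ to control the quotient graph $\op{GL}_2(k[E])\backslash\mathfrak{B}(E/k,2)$ precisely enough to rule out the ``$\op{R}_1$ a non-zero $\mathbb{F}_\ell[\op{c}_1]$-module on which $\op{c}_2$ acts by zero'' mechanism responsible for the failure in rank $3$. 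A secondary point to handle with care is that Henn's propagation is not a purely formal consequence of non-freeness: it depends on the Whitney-formula description of the map from $\mathbb{F}_\ell[\op{c}_1,\dots,\op{c}_{n+1}]$ to the Chern-class ring of a block subgroup $\op{GL}_n\times\op{GL}_1\leq\op{GL}_{n+1}$, together with the freeness of $\op{H}^\bullet(\op{GL}_1(k[E]);\mathbb{F}_\ell)$ noted above.
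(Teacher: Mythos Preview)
Your overall strategy for the $n\geq 3$ direction agrees with the paper's: establish the base case $n=3$ from Theorem~\ref{thm:kernel} (non-trivial Chern-class torsion $\Rightarrow$ non-freeness), then invoke Appendix~\ref{sec:henn} to propagate failure to all higher ranks. However, your description of \emph{how} Henn's argument works is not what the appendix actually does. The mechanism is not an inductive restriction along $\op{GL}_n\hookrightarrow\op{GL}_{n+1}$ together with the Whitney formula; rather, Theorem~\ref{thm:henn} applies Lannes' $T$-functor to the centralizer of a suitable elementary abelian $\ell$-subgroup of the form $\op{GL}_{3}\times(k^\times)^{n-3}\subset\op{GL}_n$ and shows that $T_A(Q_n)$ contains $Q_3\otimes\op{H}^\bullet((k^\times)^{n-3};\mathbb{F}_\ell)$. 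Since $\op{HP}(Q_3,T)$ has a pole of order $2$ at $T=1$ (this is the input from Theorem~\ref{thm:kernel}), one gets pole order $n-1$ for $Q_n$, hence $Q_n\neq 0$ for all $n\geq 3$. The paper's proof of Corollary~\ref{cor:henn} is then just one sentence citing Theorem~\ref{thm:henn}. Your ``Whitney-formula plus freeness of $\op{H}^\bullet(\op{GL}_1)$'' sketch does not give this; there is no direct argument that non-freeness survives restriction to a block subgroup, and the $T$-functor machinery is genuinely needed.

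For $n\leq 2$, the paper does not re-prove anything: the case $n=2$ is listed among the known positive results in Section~\ref{known}, with a citation to \cite[Section 4.5]{knudson:book}, and $n=1$ is the trivial computation you gave. Your direct sketch for $n=2$ via the tree and Mayer--Vietoris is in the right spirit and is essentially how Knudson's computation proceeds, but as written it is only a plan (you yourself flag the control of the quotient graph as ``the main obstacle''). For the purposes of the corollary, the paper simply takes this case as established in the literature.
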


After the detailed analysis of the kernel, we can now turn to the image of the Quillen homomorphism. By Quillen's results, the image coincides with the part  $E^{0,\bullet}_\infty$ of the $E_\infty$-page of the isotropy spectral sequence and contains exactly those classes which can be detected on some elementary abelian $\ell$-subgroup. For the analysis of the image, it turns out to be very useful to define a so-called \emph{detection filtration} $\op{D}^\bullet$ on the image of the Quillen homomorphism which measures exactly how large the rank of an elementary abelian $\ell$-group has to be to detect a given cohomology class. Considering only subquotients for the detection significantly facilitates the spectral sequence computations. The result is the following, cf.~page~\pageref{pfimage} in Section~\ref{pfimage}: 

\begin{theorem}
\label{thm:image}
Let $k=\mathbb{F}_q$ be a finite field, let $\overline{E}$ be an elliptic curve over $k$ with $k$-rational point $\op{O}$ and set $E=\overline{E}\setminus\{\op{O}\}$. Let $\ell\geq 5$ be a prime with $\ell\mid q-1$. The numbers $N_a,N_b$ and $N_c$ below are introduced in Definition~\ref{def:na} and count various types of vector bundles on $\overline{E}$.
\begin{enumerate}
\item The filtration step $\op{D}^2\subset E^{0,\bullet}_\infty$ is a free $\mathbb{F}_\ell[\op{c}_1,\op{c}_2,\op{c}_3]$-module with Hilbert--Poincar{\'e} series 
\[
\frac{T^2(1+T)^3(1-T+T^2)\left((1+2T^2+2T^4+T^6)N_a+(T^2+T^4+T^6)N_b+ T^6N_c\right)}{(1-T^2)(1-T^4)(1-T^6)}.
\]
Its total rank is $8\cdot(\#\overline{E}(\mathbb{F}_q))^2$.
\item The subquotient $\op{D}^1/\op{D}^2$ of $E^{0,\bullet}_\infty$ is induced from a free $\mathbb{F}_\ell[\op{c}_1,\op{c}_2]$-module with Hilbert--Poincar{\'e} series
\[
\frac{T^2(1+T)^2\left(\op{rk}_\ell(\overline{E})+ (N_b+N_c)T+ (N_a+N_b+N_c)T^3)\right)}{(1-T^2)(1-T^4)}.
\]
Its total rank is $4(N_a+\op{rk}_\ell(\overline{E}))+8\cdot\#\overline{E}(\mathbb{F}_q)$.
\item The quotient $E^{0,\bullet}_\infty/\op{D}^1$ is induced from a free $\mathbb{F}_\ell[\op{c}_1]$-module with Hilbert--Poincar{\'e} series
\[
\frac{(1+T)}{(1-T^2)}.
\]
\end{enumerate}
\end{theorem}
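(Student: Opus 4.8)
The plan is to compute $E^{0,\bullet}_\infty$ --- which, by Quillen's results recalled in the introduction, is precisely the image of the Quillen homomorphism --- directly from the isotropy spectral sequence
\[
E_1^{p,q}=\prod_{\sigma\in\Sigma_p}\op{H}^q\bigl(\op{Stab}_{\op{GL}_3(k[E])}(\sigma);\mathbb{F}_\ell\bigr)\ \Longrightarrow\ \op{H}^{p+q}\bigl(\op{GL}_3(k[E]);\mathbb{F}_\ell\bigr),
\]
where $\Sigma_p$ is a set of representatives for the $\op{GL}_3(k[E])$-orbits of $p$-cells of the building $\mathfrak{B}$, and the quotient complex $\op{GL}_3(k[E])\backslash\mathfrak{B}$ together with its cell stabilizers --- automorphism groups of decompositions of rank-$3$ vector bundles on $\overline{E}$ into indecomposables --- is the one computed in \cite{ell-dilog}. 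Since $\dim\mathfrak{B}=2$, the column $p=0$ only meets the differentials $d_1$ and $d_2$, so that $E^{0,\bullet}_\infty=\ker\bigl(d_2\colon E_2^{0,\bullet}\to E_2^{2,\bullet}\bigr)$ with $E_2^{0,\bullet}=\ker\bigl(d_1\colon E_1^{0,\bullet}\to E_1^{1,\bullet}\bigr)$; the entire computation thus takes place in columns $0,1,2$, together with the $d_1$-differentials among them and the single $d_2$ from column $0$ to column $2$.

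The organizing device is the detection filtration $\op{D}^\bullet$. Since $\ell\geq 5$ and $\ell\mid q-1$, every cell stabilizer $G_\sigma$ is an extension of a reductive group of Lie type over $\mathbb{F}_q$ (the automorphisms of an indecomposable decomposition) by a finite $\ell'$-group, so Quillen's detection theorem applies: $\op{H}^\bullet(G_\sigma;\mathbb{F}_\ell)$ is detected on its elementary abelian $\ell$-subgroups, all of which are subconjugate to a diagonal torus and hence of rank $\leq 3$, and it is free over the Chern subring $\mathbb{F}_\ell[\op{c}_1,\dots,\op{c}_{r(\sigma)}]$, pulled back along $\mathbb{F}_\ell[\op{c}_1,\op{c}_2,\op{c}_3]\twoheadrightarrow\mathbb{F}_\ell[\op{c}_1,\dots,\op{c}_{r(\sigma)}]$, where $r(\sigma)\in\{1,2,3\}$ is the maximal such rank. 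Put $\op{D}^0=E^{0,\bullet}_\infty$ and, for $j=1,2$, let $\op{D}^j\subseteq E^{0,\bullet}_\infty$ be the sub-$\mathbb{F}_\ell[\op{c}_1,\op{c}_2,\op{c}_3]$-module of classes restricting to zero on every elementary abelian $\ell$-subgroup of $\op{GL}_3(k[E])$ of rank $\leq j$; since $E^{0,\bullet}_\infty$ injects into the inverse limit of the $\op{H}^\bullet(A;\mathbb{F}_\ell)$, one gets $0=\op{D}^3\subseteq\op{D}^2\subseteq\op{D}^1\subseteq\op{D}^0=E^{0,\bullet}_\infty$. Each face map of the spectral sequence is a signed sum of cohomology restrictions along subgroup inclusions $G_\tau\hookrightarrow G_\sigma$, and restriction preserves each $\op{D}^j$; checking in addition that $d_1$ and $d_2$ are strictly compatible with $\op{D}^\bullet$, one finds that $\op{D}^2$, $\op{D}^1/\op{D}^2$ and $E^{0,\bullet}_\infty/\op{D}^1$ are computed, respectively, from the detection-rank $3$, $2$ and $1$ parts of the associated-graded complex built from $d_1$ and $d_2$ on columns $0,1,2$. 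This splits the proof into three mutually independent computations, the detection-rank-$(j+1)$ one being linear over $\mathbb{F}_\ell[\op{c}_1,\dots,\op{c}_{j+1}]$, which already forces the three subquotients to be induced from $\mathbb{F}_\ell[\op{c}_1,\op{c}_2,\op{c}_3]$, $\mathbb{F}_\ell[\op{c}_1,\op{c}_2]$ and $\mathbb{F}_\ell[\op{c}_1]$.

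For detection rank $1$, the only contributing cells are those whose stabilizer has reductive quotient $\op{GL}_1$ --- in bundle terms, the twists of the rank-$3$ Atiyah bundle on $\overline{E}$ and their neighbours in $\mathfrak{B}$ --- and $d_1,d_2$ collapse their contribution to a single copy of $\op{H}^\bullet(\mathbb{Z}/\ell;\mathbb{F}_\ell)$, which one recognizes as the image of the map $\op{H}^\bullet(\op{GL}_1(\mathbb{F}_q);\mathbb{F}_\ell)\to\op{H}^\bullet(\op{GL}_3(k[E]);\mathbb{F}_\ell)$ induced by the determinant; its Hilbert--Poincar\'e series is $(1+T)/(1-T^2)$. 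For detection ranks $2$ and $3$ one invokes the dictionary of \cite{ell-dilog} to identify the contributing cells with configurations of direct sums of degree-$0$ line bundles on $\overline{E}$ subject to prescribed coincidences and tensor-product relations --- the vertices $L_1\oplus L_2\oplus L_3$ with pairwise non-isomorphic summands of trivial product, their degenerations $L^{\oplus 2}\oplus L'$ and $L^{\oplus 3}$, and the rank $(2,1)$ bundles $\mathcal{V}\oplus L$ with $\mathcal{V}$ indecomposable of rank $2$ (whose numbers are the invariants $N_a,N_b,N_c$ of Definition~\ref{def:na} and the $\ell$-rank $\op{rk}_\ell(\overline{E})$) --- and one computes $d_1$ and $d_2$ explicitly on the resulting finite sub/quotient complexes. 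These differentials are combinatorial maps assembled from the restriction maps among $\op{H}^\bullet(\op{GL}_2(\mathbb{F}_q);\mathbb{F}_\ell)$, $\op{H}^\bullet((\mathbb{Z}/\ell)^{\times 3};\mathbb{F}_\ell)$ and their parabolic analogues, all of which are completely known when $\ell\mid q-1$; taking homology produces free modules over $\mathbb{F}_\ell[\op{c}_1,\op{c}_2]$, resp.\ $\mathbb{F}_\ell[\op{c}_1,\op{c}_2,\op{c}_3]$, whose free bases occupy the bidegrees encoded by the stated generating functions. Finally, the total ranks $8(\#\overline{E}(\mathbb{F}_q))^2$ and $4(N_a+\op{rk}_\ell(\overline{E}))+8\#\overline{E}(\mathbb{F}_q)$ follow by evaluating those generating functions at $T=1$ and using the identities $6N_a+3N_b+N_c=(\#\overline{E}(\mathbb{F}_q))^2$ and $N_b+N_c=\#\overline{E}(\mathbb{F}_q)$, both of which are immediate from counting ordered triples $(L_1,L_2,L_3)$ of degree-$0$ line bundles on $\overline{E}$ with $L_1\otimes L_2\otimes L_3$ trivial.

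I expect the main obstacles to be twofold. First, one has to pin down the quotient complex $\op{GL}_3(k[E])\backslash\mathfrak{B}$ and the induced restriction maps on stabilizer cohomology precisely enough --- including the disconnectedness of several stabilizers and the orientation bookkeeping in $d_1$ and $d_2$ --- which is exactly where the detailed input of \cite{ell-dilog} is indispensable. Second, and more delicate: one must establish that $d_1$ and $d_2$ are strictly compatible with $\op{D}^\bullet$, and then, after passing to $d_1$- and $d_2$-homology, exclude extension problems over the truncated Chern-class rings --- that is, show that a module with free associated graded is itself free, equivalently that the relevant groups $\op{Ext}^{\geq 1}_{\mathbb{F}_\ell[\op{c}_1,\dots,\op{c}_j]}$ between graded pieces vanish in the bidegrees that occur. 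I would settle this by exhibiting explicit polynomial (Chern-class) generators, using that every free basis element already sits in the lowest cohomological degree allowed by its $\op{c}_i$-orbit, so that no room for a nontrivial extension remains. Making this last step airtight, rather than grinding through the combinatorial differentials, is where the real work lies.
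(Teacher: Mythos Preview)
Your approach is essentially the paper's: isotropy spectral sequence plus detection filtration, with the three subquotients analysed separately. Two points deserve comment.

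First, your description of the detection-rank-$1$ piece is not right: the quotient $E_1^{s,t}/\op{D}^1E_1^{s,t}$ receives a contribution from \emph{every} cell of $\op{GL}_3(k[E])\backslash\mathfrak{B}$, not only from those with reductive quotient $\op{GL}_1$, because the center $k^\times$ sits in every stabilizer and the restriction to it is always surjective (this uses $\ell\neq 3$). The paper's Proposition~\ref{prop:rankonee1} identifies this quotient as $\op{H}^t(k^\times;\mathbb{F}_\ell)\otimes\op{C}^s(\op{GL}_3(k[E])\backslash\mathfrak{B};\mathbb{F}_\ell)$; it is then the cohomology of the quotient space (a suspension of the flag complex) that singles out one copy of $\op{H}^\bullet(k^\times)$ in column $0$. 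Your determinant argument does correctly identify the surviving summand, but the mechanism is not the one you describe.

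Second, and this is the genuine gap: you treat $d_2$ as something to be ``computed explicitly'' from the combinatorics, but $d_2$ is a secondary operation and cannot be read off from stabilizer restriction maps alone. The paper shows (Lemma~\ref{lem:d2vanish}) that $d_2$ vanishes on $\op{D}^2$, but on the $\op{D}^1/\op{D}^2$ part of column $0$ the differential $d_2$ is genuinely nonzero: it kills precisely the $\op{rk}_\ell(\overline{E})$ classes in degree $1$ together with their $\op{e}_1$- and $(X_1-X_2)$-multiples. The paper establishes this indirectly (Lemma~\ref{lem:d2lower}) by knowing a priori that $\op{H}^1(\op{GL}_3(k[E]);\mathbb{F}_\ell)\cong\mathbb{F}_\ell$, via homological stabilization and the computation of $\op{K}_1$ of the curve; since $E_2^{0,1}$ has dimension $1+\op{rk}_\ell(\overline{E})$ and $E_2^{1,0}=0$, the differential $d_2^{0,1}$ is forced to be injective on the $\ell$-rank part. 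Without this global input you cannot determine $d_2$, and hence cannot obtain the Hilbert--Poincar\'e series in part~(2). By contrast, the obstacle you flag as the hard one---extension problems over the truncated Chern-class rings---turns out to be benign: the paper's Lemma~\ref{lem:degenerate} disposes of it by degree reasons and the $\op{GL}_3(k)$-splitting. The $d_2$-computation, not the extension bookkeeping, is where the actual work lies.
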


While the detection filtration behaves very nicely, with free subquotients whose ranks are strongly related to the arithmetic of the underlying elliptic curve, the behaviour of the full image $E^{0,\bullet}_\infty$ (or just the term $E_2^{0,\bullet}$ related to sheaf cohomology of the building) is not as nice as Quillen's conjecture would suggest.

\begin{theorem}
\label{thm:free}
Let $k=\mathbb{F}_q$ be a finite field, let $\overline{E}$ be an elliptic curve over $k$ with $k$-rational point $\op{O}$ and set $E=\overline{E}\setminus\{\op{O}\}$. Let $\ell\geq 5$ be a prime with $\ell\mid q-1$. Assume furthermore that $\op{rk}_\ell(\overline{E})=0$ and $\op{rk}_3(\overline{E})>0$. Then $E^{0,\bullet}_\infty=E^{0,\bullet}_2$ is a torsionless $\mathbb{F}_\ell[\op{c}_1,\op{c}_2,\op{c}_3]$-module which is not free. In particular, the image of the Quillen homomorphism on $\op{H}^\bullet(\op{GL}_3(k[E]);\mathbb{F}_\ell)$ is not necessarily free over the Chern-class ring.
\end{theorem}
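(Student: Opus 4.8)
The plan splits into three parts: the degeneration $E^{0,\bullet}_\infty=E^{0,\bullet}_2$, torsionlessness of this module over $R:=\mathbb F_\ell[\op{c}_1,\op{c}_2,\op{c}_3]$, and the failure of freeness.

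For the degeneration I would argue that since the quotient $\op{GL}_3(k[E])\backslash\mathfrak B(E/k,3)$ is two-dimensional, the only differential affecting the $p=0$ column of the isotropy spectral sequence is the outgoing $d_2\colon E^{0,q}_2\to E^{2,q-1}_2$, so that $E^{0,\bullet}_\infty=\ker d_2$. I would then invoke the complete computation of the differentials underlying Theorems~\ref{thm:kernel} and~\ref{thm:image}: under the hypothesis $\op{rk}_\ell(\overline E)=0$ the whole term $E^{2,\bullet}_2$ consists of permanent cycles, namely the "cuspidal" classes accounting for $\op{R}_1$, whose Hilbert--Poincar\'e series $(1+T)q^3/(1-T^2)$ carries no $\ell$-rank correction in this case; hence $d_2$ vanishes on the $p=0$ line and $E^{0,\bullet}_\infty=E^{0,\bullet}_2$. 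Equivalently, since Theorem~\ref{thm:kernel}(1) identifies $\op{R}_2$ with the full torsion submodule, the graded dimensions of $E^{0,\bullet}_\infty$ are already pinned down, and they agree with those of $E^{0,\bullet}_2$ precisely when $\op{rk}_\ell(\overline E)=0$.

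For torsionlessness, note first that since $E^{0,\bullet}_2$ is finitely generated over the domain $R$, torsionless is the same as torsion-free, i.e.\ $\op{Ass}_R(E^{0,\bullet}_2)=\{(0)\}$. The module $E^{0,\bullet}_2$ is the module of compatible families in $\bigoplus_\sigma\op{H}^\bullet(\op{Stab}(\sigma);\mathbb F_\ell)$ over the cells of the quotient complex, with vertex stabilizers the automorphism groups of semistable rank-$3$ bundles on $\overline E$. I would record the model computation that whenever the reductive quotient of $\op{Aut}(V)$ is a product of split $\op{GL}$'s over $\mathbb F_q$ (e.g.\ $V$ a direct sum of line bundles defined over $\mathbb F_q$), the ring $\op{H}^\bullet(\op{Aut}(V);\mathbb F_\ell)$ is a \emph{free} $R$-module: here $\ell\geq 5>3$ forces $\ell\nmid 3!$, the $\ell$-Sylow is the relevant diagonal torus, and the three Chern roots of the standard representation restrict to a basis of $\op{H}^2$ of the top elementary abelian subgroup, so that the Chern subring of $\op{H}^\bullet(\op{Aut}(V))$ is a polynomial ring over which the cohomology is finite free and $R$ surjects onto it. Using the explicit equivariant cell structure from \cite{ell-dilog}, I would then show that every vertex is joined by an edge to such a "good" vertex and that the coefficient system is rigid enough that a compatible family which is $R$-torsion (hence vanishes at every good vertex, the relevant cohomology there being torsion-free) must vanish everywhere; this gives $\op{Ass}_R(E^{0,\bullet}_2)\subseteq\{(0)\}$. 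The hypothesis $\op{rk}_\ell(\overline E)=0$ is used to rule out honest $\ell$-power torsion contributed by finite subgroups carrying $\ell$-torsion in their arithmetic part, i.e.\ to ensure we are not disturbed by the classes producing $\op{R}_2$.

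For the failure of freeness I would use that, by Auslander--Buchsbaum, freeness of the finitely generated graded module $E^{0,\bullet}_2$ is equivalent to $\op{depth}_R E^{0,\bullet}_2=3$, i.e.\ to $\op{H}^i_{\mathfrak m}(E^{0,\bullet}_2)=0$ for $i\leq 2$, equivalently to $(1-T^2)(1-T^4)(1-T^6)\,HS(T)$ being a polynomial with non-negative coefficients, where $HS(T)$ is the Hilbert--Poincar\'e series (the sum of the three series of Theorem~\ref{thm:image} with $\op{rk}_\ell(\overline E)=0$). I would then analyse the detection filtration: $\op{D}^2$ is free, hence Cohen--Macaulay of dimension $3$, while $\op{D}^1/\op{D}^2$ is supported on $V(\op{c}_3)$ and $E^{0,\bullet}_2/\op{D}^1$ on $V(\op{c}_2,\op{c}_3)$, of dimensions $2$ and $1$. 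A local-cohomology long exact sequence argument then shows the connecting maps cannot promote $E^{0,\bullet}_2$ to a Cohen--Macaulay module once these thin subquotients occur with the ranks forced by the hypotheses, since the bounded graded pieces of $\op{H}^{<3}_{\mathfrak m}$ of the low-dimensional subquotients cannot absorb the polynomially growing graded pieces of $\op{H}^{<3}_{\mathfrak m}$ of the free part $\op{D}^2$; concretely I expect this to materialise as an explicit negative coefficient of $(1-T^2)(1-T^4)(1-T^6)\,HS(T)$. This is exactly where $\op{rk}_3(\overline E)>0$ enters: the $3$-torsion of $\overline E$ contributes the extra bundles counted in Definition~\ref{def:na} (the parts of $N_a,N_b,N_c$ sensitive to $3^{\op{rk}_3(\overline E)}$), which is what creates the obstruction, whereas in the complementary case $\op{rk}_3(\overline E)=0$ the same numerology leaves $E^{0,\bullet}_2$ free.

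The main obstacle I anticipate is precisely this last point: an extension of a free module by a torsion module can perfectly well be torsion-free and even free, so one cannot conclude non-freeness from the mere nonvanishing of the torsion subquotients of the detection filtration (which are always nonzero); the argument must go through the exact numerology of Theorem~\ref{thm:image}, or an explicit minimal free resolution, and isolate the contribution of $\op{rk}_3(\overline E)>0$. The torsionlessness step is also somewhat delicate, as it genuinely uses the global geometry of the quotient of the building: the stalks of the coefficient system at the "bad" vertices are $R$-torsion, and only the compatibility constraints kill that torsion.
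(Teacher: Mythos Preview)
Your degeneration and torsionlessness arguments are essentially in line with the paper (the paper's torsionlessness argument is simply that $E^{0,\bullet}_2$ embeds as a submodule of the product of vertex-stabilizer cohomologies, each of which is free over $R$; your version with ``good'' and ``bad'' vertices and compatibility constraints is a more elaborate packaging of the same idea). The hypothesis $\op{rk}_\ell(\overline E)=0$ is \emph{not} needed for torsionlessness---it is needed only to kill $d_2$.

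The non-freeness argument, however, is where your proposal diverges from the paper and where it has a genuine gap. You correctly diagnose the obstacle: nonzero torsion subquotients in the detection filtration do not by themselves obstruct freeness of the total module, and the local-cohomology/depth argument you sketch (``the connecting maps cannot promote $E^{0,\bullet}_2$ to Cohen--Macaulay'') is not an argument but a hope, with no mechanism given for isolating the contribution of $\op{rk}_3(\overline E)>0$. Checking for a negative coefficient in $(1-T^2)(1-T^4)(1-T^6)\cdot HS(T)$ is only a sufficient criterion for non-freeness, and you do not verify it (nor is it obvious that it holds for the full $E^{0,\bullet}_2$).

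The paper bypasses this entirely by exhibiting an explicit non-free \emph{direct summand} and invoking Krull--Schmidt for graded modules (Atiyah). Concretely: under $\op{rk}_\ell(\overline E)=0$ the kernel of $d_1$ splits along the vertex decomposition of $E^{0,\bullet}_1$; the diagonal copy of $\op{H}^\bullet(\op{GL}_3(\mathbb F_q);\mathbb F_\ell)$ (coming from inclusion of constants and evaluation at a rational point) then splits off as a free rank-one summand. In the complement one finds, as a further direct summand, $N_c-1$ copies of the submodule $\op{D}^1\subset\op{H}^\bullet(\op{GL}_3(\mathbb F_q);\mathbb F_\ell)$. A direct computation shows this $\op{D}^1$ has Hilbert--Poincar\'e numerator
\[
T^3+2T^4+2T^5+2T^6+T^7+T^8+T^9-T^{10}-T^{11},
\]
whose negative top coefficients witness non-freeness. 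Since $N_c=3^{\op{rk}_3(\overline E)}$, the hypothesis $\op{rk}_3(\overline E)>0$ is exactly $N_c>1$, so the non-free summand is nonzero; Krull--Schmidt finishes. This is both more elementary than a depth argument and makes the role of $\op{rk}_3$ completely transparent. Incidentally, the paper does not claim freeness when $\op{rk}_3(\overline E)=0$, contrary to what your final sentence suggests.
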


The above theorems concerning the failure of the Quillen conjecture significantly improve results known so far. The examples above show that the Quillen conjecture can already fail in the case $\op{GL}_3$,  the lowest possible rank for elliptic curves, while previous counterexamples were for ranks $\geq 14$. On the other hand, we can provide an infinite collection of rings supporting counterexamples while previous work on the Quillen conjecture focused mainly on the two examples $\mathbb{Z}[1/2]$ and $\mathbb{Z}[\zeta_3,1/3]$. Moreover, the computations in Theorems~\ref{thm:kernel} and \ref{thm:image} make the Chern-class module structure very explicit where previous counterexamples were established by indirect methods. The possibility of the failure of freeness of the torsion-free part is also a qualitatively new example. This gives precise information to guide possible reformulations of the conjecture which we discuss in Section~\ref{sec:quillenconj}. Finally, to my knowledge, the computations provided here are the first complete calculation of mod $\ell$ cohomology for rank 2 arithmetic groups in situations with non-trivial class groups. 

\subsection{Disclaimer}
Lengthy calculations lead to proportionally many mistakes and substantial efforts have been made to eliminate at least the most obvious ones. However: This paper makes use of the computations of equivariant cell structures of Bruhat--Tits buildings in \cite{ell-dilog} which have not yet been checked independently or on a computer. The fact that the first homology of the parabolic graph computes $\overline{E}(\mathbb{F}_q)\otimes\mathbb{F}_q^\times$ (which appears in the Somekawa-style presentation of $\op{K}_1$ of elliptic curves) has been checked using \verb!Sage! in a number of examples and provides evidence but no guarantee that the computation of the parabolic graph, the structure of automorphism groups and the description of the inclusions of stabilizer groups are correct. The computations of the $E^2$-page up to cohomological degree 4  (assuming quotient, stabilizer groups and inclusion maps as described in Sections~\ref{sec:equivariant} and \ref{sec:isotropy}) have also been verified using \verb!Sage! in a number of examples. Plans concerning implementation of  quotient computations for $\op{GL}_n(k[C])$-actions on Bruhat--Tits buildings for computer verification of the present results have been put in motion. At present, too little is known about general structural properties of arithmetic groups to allow even the most basic sanity checks.

\subsection{Structure of the paper} We begin with some preliminaries on cohomology of finite groups in Section~\ref{sec:reccohom}. Section~\ref{sec:equivariant} recalls the computation of the building quotient and Section~\ref{sec:isotropy} sets up the spectral sequence for the cohomology computations. The main bulk of computations, using the detection filtration to compute the $E_2$-page, is done in Sections~\ref{sec:detection} and \ref{sec:e2}. The $\op{d}_2$-differential computation is done in Section~\ref{sec:d2}. The discussion of consequences for Quillen's conjecture is done in Section~\ref{sec:quillenconj}. Some computations with alternating polynomials relevant for studying the detection filtration are deferred to Appendix~\ref{sec:eltalt}. Appendix~\ref{sec:reduction} recalls the general method to reduce quotients of arithmetic group actions on buildings to finitely many cells and Appendix~\ref{sec:henn} provides the function field generalization of Henn's argument.

\subsection{Acknowledgements} The results described in the paper would have been impossible to achieve without extensive experiments conducted using the \verb!Sage! computer algebra system as well as \verb!SageMathCloud!. Helpful discussions with Ga\"el Collinet, Hans-Werner Henn and Alexander D. Rahm are gratefully acknowledged. In particular Hans-Werner Henn's questions and suggestions concerning the category of elementary abelian $\ell$-subgroups and the Quillen homomorphism very much influenced the organization of the material. I am very grateful to Wolfgang Soergel for explaining to me the general picture behind the alternating polynomials, cf. Remark~\ref{rem:soergel}, which suggests that the methods in this paper are not restricted to the case $\op{GL}_3$. 

\section{Recollections on cohomology of finite groups}
\label{sec:reccohom}

For the computations in the paper, we need a short recollection on cohomology of finite groups. Most of the necessary results can be found in the book \cite{adem:milgram}; of central relevance are cohomology statements for general linear groups $\op{GL}_n(\mathbb{F}_q)$ which can be found in Quillen's paper \cite{quillen:ktheory} or Knudson's book \cite{knudson:book}. Since we only need the odd primary case for the present paper, we exclude the discussion of $\ell=2$ below.

\subsection{Cohomology of elementary abelian groups}
First, note that for $\ell\mid n$ the inclusion $\mathbb{Z}/\ell\mathbb{Z}\hookrightarrow\mathbb{Z}/n\mathbb{Z}$ induces an isomorphism $\op{H}^\bullet(\mathbb{Z}/n\mathbb{Z};\mathbb{F}_\ell)\cong \op{H}^\bullet(\mathbb{Z}/\ell\mathbb{Z};\mathbb{F}_\ell).$

From the K\"unneth formula, we obtain the cohomology of elementary abelian $\ell$-groups. We view the elementary abelian $\ell$-group $(\mathbb{Z}/\ell\mathbb{Z})^n$ as an $\mathbb{F}_\ell$-vector space, denoted by $V$. For $\ell$ odd, we have  
\[
\op{H}^\bullet(V;\mathbb{F}_\ell)\cong 
\op{Sym}^\bullet(V^\vee)\otimes_{\mathbb{F}_\ell}\bigwedge(V^\vee)
\]
where the symmetric algebra is generated in degree $2$ and the exterior algebra is generated in degree $1$. This is non-canonically isomorphic to 
\[
\mathbb{F}_\ell[y_1,\dots,y_n]\langle x_1,\dots,x_n\rangle, \quad  \deg x_i=1,\quad \deg y_i=2.
\]
Here and throughout the paper, $\mathbb{F}_\ell\langle a_1,\dots,a_m\rangle$ is our choice of notation for the exterior algebra generated by $a_1,\dots,a_m$.

Let $\phi:V_1\to V_2$ be a homomorphism of elementary abelian $\ell$-groups, viewed as $\mathbb{F}_\ell$-vector spaces. Then the induced homomorphism $\phi^\ast:\op{H}^\bullet(V_2;\mathbb{F}_\ell)\to\op{H}^\bullet(V_1;\mathbb{F}_\ell)$ on cohomology corresponds, under the above identifications, to the one induced by the  dual morphism $\phi^\vee:V_2^\vee\to V_1^\vee$. 

\subsection{Cohomology of general linear groups}
We provide a short recollection of Quillen's computation of cohomology of general linear groups over finite fields, cf. \cite{quillen:ktheory} or \cite[Chapter 1.1]{knudson:book}.

\begin{theorem}
\label{thm:glnfq}
Let $p$ and $\ell$ be distinct primes, both assumed to be odd. Let $r$ be the smallest positive number such that $q^r-1\equiv 0\bmod \ell$. Then there are isomorphisms
\[
\op{H}^\bullet(\op{GL}_n(\mathbb{F}_q);\mathbb{F}_\ell)\cong
\mathbb{F}_\ell[\op{c}_r,\dots,\op{c}_{r[n/r]}]\langle \op{e}_r,\dots,\op{e}_{r[n/r]}\rangle, 
\]
with $\deg \op{c}_{rj}=2rj$ and $\deg \op{e}_{rj}=2rj-1$.
\end{theorem}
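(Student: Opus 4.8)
The plan is to follow Quillen \cite{quillen:ktheory} and reduce the computation to the $\ell$-local structure of the group. For $\ell>n$ --- the only case needed here, since $\ell\geq5>3=n$ --- a Sylow $\ell$-subgroup is abelian, and everything is governed by its normalizer; when $\ell\leq n$ the Sylow $\ell$-subgroup is non-abelian and one argues instead via the Brauer lift, identifying (after plus construction and $\ell$-completion) $B\op{GL}_n(\mathbb{F}_q)$ with the homotopy fibre of $\psi^q-1$ on $BU(n)$. I describe the argument for $\ell>n$.

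First I would set up the relevant subgroup. Put $m=[n/r]$. Using $(q^{rj}-1)/(q^r-1)\equiv j\bmod\ell$ for $j\geq1$ together with $\ell>n$, the $\ell$-part of $|\op{GL}_n(\mathbb{F}_q)|$ equals $\big((q^r-1)_\ell\big)^m$, and a Sylow $\ell$-subgroup is $P\cong(\mathbb{Z}/\ell^a)^m$, where $\ell^a\parallel q^r-1$, sitting block-diagonally inside $(\mathbb{F}_{q^r}^\times)^m\leq\op{GL}_{rm}(\mathbb{F}_q)\leq\op{GL}_n(\mathbb{F}_q)$. One computes the centralizer $C:=C_G(P)\cong(\mathbb{F}_{q^r}^\times)^m\times\op{GL}_{n-rm}(\mathbb{F}_q)$, the normalizer $N:=N_G(P)$, for which $[\op{GL}_n(\mathbb{F}_q):N]$ is prime to $\ell$, and the relative Weyl group $W:=N/C\cong(\mathbb{Z}/r)^m\rtimes\Sigma_m$, made up of Frobenius rotations of the $m$ blocks together with permutations of them.

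Next I would pass from $\op{GL}_n(\mathbb{F}_q)$ to $N$. Since $P$ is abelian, Burnside's fusion theorem (see \cite{adem:milgram}) shows that $N$ controls $\ell$-fusion, so by the Cartan--Eilenberg stable-elements description restriction $\op{H}^\bullet(\op{GL}_n(\mathbb{F}_q);\mathbb{F}_\ell)\to\op{H}^\bullet(N;\mathbb{F}_\ell)$ is an isomorphism. As $|W|=r^m\,m!$ is prime to $\ell$ (here $r\mid\ell-1$ and $m\leq n<\ell$), the Lyndon--Hochschild--Serre spectral sequence for $C\triangleleft N$ degenerates into $\op{H}^\bullet(N;\mathbb{F}_\ell)=\op{H}^\bullet(C;\mathbb{F}_\ell)^{W}$. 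Since $\op{GL}_{n-rm}(\mathbb{F}_q)$ has order prime to $\ell$, one gets $\op{H}^\bullet(C;\mathbb{F}_\ell)=\op{H}^\bullet\big((\mathbb{F}_{q^r}^\times)^m;\mathbb{F}_\ell\big)$, and by the recollections above --- together with the fact that $\mathbb{Z}/\ell\hookrightarrow\mathbb{Z}/\ell^a$ induces an isomorphism on mod-$\ell$ cohomology --- this is $\mathbb{F}_\ell[y_1,\dots,y_m]\langle x_1,\dots,x_m\rangle$ with $\deg x_i=1$, $\deg y_i=2$. The $i$-th factor $\mathbb{Z}/r\leq W$ acts on the pair $(x_i,y_i)$ by multiplication by an element of order $r$ in $\mathbb{F}_\ell^\times$, so taking $(\mathbb{Z}/r)^m$-invariants amounts to replacing the $i$-th tensor factor by $\mathbb{F}_\ell[y_i^r]\langle y_i^{r-1}x_i\rangle$, with generators in degrees $2r$ and $2r-1$.

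It then remains to take the $\Sigma_m$-invariants of the diagonal permutation action on $\mathbb{F}_\ell[z_1,\dots,z_m]\otimes\bigwedge(w_1,\dots,w_m)$, where $z_i=y_i^r$ and $w_i=y_i^{r-1}x_i$. I would let $\op{c}_{rj}$ be the $j$-th elementary symmetric function of the $z_i$ (degree $2rj$), which is invariant, and $\op{e}_{rj}:=\sum_i w_i\,\partial\op{c}_{rj}/\partial z_i$ (degree $2rj-1$), which is also invariant; since the Jacobian matrix $(\partial\op{c}_{rj}/\partial z_i)$ has determinant the Vandermonde $\pm\prod_{i<k}(z_i-z_k)$, a nonzerodivisor, the classes $\op{e}_{rj}$ are exterior-algebraically independent over $\mathbb{F}_\ell[\op{c}_{r},\dots,\op{c}_{rm}]$. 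The remaining --- and genuinely computational --- point is that these classes exhaust the invariant ring, i.e.\ $\big(\mathbb{F}_\ell[z]\otimes\bigwedge(w)\big)^{\Sigma_m}=\mathbb{F}_\ell[\op{c}_{r},\dots,\op{c}_{rm}]\langle\op{e}_{r},\dots,\op{e}_{rm}\rangle$. I would establish this by a Poincar\'{e}-series (Molien) computation, legitimate because $\ell>m$ allows averaging, comparing the Hilbert--Poincar\'{e} series of the invariants with $\prod_{j=1}^{m}(1+T^{2rj-1})/(1-T^{2rj})$; this is the same kind of alternating-polynomial bookkeeping that appears in Appendix~\ref{sec:eltalt}, whose structural background is explained in Remark~\ref{rem:soergel}. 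Relabelling the generators then gives the asserted presentation. Of the two ingredients --- the fusion statement and the invariant-theoretic identification --- the first comes for free from Burnside's theorem once $\ell>n$ is used, so the real obstacle, the only step requiring honest work, is the computation of the $\Sigma_m$-invariant ring.
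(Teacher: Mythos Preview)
The paper does not give a proof of this theorem; it is stated in a ``Recollections'' section and simply cited from Quillen's original paper \cite{quillen:ktheory} and Knudson's book \cite{knudson:book}. Your sketch is therefore not competing with any argument in the paper.

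That said, your outline is correct and is essentially Quillen's own approach in the abelian-Sylow case $\ell>n$: identify a Sylow $\ell$-subgroup as $(\mathbb{Z}/\ell^a)^m$ sitting in $(\mathbb{F}_{q^r}^\times)^m$, use Burnside fusion to reduce to the normalizer, take invariants under $W\cong(\mathbb{Z}/r)^m\rtimes\Sigma_m$, and then carry out the invariant-theory computation. Your identification of the $\Sigma_m$-invariant step as the only genuinely nontrivial point is accurate, and your remark that for general $\ell$ one needs the Brauer lift / $\psi^q-1$ fibre-sequence argument is also correct. One small comment: you write that restriction $\op{H}^\bullet(\op{GL}_n(\mathbb{F}_q);\mathbb{F}_\ell)\to\op{H}^\bullet(N;\mathbb{F}_\ell)$ is an isomorphism; strictly, Cartan--Eilenberg plus control of fusion gives that restriction identifies $\op{H}^\bullet(G;\mathbb{F}_\ell)$ with the stable elements in $\op{H}^\bullet(P;\mathbb{F}_\ell)$, which here equals $\op{H}^\bullet(P;\mathbb{F}_\ell)^{N/C_G(P)}$, and this in turn agrees with $\op{H}^\bullet(N;\mathbb{F}_\ell)$ for the reasons you state --- so the conclusion is right, but the logical order is slightly different from how you phrased it.
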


In the present paper we make the assumption $\ell\mid q-1$ which is the function field analogue of the requirement $\zeta_\ell\in K$ made in Quillen's conjecture. This means that we will only need the case $r=1$ of the above theorem for the computations of $\op{GL}_3(k[E])$. In this case, the standard representative for the conjugacy class of maximal elementary $\ell$-subgroups is given by the diagonal matrices with entries $\ell$-th roots of unity. We want to describe the restriction morphism from  cohomology of $\op{GL}_n(\mathbb{F}_q)$ to a maximal elementary abelian $\ell$-subgroup. Therefore, let $i:T\hookrightarrow\op{GL}_n(\mathbb{F}_q)$ be  an elementary abelian  $\ell$-subgroup of maximal rank.  The corresponding restriction morphism factors as
\[
\op{res}:\op{H}^\bullet(\op{GL}_n(\mathbb{F}_q);\mathbb{F}_\ell)
\stackrel{\cong}{\longrightarrow}
\op{H}^\bullet(T;\mathbb{F}_\ell)^{\Sigma_n}\hookrightarrow
\op{H}^\bullet(T;\mathbb{F}_\ell).
\] 
The Weyl-group invariants are the symmetric polynomials, so that the restriction morphism can also, using the above identifications of cohomology rings, be written explicitly as follows: 
\begin{align*}
\mathbb{F}_\ell[\op{c}_1,\dots,\op{c}_{n}]\langle\op{e}_1,\dots,\op{e}_{n} \rangle&\stackrel{\op{res}}{\longrightarrow}
\op{Sym}^\bullet(x_1,\dots,x_n)\otimes_{\mathbb{F}_\ell}\bigwedge(y_1,\dots,y_n):\\
\op{c}_j&\mapsto \sum_{i_1<\cdots<i_j}x_{i_1}\otimes\cdots\otimes x_{i_j},\\
\op{e}_j&\mapsto \sum_{\substack{i_1<\cdots<i_j\\ 1\leq k\leq j}}x_{i_1}\otimes\cdots\otimes \widehat{x_{i_k}}\otimes\cdots\otimes x_{i_j}\otimes y_{i_k}. 
\end{align*}
These formulas can be found as (2.11) in an unpublished preprint of Quillen entitled ``The K-theory associated to a finite field I'' which was a preliminary version of \cite{quillen:ktheory}.

\subsection{Modules over Chern class rings}
As a next step, we will describe the structure of cohomology rings of elementary abelian $\ell$-groups as modules over the Chern class ring $\mathbb{F}_\ell[\op{c}_1,\dots,\op{c}_n]$. 

For this, let $k=\mathbb{F}_q$ be a finite field, let $k(E)$ be a function field of a curve over $k$ and let $A\leq \op{GL}_3(k(E))$ be an elementary abelian $\ell$-subgroup of rank 3. Since $\ell$ is different from the characteristic of $k$, the associated representation $A\leq\op{GL}_3(k[E]) \hookrightarrow \op{GL}_3(\overline{k(E)})$ is conjugate to one that factors through diagonal matrices and therefore through the inclusion $\op{GL}_3(\overline{k})\subset \op{GL}_3(\overline{k(E)})$. If $\ell\mid q-1$, the representation is even conjugate to one factoring through an inclusion $A\leq\op{GL}_3(k)$. But in this case, the restriction morphism $\op{H}^\bullet(\op{GL}_3(k);\mathbb{F}_\ell)\to \op{H}^\bullet(A;\mathbb{F}_\ell)$ for the inclusion is the one described in the previous subsection. The Chern-classes of this representation are then obviously the symmetric polynomials. As a consequence, via this restriction morphism, $\op{H}^\bullet(A;\mathbb{F}_\ell)$ is a free module over the Chern-class ring $\mathbb{F}_\ell$ of rank $\#\Sigma_3$. From the discussion in Section~\ref{known}, the Chern-class module structure induced on $A$ via the inclusion $A\leq \op{GL}_3(k[E])$ from the Chern-class module structure $\op{res}\circ\delta$ of Section~\ref{known} equals the one above. Similar statements are then true for the other relevant groups $\op{GL}_2(k)\times k^\times$ and $\op{GL}_3(k)$ in their natural 3-dimensional representations. 

\subsection{Hilbert--Poincar{\'e} series} 
We collect information on Hilbert--Poincar{\'e} series for cohomology rings of the finite groups which we will need for our computations. Recall that if $k$ is a field and $M$ is an $\mathbb{N}$-graded $k$-module with all $M_i$ finite-dimensional,  the Hilbert--Poincar{\'e} series is the formal power series
\[
\op{HP}(M,T)=\sum_{i\geq 0}\dim_k M_i\cdot T^i.
\]

Hilbert--Poincar{\'e} series for graded modules are additive for exact sequences, and multiplicative for graded tensor products. The Hilbert--Poincar{\'e} series of a graded module $M$ and its shift $M(-d)$ are related by $\op{HP}(M(-d),T)=T^d\cdot \op{HP}(M,T)$. From this, it follows that the Hilbert--Poincar{\'e} series of a free graded module $M$ over a ring $R$ is of the form $\op{HP}(M,T)=\op{HP}(R,T)\cdot F(T)$ where $F(T)$ is a Laurent polynomial with positive coefficients. This is the simplest test for non-freeness of graded modules.

We will be using Hilbert--Poincar{\'e} series for cohomology rings for groups (finite and arithmetic), as well as the graded Chern class rings $\mathbb{F}_\ell[\op{c}_1,\dots,\op{c}_n]$ and their graded modules; we provide the following collection of relevant Hilbert--Poincar{\'e} series for later reference.

\begin{itemize}
\item With $\deg \op{c}_i=2i$, the Hilbert--Poincar{\'e} series of the $n$-th Chern class ring is 
\[
\sum_{i=0}^\infty \dim_{\mathbb{F}_\ell}(\mathbb{F}_\ell[\op{c}_1,\dots,\op{c}_n]_i)\cdot T^i=
\frac{1}{\prod_{i=1}^n\left(1-T^{2i}\right)}.
\]
\item
If $A$ is an elementary abelian $\ell$-group of rank $n$, then the Hilbert--Poincar{\'e} series of its mod $\ell$-cohomology is
\[
\op{HP}\left(\op{H}^\bullet(A;\mathbb{F}_\ell),T\right)=
\frac{(1+T)^n}{(1-T^2)^n}.
\]
\item In the case where $\ell\mid q-1$, the Hilbert--Poincar{\'e} series for mod $\ell$ cohomology of $\op{GL}_n(\mathbb{F}_q)$  is of the form
\[
\op{HP}\left(\op{H}^\bullet(\op{GL}_n(\mathbb{F}_q);\mathbb{F}_\ell),T\right)=
\frac{\prod_{i=1}^n\left(1+T^{2i-1}\right)}{\prod_{i=1}^{n}\left(1-T^{2i}\right)}.
\]
\end{itemize}

\section{Equivariant cell structures of Bruhat--Tits buildings}
\label{sec:equivariant}

Let $k=\mathbb{F}_q$ be a finite field, let $\overline{E}$ be an elliptic curve over $k$ with $k$-rational point $\op{O}$, and set $E=\overline{E}\setminus\{\op{O}\}$. In this section we will recall from \cite{ell-dilog} the $\op{GL}_3(k[E])$-equivariant cell structure of the Bruhat--Tits building $\mathfrak{B}(E/k,3)$ associated to $\op{GL}_3(k(E))$ where $k(E)$ is equipped with the valuation $v_{\op{O}}$. 

\subsection{Equivariant cell structure}
The precise description of the cell structure is fairly complicated, because the quotient is in fact non-compact. The following two results describe the quotient and the essential statements on stabilizer groups of cells. 

Fix a projective embedding $\overline{E}\hookrightarrow \mathbb{P}^2$ having $\op{O}$ as point at infinity, corresponding to a choice of Weierstra\ss{} equation for $E$. We denote the associated degree 2 covering of the projective line by $\beta:\overline{E}\to\mathbb{P}^1$.

\begin{theorem}
\label{thm:quotient}
The quotient $\op{GL}_3(k[E])\backslash \mathfrak{B}(E/k,3)$ has the homotopy $\Sigma\mathcal{F}\ell_3(k)$ of the suspension of the flag complex for the $k$-vector space $k^3$. 
\end{theorem}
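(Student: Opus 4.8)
The plan is to pass from the building to vector bundles on $\overline{E}$, use reduction theory to retract the quotient onto a finite ``core'', and then identify that core with $\Sigma\mathcal{F}\ell_3(k)$ by an explicit computation built on Atiyah's classification of bundles on an elliptic curve. First I would recall the classical lattices/bundles dictionary: vertices of $\mathfrak{B}(E/k,3)$ are homothety classes of $\mathcal{O}_{\op{O}}$-lattices in $k(E)_{\op{O}}^3$ (where $k(E)_{\op{O}}$ is the completion at $\op{O}$ and $\mathcal{O}_{\op{O}}$ its ring of integers), and higher simplices are chains of such lattices with torsion successive quotients. By Beauville--Laszlo gluing, a $\op{GL}_3(k[E])$-orbit of a lattice is the same datum as an isomorphism class of rank $3$ bundle on $\overline{E}$ (glue a variable bundle on the affine curve $E$ to the chosen lattice at $\op{O}$), passing to homothety classes corresponds to identifying a bundle with its twists $\mathcal{E}(\op{O})$, and the type of a vertex in $\mathbb{Z}/3$ is $\deg\det\mathcal{E}\bmod 3$. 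Since $k[E]^\times=k^\times$, the group $\op{GL}_3(k[E])$ preserves type, so the quotient is a connected $\mathbb{Z}/3$-coloured complex whose $i$-coloured vertices biject with isomorphism classes of rank $3$ bundles of degree $\equiv i\pmod 3$ modulo twist, with cells recording the allowed subsheaf relations.

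Next I would invoke the general reduction machinery recalled in Appendix~\ref{sec:reduction}, in the tradition of Harder, Stuhler, Soul\'e and Grayson: using the Harder--Narasimhan filtration of the associated bundle as a Morse-type function, one obtains a $\op{GL}_3(k[E])$-equivariant deformation retraction of $\mathfrak{B}$ onto a cocompact subcomplex. Cells all of whose vertices correspond to sufficiently unstable bundles get pushed, along their canonical destabilizing flags, towards the semistable locus, and the retraction descends to a deformation retraction of the quotient onto a \emph{finite} subcomplex $X$; the vertices of $X$ are the (finitely many, since $\op{Pic}^0\overline{E}=\overline{E}(\mathbb{F}_q)$ is finite) isomorphism classes of semistable rank $3$ bundles modulo twist, and its cells are the subsheaf chains among them that survive the retraction.

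It then remains to identify $X$ with the suspension $\Sigma\mathcal{F}\ell_3(k)$. Using Atiyah's classification one enumerates the semistable rank $3$ bundles and the inclusions linking them; the picture that should emerge is that $X$ has two distinguished vertices $c_+,c_-$ (the ``most balanced'' semistable classes) and a common subcomplex $K$ such that every cell of $X$ not equal to $c_\pm$ is a face of a cone $c_+\ast\tau$ or $c_-\ast\tau$ for some cell $\tau$ of $K$, whence $X=(c_+\ast K)\cup_K(c_-\ast K)=\Sigma K$. Finally one identifies $K$ with (a subdivision of) the spherical Tits building of $\op{GL}_3$ over the residue field $k$, i.e.\ the flag complex $\mathcal{F}\ell_3(k)$, by recognising $K$ as essentially the link of a special vertex of $\mathfrak{B}$. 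Combining the steps yields $\op{GL}_3(k[E])\backslash\mathfrak{B}(E/k,3)\simeq X\simeq\Sigma\mathcal{F}\ell_3(k)$.

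The main obstacle is this last step. The reduction theory and the bundle dictionary are essentially formal and insensitive to the curve; the real content --- and the place where the elliptic curve genuinely enters --- is the explicit determination of the face poset of $X$: exactly which semistable bundles occur, which inclusions (equivalently which flags of torsion quotients supported at $\op{O}$) remain after the retraction, that the two cone vertices are genuine cone points with no further cells joining them directly, and that the rest of $X$ collapses onto one copy of $\mathcal{F}\ell_3(k)$. This is precisely the detailed combinatorial computation carried out in \cite{ell-dilog}. As a sanity check, the Solomon--Tits theorem gives $\mathcal{F}\ell_3(k)\simeq\bigvee_{q^3}S^1$, so the theorem predicts $\op{GL}_3(k[E])\backslash\mathfrak{B}\simeq\bigvee_{q^3}S^2$, which is exactly the input the later sections use for the top row of the isotropy spectral sequence.
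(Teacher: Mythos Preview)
The paper does not actually prove Theorem~\ref{thm:quotient}; it is stated in Section~\ref{sec:equivariant} as a result \emph{recalled from} \cite{ell-dilog}, so there is no in-paper proof to compare against. Your outline is a reasonable sketch of the argument one expects to find in that reference: the lattice/bundle dictionary, Harder--Stuhler--Grayson-type reduction theory, and then an explicit identification of the finite core. You correctly identify that the substantive step --- the combinatorial determination of the face poset of the reduced complex --- is exactly the content deferred to \cite{ell-dilog}, and your sanity check via Solomon--Tits matches what the paper later uses (Corollary~\ref{cor:rankonee1}).

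One small sharpening: the paper's own description (see \S\ref{prop:simplee1}) indicates that the two cone points are not arbitrary ``most balanced semistable'' classes but specifically the two \emph{stable} rank~$3$ bundles on $\overline{E}$ with determinant concentrated at $\op{O}$; the reduced subcomplex $\mathfrak{X}$ is ``essentially the union of the links'' of these two vertices. This is consistent with your picture but pins down $c_\pm$ more precisely and explains why the common link $K$ is a copy of the flag complex $\mathcal{F}\ell_3(k)$: stable bundles have automorphism group $k^\times$, so their links in the quotient are full links in the building.
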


First note that the center $\mathcal{Z}(\op{GL}_3(k[E]))\cong k^\times$  acts trivially on the building, in particular all cell stabilizers contain the center of $\op{GL}_3(k[E])$. In the following description of the subcomplex with non-trivial stabilizers, unipotent contributions are completely ignored. For the purposes of cohomology computations, this is unproblematic because under our assumption $\ell\mid q-1$ the projection from a stabilizer to its maximal reductive quotient will induce isomorphisms in $\mathbb{F}_\ell$-cohomology. In the following result, ``non-trivial stabilizer'' means that a Levi subgroup of the stabilizer is not contained in the center of $\op{GL}_3(k[E])$. 

\begin{theorem}
\label{thm:parabolic}
There is an $\mathcal{H}_{G}^\ast$-reduction, cf.~Definition~\ref{def:hg}, from the subcomplex of cells with non-trivial stabilizer to a graph of groups $\Gamma_E$, called the \emph{parabolic graph of $E$}, described as follows: 
\begin{itemize}
\item the underlying graph of $\Gamma_E$ is given by the following diagram of moduli spaces of vector bundles on $\overline{E}$:
\[
\xymatrix{
\mathcal{M}_{2,1}(\overline{E})\ar[d]_\cong & \mathcal{M}_{2,0}(\overline{E}) \ar[d]^\cong \ar[l] \ar[r] & \mathcal{M}_{3,\mathcal{O}}(\overline{E}) \ar[d]^\cong
\\
\overline{E} & \op{Sym}^2\overline{E} \ar[l]^\mu \ar[r]_{\op{Sym}^2 \beta} &
\mathbb{P}^2
}
\]
The semi-stable bundles $\mathcal{V}$ for points in $\mathcal{M}_{2,i}(\overline{E})$ correspond to the rank 3 bundles $\mathcal{V}\oplus\det^{-1}\mathcal{V}$ on $\overline{E}$. 
\item the stabilizer of the vertices are given by the automorphism groups of the corresponding rank $3$ vector bundles, the stabilizers of the edges are the intersections of the vertex groups. 
\end{itemize}
\end{theorem}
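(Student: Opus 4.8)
The plan is to proceed in three steps: (i) set up the dictionary between cells of $\mathfrak{B}(E/k,3)$ and rank~$3$ vector bundles on $\overline{E}$, together with the identification of cell stabilizers with bundle automorphism groups; (ii) apply reduction theory for $\op{GL}_3$ over $k(E)$ to produce the $\mathcal{H}_{G}^\ast$-reduction collapsing the non-compact quotient onto a finite graph; and (iii) invoke Atiyah's classification of bundles on an elliptic curve to identify the resulting graph of groups with the one displayed.

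\emph{The bundle dictionary.} A vertex of $\mathfrak{B}(E/k,3)$ is a homothety class of $\mathcal{O}_{\op{O}}$-lattices $L\subset k(E)^3$. Gluing the trivial bundle on the affine curve $E$ to $L$ across the punctured formal neighbourhood of $\op{O}$ (Beauville--Laszlo) produces a rank~$3$ bundle $\mathcal{V}_L$ on $\overline{E}$, trivial on $E$; the homothety ambiguity amounts to twisting by powers of $\mathcal{O}(\op{O})$, and chains of lattices become chains of bundles related by elementary modifications at $\op{O}$. Conversely a rank~$3$ bundle that is trivial on $E$, with a trivialization there, recovers a lattice, and two trivializations differ by an element of $\op{GL}_3(\Gamma(E,\mathcal{O}))=\op{GL}_3(k[E])$; this is one-point Weil uniformization for $\op{GL}_3$, available because $E$ is affine. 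Hence $\op{GL}_3(k[E])$-orbits of cells biject with isomorphism classes of the corresponding chains of bundles, and under this bijection $\op{Stab}_{\op{GL}_3(k[E])}(L)=\op{Aut}(\mathcal{V}_L)$, with the stabilizer of a higher cell the intersection of its vertex stabilizers. Since $\ell\mid q-1$, each stabilizer has the same $\mathbb{F}_\ell$-cohomology as its maximal reductive quotient, so unipotent parts may be discarded, and (as in the statement) ``non-trivial stabilizer'' means that a Levi subgroup of the stabilizer is not central in $\op{GL}_3$.

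\emph{Reduction to the parabolic graph.} The quotient is non-compact because bundles with a non-trivial Harder--Narasimhan filtration can be pushed off to infinity. The idea is to run Harder's reduction theory for $\op{GL}_3$ over $k(E)$ --- equivalently, a $\op{GL}_3(k[E])$-equivariant deformation retraction along a Morse-type function measuring HN-instability --- producing the $\mathcal{H}_{G}^\ast$-reduction (Definition~\ref{def:hg}, Appendix~\ref{sec:reduction}) from the subcomplex of cells with non-trivial stabilizer onto a finite graph of groups. I expect this to be the main obstacle: one must make the retraction explicit, determine which cells survive --- they turn out to carry exactly the bundles $\mathcal{V}\oplus\det^{-1}\mathcal{V}$ with $\mathcal{V}$ semistable of rank~$2$ and degree~$0$ or $1$, together with the semistable rank~$3$ bundles of trivial determinant --- check that nothing else remains, and verify that the incidences assemble into precisely the span $\mathcal{M}_{2,1}(\overline{E})\leftarrow\mathcal{M}_{2,0}(\overline{E})\to\mathcal{M}_{3,\op{O}}(\overline{E})$, with the two vertex bundles carrying their automorphism groups and the edge bundle carrying the intersection. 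This bookkeeping over parabolic subgroups and their reductive quotients is the technical heart of \cite{ell-dilog}.

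\emph{Identifying the moduli.} The remaining identifications are classical Atiyah theory on an elliptic curve. The determinant gives an isomorphism $\mathcal{M}_{2,1}(\overline{E})\cong\op{Pic}^1(\overline{E})\cong\overline{E}$ (a stable rank~$2$ bundle of odd degree is determined by its determinant); the semistable rank~$2$ degree~$0$ locus is $\op{Sym}^2\overline{E}$ (polystable bundles being unordered pairs of degree~$0$ line bundles), with $\mu$ a translate of the summation map $\op{Sym}^2\overline{E}\to\op{Pic}^2\overline{E}\cong\overline{E}$, which realizes $\op{Sym}^2\overline{E}$ as a $\mathbb{P}^1$-bundle over $\overline{E}$; and $\mathcal{M}_{3,\op{O}}(\overline{E})$ is identified via the determinant with the linear system $|3\op{O}|\cong\mathbb{P}^2$, the $\mathbb{P}^2$ carrying the Weierstra\ss{} embedding of $\overline{E}$, while $\op{Sym}^2\beta$ is the symmetric square of the degree~$2$ cover $\beta\colon\overline{E}\to\mathbb{P}^1$, landing in $\op{Sym}^2\mathbb{P}^1=\mathbb{P}^2$. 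Commutativity of the square is then immediate, and combined with the stabilizer computation of step~(i) this gives the theorem.
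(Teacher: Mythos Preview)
The paper does not contain a proof of this theorem. It is stated in Section~\ref{sec:equivariant} as a recollection from \cite{ell-dilog}, which is listed in the bibliography as ``in preparation''; the present paper only \emph{uses} the result as input for the isotropy spectral sequence computations. There is therefore nothing in the paper to compare your proposal against.

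That said, your three-step outline is a faithful sketch of how such a proof is expected to go, and you yourself flag the crucial point: the hard step is (ii), the explicit reduction theory identifying exactly which bundle chains survive in the retract and verifying that they assemble into the displayed span. Your steps (i) and (iii) are standard and correct --- one-point uniformization for the lattice/bundle dictionary, and Atiyah's classification for the moduli identifications. One small quibble in (iii): the identification $\mathcal{M}_{3,\mathcal{O}}(\overline{E})\cong\mathbb{P}^2$ is not literally ``via the determinant with $|3\op{O}|$''; rather, a polystable rank~$3$ bundle with trivial determinant corresponds to an unordered triple of degree~$0$ line bundles summing to zero, i.e.\ a point in the fibre of the summation map $\op{Sym}^3\overline{E}\to\overline{E}$ over the origin, and this fibre is a $\mathbb{P}^2$. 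The connection to $|3\op{O}|$ can be made after a translation, but as written it obscures the geometry.
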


\begin{remark}
There is a slight inaccuracy pertaining to the use of $k$-points in the above description of the graph. On the left-hand side, parametrizing sums of stable rank 2 bundles and the inverses of their determinants, we are really interested in the $k$-points giving the determinant line bundles of the stable summand. On the right-hand side, parametrizing rank 3 bundles with trivial determinant, we are really interested in the $k$-points giving unordered triples of line bundles invariant (as an unordered triple) under the Galois group. However, in the middle, we are interested in possibly more than $k$-points of $\op{Sym}^2(\overline{E})$ -- we are interested in all $\overline{k}$-points which map to $k$-points of $\mathbb{P}^2$. This essentially means that the corresponding semistable bundle of rank $2$ over $E\times_k\overline{k}$ has a determinant defined over $k$. In spite of this inaccuracy in the above description, it is probably more helpful (also for higher-rank generalizations) to think of the quotient being described in terms of a diagram of algebraic varieties resp. coarse moduli spaces.  
\end{remark}

\begin{remark}
\label{rem:chi}
The size of the graph for an elliptic curve $\overline{E}$ over a finite field $k=\mathbb{F}_q$ is easily computed: there are $\#\overline{E}(k)$ vertices of type (I) and $\#\mathbb{P}^2(k)= q^2+q+1$ vertices of type (II). Each type (I) point has a $\mathbb{P}^1(k)$ of adjacent edges, hence we have $\#\mathbb{P}^1(k)\cdot\#\overline{E}(k)$ edges. In particular, the Euler characteristic of the parabolic graph is 
\[
\#\overline{E}(k)+q^2+q+1-(q+1)\cdot\#\overline{E}(k)= q^2+q+1-q\cdot\#\overline{E}(k).
\]
Note however, that the parabolic graph is not necessarily connected, due to the bundles of type (IIe). There is, however, a single ``nontrivial'' connected component (the one containing the trivial bundle).
\end{remark}

For later computations of Hilbert--Poincar{\'e} series we introduce numbers counting the different types of bundles: 

\begin{definition}
\label{def:na}
For $?=a,b,c$, $N_?$ denotes the number of vertices with stabilizer of type (II?), and $N_e$ denotes the number of edges in the parabolic graph.
\end{definition}

\begin{remark}
Note that the above numbers are not independent, they all depend essentially on group of $\mathbb{F}_q$-rational points of $\overline{E}$. For example, $N_e=(q+1)\cdot\#\overline{E}(\mathbb{F}_q)$. 
\end{remark}

\begin{lemma}
\label{lem:ellform}
Let $k=\mathbb{F}_q$ be a finite field, let $\overline{E}$ be an elliptic curve over $k$ with $k$-rational point $\op{O}$ and set $E=\overline{E}\setminus\{\op{O}\}$. 
\begin{enumerate}
\item $6N_a+3N_b+N_c=(\#\overline{E}(\mathbb{F}_q))^2$.
\item  $N_b+N_c=\#\overline{E}(\mathbb{F}_q)$.
\item $N_c=3\op{rk}_3(\overline{E})$.
\item  $2N_a=-\chi'+N_c$ where $\chi'$ is the Euler characteristic of the nontrivial connected component of the parabolic graph. 
\end{enumerate}
\end{lemma}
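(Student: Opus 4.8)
The plan is to count vertices and edges of the parabolic graph $\Gamma_E$ of Theorem~\ref{thm:parabolic} by translating everything into counts of vector bundles (equivalently, of points on $\overline{E}$), using the identifications $\mathcal{M}_{2,1}(\overline{E})\cong\overline{E}$, $\mathcal{M}_{2,0}(\overline{E})\cong\op{Sym}^2\overline{E}$, $\mathcal{M}_{3,\mathcal{O}}(\overline{E})\cong\mathbb{P}^2$ of that theorem. First I would fix the meaning of the types (IIa), (IIb), (IIc) of vertices on the $\mathbb{P}^2$-side: a point of $\mathbb{P}^2(k)$ corresponds to a rank $3$ bundle with trivial determinant of the form $\mathcal{L}_1\oplus\mathcal{L}_2\oplus\mathcal{L}_3$ with $\mathcal{L}_1\otimes\mathcal{L}_2\otimes\mathcal{L}_3\cong\mathcal{O}$, and the type records the partition of the multiset $\{\mathcal{L}_1,\mathcal{L}_2,\mathcal{L}_3\}$: type (IIa) means the three line bundles are pairwise distinct, type (IIb) means exactly two agree, type (IIc) means all three agree. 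Since the $\mathcal{L}_i$ are degree $0$ line bundles, they are $k$-points of $\op{Pic}^0(\overline{E})\cong\overline{E}$, so counting ordered triples $(\mathcal{L}_1,\mathcal{L}_2,\mathcal{L}_3)$ with trivial product is the same as counting ordered pairs $(\mathcal{L}_1,\mathcal{L}_2)$ freely, hence $(\#\overline{E}(\mathbb{F}_q))^2$ in total. Sorting this count by the isomorphism type of the unordered triple gives: each (IIa)-triple is hit by $3!=6$ ordered triples, each (IIb)-triple by $3$, each (IIc)-triple by $1$; that is precisely part (1), $6N_a+3N_b+N_c=(\#\overline{E}(\mathbb{F}_q))^2$.

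For part (3): a type (IIc) vertex is a triple $\mathcal{L}\oplus\mathcal{L}\oplus\mathcal{L}$ with $\mathcal{L}^{\otimes 3}\cong\mathcal{O}$, i.e.\ $\mathcal{L}$ a $3$-torsion point of $\op{Pic}^0(\overline{E})(k)\cong\overline{E}(k)$; the number of such is $\#\overline{E}(k)[3]=3^{\op{rk}_3(\overline{E})}$ — wait, one must be careful: $N_c$ is defined as the number of type (IIc) \emph{vertices}, and I claim these are in bijection with nontrivial... actually each $3$-torsion $\mathcal{L}$ gives one vertex, so $N_c=\#\{\mathcal{L}:\mathcal{L}^{\otimes 3}=\mathcal{O}\}$, and the right normalization making this equal $3\op{rk}_3(\overline{E})$ rather than $3^{\op{rk}_3(\overline{E})}$ must come from the precise bundle-theoretic definition in \cite{ell-dilog} (likely counting certain non-split or decorated configurations, or the $\mathcal{L}$ being required nontrivial together with a choice); I would quote the definition from \cite{ell-dilog} and verify the count matches, flagging this as the step where the exact conventions of the cited paper are essential. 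For part (2): a type (IIb) or (IIc) vertex is a triple of the shape $\mathcal{L}\oplus\mathcal{L}\oplus\mathcal{L}^{\otimes -2}$, determined by the single line bundle $\mathcal{L}\in\op{Pic}^0(\overline{E})(k)\cong\overline{E}(k)$ with no constraint, giving $N_b+N_c=\#\overline{E}(\mathbb{F}_q)$; here (IIc) is exactly the sub-case $\mathcal{L}=\mathcal{L}^{\otimes -2}$, i.e.\ $\mathcal{L}^{\otimes 3}=\mathcal{O}$, consistent with part (3).

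For part (4): by Remark~\ref{rem:chi} the Euler characteristic of the full parabolic graph is $q^2+q+1-q\cdot\#\overline{E}(\mathbb{F}_q)$, with $\#\overline{E}(k)$ vertices of type (I), $q^2+q+1 = N_a+N_b+N_c$ vertices of type (II), and $N_e=(q+1)\cdot\#\overline{E}(\mathbb{F}_q)$ edges. The subtlety, already noted in Remark~\ref{rem:chi}, is that $\Gamma_E$ need not be connected: the ``type (IIe)'' bundles split off isolated components. I would argue that each disconnected component beyond the main one is a single isolated type (II) vertex carrying no type (I) neighbours (a rank $3$ bundle $\mathcal{L}_1\oplus\mathcal{L}_2\oplus\mathcal{L}_3$ with pairwise distinct $\mathcal{L}_i$ no two of which are inverse to each other, hence no adjacent type-(I) vertex from the $\mu$-map side), determine their number from the bundle combinatorics, and subtract to get the Euler characteristic $\chi'$ of the nontrivial component. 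Writing $\chi'=(\#\overline{E}(k)+\#(\text{type II vertices in main component}))-N_e$ and comparing with the vertex/edge counts above, the isolated (IIa)-type vertices are exactly those type-(IIa) triples with no inverse pair among $\{\mathcal{L}_1,\mathcal{L}_2,\mathcal{L}_3\}$; the remaining (IIa) vertices, all (IIb), all (IIc), together with all type (I) vertices and all edges, form the main component, and a direct count gives $2N_a = -\chi' + N_c$. The main obstacle throughout is pinning down the precise definitions of the bundle types (IIa)–(IIe) and of the counting normalization for $N_c$ from \cite{ell-dilog}; once those are fixed, parts (1)–(4) are short combinatorial identities about $\op{Pic}^0(\overline{E})(k)\cong\overline{E}(k)$ and its $3$-torsion.
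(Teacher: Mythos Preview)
Your arguments for parts (1) and (2) are essentially the same as the paper's. Your caution about part (3) is well-placed: the paper's proof simply says $N_c$ counts line bundles $\mathcal{L}$ with $\mathcal{L}^{\otimes 3}\cong\mathcal{O}$, which naively gives $3^{\op{rk}_3(\overline{E})}$ rather than $3\op{rk}_3(\overline{E})$; the two happen to agree when $\op{rk}_3\in\{0,1\}$ only after adjusting conventions, so flagging this as dependent on the definitions in the cited source is the right call.

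Part (4), however, has a genuine gap. You misidentify which vertices are isolated. In the parabolic graph, the edges are the points of $\mathcal{M}_{2,0}(\overline{E})$, and the map to $\mathcal{M}_{3,\mathcal{O}}(\overline{E})$ sends $(\mathcal{L}_i,\mathcal{L}_j)$ to $\mathcal{L}_1\oplus\mathcal{L}_2\oplus\mathcal{L}_3$; hence a type (IIa) vertex $\mathcal{L}_1\oplus\mathcal{L}_2\oplus\mathcal{L}_3$ has exactly three adjacent edges (one for each unordered pair of summands), regardless of whether any two summands are inverse to each other. So no type (IIa) vertex is ever isolated, and your proposed characterisation of the isolated pieces is wrong. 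The actual isolated vertices are of type (IIe): indecomposable rank~3 bundles $\pi_\ast\mathcal{L}$ arising from degree~3 points of $\overline{E}$, which have automorphism group $L^\times$ for a cubic extension $L/k$ and hence admit no $k^\times\times k^\times$ edge group. You also omit the type (IId) vertices (from degree~2 points), which are leaves of the main component. Consequently your equation $q^2+q+1=N_a+N_b+N_c$ is false: the $\mathbb{P}^2(k)$-count includes types (IId) and (IIe) as well.

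The paper's route for (4) is cleaner: remove the (IIe) isolated points and the (IId) leaves (together with their adjacent edges), then count the remaining edges by their type (II) endpoint, giving $3N_a+2N_b+N_c$ edges. The nontrivial component then has Euler characteristic
\[
\chi'=\bigl(N_a+N_b+N_c+\#\overline{E}(\mathbb{F}_q)\bigr)-(3N_a+2N_b+N_c)=-2N_a-N_b+\#\overline{E}(\mathbb{F}_q),
\]
and substituting $N_b=\#\overline{E}(\mathbb{F}_q)-N_c$ from part (2) gives $\chi'=-2N_a+N_c$, i.e.\ $2N_a=-\chi'+N_c$.
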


\begin{proof}
(1) The identification $6N_a+3N_b+N_c=(\#\overline{E}(\mathbb{F}_q))^2$ can be seen as follows. Denote $X=\overline{E}(\mathbb{F}_q)$. The map $\mu:X^3\to X$ given by multiplication is surjective. Mapping a tuple in $X^3$ to the direct sum of the associated line bundles of degree $0$ provides a bijection between the $\Sigma_3$-orbits on $\ker\mu$ and the isomorphism classes of bundles of types (IIa-c). Now a bundle of type (IIa) has $\#\Sigma_3=6$ preimages in $X^3$, a bundle of type (IIb) has $\#(\Sigma_3/\Sigma_2)=3$ preimages, and a bundle of type (IIc) has only a single preimage. This proves the formula.\footnote{This can also be formulated as an argument using $\op{Sym}^2$ of the Weierstra\ss{} covering $E\to\mathbb{P}^1$, restricted to points such that each fiber has trivial residue field extensions.}

(2) Fix a line bundle $\mathcal{L}$ of degree $0$ on $\overline{E}$. Since we are counting bundles of type (IIb) or (IIc), the bundle must be of the form $\mathcal{L}\oplus\mathcal{L}\oplus\mathcal{L}^{-2}$. If $\mathcal{L}$ corresponds to a $3$-torsion point of the Jacobian, we get a bundle of type (IIc), otherwise of type (IIb). On the other hand, all bundles of type (IIb) and (IIc) arise like this, and no bundles are counted twice because of the asymmetry in (IIb). 

(3) This is clear since the number $N_c$ counts isomorphism classes of vector bundles of the form $\mathcal{L}^{\oplus 3}$ where $\mathcal{L}$ is a degree $0$ line bundle with $\mathcal{L}^{\otimes 3}\cong\mathcal{O}_{\overline{E}}$. 

(4) Note that the parabolic graph is a union of the nontrivial connected component containing the trivial bundle and a number of isolated points corresponding to bundles of type (IIe). This means the formula from Remark~\ref{rem:chi} is not sufficient for our purposes. We also want to ignore the points of type (IId) in the graph. This is ok, since these are leaves of the graph and omitting them does not change the Euler characteristic, but then we also need to ignore the corresponding adjacent edges as well. We can count the remaining edges because we know the numbers of edges adjacent to vertices of types (IIa-c), and every remaining edge is adjacent to exactly one of these. The number of edges is therefore $3N_a+2N_b+N_c$. The Euler characteristic of the nontrivial connected component is then
\[
\chi'=\left(N_a+N_b+N_c+\#\overline{E}(\mathbb{F}_q)\right)-(3N_a+2N_b+N_c)= -2N_a-N_b+\#\overline{E}(\mathbb{F}_q).
\]
Combining this equality with the one proved in (2) proves the claim.
\end{proof}

\subsection{Description of inclusion maps}
\label{sec:inclusions}
We now give a more detailed description of the parabolic graph, in particular of the inclusion homomorphisms mapping the edge stabilizers into the vertex stabilizers. First, we need a description of the automorphism groups of rank 3 vector bundles on $\overline{E}$. Actually, it suffices to describe Levi subgroups of the automorphism groups; the unipotent radicals of the automorphism groups will be $p$-groups and therefore have trivial $\mathbb{F}_\ell$-cohomology. 

\begin{enumerate}[(I)]
\item We first consider bundles corresponding to $\mathcal{M}_{2,1}(\overline{E})\cong\overline{E}$. A $k$-point in $\overline{E}$ corresponds to a degree $0$ line bundle $\mathcal{L}$ on $\overline{E}$, the associated point in $\mathcal{M}_{2,1}(\overline{E})$ corresponds to a stable bundle of rank $2$, degree $1$ and determinant $\mathcal{L}$, denoted by $\mathcal{V}_{\mathcal{L}}(2,1)$. The automorphism group of $\mathcal{V}_{\mathcal{L}}(2,1)\oplus\mathcal{L}^{-1}$ is
\[
\op{Aut}(\mathcal{V}_{\mathcal{L}}(2,1)\oplus\mathcal{L}^{-1})\cong k^\times\times k^\times.
\]
\item Now we consider bundles corresponding to $k$-points in $\mathcal{M}_{3,\mathcal{O}}(\overline{E})\cong\mathbb{P}^2$. Actually, these are $S$-equivalence classes of semistable rank 3 bundles with trivial determinant. In the $S$-equivalence class, we can choose a representative $\mathcal{V}$ which is geometrically completely decomposable, corresponding to an unordered triple $(\mathcal{L}_1,\mathcal{L}_2,\mathcal{L}_3)$ of line bundles over $\overline{E}\times_k\overline{k}$ with $\mathcal{L}_1\otimes\mathcal{L}_2\otimes\mathcal{L}_3\cong\mathcal{O}$. We have the following possibilities for the automorphism group of the associated vector bundle $\op{Aut}(\mathcal{V})$: 
\begin{enumerate}
\item If the splitting of $\mathcal{V}$ is defined over $k$ and all the summands $\mathcal{L}_i$ are pairwise non-isomorphic, then the automorphism  group is 
\[
\op{Aut}(\mathcal{L}_1\oplus\mathcal{L}_2\oplus\mathcal{L}_3)\cong k^\times\times k^\times\times k^\times. 
\]
\item If the splitting of $\mathcal{V}$ is defined over $k$ and two of the line bundles are isomorphic but non-isomorphic to the third, then the automorphism group is 
\[
\op{Aut}(\mathcal{L}\oplus\mathcal{L}\oplus\mathcal{L}^{-2})\cong \op{GL}_2(k)\times k^\times.
\]
\item If the splitting of $\mathcal{V}$ is defined over $k$ and all line bundles $\mathcal{L}_i$ are isomorphic, then the automorphism group is 
\[
\op{Aut}(\mathcal{L}^{\oplus 3})\cong \op{GL}_3(k). 
\]
\item If two of the line bundles form a Galois orbit corresponding to a degree $2$ point of $\overline{E}$ with residue field $L$, then the vector bundle is of the form $\mathcal{V}\cong \pi_\ast\mathcal{L}\oplus\op{Nm}(\mathcal{L})^{-1}$, where $\mathcal{L}$ is a line bundle $\mathcal{L}$ defined over $E\times_kL$ and $\pi:E\times_kL\to E$ is the natural degree 2 \'etale cover. In this case, the automorphism group is  
\[
\op{Aut}(\pi_\ast\mathcal{L}\oplus\op{Nm}(\mathcal{L})^{-1})\cong L^\times\times k^\times.
\]
\item If all three line bundles form a Galois orbit corresponding to a degree $3$ point of $\overline{E}$ with residue field $L$, the vector bundle is of the form $\mathcal{V}\cong\pi_\ast\mathcal{L}$ where again $\mathcal{L}$ is a line bundle on $E\times_kL$ and $\pi:E\times_kL\to E$ is the projection. The automorphism group is of the form
\[
\op{Aut}(\pi_\ast\mathcal{L})\cong L^\times. 
\]
\end{enumerate}
\item Finally, we discuss the bundles attached to the edges of the graph, i.e., bundles corresponding to points on $\op{Sym}^2\overline{E}\cong\mathcal{M}_{2,0}(\overline{E})$. Similar to (I), a point in $\op{Sym}^2\overline{E}$ corresponds to an unordered pair $(\mathcal{L}_1,\mathcal{L}_2)$ of degree $0$ line bundles on $\overline{E}\times_k\overline{k}$, and the associated point in $\mathcal{M}_{2,0}(\overline{E})$ is an $S$-equivalence class of semistable bundles, the split representative of which is $\mathcal{L}_1\oplus\mathcal{L}_2$. However, for the vector bundles labelling the edges we choose the non-split representative of the $S$-equivalence class (if available), which is denoted by $\mathcal{V}_{\mathcal{L}_1\otimes\mathcal{L}_2}(2,0)$. The edge group is the intersection of the automorphism groups of the vertices, which is of the form 
\[
\op{Aut}(\mathcal{V}_{\mathcal{L}_1\otimes\mathcal{L}_2}(2,0) 
\oplus(\mathcal{L}_1\otimes\mathcal{L}_2)^{-1}) \cong k^\times\times k^\times. 
\]
This is the automorphism group of the elementary transformation between the semi-stable and the stable bundle, and not necessarily isomorphic to the automorphism group of the semistable bundle associated to the edge. 
\end{enumerate}

Now we need to discuss the precise inclusion of the edge groups (automorphism groups listed in item (III) above) into the vertex groups (groups listed in items (I) and (II) above). Essentially, the inclusion of edge groups into vertex groups is given by the fact that the edge group is the intersection of the two vertex groups, i.e., it is the automorphism group of the elementary transformation. We make a case distinction for edges connecting type (I) and type (II), based on the subcases for type (II): 

\begin{enumerate}[(a)]
\item Assume the type (II) bundle is of subtype (a), i.e., it is of the form $\mathcal{V}\cong\mathcal{L}_1\oplus\mathcal{L}_2\oplus\mathcal{L}_3$ with all three summands being pairwise non-isomorphic. The edge connects this to  a stable bundle of the form $\mathcal{V}_{\mathcal{L}_i^{-1}}(2,1)\oplus\mathcal{L}_i$. For ease of notation, we now assume $i=3$. The inclusion of the automorphism group of the stable bundle $\mathcal{V}_{\mathcal{L}_3^{-1}}(2,1)$ into the automorphism group of $\mathcal{L}_1\oplus\mathcal{L}_2$ is as the center, hence the inclusion of stabilizers is
\[
(k^\times)^{\times 2}\to (k^\times)^{\times 3}:\op{diag}(x,y)\mapsto \op{diag}(x,x,y). 
\]
There are two other edges, for $i=1$ and $i=2$. The corresponding inclusions are given by
\[
\op{diag}(x,y)\mapsto \op{diag}(y,x,x) \quad\textrm{ and }\quad 
\op{diag}(x,y)\mapsto \op{diag}(x,y,x),
\]
respectively. The edge group inclusion into the automorphism group of the stable bundle is by the identity. 
\item Assume the type (II) bundle is of subtype (b), i.e., it is of the form $\mathcal{V}\cong\mathcal{L}\oplus\mathcal{L}\oplus\mathcal{L}^{-2}$ with $\mathcal{L}^3\not\cong\mathcal{O}$. There are two edges to consider, one where the corresponding stable bundle is obtained from an elementary transformation on $\mathcal{L}\oplus\mathcal{L}$, the other where the stable bundle is obtained from $\mathcal{L}\oplus\mathcal{L}^{-2}$. 

Consider first the edge connecting to $\mathcal{V}_{\mathcal{L}^2}(2,1)\oplus\mathcal{L}^{-2}$. As in case (a), the inclusion of the automorphism group of the stable bundle $\mathcal{V}_{\mathcal{L}^{2}}(2,1)$ into the automorphism group of  $\mathcal{L}\oplus\mathcal{L}$ is as the center, hence the inclusion of stabilizers is
\[
(k^\times)^{\times 2}\to \op{GL}_2(k)\times k^\times:\op{diag}(x,y)\mapsto \op{diag}(x,x,y). 
\]
For the other edge, we now have the inclusion of the automorphism group of $\mathcal{V}_{\mathcal{L}^{2}}(2,1)$ into the automorphism group of $\mathcal{L}\oplus\mathcal{L}^{-2}$ as the center. Therefore, it is of the form 
\[
(k^\times)^{\times 2}\to \op{GL}_2(k)\times k^\times:\op{diag}(x,y)\mapsto \op{diag}(x,y,x). 
\]
Note that this is conjugate to the inclusion via $\op{diag}(y,x,x)$ (or any other choice of maximal torus of $\op{GL}_2(k)$), so this choice does not affect the restriction maps on cohomology.
\item Assume the type (II) bundle is of subtype (c), i.e., it is of the form $\mathcal{V}\cong\mathcal{L}^{\oplus 3}$. There is a single edge to a type (I) bundle corresponding to an elementary transformation changing any choice of  $\mathcal{L}\oplus\mathcal{L}$ to $\mathcal{V}_{\mathcal{L}^2}(2,1)$. Again, the inclusion map on automorphism groups is the inclusion of the center on the stable summand, hence it is of the form
\[
(k^\times)^{\times 2}\to \op{GL}_3(k):\op{diag}(x,y)\mapsto \op{diag}(x,x,y). 
\]
As before, any choice of torus in $\op{GL}_3(k)$ will work because they are all conjugate. 
\item Assume the type (II) bundle is of subtype (d), i.e., it is of the form $\mathcal{V}\cong\pi_\ast\mathcal{L}\oplus\op{Nm}(\mathcal{L})^{-1}$ where $\mathcal{L}$ is a line bundle on $E\times_kL$, $L/k$ the residue field extension for a degree $2$ point on $E$ and $\pi:E\times_kL\to E$ the corresponding degree 2 covering. There is a single edge connecting this bundle to the bundle $\mathcal{V}_{\op{Nm}(\mathcal{L})}(2,1)\oplus\op{Nm}(\mathcal{L})^{-1}$. As usual, the inclusion of the automorphism groups is as homotheties of the stable bundle. This implies that the inclusion of automorphism groups is of the form 
\[
k^\times\times k^\times\to L^\times\times k^\times
\]
where the first factor is included via the obvious algebra map $k^\times\hookrightarrow L^\times$ and the second factor is the identity. 
\item Finally, assume that the type (II) bundle is of subtype (e). The bundle is an indecomposable rank 3 bundle which is geometrically decomposable, obtained from a degree 3 point of $\overline{E}$. Such bundles are isolated vertices of the parabolic graph, there is no subgroup isomorphic to $k^\times \times k^\times$ in $L^\times$ when $L/k$ is a field extension of degree $3$. 
\end{enumerate}

It should be pointed out that, while not made explicit in each of the above cases, the edge group is always included into the type (I) vertex group via the identity. Of course, this is implicit in the above descriptions. 

Finally, it is important to remark that the center $\mathcal{Z}(\op{GL}_3(k[E]))\cong k^\times$ is always included in the stabilizers as the homotheties of the corresponding rank three vector bundles. 

\subsection{Setwise vs pointwise stabilizers}
\label{sec:rigid}

For the spectral sequence calculation, we need that the cell stabilizers always fix the cells pointwise. We give an argument why this is true in our setting. 

Recall that the type of a lattice class $\Lambda$  corresponding to a point in the Bruhat--Tits building $\mathfrak{B}(E/k,3)$ is defined as $v_{\op{O}}(\det \Lambda)\bmod 3$. If the action of  a group $G$ on $\mathfrak{B}(E/k,3)$ preserves types, then the stabilizers of simplices of the building will fix these simplices pointwise because no two vertices of any simplex will have the same type. 

Now, in our specific situation, we only remove a single $k$-rational point $\op{O}$ from the elliptic curve $\overline{E}$ and therefore $k[E]^\times\cong k^\times$. This implies that for any matrix $A\in\op{GL}_3(k[E])$, the determinant of $A$ will satisfy $v_{\op{O}}(\det A)=0$. In particular, the action of $\op{GL}_3(k[E])$ on $\mathfrak{B}(E/k,3)$ will preserve types, hence cell stabilizers fix their cells pointwise.

\subsection{First homology of the parabolic graph}

We provide one more bit of computation related to the parabolic graph, namely a part of its first homology. This is directly related to computations of the first K-group of the elliptic curve \cite{ell-dilog} and will be necessary input for the cohomology computations in Section~\ref{sec:e2}. 

Recall that the present section introduced the parabolic graph as a graph of groups: the vertices and edges of the graph are rank three vector bundles and the associated groups are the automorphism groups of the vector bundles. 

\begin{definition}
Let $\Gamma=(V_0,V_1,G,s,t)$ be a graph of groups, i.e., 
\begin{itemize}
\item a graph $(V_0,V_1,s,t)$ consisting of a set $V_0$ of vertices and a set $V_1$ of edges together with maps $s$ and $t$ mapping edges to source and target,
\item an assignment $G$ mapping edges and vertices to groups, together with embeddings of edge groups into the adjacent vertex groups.
\end{itemize}
Then we can define the following homology group
\[
\widehat{\op{H}}_1(\Gamma):=\op{coker}\left(\bigoplus_{e\in V_1}G(e)^{\op{ab}}\stackrel{G(s)-G(t)}{\longrightarrow} \bigoplus_{v\in V_0}G(v)^{\op{ab}}\right).
\]
If $\Gamma$ is the nontrivial connected component of the graph of groups associated to an elliptic curve $E/k$ as in the Theorem~\ref{thm:parabolic}, the homology group is denoted by $\widehat{\op{H}}_1(E/k)$.
\end{definition}

The group defined is not the first homology of the parabolic graph because it does not include the kernel of the induced maps on $\op{H}_0$ which would count the loops in the graph. The notation $\widehat{\op{H}}_1$ is inspired from the analogy with Farrell--Tate homology for groups of finite virtual cohomological dimension. However, the group is also not the first Farrell--Tate homology of $\op{GL}_3(k[E])$. 

From a conceptual perspective, the following computation essentially provides a Somekawa-style presentation of  $\op{H}_1(\op{GL}_3(k[E]);\mathbb{Z})\cong\op{K}_1(E)$ as quotient of $\overline{E}(k)\otimes_\mathbb{Z}k^\times$, cf. \cite{ell-dilog}.

\begin{proposition}
\label{prop:k1}
Let $k$ be a field, let $\overline{E}$ be an elliptic curve over $k$ with $k$-rational point $\op{O}$ and set $E=\overline{E}\setminus\{\op{O}\}$. 
Then there is an isomorphism
\[
\widehat{\op{H}}_1(E/k)\cong k^\times\oplus  \left(\overline{E}(k)\otimes_{\mathbb{Z}}k^\times\right)\oplus \bigoplus_Pk(P)^\times/k^\times
\]
where $P$ runs through degree 2 points on $E$.
\end{proposition}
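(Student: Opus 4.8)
The plan is to compute the cokernel defining $\widehat{\op{H}}_1(E/k)$ directly from the explicit description of the parabolic graph of groups given in Section~\ref{sec:inclusions}. First I would abelianize all the vertex and edge groups. The edge groups and the type~(I), (IIa), (IId) vertex groups are products of tori, hence already abelian; for type~(IIb) the group $\op{GL}_2(k)\times k^\times$ has abelianization $k^\times\times k^\times$ via the determinant, and for type~(IIc) the group $\op{GL}_3(k)$ abelianizes to $k^\times$, again via the determinant. The type~(IIe) vertices are isolated with group $L^\times$ for a degree~$3$ point, contributing directly to the summands $\bigoplus_P k(P)^\times/k^\times$ after we account for the diagonal $k^\times$. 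The key bookkeeping device is to split off the ``determinant'' direction: every automorphism group contains the center $k^\times$ as homotheties, and the inclusion maps all respect this, so the homothety $k^\times$ splits off as a direct summand of $\widehat{\op{H}}_1$ (this is the first $k^\times$ summand in the statement), and we are left to compute the cokernel of the complementary ``traceless/degree-zero'' part of the complex.

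Next I would organize the remaining computation component by component of the parabolic graph. On the nontrivial connected component, after quotienting out the homothety direction, each type~(I) vertex $\mathcal{V}_{\mathcal{L}^{-1}}(2,1)\oplus\mathcal{L}$ contributes a copy of $k^\times$ (the remaining torus coordinate), indexed by the degree-$0$ line bundle $\mathcal{L}$, i.e.\ by a point of $\overline{E}(k)$; each edge $\mathcal{V}_{\mathcal{L}_1\otimes\mathcal{L}_2}(2,0)\oplus(\mathcal{L}_1\otimes\mathcal{L}_2)^{-1}$ contributes a copy of $k^\times$; and the type~(II) vertices of subtypes (a), (b), (c) contribute $k^\times$-ranks $2$, $1$, $0$ respectively once the determinant is removed. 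Using the explicit inclusion formulas $\op{diag}(x,y)\mapsto\op{diag}(x,x,y)$ etc.\ from Section~\ref{sec:inclusions}, the boundary map becomes completely explicit, and the cokernel computation reduces to a colimit over the graph. The claim is that this colimit, over the nontrivial component, is exactly $\overline{E}(k)\otimes_{\mathbb{Z}}k^\times$: each type~(I) vertex carries a class $[\mathcal{L}]\otimes(-)$, the type~(IIa) relations impose $[\mathcal{L}_1]\otimes x+[\mathcal{L}_2]\otimes x+[\mathcal{L}_3]\otimes x=0$ whenever $\mathcal{L}_1\otimes\mathcal{L}_2\otimes\mathcal{L}_3\cong\mathcal{O}$, the type~(IIb) and (IIc) relations being degenerate versions of the same, and these are precisely the defining relations (bilinearity plus the group law on $\overline{E}(k)$) of the tensor product $\overline{E}(k)\otimes_{\mathbb{Z}}k^\times$. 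This is where I would invoke the consistency check mentioned in the disclaimer, namely that this recovers the Somekawa-type presentation of $\op{K}_1(E)$.

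Then I would handle the remaining contributions. The degree-$2$ points~$P$ of $E$ give type~(IId) vertices with group $L^\times\times k^\times$ and a single adjacent edge with group $k^\times\times k^\times$ including via $k^\times\hookrightarrow L^\times$ on the first factor and the identity on the second; after removing the homothety direction and the shared torus coordinate already accounted for on the nontrivial component, the cokernel contribution of each such $P$ is $L^\times/k^\times=k(P)^\times/k^\times$, a leaf contributing freely. The degree-$3$ points give isolated type~(IIe) vertices; here the group is just $L^\times$ for $L/k$ of degree~$3$, and since there is no edge at all, the full $L^\times$ survives in the cokernel, but one $k^\times$ worth is the homothety already split off, leaving $L^\times/k^\times = k(P)^\times/k^\times$ — so in the statement the sum $\bigoplus_P$ should be read as running over degree-$2$ points (with degree-$3$ contributions absorbed consistently, or I would note the slight notational conflation). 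Assembling the three pieces — the split homothety $k^\times$, the colimit $\overline{E}(k)\otimes_{\mathbb{Z}}k^\times$ over the nontrivial component, and the free contributions $k(P)^\times/k^\times$ from the leaves and isolated vertices — gives the claimed direct sum decomposition.

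The main obstacle I expect is the middle step: verifying that the cokernel over the nontrivial connected component is \emph{exactly} $\overline{E}(k)\otimes_{\mathbb{Z}}k^\times$ and not some larger quotient. One must check that the relations coming from type~(IIa) vertices (together with the degenerate ones from (IIb), (IIc)) already generate all of bilinearity and additivity for the group law — in particular that every identity $\mathcal{L}_1\otimes\mathcal{L}_2\otimes\mathcal{L}_3\cong\mathcal{O}$ with the $\mathcal{L}_i$ defined over $k$ actually occurs as a vertex of the graph (this is where the ``inaccuracy'' remark about $k$-points versus Galois-stable unordered triples needs care) and that the edges connect things in the right way to propagate these relations across the whole component. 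There is also a subtlety in disentangling which torus coordinate of a type~(I) vertex is ``the same'' as the homothety coordinate versus the free coordinate, since the inclusion maps mix them; choosing compatible bases for all the tori so that the boundary map is block-triangular with the homothety block split off is the technical heart of the argument.
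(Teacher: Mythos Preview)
Your overall strategy matches the paper's: split off a $k^\times$ via the determinant, peel off the type~(IId) contributions as leaves giving $k(P)^\times/k^\times$, and identify what remains with $\overline{E}(k)\otimes_{\mathbb{Z}}k^\times$. Two points deserve correction or sharpening.

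First, a definitional slip: $\widehat{\op{H}}_1(E/k)$ is defined on the \emph{nontrivial connected component} of the parabolic graph only. The type~(IIe) vertices (degree-$3$ points) are isolated, hence not in that component at all; there is no ``absorption'' or ``notational conflation'' to explain away. The sum over degree-$2$ points is exactly right as stated. Relatedly, your splitting of the first $k^\times$ via the homothety inclusion is not quite what the paper does: the paper uses the determinant map out together with the embedding of $k^\times$ as automorphisms of the \emph{trivial} bundle (not as the center), and these compose to the identity on the nose. Your center-based splitting would introduce a factor of $3$.

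Second, and more substantively, the obstacle you flag at the end is genuine, and your proposal does not overcome it. Saying that the type~(IIa) relations ``are precisely the defining relations of the tensor product'' is the entire content of the proposition, and it is not immediate: you must show both that no extra relations are imposed (a map out of the cokernel to $\overline{E}(k)\otimes k^\times$) and that bilinearity in the line-bundle variable really holds in the cokernel (a map back). The paper carries this out by (i) writing down an explicit map $\widehat{\op{H}}'_1\to\overline{E}(k)\otimes k^\times$ on each vertex type and checking edge relations kill it, (ii) proposing the obvious inverse $[\mathcal{L}]\otimes u\mapsto$ the automorphism $(u,1,1)$ on $\mathcal{L}\oplus\mathcal{L}^{-1}\oplus\mathcal{O}$, and then (iii) proving bilinearity in $\mathcal{L}$ by an explicit chain of edge moves through the bundle $\mathcal{L}_3\oplus\mathcal{L}_1^{-1}\oplus\mathcal{L}_2^{-1}$ with $\mathcal{L}_3=\mathcal{L}_1\otimes\mathcal{L}_2$. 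Step~(iii) is the heart of the argument and is where your sketch stops. Your plan to ``choose compatible bases so the boundary map is block-triangular'' is not how the paper proceeds and would, I think, be harder to make precise than the paper's direct construction of mutually inverse maps.
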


\begin{proof}
(0) Recall that $\widehat{\op{H}}_1(E/k)$ is the cokernel of a map to abelianizations of automorphism groups of rank three vector bundles on $\overline{E}$. In particular, we can represent elements of $\widehat{\op{H}}_1(E/k)$ as tuples of automorphisms, indexed by rank three vector bundles.  

(1) Note that there is a bijection between the bundles of type (IId) and points of degree 2. The stabilizers of points of type (IId) are of the form $k(P)^\times\times k^\times$ for a point $P$ of degree 2. The inclusion of the edge group is the natural inclusion $k^\times\times k^\times\hookrightarrow k(P)^\times\times k^\times$. We can therefore split off the contributions from type (IId) points and remove the adjacent edges. This contributes the last sum, so that we can concentrate on the subgraph containing only the points of type (IIa-c) and the stable bundles. 

(2) 
The map to the first direct summand is given by the determinant 
\[
\widehat{\op{H}}_1(E/k)\to k^\times: (\phi_v)\in \bigoplus_v \op{Aut}(\mathcal{V}_v)^{\op{ab}}\mapsto \prod_v \det\phi_v.
\]
This map is well-defined: an element of an edge group of the parabolic graph is an automorphism of an elementary transformation between vector bundles. The boundary map defining $\widehat{\op{H}}_1(E/k)$ maps this to an automorphism of the two vector bundles involved, with opposite signs. Therefore, the relations coming from edges map to $1\in k^\times$ under the above map. Note that the determinant map is surjective. Since $k^\times$ can be embedded as automorphisms of the trivial bundle, the $k^\times$ also splits off as a direct summand from $\widehat{\op{H}}_1(E/k)$. 

(3) 
Now we want to define a morphism $\widehat{\op{H}}'_1(E/k)\to \overline{E}(k)\otimes_{\mathbb{Z}}k^\times$ where $\widehat{\op{H}}'_1$ denotes the quotient of $\widehat{\op{H}}_1(E/k)$ modulo the contributions identified in (1) and (2). We noted in (0) that we can represent elements of $\widehat{\op{H}}'_1(E/k)$ by tuples of automorphism of rank three vector bundles over $\overline{E}/k$, and by the reduction in (1) we know that we only need vector bundles of types (IIa-c) and (I). 

To define the map on automorphisms of vector bundles of type (I) recall that such a vector bundle is of the form $\mathcal{V}_{\mathcal{L}}(2,1)\oplus\mathcal{L}^{-1}$ where the first summand is a stable bundle of rank 2 with determinant $\mathcal{L}$. An automorphism of this bundle is of the form $(h,u)$ where $h$ is a homothety of the stable bundle and $u$ is a homothety of $\mathcal{L}^{-1}$. The automorphism $(h,u)$ on $\mathcal{V}_{\mathcal{L}}(2,1)\oplus\mathcal{L}^{-1}$ is mapped to $\det\mathcal{V}\otimes h+\mathcal{L}^{-1}\otimes u$ in $\overline{E}(k)\otimes_{\mathbb{Z}}k^\times$.

To define the map on automorphisms of vector bundles of type (II) recall that each such vector bundle is of the form $\mathcal{L}_1\oplus\mathcal{L}_2\oplus\mathcal{L}_3$, with subtypes a,b and c determined by possible isomorphisms between these degree $0$ line bundles. Since we are only interested in abelianizations of automorphism groups, the automorphism can be chosen to be diagonalized, i.e., of the form $(u_1,u_2,u_3)$ with units $u_i\in k^\times$ acting as homotheties on the summands $\mathcal{L}_i$, respectively. This automorphism of the bundle is mapped to $\sum_{i=1}^3\mathcal{L}_i\otimes u_i$. 

Now we need to check that this map is well-defined. Recall that the relations from edges come from compatible automorphisms of elementary transformations: if we have a bundle $\mathcal{L}_1\oplus\mathcal{L}_2\oplus\mathcal{L}_3$ and transform it to the bundle $\mathcal{V}_{\mathcal{L}_3^{-1}}(2,1)\oplus\mathcal{L}_3$, a compatible automorphism must act as homothety on the stable rank 2 summand \emph{and} as homothety on $\mathcal{L}_1\oplus\mathcal{L}_2$. The differential maps this automorphism $(u_1,u_1,u_3)$ to the corresponding automorphisms of the two bundles above, with signs coming from the orientation of the parabolic graph. The (IIa) automorphism will be mapped to $-\mathcal{L}_1\otimes u_1-\mathcal{L}_2\otimes u_1-\mathcal{L}_3\otimes u_3$. The (I) automorphism will be mapped to $(\mathcal{L}_1\otimes\mathcal{L}_2)\otimes u_1+\mathcal{L}_3\otimes u_3$. This shows that the map is well-defined on the quotient $\widehat{\op{H}}_1(E/k)$.

(4)
The map defined is obviously surjective because $[\mathcal{L}]\otimes u$ is in the image of the corresponding automorphism $(u,1,1)$ of the bundle $\mathcal{L}\oplus\mathcal{L}^{-1}\oplus\mathcal{O}$. 

(5) The assignment in (4) does not directly give rise to a potential inverse morphism from the tensor product to the cohomology. We have a built-in linearity on the side of the units, but not on the side of the line bundles. To show that the map $\overline{E}(k)\otimes_{\mathbb{Z}}k^\times\to \widehat{\op{H}}'_1$ described in (4) is actually well-defined, we need to establish bilinearity in the line bundle component.

Therefore, consider vector bundles $\mathcal{L}_1\oplus\mathcal{L}_1^{-1}\oplus\mathcal{O}$ and $\mathcal{L}_2\oplus\mathcal{L}_2^{-1}\oplus\mathcal{O}$. Assume both are equipped with an automorphism of the form $(u,1,1)$. We want to use  the edge relations map to show that the sum of these two automorphisms equals the automorphism $(u,1,1)$ on $\mathcal{L}_3\oplus\mathcal{L}_3^{-1}\oplus\mathcal{O}$ where $\mathcal{L}_3\cong\mathcal{L}_1\otimes\mathcal{L}_2$. 
From the latter bundle, we can make elementary transformations to the unstable bundle $\mathcal{L}_3\oplus\mathcal{L}_1^{-1}\oplus\mathcal{L}_2^{-1}$. Since we are working in $\widehat{\op{H}}_1'(E/k)$, we can change the automorphism modulo homotheties. The result is the automorphism $(1,u^{-1},u^{-1})=(1,u^{-1},1)+(1,1,u^{-1})$ on $\mathcal{L}_3\oplus\mathcal{L}_1^{-1}\oplus\mathcal{L}_2^{-1}$. Now we can use the edges from this bundle to the bundles $\mathcal{L}_1\oplus\mathcal{L}_1^{-1}\oplus\mathcal{O}$ and $\mathcal{L}_2\oplus\mathcal{L}_2^{-1}\oplus\mathcal{O}$ which are compatible with the summands of the automorphism $(1,u^{-1},u^{-1})$. 

We need to show that these automorphisms (where the $u^{-1}$ acts on the $\mathcal{L}_i^{-1}$-component) are equal to the automorphisms given above. Alternatively, we need to show that for a bundle $\mathcal{L}\oplus\mathcal{L}^{-1}\oplus\mathcal{O}$, the automorphisms $(u,1,1)$ and $(1,u,1)$ are inverses in $\widehat{\op{H}}'_1$. The latter automorphism, up to multiplication by the center is $(u^{-1},1,u^{-1})$ and the summand $(1,1,u^{-1})$ is trivial because we are computing in $\widehat{\op{H}}'_1$. 
This shows bilinearity in the line bundle component.


(6)
It remains to show that the above morphisms are inverses. To do this, we will provide simple representatives for the elements in $\widehat{\op{H}}_1(E/k)$.

We can use the edge relations to get rid of all automorphisms of bundles of type (II) -- by moving the automorphism $(u,1,1)$ of $\mathcal{L}_1\oplus\mathcal{L}_2\oplus\mathcal{L}_2$ to the automorphism $(u,1)$ of $\mathcal{L}_1\oplus\mathcal{V}_{\mathcal{L}_1^{-1}}(2,1)$ etc.

Now if we have an automorphism of a stable bundle $\mathcal{V}_{\mathcal{L}^{-1}}(2,1)\oplus\mathcal{L}$, there is an elementary transformation to $\mathcal{O}\oplus\mathcal{L}^{-1}\oplus\mathcal{L}$ compatible with it. Using edge relations, we can reduce to have only automorphisms of bundles $\mathcal{L}\oplus\mathcal{L}^{-1}\oplus\mathcal{O}$; these are all in the link of the stable bundle $\mathcal{V}(2,1)\oplus\mathcal{O}$. Now we apply the map to compute the corresponding element in $\overline{E}(k)\otimes_{\mathbb{Z}}k^\times$. Note that the line bundle $\mathcal{O}$ corresponds to the unit of $\overline{E}(k)$, hence the corresponding units do not contribute to the result. Homotheties of $\mathcal{L}\oplus\mathcal{L}^{-1}$ can be moved to the center stable bundle using edge relations, and this way we can assume that the unit $u_1$ on the summand $\mathcal{L}^{-1}$ is always 1. Note that the units moved to the center stable bundle do not affect the end result because this stable bundle also maps to the unit of $\overline{E}(k)$. This special representative is now mapped to 
\[
\sum_{x\in\mathbb{P}^1(k)}\mathcal{L}_x\otimes u_x
\]
with almost all units being $1\in k^\times$.

From this, it is now obvious that the two maps are inverses, and the proof is complete.
\end{proof}

\section{The isotropy spectral sequence for group cohomology}
\label{sec:isotropy}

In this section, we discuss the structure of the $E_1$-page of the isotropy spectral sequence associated to the $\op{GL}_3(k[E])$-action on the Bruhat--Tits building $\mathfrak{B}(E/k,3)$. 

\subsection{Recollections on the isotropy spectral sequence}
We shortly recall the Borel isotropy spectral sequence for the computation of equivariant cohomology, cf. \cite[Section VII]{brown:book} or \cite[Appendix A]{knudson:book}.  For a $G$-complex $X$, there is an action of $G$ on the cellular chain complex $\op{C}_\bullet(X)$. Fixing a resolution $\mathcal{P}_\bullet\to\mathbb{Z}$ by projective $\mathbb{Z}[G]$-modules, the corresponding cohomology groups 
\[
\op{H}_G^\bullet(X;M):=\op{H}^\bullet(\op{Hom}_G(\mathcal{P}_\bullet,\op{C}_\bullet(X)\otimes_{\mathbb{Z}[G]}M))
\]
are called \emph{equivariant cohomology groups of $(G,X)$}. If $X$ is contractible, then the equivariant cohomology of $X$ is isomorphic to the group cohomology of $G$. 

The stupid filtration of the complex $\op{C}_\bullet(X)\otimes_{\mathbb{Z}[G]}M$ gives rise to the following spectral sequence, which will be called \emph{isotropy spectral sequence}
\[
E^{i,j}_1=\prod_{\sigma\in(G\backslash X)_{(i)}}\op{H}^j(G_\sigma,M_\sigma)\Rightarrow
\op{H}_G^{i+j}(X;M). 
\]
In the above, $(G\backslash X)_{(i)}$ denotes a set of representatives for the orbits of the $G$-action on the $i$-cells of $X$. The module $M_\sigma$ is the coefficient $G$-module $M$ twisted by the orientation character $\chi_\sigma:G_\sigma\rightarrow\mathbb{Z}/2$ of the stabilizer group $G_\sigma$. 
\begin{remark}
In the special case where each cell stabilizer fixes the cell pointwise, the orientation character is trivial. By the discussion in \ref{sec:rigid}, this is the case for the $\op{GL}_3(k[E])$-action on $\mathfrak{B}(E/k,3)$. 
\end{remark}
\begin{remark}
Note that the $E_1$-page features direct products as opposed to the direct sums for homology. This is relevant, since the quotient $\op{GL}_3(k[E])\backslash\mathfrak{B}(E/k,3)$ has infinitely many cells. However, the description of the quotient in Section~\ref{sec:equivariant} shows that (because of a suitable $\mathcal{H}_G^\ast$-reduction) only finitely many cells are relevant for the computation of the $E_2$-page. Still, it is important to remember that the initial description of the $E_1$-page (before making any reductions to finite skeleta) contains infinite direct products. 
\end{remark}

With the above indexing, the differentials of the isotropy spectral sequence are of the form 
\[
\op{d}_r^{i,j}:E_r^{i,j}\to E_r^{i+r,j-r+1}.
\]
Up to signs coming from the choice of orientation for the cells, the first differential is induced from the boundary map of the complex $X$ and inclusions of stabilizers, cf. \cite[VII.8]{brown:book}:
\[
\op{d}_1|_{\op{H}^i(G_\sigma,M_\sigma)}:\op{H}^i(G_\sigma,M_\sigma)\mapsto \bigoplus_{\tau\supseteq \sigma} \op{H}^i(G_\tau,M_\tau)
\]

\begin{remark}
Note that at this point, it is justified to write a direct sum, because the building is locally compact and therefore any vertex is contained in only finitely many simplices. Note also that any given target group $\op{H}^i(G_\tau,M_\tau)$ can only be hit by finitely many differentials because there are only finitely many simplices contained in $\tau$. In particular, the differential is completely described by the above restrictions to single factors.
\end{remark}

The isotropy spectral sequence will be applied to the action of  $\op{GL}_3(k[E])$ on the associated Bruhat--Tits building $\mathfrak{B}(E/k,3)$. The building $\mathfrak{B}(E/k,3)$ is a $2$-dimensional simplicial complex, therefore the only non-trivial $d_2$-differentials are $d^{0,j}_2:E^{0,j}_2\to E^{2,j-1}_2$, and the spectral sequence degenerates at the term $E_3=E_\infty$. 

As our main interest is the module structure of $\op{H}^\bullet(\op{GL}_3(k[E]);\mathbb{F}_\ell)$ over the Chern class ring $\mathbb{F}_\ell[\op{c}_1,\op{c}_2,\op{c}_3]$, we need more precise information on the multiplicative structure of the isotropy spectral sequence. The isotropy spectral sequence is a spectral sequence of algebras in the sense \cite{hatcher,mccleary}, corresponding statements for Farrell--Tate cohomology are discussed in \cite[Section X.4]{brown:book}. 
This means that there are bilinear products $E_r^{i,j}\times E_r^{s,t}\to E_r^{i+s,j+t}$ such that 
\begin{itemize}
\item each differential $\op{d}$ is a derivation, i.e., $\op{d}(xy)=(\op{d}x)y+(-1)^{i+j}x(\op{d}y)$ for $x\in E^{i,j}_r$. The given product on $E_{r+1}$ coincides with the product induced from $E_r$, and
\item denoting the filtration on $\op{H}^\bullet_{\op{GL}_3(k[E])}(\mathfrak{B}(E/k,3);\mathbb{F}_\ell)$ by $\op{FH}^\bullet_r$, the cup product on $\op{H}^\bullet_{\op{GL}_3(k[E])}(\mathfrak{B}(E/k,3);\mathbb{F}_\ell)$ restricts to maps $\op{FH}^m_s\times \op{FH}^n_t\to \op{FH}^{m+n}_{s+t}$. The induced quotient maps 
\[
\op{FH}^m_s/\op{FH}^m_{s+1}\times \op{FH}^n_t/\op{FH}^n_{t+1}\to \op{FH}^{m+n}_{s+t}/\op{FH}^{m+n}_{s+t+1}
\]
coincide with the products $E_\infty^{s,m-s}\times E^{t,n-t}_\infty\to E_\infty^{s+t,m+n-s-t}$.
\end{itemize}
As a particular case of the derivation property, the kernel of $\op{d}_1$ in $E^{0,\bullet}_1$ is closed under $\cup$-products. However, it is not in general an ideal in the product ring $\prod_{\sigma\in (G\backslash X)_{(0)}}\op{H}^\bullet(G_\sigma;\mathbb{F}_\ell)$.

It remains to identify the product structure on the $E_1$-page. Denoting $G=\op{GL}_3(k[E])$ and $X=\mathfrak{B}(E/k,3)$, the product on the $E_1$-page is induced from the composition
\begin{eqnarray*}
\op{H}^q(G;\op{C}^p(X))\otimes \op{H}^{q'}(G;\op{C}^{p'}(X))&\to& \op{H}^{q+q'}(G;\op{C}^p(X)\otimes\op{C}^{p'}(X))\\&\to& \op{H}^{q+q'}(G;\op{C}^{p+p'}(X))
\end{eqnarray*}
where the first map is the usual cup product in group cohomology and the second map is the cup product on $\op{C}^\bullet(X)$. From this description, we deduce the following projection formula: given two elements 
\[
u\in \op{H}^i(G_\sigma;\mathbb{F}_\ell)\subseteq E^{0,i}_1\quad\textrm{ and }\quad v\in \op{H}^j(G_\tau;\mathbb{F}_\ell)\subseteq E^{n,j}_1,
\]
the product structure on the $E_1$-page yields 
\[
u\cup_1 v=\op{res}_{G_\sigma}^{G_\tau}(u)\cup v\in \op{H}^{i+j}(G_\tau;\mathbb{F}_\ell)\subseteq E^{n,i+j}_1,
\]
where $\op{res}_{G_\sigma}^{G_\tau}:\op{H}^i(G_\sigma;\mathbb{F}_\ell)\to
\op{H}^i(G_\tau;\mathbb{F}_\ell)$ is the usual restriction on group cohomology, set to $0$ if $\sigma\not\subseteq \tau$, and the product on the right-hand side is the ordinary $\cup$-product in $\op{H}^\bullet(G_\tau;\mathbb{F}_\ell)$.

This is now enough to describe the structure of the $E_1$-page as a module over the Chern class ring $\mathbb{F}_\ell[\op{c}_1,\op{c}_2,\op{c}_3]$. Recall from the discussion in Section~\ref{sec:quillenconj} that the module structure is induced by 
\[
\op{res}\circ\delta:\mathbb{F}_\ell[\op{c}_1,\op{c}_2,\op{c}_3]\to
\op{H}^\bullet(\op{GL}_3(\overline{k(E)});\mathbb{F}_\ell)\to \op{H}^\bullet(\op{GL}_3(k[E]);\mathbb{F}_\ell).
\]
Denoting again $G=\op{GL}_3(k[E])$, composition with the projection 
\[
\op{H}^\bullet(G;\mathbb{F}_\ell)\to E_1^{0,\bullet}=\prod_{\sigma\in(G\backslash X)_{(0)}} \op{H}^\bullet(G_\sigma;\mathbb{F}_\ell)
\]
maps $\op{c}_i$ to the element which in each factor $\op{H}^\bullet(G_\sigma;\mathbb{F}_\ell)$ is the $i$-th Chern class of the representation $G_\sigma\hookrightarrow \op{GL}_3(\overline{k(E)})$. Now we combine this with the above projection formula to see that the action of $\op{c}_i$ on a class $v\in \op{H}^j(G_\tau;\mathbb{F}_\ell)\subseteq E^{n,j}_1$ is given by 
\[
\left(\op{res}_{\op{GL}_3(\overline{k(E)})}^{G_{\tau}}\circ\delta\right)(\op{c}_i)
\cup v,
\]
where now the map applied to $\op{c}_i$ is the composition of $\op{res}\circ\delta$ with the restriction map $\op{H}^\bullet(G;\mathbb{F}_\ell)\to\op{H}^\bullet(G_\tau;\mathbb{F}_\ell)$.


As a consequence of the above discussion, we now have the following:

\begin{proposition}
\label{prop:multiplicative}
The Chern-class ring module structure induced via $\op{res}\circ\delta$ on the $E_1$-page of the isotropy spectral sequence associated to the $\op{GL}_3(k[E])$-action on the associated building $\mathfrak{B}(E/k,3)$ is the natural one: for a $\op{GL}_3(k[E])$-orbit $\tau$ of simplices in $\mathfrak{B}(E/k,3)$ with stabilizer group $G_\tau$, the action of $\op{c}_i$ on the factor $\op{H}^\bullet(G_\tau;\mathbb{F}_\ell)$ is given by cup product with the $i$-th Chern class of the representation $G_\tau\hookrightarrow \op{GL}_3(\overline{k(E)})$. 
\end{proposition}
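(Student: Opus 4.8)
The plan is to assemble the facts collected in the discussion preceding the proposition; the statement is essentially a bookkeeping identification of the module structure defined abstractly through $\op{res}\circ\delta$ with the concrete factorwise one, and its main input — the projection formula for the $E_1$-page product — is already at hand. Write $G=\op{GL}_3(k[E])$ and $X=\mathfrak{B}(E/k,3)$. First I would pin down the graded ring homomorphism that governs the module structure, namely the edge homomorphism
\[
e\colon \op{H}^\bullet(G;\mathbb{F}_\ell)\cong\op{H}^\bullet_G(X;\mathbb{F}_\ell)\longrightarrow E_1^{0,\bullet}=\prod_{\sigma\in(G\backslash X)_{(0)}}\op{H}^\bullet(G_\sigma;\mathbb{F}_\ell).
\]
Since $X$ is contractible, the first map is the usual isomorphism; since the isotropy spectral sequence is multiplicative and the $0$-column $E_r^{0,\bullet}$ receives no incoming differentials, the inclusions $E_\infty^{0,\bullet}\hookrightarrow E_r^{0,\bullet}\hookrightarrow E_1^{0,\bullet}$ are maps of graded rings (kernels of the derivations $\op{d}_r$), and precomposing them with the ring quotient $\op{H}^\bullet_G(X;\mathbb{F}_\ell)\twoheadrightarrow E_\infty^{0,\bullet}$ (a ring map because the filtration is multiplicative, as recorded above) produces $e$. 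Its $\sigma$-component is the restriction $\op{res}^G_{G_\sigma}$ along $G_\sigma\hookrightarrow G$, cf.~\cite{brown:book}. By construction the $\op{H}^\bullet(G;\mathbb{F}_\ell)$-module structure on the whole $E_1$-page — hence the $\mathbb{F}_\ell[\op{c}_1,\op{c}_2,\op{c}_3]$-module structure obtained by precomposing with $\op{res}\circ\delta$ — is the one obtained by pairing $e$ with the internal product on $E_1$.

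Next I would evaluate that pairing on a single factor. For a simplex $\tau$ with ordered vertex set $\{\tau_0<\tau_1<\cdots\}$, the simplicial cup product $\op{C}^0(X)\otimes\op{C}^n(X)\to\op{C}^n(X)$ sees a $0$-cochain only through its value at the initial vertex $\tau_0$. Combining this with the projection formula displayed above and with transitivity of restriction, $\op{res}^{G_{\tau_0}}_{G_\tau}\circ\op{res}^{G}_{G_{\tau_0}}=\op{res}^{G}_{G_\tau}$, one gets for $v\in\op{H}^j(G_\tau;\mathbb{F}_\ell)\subseteq E_1^{n,j}$ and any $c\in\op{H}^\bullet(G;\mathbb{F}_\ell)$ the identity $e(c)\cup_1 v=\op{res}^G_{G_\tau}(c)\cup v$, with the right-hand product the ordinary cup product in $\op{H}^\bullet(G_\tau;\mathbb{F}_\ell)$. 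The orientation signs present no difficulty, since they can be absorbed into the chosen orientations of the cells — permissible because cell stabilizers fix their cells pointwise, cf.~Section~\ref{sec:rigid}.

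Finally I would specialize to $c=(\op{res}\circ\delta)(\op{c}_i)$. By the definition of $\delta$ in Section~\ref{sec:quillenconj}, the class $\delta(\op{c}_i)\in\op{H}^\bullet(\op{GL}_3(\overline{k(E)});\mathbb{F}_\ell)$ is the $i$-th universal Chern class, so by functoriality of Chern classes $\op{res}^{G}_{G_\tau}\!\bigl((\op{res}\circ\delta)(\op{c}_i)\bigr)$ is the $i$-th Chern class of the composite representation $G_\tau\hookrightarrow\op{GL}_3(k[E])\hookrightarrow\op{GL}_3(\overline{k(E)})$. Together with the previous step this shows that $\op{c}_i$ acts on the factor $\op{H}^\bullet(G_\tau;\mathbb{F}_\ell)$ of $E_1^{n,\bullet}$ by cup product with that Chern class, which is exactly the assertion. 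I expect the only genuinely delicate point to be the careful identification of the $E_1$-page product used in the first two steps — in particular the front-vertex behaviour of the simplicial cup product and the orientation/ordering conventions underlying the projection formula; once those are nailed down the conclusion is formal, resting only on multiplicativity of the spectral sequence and functoriality of Chern classes.
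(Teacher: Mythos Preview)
Your proposal is correct and follows essentially the same approach as the paper: the proposition is stated there as a direct consequence of the preceding discussion, which establishes the projection formula for the $E_1$-product and then specializes it to the image of the universal Chern classes under $\op{res}\circ\delta$. You have simply made that discussion a bit more explicit, in particular the ring-map status of the edge homomorphism and the front-vertex behaviour of the simplicial cup product, but the argument is the same.
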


\subsection{Description of restriction maps}
\label{sec:restrictions}

The next goal is the computation of the $\op{d}_1$-differentials for the isotropy spectral sequence. As noted above, these are given by restriction maps induced from inclusions of stabilizer subgroups. In this subsection, we will explicitly describe all the relevant restriction maps. We follow the case distinction introduced in Section~\ref{sec:inclusions}. 


First, if $\sigma$ is a $0$-cell of type (I), corresponding to a bundle $\mathcal{V}_{\mathcal{L}}(2,1)\oplus\mathcal{L}^{-1}$, its stabilizer is isomorphic to $k^\times\times k^\times$. For each of the $q+1$ adjacent edges in the parabolic graph, the edge stabilizer is also $k^\times\times k^\times$ and the inclusion map is the identity. Therefore, the restriction map for type (I) is the identity on cohomology. 

Now we consider the stabilizer groups of vertices of type (II). We consider the finite field $k=\mathbb{F}_q$ and cohomology coefficients $\mathbb{F}_\ell$ with $\ell\mid q-1$. This also leads to simplifications for bundles of types (d) and (e). The formulas are obtained by the obvious evaluation of the composition of the inclusion $\op{H}^\bullet(G_\sigma;\mathbb{F}_\ell)\subseteq \op{H}^\bullet((\mathbb{F}_q^\times)^3;\mathbb{F}_\ell)$ with the restriction maps $\op{H}^\bullet((\mathbb{F}_q^\times)^3;\mathbb{F}_\ell)\to\op{H}^\bullet((\mathbb{F}_q^\times)^2;\mathbb{F}_\ell)$ induced from the diagonal inclusions $(x,y)\mapsto (x,x,y)$ etc.

\begin{enumerate}[(a)]
\item For $\sigma$ a vertex of subtype (IIa), the vertex stabilizer is $(\mathbb{F}_q^\times)^{\times 3}$, and there are three adjacent edges $\tau$ all stabilized by $(\mathbb{F}_q^\times)^{\times 2}$.  As determined in Section~\ref{sec:inclusions}, the inclusions of the edge group are 
\[
(x,y)\mapsto (x,x,y),\, (x,y)\mapsto (x,y,x),\,\textrm{ and }\,(x,y)\mapsto (y,x,x),
\]
respectively. We can rewrite these as linear maps between $\mathbb{F}_\ell$-vector spaces, and the representing matrices for the standard bases are
\[
\left(\begin{array}{cc}
1&0\\1&0\\0&1
\end{array}\right),\,
\left(\begin{array}{cc}
1&0\\0&1\\1&0
\end{array}\right),\,\textrm{ and }\,
\left(\begin{array}{cc}
0&1\\1&0\\1&0
\end{array}\right).
\]
Using the identification from Section~\ref{sec:reccohom}, the three induced maps on  cohomology $\op{H}^\bullet(\mathbb{F}_\ell^{3};\mathbb{F}_\ell)\to \op{H}^\bullet(\mathbb{F}_\ell^2;\mathbb{F}_\ell)$ can be written as
\[
\mathbb{F}_\ell[X_1,X_2,X_3]\langle Y_1,Y_2,Y_3\rangle\to
\mathbb{F}_\ell[X_1',X_2']\langle Y_1',Y_2'\rangle
\]
\[
\left\{\begin{array}{c}
X_1\mapsto X_1'\\ X_2\mapsto X_1'\\ X_3\mapsto X_2'
\end{array}\right\},\,
\left\{\begin{array}{c}
X_1\mapsto X_1' \\ X_2\mapsto X_2' \\ X_3\mapsto X_1'
\end{array}\right\},\,\textrm{ and }\,
\left\{\begin{array}{c}
X_1\mapsto X_2' \\ X_2\mapsto X_1' \\ X_3\mapsto X_1'
\end{array}\right\}
\]
and the same definitions for the exterior variables $Y_i$.
\item For $\sigma$ a vertex of subtype (IIb), the vertex stabilizer is $\op{GL}_2(\mathbb{F}_q)\times \mathbb{F}_q^\times$, and there are two adjacent edges $\tau$ stabilized by $(\mathbb{F}_q^\times)^{\times 2}$. The inclusions of the edge group  are $(x,y)\mapsto \op{diag}(x,x,y)$ and $(x,y)\mapsto \op{diag}(x,y,x)$, respectively, where $\op{GL}_2(\mathbb{F}_q)\times\mathbb{F}_q^\times$ is thought embedded in $\op{GL}_3(\mathbb{F}_q)$ in the standard block diagonal way. The maps on cohomology 
\[
\op{H}^\bullet(\op{GL}_2(\mathbb{F}_q)\times \mathbb{F}_q^\times,\mathbb{F}_\ell)
\to\op{H}^\bullet((\mathbb{F}_q^\times)^2,\mathbb{F}_\ell)
\] 
induced by the above inclusions can then be written as 
\[
\mathbb{F}_\ell[\op{c}_1,\op{c}_2,X_1]\langle \op{e}_1,\op{e}_2,Y_1\rangle\to 
\mathbb{F}_\ell[X_1',X_2']\langle Y_1',Y_2'\rangle:
\]
\[
\left\{\begin{array}{l}
X_1\mapsto X_2'\\ Y_1\mapsto Y_2'\\
\op{c}_1\mapsto 2X_1'\\ \op{c}_2\mapsto (X_1')^2\\
\op{e}_1\mapsto 2Y_1'\\ \op{e}_2\mapsto 2X_1'Y_1'
\end{array}\right\},\quad
\left\{\begin{array}{l}
X_1\mapsto X_1'\\ Y_1\mapsto Y_1'\\
\op{c}_1\mapsto X_1'+X_2'\\ \op{c}_2\mapsto X_1'X_2'\\
\op{e}_1\mapsto Y_1'+Y_2'\\ \op{e}_2\mapsto X_1'Y_2'+X_2'Y_1'
\end{array}\right\}
\]
with $\deg \op{c}_i=2i$, $\deg \op{e}_i=2i-1$, $\deg X_i^{(')}=2$, $\deg Y_i^{(')}=1$.
\item For $\sigma$ a  vertex of subtype (IIc), the vertex stabilizer is $\op{GL}_3(\mathbb{F}_q)$, and there is a single adjacent edge $\tau$ stabilized by  $(\mathbb{F}_q^\times)^{\times 2}$. The inclusion of the edge group into the vertex group is $(\mathbb{F}_q^\times)^{\times 2}\to \op{GL}_3(\mathbb{F}_q):(x,y)\mapsto \op{diag}(x,x,y)$. The induced map on cohomology 
\[
\op{H}^\bullet(\op{GL}_3(\mathbb{F}_q);\mathbb{F}_\ell)\to
\op{H}^\bullet((\mathbb{F}^\times_q)^2;\mathbb{F}_\ell)
\]
can be written, using the identifications of Section~\ref{sec:reccohom}, as follows:
\[
\mathbb{F}_\ell[\op{c}_1,\op{c}_2,\op{c}_3]\langle\op{e}_1,\op{e}_2,\op{e}_3\rangle\to
\mathbb{F}_\ell[X_1,X_2]\langle Y_1,Y_2\rangle:
\]
\[
\left\{\begin{array}{l}
\op{c}_1\mapsto 2X_1+X_2\\
\op{c}_2\mapsto X_1^2+2X_1X_2\\
\op{c}_3\mapsto X_1^2X_2\\
\op{e}_1\mapsto 2Y_1+Y_2\\
\op{e}_2\mapsto 2X_1Y_1+2(X_1Y_2+X_2Y_1)\\
\op{e}_3\mapsto X_1^2Y_2+2X_1X_2Y_1.
\end{array}\right\}.
\]
\item 
For $\sigma$ a vertex of subtype (IId), the vertex stabilizer is $\mathbb{F}_{q^2}^\times\times \mathbb{F}_q^\times$. There is a single edge stabilized by $(\mathbb{F}_q^\times)^{\times 2}$ adjacent to $\sigma$, and the inclusion of the edge group is the natural one $(\mathbb{F}_q^\times)^{\times 2}\to \mathbb{F}_{q^2}^\times\times \mathbb{F}_q^\times$. The induced map on cohomology with $\mathbb{F}_\ell$-coefficients is the identity.
\item
Vertices of type (IIe) are isolated in the parabolic graph. Their stabilizer is isomorphic to $\mathbb{F}_{q^3}^\times$. The inclusion $\mathbb{F}_q^\times\hookrightarrow\mathbb{F}_{q^3}^\times$ induces the identity on cohomology with $\mathbb{F}_\ell$-coefficients, and therefore these vertices are not visible in the $\mathbb{F}_\ell$-cohomology of $\op{GL}_3(k[E])$. 
\end{enumerate}

Finally, it remains to describe the restriction maps induced from the inclusion of the center $\mathcal{Z}(\op{GL}_3(\mathbb{F}_q[E]))\cong\mathbb{F}_q^\times$. For the edge stabilizers of the parabolic subgraph, the inclusion of the center is given by the homotheties, i.e., it is
\[
\mathbb{F}_q^\times\mapsto (\mathbb{F}_q^\times)^2:x\mapsto \op{diag}(x,x,x).
\]
In particular, the corresponding restriction map on cohomology is
\[
\mathbb{F}_\ell[X_1,X_2]\langle Y_1,Y_2\rangle\cong \op{H}^\bullet((\mathbb{F}_q^\times)^2;\mathbb{F}_\ell)\to\op{H}^\bullet(\mathbb{F}_q^\times;\mathbb{F}_\ell)\cong \mathbb{F}_\ell[Z]\langle W\rangle
\]
\[
X_1,X_2\mapsto Z; \quad Y_1,Y_2\mapsto W.
\]
The restrictions corresponding to inclusions of the center into vertex stabilizers of type (II) above can be obtained from the above case distinction by composition with further restriction to $\mathbb{F}_\ell[Z]\langle W\rangle$. For types (IIa) and (IIb), we can check that the compositions of the different restriction maps for the different edges are equalized by the restriction to the center.

The above case distinction describes completely the restriction map for inclusions of stabilizer groups, and therefore describes up to sign the differential $\op{d}_1:E^{i,j}_1\to E^{i+1,j}_1$. 

\subsection{Simplification of the \texorpdfstring{$E_1$}{E1}-page}
\label{prop:simplee1}

At this point, we have described the $E_1$-page of the spectral sequence. Since the quotient $\op{GL}_3(k[E])\backslash\mathfrak{B}(E/k,3)$ has infinitely many cells, the relevant entries in the $E_1$-page are infinite direct products of cohomology groups of cell stabilizers. Using Theorems~\ref{thm:quotient} and \ref{thm:parabolic}, there is an $\mathcal{H}_G^\ast$-reduction of the quotient $\op{GL}_3(k[E])\backslash\mathfrak{B}(E/k,3)$ to a finite subcomplex $\mathfrak{X}$ having the homotopy type of the suspension of the flag complex for $k^3$ which contains the (finite) parabolic graph. The subcomplex is essentially the union of the links of the two stable bundles of rank three on $\overline{E}$ with determinant concentrated at $\{O\}$.

Using Proposition~\ref{prop:reduction}, we find that there is a subcomplex $\tilde{E}^{s,t}_1\subset E^{s,t}_1$ of finitely generated $\mathbb{F}_\ell[\op{c}_1,\op{c}_2,\op{c}_3]$-modules such that the inclusion $\tilde{E}^{s,t}_1\subset E^{s,t}_1$ is a quasi-isomorphism. Henceforth, we will only compute in this finitely generated subcomplex, omitting notational distinction between $\tilde{E}^{s,t}_1$ and $E^{s,t}_1$. Two important things are noteworthy: first, because of the finiteness, the direct products of  stabilizer cohomology groups in the reduced $E_1$-page are now direct sums. The second thing to note, which follows from the explicit description in Theorem~\ref{thm:parabolic} is that if an elementary abelian $\ell$-group or rank $m$ stabilizes an $n$-cell of the $\mathcal{H}_G^\ast$-reduction $\mathfrak{X}$, then $n\leq 3-m$. 

\section{A detection filtration on the \texorpdfstring{$E_1$}{E1}-page}
\label{sec:detection}

After having obtained a full description of the $E_1$-page with differentials and Chern-class module structure, the next goal is to work out the $E_2$-page of the spectral sequence. Since the $E_1$-page has quite a number of non-trivial entries and differentials relating them, this requires a bit more work. We first outline the main steps of the computation. 

First of all, we will define a filtration on the $E_1$-page, called \emph{detection filtration}, which essentially is given by the kernels of restriction maps to stabilizers of adjacent cells. In a sense, this is a rank refinement of the idea of essential classes in the cohomology of elementary abelian groups as well as the idea of torsion subcomplexes used in the study of rank one groups, cf. \cite{rahm}. There are only three steps of the filtration, given by $\ell$-ranks of stabilizers between $1$ and $3$. On the subquotients of the detection filtration, the $\op{d}_1$-differential will be linear over the respective Chern-class ring, and we will be able to precisely compute the cohomology of the $\op{d}_1$-differential on the subquotients in terms of the structure of the quotient $\op{GL}_3(k[E])\backslash\mathfrak{B}(E/k,3)$. This provides a spectral sequence converging to the $E_2$-page of the spectral sequence. 

The goal of the present section is to describe explicitly the detection filtration on the cohomology of each type of stabilizer subgroup, as well as explicit generators (as appropriate Chern-class modules) and Hilbert--Poincar{\'e} series for the pieces of the detection filtration.

\subsection{Definition of the detection filtration} 
The detection filtration on the $E_1$-page is defined by defining the filtration on the cohomology $\op{H}^\bullet(G_\sigma;\mathbb{F}_\ell)$ for each type of stabilizer group $G_\sigma$. 

\begin{definition}[Detection filtration]
\label{def:e1filtration}
We define the \emph{detection filtration $\op{D}^\bullet E_1^{s,t}$} as follows: 
\begin{itemize}
\item Let $\sigma$ be a simplex with stabilizer group $G_\sigma$. A class in $\op{H}^\bullet(G_\sigma;\mathbb{F}_\ell)$ is in the $i$-th step $\op{D}^i$ of the detection filtration if it is detected on an elementary abelian $\ell$-subgroup of $G_\sigma$ of rank $\geq i+1$ and restricts trivially to elementary abelian $\ell$-subgroups of rank $i$ in all stabilizer groups $G_\tau$ with $\tau\supset \sigma$.
\item Then $\op{D}^iE^{s,t}_1$ is defined to be the direct product of the $\op{D}^i\op{H}^t(G_\sigma;\mathbb{F}_\ell)$ with $\sigma$ an $s$-simplex of the orbit space. 
\end{itemize}
\end{definition}

\begin{remark}
Note that the direct product is in fact a finite direct sum in the reduced spectral sequence, cf. \ref{prop:simplee1}.
\end{remark}

\begin{remark}
The definition of the detection filtration above can be made for arbitrary proper actions and is made with a view towards possible computations in higher ranks. In our specific case, $\op{D}^3E_1^{s,t}=0$ because $\op{GL}_3(k[E])$ contains elementary abelian $\ell$-groups of rank at most $3$. Also, the condition defining $\op{D}^0$ is vacuously true, giving $\op{D}^0E_1^{s,t}=E_1^{s,t}$. The step $\op{D}^1E_1^{s,t}$ is given by classes in stabilizers of the parabolic subgraph whose restriction to the center $\mathcal{Z}(\op{GL}_3(\mathbb{F}_q[E]))$ is trivial. Finally, the step $\op{D}^2E_1^{s,t}$ is given by classes in stabilizers of type (IIa-c) vertices whose restriction to the adjacent edges in the parabolic graph are trivial.
\end{remark}

\begin{lemma}
\label{lem:e1props}
\begin{enumerate}
\item The $\op{d}_1$-differential is compatible with this filtration in the sense that $d_1\op{D}^iE^{s,t}_1\subseteq \op{D}^iE^{s+1,t}_1$.
\item $\op{D}^iE_1^{s,t}\subseteq E_1^{\leq (2-i),t}$. 
\item $\op{D}^2E_1^{s,t}$ consists exactly of sums of cohomology classes $\gamma\in \op{H}^t(G_\sigma;\mathbb{F}_\ell)$ which are in the kernel of $\op{d}_1^{s,t}|_{\op{H}^t(G_\sigma;\mathbb{F}_\ell)}$, where $G_\sigma$ is a type (IIa-c) stabilizer.
\item The filtration steps $\op{D}^iE_1^{s,t}$ are $\mathbb{F}_\ell[\op{c}_1,\op{c}_2,\op{c}_3]$-submodules of $E_1^{s,t}$. \end{enumerate}
\end{lemma}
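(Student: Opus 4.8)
\textbf{Proof plan for Lemma~\ref{lem:e1props}.}

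The plan is to verify the four assertions essentially from the definitions, using the explicit description of the restriction maps from Section~\ref{sec:restrictions} and the multiplicative structure from Proposition~\ref{prop:multiplicative}. For (1), the point is that the $\op{d}_1$-differential is (up to sign) a sum of restriction maps $\op{res}_{G_\sigma}^{G_\tau}$ along inclusions of stabilizers $\sigma\subset\tau$, and such a map sends a class detected on an elementary abelian $\ell$-subgroup of rank $\geq i+1$ and vanishing on all rank-$i$ elementary abelians inside the various $G_\tau$ to a class with the same property inside $G_\tau$, simply because restriction along a subgroup inclusion commutes with further restriction to elementary abelian subgroups. Concretely, if $A\leq G_\tau$ is elementary abelian of rank $i$ and $\tau\subset\tau'$, then $\op{res}^{G_{\tau'}}_A\circ\op{res}^{G_{\tau'}}_{G_\tau}=\op{res}^{G_\tau}_A$, so the vanishing conditions defining $\op{D}^i$ propagate. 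One must check the detection condition survives as well: if $\gamma\in\op{H}^t(G_\sigma)$ is detected on some $A\leq G_\sigma$ of rank $\geq i+1$, then $\op{res}^{G_\tau}_{G_\sigma}(\gamma)$ restricted to the image of $A$ in $G_\tau$ is nonzero; but here I would instead appeal to the third statement and the explicit geometry, since the only nonzero differentials out of $\op{D}^2$ land in degree where targets are forced to vanish (see below), and for $\op{D}^1$ the detection condition is automatically met because every stabilizer in the parabolic graph has rank $\geq 2$ unless it is central.

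For (2), I would read off from the final sentence of Section~\ref{prop:simplee1}: if an elementary abelian $\ell$-group of rank $m$ stabilizes an $n$-cell of the $\mathcal{H}_G^\ast$-reduction, then $n\leq 3-m$. A nonzero class in $\op{D}^iE_1^{s,t}$ lives in a stabilizer $G_\sigma$ of an $s$-cell that contains an elementary abelian $\ell$-subgroup of rank $\geq i+1$, whence $s\leq 3-(i+1)=2-i$; this is exactly the assertion $\op{D}^iE_1^{s,t}\subseteq E_1^{\leq(2-i),t}$. For (3), by (2) with $i=2$ we have $\op{D}^2E_1^{s,t}\subseteq E_1^{\leq 0,t}$, so $\op{D}^2$ is concentrated in the column $s=0$ and supported on the type (IIa-c) vertex stabilizers, which are precisely the $0$-cell stabilizers containing rank-$3$ elementary abelians. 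A class $\gamma$ in such a stabilizer lies in $\op{D}^2$ iff it restricts trivially to the rank-$2$ elementary abelians in the adjacent edge groups; since the edge stabilizers in the parabolic graph are all of the form $(\mathbb{F}_q^\times)^{\times 2}$ with $\ell\mid q-1$, they are themselves (up to a $p$-group) elementary abelian of rank $2$, so "restricts trivially to rank-$2$ elementary abelians in $G_\tau$" is the same as "restricts trivially to $G_\tau$", i.e. $\gamma$ is killed by $\op{d}_1^{0,t}|_{\op{H}^t(G_\sigma)}$. One also has to observe the detection condition in the definition of $\op{D}^2$ is automatic here, because the full cohomology of a type (IIa-c) stabilizer is detected on its maximal (rank-$3$) elementary abelian by Quillen's theorem (Theorem~\ref{thm:glnfq} with $r=1$), so every class is "detected on an elementary abelian of rank $\geq 3$."

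For (4), the Chern-class module structure on $E_1^{s,t}$ is, by Proposition~\ref{prop:multiplicative}, given on each factor $\op{H}^\bullet(G_\sigma;\mathbb{F}_\ell)$ by cup product with the Chern classes of the representation $G_\sigma\hookrightarrow\op{GL}_3(\overline{k(E)})$, and these Chern classes restrict, on any elementary abelian $\ell$-subgroup $A\leq G_\sigma$, to the elementary symmetric polynomials in the weights — in particular they lie in the symmetric (polynomial) part $\op{Sym}^\bullet(A^\vee)$ of $\op{H}^\bullet(A;\mathbb{F}_\ell)$. Hence multiplication by $\op{c}_i$ preserves both the property of being detected on a given elementary abelian subgroup (multiplying a nonzero element of $\op{H}^\bullet(A;\mathbb{F}_\ell)$ by $\op{res}_A(\op{c}_i)$ can only be traced compatibly, and vanishing is certainly preserved) and the property of restricting trivially to rank-$i$ elementary abelians in the $G_\tau$, because restriction is a ring map and $\op{res}(\op{c}_i\cup\gamma)=\op{res}(\op{c}_i)\cup\op{res}(\gamma)=0$ once $\op{res}(\gamma)=0$. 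The one genuinely delicate point — and I expect this to be the main obstacle — is the \emph{detection} half of the $\op{D}^i$-condition under multiplication by $\op{c}_i$: one must ensure $\op{c}_i\cup\gamma$ is still detected on some elementary abelian of rank $\geq i+1$ rather than being annihilated. This is handled by the same Quillen-detection input: for all the relevant stabilizer groups, $\op{H}^\bullet(G_\sigma;\mathbb{F}_\ell)$ injects into the product of cohomology rings of its maximal elementary abelian subgroups and the latter are free modules over the corresponding Chern (= symmetric polynomial) rings, so multiplication by a nonzero $\op{c}_i$ is injective and the detecting restriction of $\op{c}_i\cup\gamma$ is $\op{res}(\op{c}_i)\cup\op{res}(\gamma)$, which is nonzero whenever $\op{res}(\gamma)\neq 0$ since we are multiplying inside a free module over a domain. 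Thus all four filtration steps are $\mathbb{F}_\ell[\op{c}_1,\op{c}_2,\op{c}_3]$-submodules, as claimed.
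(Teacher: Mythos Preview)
Your proposal is correct and follows essentially the same approach as the paper: parts (1)--(3) are unwound from the definitions and the explicit cell-structure data in Sections~\ref{sec:inclusions} and~\ref{prop:simplee1}, and part (4) uses that restriction is a ring map together with freeness of $\op{H}^\bullet(A;\mathbb{F}_\ell)$ over the symmetric-polynomial subring. One small phrasing point in (4): your claim that ``multiplication by a nonzero $\op{c}_i$ is injective'' fails for stabilizers whose maximal elementary abelian has rank $<i$ (e.g.\ $\op{c}_3$ on type (I) stabilizers), but there the product is zero and hence lies in $\op{D}^j$ trivially --- exactly the ``unless the element is completely annihilated'' caveat the paper makes explicit.
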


\begin{proof}
(1) follows since both the differential as well as the detection filtration are defined in terms of restriction maps. 

(2) Step $\op{D}^2$ is contained in $E^{\leq 0,t}_1$ since only $0$-cells can possibly be contained in simplices stabilized by rank 2 elementary abelian groups. Step $\op{D}^1$ is contained in $E^{\leq 1,t}_1$ since $E^{2,t}_1$ contains only cells stabilized by the center. 

(3) is essentially the definition, plus the structural statements from Section~\ref{sec:inclusions} 

(4) The cohomology of an elementary abelian $\ell$-group of rank $n$ is a free module over the Chern-class subring generated by $\op{c}_1,\dots,\op{c}_n$; the Chern classes $\op{c}_m$ with $m>n$ act trivially. This means that detectability of a class on rank $\geq i+1$ abelian subgroups is not destroyed by multiplication with elements from the Chern-class ring unless the element is completely annihilated. The other part of the definition of $\op{D}^i$ related to the trivial restriction is now essentially a statement about the restriction map associated to $E\hookrightarrow E'$ where $E$ and $E'$ are elementary abelian $\ell$-groups of rank $i$ and $i+1$, respectively. Since restriction commutes with cup-product, $\op{res}(\op{c}_i\cup x)=\op{res}(\op{c}_i)\cup\op{res}(x)$ and therefore the class $\op{c}_i\cup x$ has trivial restriction whenever $x$ has trivial restriction. This establishes the claim.
\end{proof}

Next, to determine the filtration $\op{D}^\bullet E_1^{s,t}$ we will compute the detection filtration for each type of stabilizer. Note that only stabilizers $G_\sigma$ of types (IIa-c) will have a non-trivial $\op{D}^2$, which by Lemma~\ref{lem:e1props} is given as kernel of the restriction maps 
\[
\op{d}_1^{0,j}:\op{H}^j(G_\sigma;\mathbb{F}_\ell)\to\bigoplus_\tau\op{H}^j(G_\tau;\mathbb{F}_\ell),
\]
where $\tau$ in the direct sum runs over the edges containing the vertex $\sigma$ and having a rank 2 elementary abelian $\ell$-subgroup. For all the other stabilizers, it suffices to compute the kernel and image of the restriction to the center. 

\subsection{Detection filtration on rank 2 stabilizers}

In this section, we want to compute the step $\op{D}^1$ of the detection filtration. 

We first note that the restriction to the center is always surjective for the cases we consider. This will also be helpful for the computation of the detection filtration for rank 3 stabilizers in subsequent subsections.

\begin{lemma}
\label{lem:center}
Let $k=\mathbb{F}_q$ be a finite field, and let $\ell\mid q-1$ be a prime different from $2$ and $3$. Let $\sigma$ be any simplex of $\mathfrak{B}(E/k,3)$ and let $G_\sigma$ be the stabilizer of $\sigma$ in $\op{GL}_3(k[E])$. Then the restriction associated to the natural inclusion $\mathbb{F}_q^\times\cong\mathcal{Z}(\op{GL}_3(k[E]))\hookrightarrow G_\sigma$ is surjective. 
\end{lemma}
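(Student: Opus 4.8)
The statement to prove is that the restriction map $\op{H}^\bullet(G_\sigma;\mathbb{F}_\ell)\to\op{H}^\bullet(\mathbb{F}_q^\times;\mathbb{F}_\ell)$ induced by the inclusion of the center $\mathbb{F}_q^\times\cong\mathcal{Z}(\op{GL}_3(k[E]))\hookrightarrow G_\sigma$ is surjective, for every cell $\sigma$ of $\mathfrak{B}(E/k,3)$. The key observation is that this restriction always admits a section: the composition
\[
\mathbb{F}_q^\times\hookrightarrow G_\sigma\xrightarrow{\det}\mathbb{F}_q^\times
\]
of the central inclusion with the determinant character is the map $x\mapsto x^3$, which, because $\ell\neq 3$ and $\ell\mid q-1$, is an automorphism of $\mathbb{F}_q^\times$ after passing to the $\ell$-Sylow subgroup. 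Hence on $\mathbb{F}_\ell$-cohomology the central inclusion is split injective, so in particular the restriction is surjective. (Here one uses that $\op{H}^\bullet(-;\mathbb{F}_\ell)$ only sees the $\ell$-part of a cyclic group, as recalled at the start of Section~\ref{sec:reccohom}.)

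To carry this out I would first reduce to the reductive quotient of $G_\sigma$: since $\ell\mid q-1$ is coprime to $p=\op{char}k$, the unipotent radical is a $p$-group with trivial $\mathbb{F}_\ell$-cohomology, so it suffices to treat the Levi subgroups listed in Section~\ref{sec:inclusions}, namely $(\mathbb{F}_q^\times)^{\times 2}$ or $(\mathbb{F}_q^\times)^{\times 3}$ for edges and type (I), (IIa), (IId) vertices, $\op{GL}_2(\mathbb{F}_q)\times\mathbb{F}_q^\times$ for type (IIb), $\op{GL}_3(\mathbb{F}_q)$ for type (IIc), and $\mathbb{F}_{q^3}^\times$ for type (IIe). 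In each case the center is embedded as the diagonal homotheties, i.e.\ $x\mapsto\op{diag}(x,x,x)$ (or the evident analogue), and $\det$ restricted to this image is the cube map. Then I would invoke the elementary fact that a cyclic-group endomorphism inducing an isomorphism on $\ell$-Sylow subgroups (which $x\mapsto x^3$ does, since $3$ is invertible mod $\ell$) induces an isomorphism on $\op{H}^\bullet(-;\mathbb{F}_\ell)$; combined with the functoriality $\op{res}_{\op{center}}\circ\,(\det)^\ast = (x\mapsto x^3)^\ast$ (written in the wrong order — one should say $(x\mapsto x^3)^\ast = (\det\circ\iota_{\mathcal{Z}})^\ast = \op{res}^\ast_{\mathcal{Z}}\circ\det^\ast$), this forces $\op{res}^\ast_{\mathcal{Z}}$ to be a split surjection. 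One may alternatively phrase the cohomology-level computation directly: the center of a torus factor maps $Z\mapsto$ (the product of all coordinate variables, i.e.\ $3Z$ in the identification of Section~\ref{sec:restrictions}) and $W\mapsto 3W$, which is surjective since $3\in\mathbb{F}_\ell^\times$; for $\op{GL}_n(\mathbb{F}_q)$ one reads off from Quillen's formulas that $\op{c}_1\mapsto nZ$, $\op{e}_1\mapsto nW$ under restriction to the central homotheties, again surjective since $n\in\{2,3\}$ is invertible mod $\ell\geq 5$.

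There is not really a hard part here; the only thing to be careful about is the case distinction for type (IId) and type (IIe), where the stabilizer involves a field extension $\mathbb{F}_{q^2}^\times$ or $\mathbb{F}_{q^3}^\times$ — one must note that $\ell\mid q-1$ already implies $\mathbb{F}_\ell$-cohomology of these equals that of $\mathbb{F}_q^\times$ via the norm/inclusion (as used repeatedly in Section~\ref{sec:restrictions}), and the central inclusion $\mathbb{F}_q^\times\hookrightarrow\mathbb{F}_{q^m}^\times$ is split on $\ell$-parts for the same reason. The cleanest writeup is the uniform one via the determinant: state that $\det\circ(\text{central inclusion})$ is the $n$-th power map with $n=\dim=3$, hence an $\ell$-isomorphism, hence the central restriction is a (split) epimorphism, and remark that this holds for every cell since every stabilizer sits inside $\op{GL}_3(k[E])$ with the center acting by homotheties.
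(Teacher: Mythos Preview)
Your argument is correct, and your uniform approach via the determinant is genuinely cleaner than the paper's. The paper proceeds by a case distinction over the stabilizer types listed in Section~\ref{sec:inclusions}: it observes that surjectivity is immediate whenever $G_\sigma$ has an explicit $k^\times$ factor, dismisses type (IIe) as irrelevant under $\ell\mid q-1$, and then handles the single remaining case (IIc) by computing that $\op{c}_1\mapsto 3X$, $\op{e}_1\mapsto 3A$ under restriction from $\op{GL}_3(k)$ to its center, which is surjective since $\ell\neq 3$.

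Your determinant splitting subsumes all of this at once: since $k[E]^\times\cong k^\times$ (Section~\ref{sec:rigid}), the character $\det:G_\sigma\to\mathbb{F}_q^\times$ is available for every stabilizer simultaneously, and the composition with the central inclusion is the cube map regardless of the type of $\sigma$. This buys you a proof that does not depend on the enumeration of stabilizer types at all, and it makes transparent why the hypothesis $\ell\neq 3$ is exactly what is needed. The paper's approach, by contrast, has the minor advantage of being entirely explicit in the generators already set up for the later restriction computations, but your argument is shorter and more conceptual. The case-by-case verification you sketch in your second paragraph is therefore unnecessary; the determinant argument already covers everything, including types (IId) and (IIe), since the norm from $\mathbb{F}_{q^m}^\times$ restricted to $\mathbb{F}_q^\times$ is the $m$-th power and the full determinant on the center remains the cube.
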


\begin{proof}
Since the restriction is an algebra map, it suffices to show that the algebra generators of $\op{H}^\bullet(\mathbb{F}_q^\times;\mathbb{F}_\ell)\cong \mathbb{F}_\ell[X]\langle A\rangle$ are in the image (this description of the algebra is where we use the assumption $\ell\neq 2$). We can prove the statement using a case distinction, going through the list provided in Section~\ref{sec:inclusions}. The surjectivity is easy to see in all the cases where we have a factor $k^\times$ in the stabilizer. The cases (IIe) is not relevant for us since $\ell|q-1$, so the only case left is case (IIc), the restriction from $\op{GL}_3(k)$. In this case, $\op{c}_1$ and $\op{e}_1$ map to $3X$ and $3A$, respectively, and by our assumption $\ell\neq 3$ we have surjectivity.
\end{proof}

The stabilizers whose maximal elementary abelian $\ell$-subgroups are of rank 2 are always of the form $k^\times\times k^\times$, with the inclusion of the center $k^\times$ given by the diagonal map $x\mapsto(x,x)$. The next proposition determines the kernel of the associated restriction map.

\begin{proposition}
\label{prop:detectionrk2}
Let $k=\mathbb{F}_q$ be a finite field, and let $\ell\mid q-1$ be a prime different from $2$ and $3$. The kernel of the restriction morphism
\[
\mathbb{F}_\ell[X,Y]\langle A,B\rangle\cong \op{H}^\bullet(\mathbb{F}_q^\times\times\mathbb{F}_q^\times;\mathbb{F}_\ell)\to
\op{H}^\bullet(\mathbb{F}_q^\times;\mathbb{F}_\ell)\cong 
\mathbb{F}_\ell[Z]\langle C\rangle
\]
given by $X,Y\mapsto Z$ and $A,B\mapsto C$ is a free graded module of rank $4$ over the ring $\mathbb{F}_\ell[X,Y]$ generated by the elementary alternating polynomials in $\mathbb{F}_\ell[X,Y]\langle A,B\rangle$, i.e., 
\[
A-B,\, AB,\, X-Y,\, (X-Y)(A+B).
\]
Consequently, the kernel of the restriction morphism is a free module of rank $8$ over the second Chern-class ring $\mathbb{F}_\ell[\op{c}_1,\op{c}_2]$, generated by the elementary alternating polynomials and 
\[
(X-Y)(A-B),\, (X-Y)AB,\, (X-Y)^2,\, (X-Y)^2(A+B).
\]
The Hilbert--Poincar{\'e} series of the kernel is 
\[
\frac{(T+2T^2+T^3)(1+T^2)}{(1-T^2)(1-T^4)}.
\]
\end{proposition}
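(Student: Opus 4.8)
The plan is to identify the kernel of the restriction morphism explicitly as a free $\mathbb{F}_\ell[X,Y]$-module, then bootstrap to the free $\mathbb{F}_\ell[\op{c}_1,\op{c}_2]$-module structure using the fact that $\mathbb{F}_\ell[X,Y]$ is itself free of rank $2$ over the subring of symmetric polynomials $\mathbb{F}_\ell[\op{c}_1,\op{c}_2]=\mathbb{F}_\ell[X+Y,XY]$. First I would set up the restriction map concretely on the $\mathbb{F}_\ell[X,Y]$-module basis $1, A, B, AB$ of $\op{H}^\bullet(\mathbb{F}_q^\times\times\mathbb{F}_q^\times;\mathbb{F}_\ell)$: since $X,Y\mapsto Z$ and $A,B\mapsto C$, the map sends a general element $f_0 + f_1 A + f_2 B + f_3 AB$ (with $f_i\in\mathbb{F}_\ell[X,Y]$) to $\bar f_0 + (\bar f_1 + \bar f_2)C$, where $\bar{\cdot}$ denotes reduction modulo the ideal $(X-Y)$ followed by renaming to $Z$ (note the $AB$-term dies because $C^2=0$). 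Hence an element lies in the kernel if and only if $f_0\in(X-Y)$, $f_1+f_2\in(X-Y)$, and $f_3$ is arbitrary.

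From this description the kernel decomposes as an $\mathbb{F}_\ell[X,Y]$-module into three summands: the $f_0$-part is $(X-Y)\cdot\mathbb{F}_\ell[X,Y]$ in degree-$0$ exterior weight, generated by $X-Y$; the $f_3$-part is all of $\mathbb{F}_\ell[X,Y]\cdot AB$, generated by $AB$; and the $(f_1,f_2)$-part is the submodule of $\mathbb{F}_\ell[X,Y]A\oplus\mathbb{F}_\ell[X,Y]B$ cut out by $f_1+f_2\in(X-Y)$, which is free of rank $2$ on $A-B$ (corresponding to $f_1=-f_2=1$, so $f_1+f_2=0$) and $(X-Y)(A+B)$ (corresponding to $f_1=f_2=X-Y$). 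Freeness of each summand is clear since $\mathbb{F}_\ell[X,Y]$ is a domain (for the first two) and since $\{(1,-1),(X-Y,X-Y)\}$ generate the relation-free submodule $\{(f_1,f_2):f_1+f_2\in(X-Y)\}$ — indeed given such a pair, write $f_1+f_2=(X-Y)g$; then $(f_1,f_2)=f_1\cdot(1,-1)+g\cdot(X-Y)(A+B)$-coefficients, and one checks this representation is unique. This yields the claimed rank-$4$ free $\mathbb{F}_\ell[X,Y]$-module on $A-B$, $AB$, $X-Y$, $(X-Y)(A+B)$.

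For the passage to $\mathbb{F}_\ell[\op{c}_1,\op{c}_2]$, I would invoke the standard fact that $\mathbb{F}_\ell[X,Y]$ is a free module of rank $2$ over $\mathbb{F}_\ell[\op{c}_1,\op{c}_2]$ with basis $\{1, X\}$ (or equivalently $\{1, X-Y\}$, after adjusting by $\op{c}_1$) — this is where the assumption $\ell\neq 2$ enters, since we are tacitly working where $2$ is invertible; cf.\ Appendix~\ref{sec:eltalt}. Tensoring the rank-$4$ basis above with $\{1, X-Y\}$ and simplifying (using $(X-Y)^2 = \op{c}_1^2 - 4\op{c}_2$, which lies in $\mathbb{F}_\ell[\op{c}_1,\op{c}_2]$, to reduce $(X-Y)\cdot(X-Y)$ and $(X-Y)\cdot(X-Y)(A+B)$) produces the eight generators: the four elementary alternating polynomials $A-B$, $AB$, $X-Y$, $(X-Y)(A+B)$ together with $(X-Y)(A-B)$, $(X-Y)AB$, $(X-Y)^2$, $(X-Y)^2(A+B)$, which is exactly the list in the statement. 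Finally, the Hilbert--Poincar\'e series follows formally: each free $\mathbb{F}_\ell[\op{c}_1,\op{c}_2]$-summand contributes $T^d/((1-T^2)(1-T^4))$ where $d$ is the degree of its generator, and the generators have degrees $1,2,2,3$ and $3,4,4,5$ respectively, so the numerator is $T+2T^2+T^3+T^3+2T^4+T^5 = (T+2T^2+T^3)(1+T^2)$, giving the stated series.

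\emph{Expected main obstacle.} The computation is essentially bookkeeping; the one genuinely substantive point is verifying that the rank-$2$ submodule $\{(f_1,f_2)\in\mathbb{F}_\ell[X,Y]^2 : f_1+f_2\in(X-Y)\}$ is \emph{free} on the proposed generators rather than merely generated by them — i.e.\ that there are no hidden relations. This reduces to the observation that the change of coordinates $(f_1,f_2)\mapsto(f_1, (f_1+f_2)/(X-Y))$ is a bijection onto $\mathbb{F}_\ell[X,Y]^2$, which is where one must be slightly careful. Everything else — the degree count, the tensor-up to $\mathbb{F}_\ell[\op{c}_1,\op{c}_2]$, and the Hilbert--Poincar\'e series — is routine.
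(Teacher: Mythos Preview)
Your proof is correct and takes a genuinely different route from the paper. The paper argues indirectly: it first invokes surjectivity of the restriction (Lemma~\ref{lem:center}), then reads off the Hilbert--Poincar{\'e} series of the kernel from the short exact sequence as the difference of the series for source and target, and finally observes that the four alternating polynomials generate a free $\mathbb{F}_\ell[X,Y]$-submodule (because $1,A-B,A+B,AB$ are linearly independent over the polynomial ring) which is visibly contained in the kernel and has the same Hilbert--Poincar{\'e} series, hence must coincide with it. Your approach instead computes the kernel directly by writing a general element in the $\mathbb{F}_\ell[X,Y]$-basis $1,A,B,AB$ and reading off the conditions on the coefficients, then exhibits the free generators by hand.

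Your method is more elementary and self-contained here; the paper's counting argument, on the other hand, is the template that scales to the harder cases of Propositions~\ref{prop:detectiona}--\ref{prop:detectionc}, where a direct coefficient-by-coefficient analysis of the kernel would be considerably more laborious. Two small expository points: your decomposition of the $(f_1,f_2)$-piece as $\alpha(A-B)+\beta(X-Y)(A+B)$ actually needs $\alpha=(f_1-f_2)/2$ and $\beta=g/2$ rather than the $\alpha=f_1$ you wrote, so the invertibility of $2$ is used here (as you note); and the parenthetical about reducing $(X-Y)^2$ via $\op{c}_1^2-4\op{c}_2$ is unnecessary and slightly misleading, since the eight listed generators are simply the tensor product basis $\{1,X-Y\}\otimes\{\text{four alternating polynomials}\}$ with no reduction required.
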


\begin{proof}
We get surjectivity of the restriction morphism from Lemma~\ref{lem:center}. The short exact sequence connecting the kernel with source and target of the restriction map implies that the Hilbert--Poincar{\'e} series of the kernel is
\[
\frac{(1+T)^2}{(1-T^2)^2}-\frac{(1+T)}{(1-T^2)}=
\frac{(1+T)^2-(1+T)(1-T^2)}{(1-T^2)^2}=
\frac{(T+2T^2+T^3)(1+T^2)}{(1-T^2)(1-T^4)}. 
\]
The freeness of the $\mathbb{F}_\ell[X,Y]$-submodule generated by the elementary alternating polynomials follows from the fact that $1$, $(A-B)$, $(A+B)$ and $AB$ are $\mathbb{F}_\ell[X,Y]$-linearly independent. The submodule generated by elementary alternating polynomials is also obviously contained in the kernel, giving a lower bound for the kernel. Since the Hilbert--Poincar{\'e} series of the kernel and its submodule agree, they must be equal. The statement for the generators of the Chern class ring follows since $\mathbb{F}_\ell[X,Y]$ is a free module over $\mathbb{F}_\ell[\op{c}_1,\op{c}_2]$ with a possible choice of generators given by $1$ and $X-Y$.
\end{proof}

\subsection{Detection filtration for stabilizers of type (IIa)}

The first case to consider are stabilizer groups of the form $(\mathbb{F}_q^\times)^3$. There are three adjacent edges stabilized by $(\mathbb{F}_q^\times)^2$. The restriction maps have been determined in Section~\ref{sec:restrictions}. 

The following result determines the kernel of the restriction map
\[
\mathbb{F}_\ell[X,Y,Z]\langle A,B,C\rangle\cong \op{H}^\bullet((\mathbb{F}_q^\times)^3;\mathbb{F}_\ell)\to 
\bigoplus_i\op{H}^\bullet((\mathbb{F}_q^\times)^2;\mathbb{F}_\ell)\cong
\bigoplus_i\mathbb{F}_\ell[U_i,V_i]\langle D_i,E_i\rangle
\]
\[
X\mapsto (U_1,U_2,V_3),\, Y\mapsto (U_1,V_2,U_3),\, Z\mapsto (V_1,U_2,U_3)
\]
\[
A\mapsto (D_1,D_2,E_3),\, B\mapsto (D_1,E_2,D_3),\, C\mapsto (E_1,D_2,D_3).
\]

\begin{proposition}
\label{prop:detectiona}
Let $k=\mathbb{F}_q$ be a finite field and let $\ell\mid q-1$ be a prime different from $2$ and $3$. The following three statements describe the detection filtration on the cohomology algebra
\[
\op{H}^\bullet((\mathbb{F}_q^\times)^3;\mathbb{F}_\ell)\cong\mathbb{F}_\ell[X,Y,Z]\langle A,B,C\rangle.
\]

\begin{enumerate}
\item The kernel of the above sum of restriction maps, which equals the filtration step $\op{D}^2$, is the $\mathbb{F}_\ell[X,Y,Z]$-submodule of $\mathbb{F}_\ell[X,Y,Z]\langle A,B,C\rangle$ generated by the alternating polynomials of Proposition~\ref{prop:eltalt3}. In particular, it is a free graded $\mathbb{F}_\ell[X,Y,Z]$-module of rank $8$ with Hilbert--Poincar{\'e} series 
\[
\frac{T^2(T+1)^3(T^2-T+1)}{(1-T^2)^3}
\]
Consequently, it is also a free graded module over the Chern-class subring $\mathbb{F}_\ell[\op{c}_1,\op{c}_2,\op{c}_3]$, of rank $48$. 

\item The Hilbert--Poincar{\'e} series of the image is given by 
\[
\frac{(1+T)^2(1+T+T^3)}{(1-T^2)^2}= \frac{(1+T)^2(1+T+T^3)(1+T^2)}{(1-T^2)(1-T^4)},
\]
where the numerator is $1+3T+4T^2+5T^3+5T^4+3T^5+2T^6+T^7$.
\item 
The subquotient $\op{D}^1/\op{D}^2$ of the cohomology ring, consisting of classes essentially of rank two, is a free module over $\mathbb{F}_\ell[U,V]$, generated by the residue classes of the elements 
\begin{itemize}
\item in degree 1: $A-B$ and $B-C$,
\item in degree 2: $X-Y$, $Y-Z$, $AB$ and $BC$,
\item in degree 3: $(X-Y)A$, $(Y-Z)A$, and 
\[
(2X-Y-Z)A+(-X+2Y-Z)B+(-X-Y+2Z)C
\] (equal to $2\op{c}_1\op{e}_1-3\op{e}_2$ in terms of symmetric polynomials)
\item in degree 4: 
\[
AB(X+Y+2Z)+AC(X+2Y+Z)+BC(2X+Y+Z)
\]
 (equal to $\op{e}_1\op{e}_2$ in terms of symmetric polynomials) and \[X^2+Y^2+Z^2-XY-XZ-YZ
\] (equal to $\op{c}_1^2-3\op{c}_2$)
\item in degree 5: 
\[(Y^2+Z^2-XY-XZ)A+(X^2+Z^2-XY-YZ)B+(X^2+Y^2-XZ-YZ)C
\]
(equal to $\op{c}_1\op{e}_2-2\op{c}_2\op{e}_1$ in terms of symmetric polynomials)
\end{itemize}
In particular, the subquotient is free over the Chern-class ring $\mathbb{F}_\ell[\op{c}_1,\op{c}_2]$, of rank 24, with Hilbert--Poincar{\'e} series 
\[
\frac{T(2+T^2)(1+T)^2}{(1-T^2)^2}=\frac{T(1+T^2)(2+T^2)(1+T)^2}{(1-T^2)(1-T^4)}
\]
\end{enumerate}
\end{proposition}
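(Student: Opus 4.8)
The plan is to reduce everything to an analysis of the kernel of the $\op{d}_1$-differential on the cohomology ring $\op{H}^\bullet((\mathbb{F}_q^\times)^3;\mathbb{F}_\ell)\cong\mathbb{F}_\ell[X,Y,Z]\langle A,B,C\rangle$. By Lemma~\ref{lem:e1props}(3) this kernel is $\op{D}^2$, and by the description of restriction maps in Section~\ref{sec:restrictions} the differential out of a type (IIa) stabilizer is the sum $\op{res}_1\oplus\op{res}_2\oplus\op{res}_3$ of the three ``collision'' substitutions identifying $(X,A)$ with $(Y,B)$, resp.\ $(X,A)$ with $(Z,C)$, resp.\ $(Y,B)$ with $(Z,C)$. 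Each $\op{res}_j$ is the quotient by the super-ideal $I_1=(X-Y,A-B)$, $I_2=(X-Z,A-C)$, $I_3=(Y-Z,B-C)$, so $\op{D}^2=I_1\cap I_2\cap I_3$. For the lower bound in (1), every $\Sigma_3$-alternating element $g$ of $\mathbb{F}_\ell[X,Y,Z]\langle A,B,C\rangle$ lies in $\op{D}^2$: being alternating means the transposition exchanging $(X,A)$ and $(Y,B)$ carries $g$ to $-g$, so the substitution $X=Y$, $A=B$ sends $g$ to $-g$, hence to $0$ since $\ell\neq2$, i.e.\ $\op{res}_1 g=0$; by symmetry $\op{res}_2 g=\op{res}_3 g=0$. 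Thus the $\mathbb{F}_\ell[X,Y,Z]$-submodule generated by the eight alternating polynomials of Proposition~\ref{prop:eltalt3} is contained in $\op{D}^2$, and by that proposition (the $\mathbb{F}_\ell[X,Y,Z]$-linear independence of these eight elements) it is free of rank $8$ with the stated Hilbert--Poincar\'e series.

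For the reverse inclusion I would compute $\op{HP}(\op{D}^2)=\op{HP}(I_1\cap I_2\cap I_3)$ directly. Using additivity of Hilbert--Poincar\'e series along $0\to A\cap B\to A\oplus B\to A+B\to0$ twice gives $\op{HP}(I_1\cap I_3)=\op{HP}(I_1)+\op{HP}(I_3)-\op{HP}(I_1+I_3)$ and then $\op{HP}(\op{D}^2)=\op{HP}(I_1\cap I_3)+\op{HP}(I_2)-\op{HP}((I_1\cap I_3)+I_2)$; all the quotients occurring here are explicit, namely $\op{H}^\bullet((\mathbb{F}_q^\times)^3)/I_j\cong\op{H}^\bullet((\mathbb{F}_q^\times)^2)$, $\op{H}^\bullet((\mathbb{F}_q^\times)^3)/(I_j+I_k)\cong\op{H}^\bullet(\mathbb{F}_q^\times)$ (collapse to the common diagonal), and, after passing to difference coordinates where $I_1\cap I_3$ is a monomial ideal on pairwise coprime generators, $\op{H}^\bullet((\mathbb{F}_q^\times)^3)/((I_1\cap I_3)+I_2)$ is a ``doubled diagonal'' $\mathbb{F}_\ell[P]\langle\alpha\rangle/(P^2,P\alpha)\otimes\op{H}^\bullet(\mathbb{F}_q^\times)$ of Hilbert--Poincar\'e series $(1+T+T^2)(1+T)/(1-T^2)$. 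The resulting series agrees with that of the submodule generated by the alternating polynomials, so the two modules coincide, proving (1). Then (2) is immediate from the short exact sequence $0\to\op{D}^2\to\op{H}^\bullet((\mathbb{F}_q^\times)^3;\mathbb{F}_\ell)\to\op{im}\op{d}_1\to0$, after rewriting $\op{HP}(\op{H}^\bullet((\mathbb{F}_q^\times)^3))-\op{HP}(\op{D}^2)$ into the two factored forms.

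For (3), the remark after Definition~\ref{def:e1filtration} identifies $\op{D}^1$ on a type (IIa) stabilizer with the kernel of the restriction to the center $\mathcal{Z}\cong\mathbb{F}_q^\times$, i.e.\ with the super-ideal $(X-Z,Y-Z,A-C,B-C)$; since that restriction is surjective (Lemma~\ref{lem:center}), the sequence $0\to\op{D}^1\to\op{H}^\bullet((\mathbb{F}_q^\times)^3)\to\op{H}^\bullet(\mathbb{F}_q^\times)\to0$ gives $\op{HP}(\op{D}^1)$, and subtracting $\op{HP}(\op{D}^2)$ from (1) yields the claimed $\op{HP}(\op{D}^1/\op{D}^2)$. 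Because $\op{D}^2=\op{D}^1\cap\ker\op{d}_1$, the differential induces an injection of $\op{D}^1/\op{D}^2$ into $\bigoplus_{i=1}^3\op{H}^\bullet(G_{\tau_i};\mathbb{F}_\ell)$ landing in $\bigoplus_i\op{D}^1(G_{\tau_i})$ (trivial restriction to the common center), and each $\op{D}^1(G_{\tau_i})$ is free of rank $4$ over the corresponding rank-$2$ polynomial ring by Proposition~\ref{prop:detectionrk2}. It then remains to check that the twelve listed elements represent classes in $\op{D}^1$ (an immediate substitution into the center restriction) and that their images under $\op{d}_1$ span a free $\mathbb{F}_\ell[U,V]$-submodule of rank $12$ inside $\bigoplus_i\op{D}^1(G_{\tau_i})$; comparing with the Hilbert--Poincar\'e series of $\op{D}^1/\op{D}^2$ already computed forces this submodule to be all of $\op{D}^1/\op{D}^2$, so the latter is free over $\mathbb{F}_\ell[U,V]$ on those classes, equivalently free of rank $24$ over $\mathbb{F}_\ell[\op{c}_1,\op{c}_2]$ since $\mathbb{F}_\ell[U,V]$ is free of rank $2$ over $\mathbb{F}_\ell[\op{c}_1,\op{c}_2]$.

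The main obstacle is the alternating-polynomial bookkeeping underlying (1): one needs the precise list of eight alternating super-polynomials and their $\mathbb{F}_\ell[X,Y,Z]$-linear independence from Proposition~\ref{prop:eltalt3} (Appendix~\ref{sec:eltalt}), together with a careful pass through the triple intersection of super-ideals in difference coordinates to pin down $\op{HP}(\op{D}^2)$. The second delicate point is the explicit verification in (3) that the twelve generators map to an $\mathbb{F}_\ell[U,V]$-linearly independent family spanning the full subquotient, which is a finite linear-algebra check degree by degree. Everything else --- the identifications of the filtration steps with kernels of restriction maps, the two short exact sequences, Lemma~\ref{lem:center}, and the routine Hilbert--Poincar\'e series arithmetic into the factored forms --- is formal.
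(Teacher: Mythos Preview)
Your proof is correct and broadly parallel to the paper's, but you take a genuinely different route for part~(1). The paper never computes $\op{HP}(\op{D}^2)$ directly; instead it argues by squeezing: the alternating polynomials give a lower bound for the kernel, while the twelve explicit elements in~(3), being $\mathbb{F}_\ell[U,V]$-linearly independent and restricting trivially to the center, give a lower bound for the image. Since these two lower bounds already sum to $\op{HP}(\op{H}^\bullet((\mathbb{F}_q^\times)^3;\mathbb{F}_\ell))$, both are forced to be sharp. Thus in the paper's logic~(3) is proved \emph{first} and (1),~(2) follow from it.

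Your inclusion-exclusion computation of $\op{HP}(I_1\cap I_2\cap I_3)$ is an independent and arguably cleaner proof of~(1): the change to difference coordinates $P=X-Y$, $Q=Y-Z$, $\alpha=A-B$, $\beta=B-C$ makes $I_1$ and $I_3$ into super-ideals on disjoint sets of generators, so $I_1\cap I_3=I_1I_3$, and the ``doubled diagonal'' identification $R/((I_1\cap I_3)+I_2)\cong(\mathbb{F}_\ell[P]\langle\alpha\rangle/(P^2,P\alpha))\otimes\mathbb{F}_\ell[Z]\langle C\rangle$ is correct and gives the right series. This decouples~(1) from the explicit generator list in~(3). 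The trade-off is that the paper's approach makes the linear-independence verification in~(3) do double duty---it \emph{is} the proof of~(1) and~(2)---whereas in your version~(3) still requires that check as a separate step. On the other hand, your argument for~(1) is more robust and would be easier to push to higher rank, since it avoids having to write down and verify an explicit basis for the rank-two subquotient.
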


\begin{proof}
We already know by Proposition~\ref{prop:eltalt3} that the $\mathbb{F}_\ell[X,Y,Z]$-submodule of $\mathbb{F}_\ell[X,Y,Z]\langle A,B,C\rangle$ generated by the alternating polynomials is free, and we know its Hilbert--Poincar{\'e} series. Moreover, the alternating polynomials of Proposition~\ref{prop:eltalt3} are obviously in the kernel of the sum of restriction maps. In particular, we have a free submodule of the kernel with known Hilbert--Poincar{\'e} series. This gives a lower bound for the Hilbert--Poincar{\'e} series, i.e., the coefficients for the Hilbert--Poincar{\'e} series of the kernel are at least as big as the coefficients of the series claimed in (1). 

It can be checked that the Hilbert--Poincar{\'e} series in (1) and (2) sum to the Hilbert--Poincar{\'e} series of $\mathbb{F}_\ell[X,Y,Z]\langle A,B,C\rangle$. Hence we have shown that (2) implies (1), i.e., if we can prove the statement about the Hilbert--Poincar{\'e} series of the image, we obtain the claim for the kernel. Actually, to prove the claim it suffices to show that the Hilbert--Poincar{\'e} series claimed in (2) is a lower bound for the Hilbert--Pincar{\'e} series, again in the sense of coefficient comparison. Essentially, having lower bounds for image and kernel whose sum is the Hilbert--Poincar{\'e} series for the full cohomology algebra implies that the lower bounds are in fact sharp. 

To prove that the series claimed in (2) is a lower bound for the Hilbert--Poincar{\'e} series of the image of the restriction map, we note that the restriction map from $\op{H}^\bullet((\mathbb{F}_q^\times)^3;\mathbb{F}_\ell)$ to the center is surjective. The sum of the Hilbert--Poincar{\'e} series for the cohomology of the center with the one claimed in (3) is the one claimed in (2). Therefore, showing that the series claimed in (3) is a lower bound for the Hilbert--Poincar{\'e} series of $\op{D}^1/\op{D}^2$ will prove (2) and complete the proof. 

For (3), we now show that the given generators are $\mathbb{F}_\ell[U,V]$-linearly independent in $\bigoplus_3\op{H}^\bullet$. It is easy to see that their restriction to the center is trivial, hence this will show that the Hilbert--Poincar{\'e} series claimed in 3 is a lower bound. 

To prove freeness, we note again that the exterior parts are $\mathbb{F}_\ell[U,V]$-linearly independent. There is hence the following case distinction: 

The polynomials without exterior components are $X-Y$, $Y-Z$ and $X^2+Y^2+Z^2-XY-XZ-YZ$. Their restrictions to the three components are 
\[
(0,U-V,V-U),\quad (U-V,V-U,0),\quad ((U-V)^2,(U-V)^2,(U-V)^2).
\]
Since we are in characteristic $\neq 2$, this shows linear independence. 

The polynomials with degree 2 exterior part are $AB$, $BC$ and $AB(X+Y+2Z)+AC(X+2Y+Z)+BC(2X+Y+Z)$ which are obviously linearly independent. 

The final part is to determine linear independence of the polynomials with exterior degree 1. Here we can use that $A-B$ and $B-C$ are linearly independent: the relations in degrees 3 and 5 are
\[
(2X-Y-Z)(A-B)+(X+Y-2Z)(B-C), \textrm{ and}
\]
\begin{eqnarray*} 
&&(Y^2+Z^2-XY-XZ)(A-B)\\&+&(Y^2+2Z^2+X^2-XZ-2XY-YZ)(B-C)\\&+&
(2X^2+2Y^2+2Z^2-2XZ-2XY-2YZ)C
\end{eqnarray*}
The realization of the $A$-component of the  degree 3 element is $(U-V,U-V,2(V-U))$, that of the degree 5 element $(U(V-U),V(V-U),2U(U-V))$. 
The realization of the $B$-component of the degree 3 element is 
$(2(U-V),V-U,V-U)$, that of the degree 5 element is $(2V(V-U),2U^2+V^2-3UV,2U^2+V^2-3UV)$. Then it is clear that these are linearly independent. This is even true after including $(A-B)$, $(B-C)$, $(X-Y)A$ and $(Y-Z)A$ because the $\mathbb{F}_\ell[X,Y,Z]$-linear dependence of the degree 3 generator does not descend to a diagonal $\mathbb{F}_\ell[U,V]$-linear dependence. 
\end{proof}

\subsection{Detection filtration for stabilizers of type (IIb)}
Next we consider the case of stabilizer groups of the form $\op{GL}_2(\mathbb{F}_q)\times\mathbb{F}_q^\times$. There are two adjacent edges stabilized by $(\mathbb{F}_q^\times)^2$. The restriction maps were determined in Section~\ref{sec:restrictions}. The following result determines the kernel of the restriction map 
\begin{eqnarray*}
\mathbb{F}_\ell[\op{c}_1,\op{c}_2,Z]\langle \op{e}_1,\op{e}_2,C\rangle&\cong&
\op{H}^\bullet(\op{GL}_2(\mathbb{F}_q)\times\mathbb{F}_q^\times;\mathbb{F}_\ell)\\
&\longrightarrow & \bigoplus_i\op{H}^\bullet((\mathbb{F}_q^\times)^2;\mathbb{F}_\ell)\\
&\cong&\bigoplus_i\mathbb{F}_\ell[U_i,V_i]\langle D_i,E_i\rangle:
\end{eqnarray*}
\[
\op{c}_1\mapsto (2U_1,U_2+V_2), \op{c}_2\mapsto (U_1^2,U_2V_2), Z\mapsto (V_1,U_2)
\]
\[
\op{e}_1\mapsto (2D_1,D_2+E_2), \op{e}_2\mapsto (2U_1D_1,U_2E_2+V_2,D_2), C\mapsto (E_1,D_2)
\]

\begin{proposition}
\label{prop:detectionb}
Let $k=\mathbb{F}_q$ be a finite field and let $\ell\mid q-1$ be a prime different from $2$ and $3$. The following three statements describe the detection filtration on the cohomology algebra
\[
\op{H}^\bullet(\op{GL}_2(\mathbb{F}_q)\times\mathbb{F}_q^\times;\mathbb{F}_\ell)
\cong \mathbb{F}_\ell[\op{c}_1,\op{c}_2,Z]\langle \op{e}_1,\op{e}_2,C\rangle.
\]
\begin{enumerate}
\item
Consider the standard embedding 
\[
\mathbb{F}_\ell[\op{c}_1,\op{c}_2,Z]\langle \op{e}_1,\op{e}_2,C\rangle\hookrightarrow \mathbb{F}_\ell[X,Y,Z]\langle A,B,C\rangle,
\]
where $\op{c}_1,\op{c}_2,\op{e}_1,\op{e}_2$ are the elementary symmetric polynomials in $X,Y,A,B$. With this notation, the kernel of the restriction map is the $\mathbb{F}_\ell[\op{c}_1,\op{c}_2,Z]$-submodule generated by elements of the form $(X-Y)\cdot F$ with $F$ one of the alternating polynomial of Proposition~\ref{prop:eltalt3}. It is a free module over the ring $\mathbb{F}_\ell[\op{c}_1,\op{c}_2,Z]$, with Hilbert--Poincar{\'e} series given  by 
\[
\frac{T^4+2T^5+T^6+T^7+2T^8+T^9}{(1-T^2)^2(1-T^4)}
\]
Consequently, it is a free module over the Chern class ring $\mathbb{F}_\ell[\op{c}_1,\op{c}_2,\op{c}_3]$, of rank $24$.
\item The Hilbert--Poincar{\'e} series of the image is given by 
\[
\frac{(1+T)^3(1-T+T^2)}{(1-T^2)^2}=
\frac{(1+T)^3(1-T+T^2)(1+T^2)}{(1-T^2)(1-T^4)}
\]
\item 
The rank two subquotient of  
\[
\op{H}^\bullet(\op{GL}_2(\mathbb{F}_q)\times\mathbb{F}_q^\times;\mathbb{F}_\ell)\cong 
\mathbb{F}_\ell[\op{c}_1,\op{c}_2,Z]\langle \op{e}_1,\op{e}_2,C\rangle
\]
is a free module over $\mathbb{F}_\ell[U,V]$ generated by residue classes of the elements 
\begin{itemize}
\item in degree 1: $A+B-2C$,
\item in degree 2: $X+Y-2Z$, $(A+B)C$, 
\item in degree 3: $XB+XC+YA+YC$, 
 \[
(2X-Y-Z)A+(-X+2Y-Z)B+(-X-Y+2Z)C
\] (equal to $2\op{c}_1\op{e}_1-3\op{e}_2$ in terms of symmetric polynomials \emph{in three variables!})
\item in degree 4: 
\[
AB(X+Y+2Z)+AC(X+2Y+Z)+BC(2X+Y+Z)
\]
 (equal to $\op{e}_1\op{e}_2$ in terms of symmetric polynomials of three variables) and \[X^2+Y^2+Z^2-XY-XZ-YZ
\] (equal to $\op{c}_1^2-3\op{c}_2$ in terms of symmetric polynomials in three variables)

\item in degree 5: $(Y^2+Z^2-XY-XZ)A+(X^2+Z^2-XY-YZ)B+(X^2+Y^2-XZ-YZ)C$ (equal to $\op{c}_1\op{e}_2-2\op{c}_2\op{e}_1$ in terms of symmetric polynomials in three variables!).
\end{itemize}
Its Hilbert--Poincar{\'e} series is
\[
\frac{T(1+T^2)(1+T)^2}{(1-T^2)^2}=
\frac{T(1+T)^2(1+T^2)^2}{(1-T^2)(1-T^4)}
\]
\end{enumerate}
\end{proposition}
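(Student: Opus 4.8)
The plan is to follow the same three-step template as in the proof of Proposition~\ref{prop:detectiona}. First I would exhibit an explicit free submodule of the kernel of the restriction map, which by Lemma~\ref{lem:e1props}(3) is precisely $\op{D}^2$, giving a lower bound for $\op{HP}(\op{D}^2,T)$. Next, using that the restriction to the center is surjective (Lemma~\ref{lem:center}), the image of $\op{d}_1$ on this stabilizer fits into a short exact sequence $0\to\op{D}^1/\op{D}^2\to\mathrm{image}\to\op{H}^\bullet(\mathbb{F}_q^\times;\mathbb{F}_\ell)\to 0$, so it suffices to lower-bound $\op{HP}(\op{D}^1/\op{D}^2,T)$ by the series in (3), which I would do by verifying the listed generators. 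Finally one checks that the two lower bounds add up to $\op{HP}(\op{H}^\bullet(\op{GL}_2(\mathbb{F}_q)\times\mathbb{F}_q^\times;\mathbb{F}_\ell),T)=\frac{(1+T)^2(1+T^3)}{(1-T^2)^2(1-T^4)}$; since kernel and image are complementary, both bounds must then be equalities, which proves (1), (2) and (3) at once.

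For the kernel bound I would first record the two edge restrictions from Section~\ref{sec:restrictions}(b) in the variables of the ambient ring $\mathbb{F}_\ell[X,Y,Z]\langle A,B,C\rangle$: along the first edge the $\op{GL}_2$-torus maps in diagonally (so $X,Y\mapsto U_1$, $A,B\mapsto D_1$) with the central factor going to $(V_1,E_1)$, while along the second edge the $\op{GL}_2$-torus maps onto a genuine maximal torus ($X\mapsto U_2$, $Y\mapsto V_2$, $A\mapsto D_2$, $B\mapsto E_2$) and the central factor collides with the first coordinate ($Z\mapsto U_2$, $C\mapsto D_2$). Under the embedding $\mathbb{F}_\ell[\op{c}_1,\op{c}_2,Z]\langle\op{e}_1,\op{e}_2,C\rangle\hookrightarrow\mathbb{F}_\ell[X,Y,Z]\langle A,B,C\rangle$ any element of the source is invariant under the transposition interchanging the pairs $(X,A)$ and $(Y,B)$; since each alternating polynomial $F$ of Proposition~\ref{prop:eltalt3}, and also $X-Y$, is anti-invariant under this transposition, the products $(X-Y)F$ are invariant, hence lie in the source, and they restrict to $0$ on the first edge (the factor $X-Y$) and on the second edge ($F$ is alternating, so it vanishes when two of its variable pairs are identified, which the second restriction does). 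The $\mathbb{F}_\ell[\op{c}_1,\op{c}_2,Z]$-submodule they generate is free: $X-Y$ is a nonzerodivisor in $\mathbb{F}_\ell[X,Y,Z]\langle A,B,C\rangle$ and the $F$ are $\mathbb{F}_\ell[X,Y,Z]$-linearly independent by Proposition~\ref{prop:eltalt3}, so the $(X-Y)F$ are $\mathbb{F}_\ell[X,Y,Z]$-, hence $\mathbb{F}_\ell[\op{c}_1,\op{c}_2,Z]$-, linearly independent. Multiplying the generator-degree polynomial $T^2(T+1)^3(T^2-T+1)$ from Proposition~\ref{prop:eltalt3} by $T^2$ (the shift coming from $X-Y$) and by $\op{HP}(\mathbb{F}_\ell[\op{c}_1,\op{c}_2,Z],T)=\frac{1}{(1-T^2)^2(1-T^4)}$ reproduces the series claimed in (1); the rank-$24$ assertion over $\mathbb{F}_\ell[\op{c}_1,\op{c}_2,\op{c}_3]$ then follows since $\mathbb{F}_\ell[\op{c}_1,\op{c}_2,Z]$ is free of rank $3$ over the Chern-class ring of the ambient $3$-dimensional representation.

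For the $\op{D}^1/\op{D}^2$ bound I would check that the listed elements restrict trivially to the center (so they lie in $\op{D}^1$), and then, grouping them by exterior degree, verify $\mathbb{F}_\ell[U,V]$-linear independence of their images under the two edge restrictions: the purely even generators $X+Y-2Z$ and $\op{c}_1^2-3\op{c}_2$ become independent because $\op{char}\neq 2$, the exterior-degree-$2$ generators $(A+B)C$ and $\op{e}_1\op{e}_2$ are manifestly independent, and the exterior-degree-$1$ generators in degrees $1,3,5$ are the delicate case, handled as in Proposition~\ref{prop:detectiona} by exhibiting the $A$- and $B$-components of their edge restrictions and showing that the single $\mathbb{F}_\ell[X,Y,Z]$-relation among the corresponding three-variable alternating polynomials does not descend to a diagonal $\mathbb{F}_\ell[U,V]$-relation. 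I expect this last bookkeeping to be the main obstacle, and it is a bit more delicate than in the type (IIa) case for two reasons: the $\op{GL}_2$-factor introduces the ``doubling'' $\op{c}_1\mapsto 2U_1$, $\op{c}_2\mapsto U_1^2$, $\op{e}_1\mapsto 2D_1$ along the first edge together with the collision $Z\mapsto U_2$ along the second, so the two restriction maps look rather different; and the generators in (3) are expressed through elementary symmetric polynomials in the three variables $X,Y,Z$ rather than the two generating $\op{GL}_2$-cohomology, so one must track carefully how these three-variable expressions sit inside $\mathbb{F}_\ell[\op{c}_1,\op{c}_2,Z]\langle\op{e}_1,\op{e}_2,C\rangle$. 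Once the two lower bounds are in place and seen to sum to $\frac{(1+T)^2(1+T^3)}{(1-T^2)^2(1-T^4)}$, sharpness gives the kernel description in (1), the image series in (2), and the freeness and generating set in (3); the residual free-module statements over $\mathbb{F}_\ell[\op{c}_1,\op{c}_2,\op{c}_3]$ in (1) and over $\mathbb{F}_\ell[\op{c}_1,\op{c}_2]$ in (3) follow by the usual change of rings ($\mathbb{F}_\ell[\op{c}_1,\op{c}_2,Z]$ free of rank $3$, resp. the relevant edge polynomial ring free of rank $2$, over the smaller Chern-class ring).
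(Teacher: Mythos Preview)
Your proposal is correct and follows essentially the same approach as the paper: exhibit the free submodule $(X-Y)\cdot\mathfrak{A}(3)$ as a lower bound for the kernel, use surjectivity onto the center to reduce the image bound to a lower bound for $\op{D}^1/\op{D}^2$, and then match the two bounds against the total Hilbert--Poincar\'e series to force equality. Your write-up is in fact more explicit than the paper's, which simply asserts that the elements $(X-Y)F$ ``are obviously in the kernel'' and dispatches (3) with ``the freeness is proved as in (IIa), the generators are essentially the same''; your transposition argument for why $(X-Y)F$ lands in the $\Sigma_2$-invariant subring and your identification of the variable collision $X,Z\mapsto U_2$ on the second edge as the reason the alternating factor $F$ dies are exactly the points one has to supply to make the paper's sketch rigorous.
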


\begin{proof}
As before, we know from Proposition~\ref{prop:eltalt3} that the $\mathbb{F}_\ell[\op{c}_1,\op{c}_2,Z]$-submodule generated by the product of $(X-Y)$ with alternating polynomials is free. The Hilbert--Poincar{\'e} series is the one claimed in (1). Moreover, the elements $(X-Y)\cdot F$ are obviously in the kernel of the restriction map. In particular, we have a free submodule of the kernel with known Hilbert--Poincar{\'e} series giving us a lower bound in the sense of pointwise coefficient comparison.

It can be checked that the Hilbert--Poincar{\'e} series in (1) and (2) sum to the Hilbert--Poincar{\'e} series of $\mathbb{F}_\ell[\op{c}_1,\op{c}_2,Z]\langle \op{e}_1,\op{e}_2,C\rangle$ showing that (2) implies (1). Actually, it suffices to show that the Hilbert--Poincar{\'e} series claimed in (2) is a lower bound for the Hilbert--Poincar{\'e} series of the image. Additivity plus the lower-bound assertion for the series in (1) will imply that neither kernel nor image can be bigger than the established lower bounds. 

The restriction map from the algebra $\mathbb{F}_\ell[\op{c}_1,\op{c}_2,Z]\langle\op{e}_1,\op{e}_2,C\rangle$ to the center is surjective. The sum of the Hilbert--Poincar{\'e} series for the cohomology of the center with the one claimed in (3) is the one claimed in (2). Therefore, proving (3) will establish a lower bound for the Hilbert--Poincar{\'e} series of the image, as required to complete the proof. 

The freeness is proved as in (IIa), the generators are essentially the same. 
\end{proof}


\subsection{Detection filtration for stabilizers of type (IIc)} 
Finally, we consider the case of stabilizer groups of the form $\op{GL}_3(\mathbb{F}_q)$. There is only one adjacent edge stabilized by $(\mathbb{F}_q^\times)^2$. The restriction map was determined in Section~\ref{sec:restrictions}. The following result determines the kernel of the restriction map
\[
\mathbb{F}_\ell[\op{c}_1,\op{c}_2,\op{c}_3]
\langle\op{e}_1,\op{e}_2,\op{e}_3\rangle \cong \op{H}^\bullet(\op{GL}_3(\mathbb{F}_q);\mathbb{F}_\ell)\to \op{H}^\bullet((\mathbb{F}_q^\times)^2;\mathbb{F}_\ell)\cong \mathbb{F}_\ell[U,V]\langle D,E\rangle:
\]
\[
\op{c}_1\mapsto 2U+V,\,\op{c}_2\mapsto U^2+2UV,\,\op{c}_3\mapsto U^2V
\]
\[
\op{e}_1\mapsto 2D+E,\,\op{e}_2\mapsto 2(UD+UE+VD),\,\op{e}_3\mapsto U^2E+2UVD. 
\]

\begin{proposition}
\label{prop:detectionc}
Let $k=\mathbb{F}_q$ be a finite field and let $\ell\mid q-1$ be a prime different from $2$ and $3$. The following three statements describe the detection filtration on the cohomology algebra.
\begin{enumerate}
\item 
Consider the standard embedding 
\[
\mathbb{F}_\ell[\op{c}_1,\op{c}_2,\op{c}_3]\langle \op{e}_1,\op{e}_2,\op{e}_3\rangle\hookrightarrow \mathbb{F}_\ell[X,Y,Z]\langle A,B,C\rangle,
\]
where $\op{c}_i,\op{e}_i$ are the elementary symmetric polynomials. With this notation, the kernel of the restriction map is the $\mathbb{F}_\ell[\op{c}_1,\op{c}_2,\op{c}_3]$-submodule generated by elements of the form $(X-Y)(X-Z)(Y-Z)\cdot F$ with $F$ one of the alternating polynomials in Proposition~\ref{prop:eltalt3}. It is a free module over the ring $\mathbb{F}_\ell[\op{c}_1,\op{c}_2,\op{c}_3]$, of rank $8$, with Hilbert--Poincar{\'e} series given  by 
\[
\frac{T^8+2T^9+T^{10}+T^{11}+2T^{12}+T^{13}}{(1-T^2)(1-T^4)(1-T^6)}.
\]

\item 
The Hilbert--Poincar{\'e} series of the image is
\[
\frac{(1+T)^2(1-T+T^3)}{(1-T^2)^2}
=\frac{(1+T)^2(1-T+T^3)(1+T^2)}{(1-T^2)(1-T^4)}.
\]
\item 
The rank two subquotient of $\op{H}^\bullet(\op{GL}_3(\mathbb{F}_q);\mathbb{F}_\ell)$ is a free module over the ring $\mathbb{F}_\ell[U,V]$ generated by residue classes of the elements 
\[
2\op{c}_1\op{e}_1-3\op{e}_2, \quad \op{e}_1\op{e}_2, \quad \op{c}_1^2-3\op{c}_2, \quad \op{c}_1\op{e}_2-2\op{c}_2\op{e}_1.
\]
Its Hilbert--Poincar{\'e} series is
\[
\frac{T^3(1+T)^2}{(1-T^2)^2}=\frac{T^3(1+T)^2(1+T^2)}{(1-T^2)(1-T^4)}.
\]
\end{enumerate}
\end{proposition}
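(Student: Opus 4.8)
The plan is to run the three-step Hilbert--Poincar\'e bookkeeping already used for the stabilizers of types (IIa) and (IIb) in the proofs of Propositions~\ref{prop:detectiona} and \ref{prop:detectionb}, now specialised to the single adjacent edge of a type (IIc) vertex. First I would invoke Proposition~\ref{prop:eltalt3} to see that the $\mathbb{F}_\ell[\op{c}_1,\op{c}_2,\op{c}_3]$-submodule of $\op{H}^\bullet(\op{GL}_3(\mathbb{F}_q);\mathbb{F}_\ell)\cong\mathbb{F}_\ell[X,Y,Z]\langle A,B,C\rangle^{\Sigma_3}$ generated by the products $(X-Y)(X-Z)(Y-Z)\cdot F$, with $F$ running over the alternating polynomials of that proposition, is free of rank $8$ with the Hilbert--Poincar\'e series claimed in (1). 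Each such product is $\Sigma_3$-invariant, since the alternating polynomial $F$ and the Vandermonde factor are both anti-invariant, so it lies in the symmetric subalgebra $\op{H}^\bullet(\op{GL}_3(\mathbb{F}_q);\mathbb{F}_\ell)$; it restricts trivially to the rank $2$ edge stabilizer $(\mathbb{F}_q^\times)^{\times 2}$ because the Vandermonde factor vanishes as soon as two torus coordinates are identified; and freeness is inherited because multiplication by the nonzero polynomial $(X-Y)(X-Z)(Y-Z)$ is injective on the free $\mathbb{F}_\ell[X,Y,Z]$-module underlying $\mathbb{F}_\ell[X,Y,Z]\langle A,B,C\rangle$. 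This exhibits a free submodule of the kernel whose Hilbert--Poincar\'e series is a pointwise lower bound for that of the kernel.

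Next I would check that the series in (1) and (2) sum to $\tfrac{(1+T)(1+T^3)(1+T^5)}{(1-T^2)(1-T^4)(1-T^6)}$, the Hilbert--Poincar\'e series of $\op{H}^\bullet(\op{GL}_3(\mathbb{F}_q);\mathbb{F}_\ell)$. By additivity of Hilbert--Poincar\'e series along the short exact sequence relating kernel, cohomology and image, it then suffices to exhibit the series in (2) as a lower bound for the image; since restriction to the center $\mathbb{F}_q^\times$ is surjective by Lemma~\ref{lem:center}, the cohomology of the center contributing $\tfrac{1+T}{1-T^2}$, and since $\tfrac{1+T}{1-T^2}$ plus the series in (3) equals the series in (2), it suffices in turn to exhibit the series in (3) as a lower bound for the rank two subquotient $\op{D}^1/\op{D}^2$. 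Because the two lower bounds then add up exactly to the series of the whole cohomology algebra, they must all be sharp, so proving (3) simultaneously yields (2) and the precise description of the kernel in (1).

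For (3) I would restrict the four listed elements $2\op{c}_1\op{e}_1-3\op{e}_2$, $\op{e}_1\op{e}_2$, $\op{c}_1^2-3\op{c}_2$ and $\op{c}_1\op{e}_2-2\op{c}_2\op{e}_1$ to the edge stabilizer using the restriction formula displayed before the proposition (case (c) of Section~\ref{sec:restrictions}). A short computation, valid because $\ell\neq 2,3$, shows that up to invertible scalars their images are $(U-V)(D-E)$, $(U-V)DE$, $(U-V)^2$ and $(U-V)(UE-VD)$, each of which restricts trivially to the center and therefore represents a class in $\op{D}^1/\op{D}^2$. Expanding these in the $\mathbb{F}_\ell[U,V]$-basis $D-E,\ DE,\ U-V,\ (U-V)(D+E)$ of the kernel of restriction to the center provided by Proposition~\ref{prop:detectionrk2}, the resulting $4\times 4$ matrix over $\mathbb{F}_\ell[U,V]$ has determinant a nonzero scalar multiple of $(U-V)^4$, so the four classes are $\mathbb{F}_\ell[U,V]$-linearly independent; their degrees $3,4,4,5$ realise the generator polynomial $T^3(1+T)^2$, so they freely generate an $\mathbb{F}_\ell[U,V]$-submodule of $\op{D}^1/\op{D}^2$ with the Hilbert--Poincar\'e series claimed in (3). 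Together with the bookkeeping above, this forces $\op{D}^1/\op{D}^2$ to be exactly that free module, and the proof is complete.

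I expect the only genuinely non-routine ingredient, and hence the main obstacle, to be Proposition~\ref{prop:eltalt3} on alternating polynomials in $\mathbb{F}_\ell[X,Y,Z]\langle A,B,C\rangle$ (deferred to Appendix~\ref{sec:eltalt}), which is what pins down the $8$-element free generating set of the kernel together with its Hilbert--Poincar\'e series. Once that is granted everything reduces to the same coefficient comparisons carried out in the (IIa) and (IIb) cases, the only case-specific work being the single-edge restriction formula for $\op{GL}_3(\mathbb{F}_q)$ and the small linear-algebra check closing (3); alternatively, part (1) can be read off directly by identifying this kernel with the $\Sigma_3$-invariant part of the type (IIa) kernel of Proposition~\ref{prop:detectiona}.
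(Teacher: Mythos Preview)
Your proposal is correct and follows essentially the same three-step bookkeeping as the paper's own proof: a free lower bound for the kernel from Proposition~\ref{prop:eltalt3}, additivity to reduce (1) and (2) to a lower bound for (3), and a direct linear-independence check of the four listed classes in the image. The only minor difference is in that last check: the paper separates by exterior degree and then verifies independence of the two exterior-degree-one images $2(U-V)D+2(V-U)E$ and $2(V^2-UV)D+2(U^2-UV)E$ by inspection, whereas you compute a $4\times 4$ determinant over $\mathbb{F}_\ell[U,V]$; both are routine and your added remark identifying the type (IIc) kernel with the $\Sigma_3$-invariants of the type (IIa) kernel is a nice conceptual shortcut not made explicit in the paper.
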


\begin{proof}
As before, we know from Proposition~\ref{prop:eltalt3} that the $\mathbb{F}_\ell[\op{c}_1,\op{c}_2,\op{c}_3]$-submodule generated by the product of $(X-Y)(X-Z)(Y-Z)$ with alternating polynomials is free. The Hilbert--Poincar{\'e} series is the one claimed in (1). Moreover, the elements $(X-Y)(X-Z)(Y-Z)\cdot F$ are obviously in the kernel of the restriction map. In particular, we have a free submodule of the kernel with known Hilbert--Poincar{\'e} series giving us a lower bound in the sense of pointwise coefficient comparison.

It can be checked that the Hilbert--Poincar{\'e} series in (1) and (2) sum to the Hilbert--Poincar{\'e} series of $\mathbb{F}_\ell[\op{c}_1,\op{c}_2,\op{c}_3]\langle \op{e}_1,\op{e}_2,\op{e}_3\rangle$ showing that (2) implies (1). Actually, it suffices to show that the Hilbert--Poincar{\'e} series claimed in (2) is a lower bound for the Hilbert--Poincar{\'e} series of the image. Additivity plus the lower-bound assertion for the series in (1) will imply that neither kernel nor image can be bigger than the established lower bounds. 

The restriction map from the algebra $\mathbb{F}_\ell[U,V]\langle D,E\rangle$ to the center is surjective. The sum of the Hilbert--Poincar{\'e} series for the cohomology of the center with the one claimed in (3) is the one claimed in (2). Therefore, proving (3) will establish a lower bound for the Hilbert--Poincar{\'e} series of the image, as required to complete the proof. 

For (3), the claimed generators can be checked to have non-trivial restriction to $\mathbb{F}_\ell[U,V]\langle D,E\rangle$ but trivial restriction to the center. The exterior elements $1,D,E$ and $DE$ are $\mathbb{F}_\ell[U,V]$-linearly independent, hence the only possible relation can arise between the images 
\[
\op{res}(2\op{c}_1\op{e}_1-3\op{e}_2)=2(U-V)D+2(V-U)E  \textrm{ and}
\]
\[
\op{res}(\op{c}_1\op{e}_2-2\op{c}_2\op{e}_1)=2(V^2-UV)D+2(U^2-UV)E.
\]
However, these are obviously $\mathbb{F}_\ell[U,V]$-linearly independent. This establishes that the Hilbert--Poincar{\'e} series claimed in (3) is a lower bound. From the previous reductions, this is all that remained to prove all the claims.
\end{proof}

\section{Cohomology of detection-graded pieces and the \texorpdfstring{$E_2$}{E2}-page} 
\label{sec:e2}

The goal of this section is now to compute the cohomology of the $\op{d}_1$-differential on the subquotients of the detection filtration. More precisely, we get a precise relation of the cohomology groups of these subquotients to the rank stratification of the quotient $\op{GL}_3(k[E])\backslash\mathfrak{B}(E/k,3)$. The most difficult part to deal with will be the essential rank $2$, related to the structure of the parabolic graph. Then we need to put the pieces back together to determine the Chern-class module structure of the $E_2$-page of the isotropy spectral sequence. Most of the work evaluating the spectral sequence will be done at this point, only a small discussion of the $\op{d}_2$-differential will have to be done in the next section. 

\subsection{Essential rank 3 and maximal elementary abelian $\ell$-subgroups}

The easiest piece to deal with is the filtration step $\op{D}^2$. Since elementary abelian $\ell$-subgroups of rank $3$ only appear as stabilizers of $0$-cells, the part $\op{D}^2$ of the $E_1$-page is concentrated completely in the column $E^{0,\bullet}_1$, cf. Lemma~\ref{lem:e1props}. 

\begin{proposition}
\label{prop:rankthree}
Let $k=\mathbb{F}_q$ be a finite field, let $\overline{E}$ be an elliptic curve over $k$ with $k$-rational point $\op{O}$ and set $E=\overline{E}\setminus\{\op{O}\}$. The filtration step $\op{D}^2$ is concentrated in the column $E^{0,\bullet}_1$ and is a free $\mathbb{F}_\ell[\op{c}_1,\op{c}_2,\op{c}_3]$-module with Hilbert--Poincar{\'e} series 
\[
\frac{T^2(1+T)^3(1-T+T^2)
\left((1+2T^2+2T^4+T^6)N_a+(T^2+T^4+T^6)N_b+T^6N_c\right)}
{(1-T^2)(1-T^4)(1-T^6)}.
\]
\end{proposition}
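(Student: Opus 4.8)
The plan is to assemble $\op{D}^2E_1^{0,\bullet}$ from the local computations already carried out for the individual stabilizer types. By Lemma~\ref{lem:e1props}(2) the step $\op{D}^2$ is concentrated in the column $s=0$, and by Lemma~\ref{lem:e1props}(3) it is the direct sum, over those $0$-cells $\sigma$ of the reduced orbit complex whose stabilizer $G_\sigma$ is of type (IIa), (IIb) or (IIc), of the submodules $\op{D}^2\op{H}^\bullet(G_\sigma;\mathbb{F}_\ell)$, which are exactly the kernels of the restriction maps $\op{d}_1^{0,\bullet}|_{\op{H}^\bullet(G_\sigma;\mathbb{F}_\ell)}$ to the adjacent edge stabilizers. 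By Definition~\ref{def:na} there are $N_a$ such cells of type (IIa), $N_b$ of type (IIb) and $N_c$ of type (IIc), and the three kernels, together with their Hilbert--Poincar\'e series, have been identified in Propositions~\ref{prop:detectiona}(1), \ref{prop:detectionb}(1) and \ref{prop:detectionc}(1).

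First I would confirm that the $\mathbb{F}_\ell[\op{c}_1,\op{c}_2,\op{c}_3]$-module structure on each factor $\op{H}^\bullet(G_\sigma;\mathbb{F}_\ell)$, which by Proposition~\ref{prop:multiplicative} is cup product with the Chern classes of the representation $G_\sigma\hookrightarrow\op{GL}_3(\overline{k(E)})$, coincides with the structure via the elementary symmetric polynomials used in those propositions. This is precisely the content of the discussion of modules over Chern class rings in Section~\ref{sec:reccohom}: since $\ell\mid q-1$, the relevant representation is conjugate to one factoring through $\op{GL}_3(\mathbb{F}_q)$ (resp.\ $\op{GL}_2(\mathbb{F}_q)\times\mathbb{F}_q^\times$, resp.\ $(\mathbb{F}_q^\times)^3$), and its Chern classes are the symmetric polynomials. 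With this identification each of the three kernels is a \emph{free} $\mathbb{F}_\ell[\op{c}_1,\op{c}_2,\op{c}_3]$-module (of ranks $48$, $24$ and $8$, respectively), and a direct sum of free modules is free; this establishes freeness of $\op{D}^2$ and its concentration in column $0$.

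It then remains to compute the Hilbert--Poincar\'e series, which is additive over direct sums, hence equals $N_a\,s_a(T)+N_b\,s_b(T)+N_c\,s_c(T)$ with $s_a,s_b,s_c$ the series from Propositions~\ref{prop:detectiona}(1), \ref{prop:detectionb}(1) and \ref{prop:detectionc}(1). To put this in the claimed closed form I would rewrite each $s_?$ over the common denominator $(1-T^2)(1-T^4)(1-T^6)$, i.e.\ the denominator of $\op{HP}(\mathbb{F}_\ell[\op{c}_1,\op{c}_2,\op{c}_3],T)$, using $1-T^4=(1-T^2)(1+T^2)$ and $1-T^6=(1-T^2)(1+T^2+T^4)$. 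For type (IIa) the numerator then becomes $T^2(1+T)^3(1-T+T^2)(1+T^2)(1+T^2+T^4)=T^2(1+T)^3(1-T+T^2)(1+2T^2+2T^4+T^6)$; using $(1+T)(1-T+T^2)=1+T^3$ to spot the common factor, the type (IIb) and type (IIc) numerators become $T^2(1+T)^3(1-T+T^2)(T^2+T^4+T^6)$ and $T^2(1+T)^3(1-T+T^2)\,T^6$. Adding these with coefficients $N_a,N_b,N_c$ and pulling out $T^2(1+T)^3(1-T+T^2)$ yields exactly the stated series. The only genuine labour here is this Hilbert--Poincar\'e bookkeeping, together with the one point one must not overlook — the identification of the Chern-class module structure with the symmetric-polynomial structure; I do not expect a real obstacle, since all the substantive local kernel computations already reside in Propositions~\ref{prop:detectiona}--\ref{prop:detectionc}.
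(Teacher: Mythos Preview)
Your proposal is correct and follows essentially the same approach as the paper's own proof: both invoke Lemma~\ref{lem:e1props} to reduce to the direct sum of the type (IIa--c) kernels, then cite Propositions~\ref{prop:detectiona}--\ref{prop:detectionc} for freeness and the individual Hilbert--Poincar\'e series, and finally rewrite the sum over the common denominator $(1-T^2)(1-T^4)(1-T^6)$. The paper's proof is terser and omits the explicit check that the $E_1$-page Chern-class action matches the symmetric-polynomial action used in those propositions; your inclusion of that step (via Proposition~\ref{prop:multiplicative} and Section~\ref{sec:reccohom}) is a welcome clarification but not a different idea.
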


\begin{proof}
Using Lemma~\ref{lem:e1props}, the result follows immediately from Propositions~\ref{prop:detectiona}, \ref{prop:detectionb} and \ref{prop:detectionc}, and reordering the sum
\begin{eqnarray*}
&&N_a\cdot\frac{T^2(T+1)^3(T^2-T+1)}{(1-T^2)^3}\\&+&
N_b\cdot\frac{T^4(T+1)^3(T^2-T+1)}{(1-T^2)^2(1-T^4)}\\&+&
N_c\cdot\frac{T^8(T+1)^3(T^2-T+1)}{(1-T^2)(1-T^4)(1-T^6)}
\end{eqnarray*}
\end{proof}

\begin{corollary}
\label{cor:631}
The total rank of the $\op{D}^2$-part of the $\mathbb{F}_\ell$-cohomology of $\op{GL}_3(k[E])$ is $8\cdot(\#\overline{E}(\mathbb{F}_q))^2$.
\end{corollary}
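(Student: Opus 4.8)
The plan is to evaluate the Hilbert--Poincar\'e series from Proposition~\ref{prop:rankthree} at $T=1$. Since the $\op{D}^2$-part is a finitely generated $\mathbb{F}_\ell[\op{c}_1,\op{c}_2,\op{c}_3]$-module, its total rank over the Chern-class ring is obtained by factoring out the Hilbert--Poincar\'e series of $\mathbb{F}_\ell[\op{c}_1,\op{c}_2,\op{c}_3]$, namely $(1-T^2)^{-1}(1-T^4)^{-1}(1-T^6)^{-1}$, and evaluating the remaining Laurent polynomial at $T=1$. Concretely, one writes the series in the form $\op{HP}(\mathbb{F}_\ell[\op{c}_1,\op{c}_2,\op{c}_3],T)\cdot F(T)$ and then reads off the total rank as $F(1)$, since freeness (established in Proposition~\ref{prop:rankthree}) guarantees that this limit is well-defined and equals the actual rank.

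The cleanest way to carry out the computation is to go back to the three unreduced summands appearing in the proof of Proposition~\ref{prop:rankthree}, i.e.\ the contributions
\[
N_a\cdot\frac{T^2(T+1)^3(T^2-T+1)}{(1-T^2)^3},\quad
N_b\cdot\frac{T^4(T+1)^3(T^2-T+1)}{(1-T^2)^2(1-T^4)},\quad
N_c\cdot\frac{T^8(T+1)^3(T^2-T+1)}{(1-T^2)(1-T^4)(1-T^6)},
\]
coming from Propositions~\ref{prop:detectiona}, \ref{prop:detectionb} and \ref{prop:detectionc} respectively. For each, divide by the relevant Chern-class Hilbert--Poincar\'e series and evaluate the remaining factor at $T=1$: the type (IIa) term gives rank $48$ per vertex (as already recorded in Proposition~\ref{prop:detectiona}(1)), the type (IIb) term gives rank $24$ per vertex (Proposition~\ref{prop:detectionb}(1)), and the type (IIc) term gives rank $8$ per vertex (Proposition~\ref{prop:detectionc}(1)). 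Hence the total rank is $48 N_a+24 N_b+8 N_c = 8(6N_a+3N_b+N_c)$.

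Finally, invoke Lemma~\ref{lem:ellform}(1), which states $6N_a+3N_b+N_c=(\#\overline{E}(\mathbb{F}_q))^2$, to conclude that the total rank equals $8\cdot(\#\overline{E}(\mathbb{F}_q))^2$, as claimed. There is essentially no obstacle here: the only thing to be careful about is that the evaluation at $T=1$ is legitimate, which is justified by the freeness of the module over the Chern-class ring, and that the per-vertex ranks $48$, $24$, $8$ match those already computed in the respective propositions --- so this corollary is just a bookkeeping consequence of the earlier results.
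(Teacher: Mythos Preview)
Your proof is correct and follows essentially the same approach as the paper: both derive the rank from Proposition~\ref{prop:rankthree} and identify the numerical value via Lemma~\ref{lem:ellform}(1). You have simply spelled out the intermediate step $48N_a+24N_b+8N_c=8(6N_a+3N_b+N_c)$ that the paper leaves implicit.
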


\begin{proof}
The rank statement follows directly from Proposition~\ref{prop:rankthree}. The numerical identification follows from Lemma~\ref{lem:ellform}.
\end{proof}

\subsection{Essential rank 1 and the quotient of the building}

As a next step, we compute the quotient $E_1^{s,t}/\op{D}^1E_1^{s,t}$ of classes essentially of rank 1, i.e., the quotient of the $E_1$-page modulo the cohomology classes whose restriction to the center is trivial. The result contains cohomology classes which are torsion for the Chern classes $\op{c}_2$ and $\op{c}_3$, and these are related to the quotient $\op{GL}_3(k[E])\backslash\mathfrak{B}(E/k,3)$. 

\begin{proposition}
\label{prop:rankonee1}
Let $k=\mathbb{F}_q$ be a finite field, let $\overline{E}$ be an elliptic curve over $k$ with $k$-rational point $\op{O}$ and set $E=\overline{E}\setminus\{\op{O}\}$. The graded piece $\op{gr}^{\op{D}}_1E_1=E_1/\op{D}^1E_1$ of the $E_1$-term of the isotropy spectral sequence is given by 
\[
\op{gr}^{\op{D}}_1E_1^{s,t}\cong \op{H}^t(k^\times;\mathbb{F}_\ell)\otimes_{\mathbb{F}_\ell}
\op{C}^s(\op{GL}_3(k[E])\backslash\mathfrak{B}(E/k,3);\mathbb{F}_\ell). 
\]
Under this identification, the $\op{d}_1$-differentials satisfy 
\[
\op{d}_1^{s,t}=\op{id}_{\op{H}^t(k^\times;\mathbb{F}_\ell)}\otimes \op{d}_1^{s,0}.
\]
\end{proposition}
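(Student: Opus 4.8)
The proposition has two parts: the identification of the graded piece $\op{gr}^{\op{D}}_1 E_1^{s,t}$ as a module, and the identification of the $\op{d}_1$-differential induced on it. Both assertions are cell-by-cell, so the plan is to reduce everything to a single stabilizer and then reassemble. Recall from the remarks following Definition~\ref{def:e1filtration} (and as made explicit by the computations of Section~\ref{sec:detection}) that, in the factor $\op{H}^t(G_\sigma;\mathbb{F}_\ell)$ of $E_1^{s,t}$ attached to a cell orbit $\sigma$, the step $\op{D}^1$ is the kernel of the restriction $\op{res}_{\mathcal{Z}}:\op{H}^t(G_\sigma;\mathbb{F}_\ell)\to\op{H}^t(\mathcal{Z};\mathbb{F}_\ell)$ to the center $\mathcal{Z}\cong k^\times$ when $\sigma$ lies in the parabolic subgraph, and is zero otherwise; in the latter case the reductive part of $G_\sigma$ is already $\mathcal{Z}$, so that $\op{res}_{\mathcal{Z}}$ is itself an isomorphism $\op{H}^\bullet(G_\sigma;\mathbb{F}_\ell)\cong\op{H}^\bullet(k^\times;\mathbb{F}_\ell)$. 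Since the center is embedded in every stabilizer as the homotheties of the corresponding rank three bundle (Section~\ref{sec:inclusions}), Lemma~\ref{lem:center} shows $\op{res}_{\mathcal{Z}}$ is surjective for every $\sigma$. Combining these two facts, $\op{res}_{\mathcal{Z}}$ induces an isomorphism $\op{H}^t(G_\sigma;\mathbb{F}_\ell)/\op{D}^1\op{H}^t(G_\sigma;\mathbb{F}_\ell)\xrightarrow{\ \sim\ }\op{H}^t(k^\times;\mathbb{F}_\ell)$ for every cell. Taking the direct sum of these isomorphisms over the orbits of $s$-cells — a finite sum after the reduction of the $E_1$-page (\ref{prop:simplee1}), where one also uses $\mathfrak{X}\simeq\op{GL}_3(k[E])\backslash\mathfrak{B}(E/k,3)$ — and recalling that $\op{C}^s(-;\mathbb{F}_\ell)$ is freely generated by the $s$-cells, one obtains
\[
\op{gr}^{\op{D}}_1 E_1^{s,t}\;\cong\;\bigoplus_{\sigma}\op{H}^t(k^\times;\mathbb{F}_\ell)\;\cong\;\op{H}^t(k^\times;\mathbb{F}_\ell)\otimes_{\mathbb{F}_\ell}\op{C}^s(\op{GL}_3(k[E])\backslash\mathfrak{B}(E/k,3);\mathbb{F}_\ell).
\]

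For the differential I would use that, up to the orientation signs of the cells, $\op{d}_1$ on $E_1$ is the sum $\bigoplus_{\tau\supset\sigma}\op{res}_{G_\sigma}^{G_\tau}$ of restriction maps over incident cells. Because the center is included compatibly in all stabilizers as homotheties, restricting from $G_\sigma$ to $G_\tau$ and then to $\mathcal{Z}$ agrees with restricting directly from $G_\sigma$ to $\mathcal{Z}$; hence, once both quotients are identified with $\op{H}^t(k^\times;\mathbb{F}_\ell)$ via $\op{res}_{\mathcal{Z}}$ as above, each individual restriction $\op{res}_{G_\sigma}^{G_\tau}$ descends to the identity of $\op{H}^t(k^\times;\mathbb{F}_\ell)$. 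Therefore the differential induced on $\op{gr}^{\op{D}}_1 E_1$ is $\op{id}_{\op{H}^t(k^\times;\mathbb{F}_\ell)}$ tensored with the cellular coboundary (the signed incidence operator) of the quotient complex. Specializing to $t=0$: there $\op{H}^0(G_\sigma;\mathbb{F}_\ell)=\mathbb{F}_\ell$ with all restriction maps the identity and $\op{D}^1 E_1^{s,0}=0$, so $\op{gr}^{\op{D}}_1 E_1^{s,0}=E_1^{s,0}=\op{C}^s(\op{GL}_3(k[E])\backslash\mathfrak{B}(E/k,3);\mathbb{F}_\ell)$ and $\op{d}_1^{s,0}$ is precisely that cellular coboundary. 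This gives $\op{d}_1^{s,t}=\op{id}_{\op{H}^t(k^\times;\mathbb{F}_\ell)}\otimes\op{d}_1^{s,0}$.

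The step I expect to need the most care is purely a bookkeeping matter: one must know that the abstract definition of $\op{D}^1$ via detection on elementary abelian subgroups agrees, at each type of stabilizer occurring in the parabolic graph, with the concrete description as the kernel of restriction to the center. This is exactly what the case-by-case computations of Section~\ref{sec:detection} supply (for the rank-two stabilizers it is Proposition~\ref{prop:detectionrk2}), so in the proof I would simply invoke them. The remaining points are minor: tracking orientation signs so that the operator appearing in the formula for $\op{d}_1^{s,t}$ is literally $\op{d}_1^{s,0}$ and not merely conjugate to it, and keeping in mind that the computation is carried out in the finite reduced model $\mathfrak{X}$ while the conclusion is phrased via the cochain complex of the quotient $\op{GL}_3(k[E])\backslash\mathfrak{B}(E/k,3)$, which is legitimate since the two are homotopy equivalent.
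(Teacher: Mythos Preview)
Your proposal is correct and follows essentially the same approach as the paper's proof: identify $\op{D}^1$ cell-by-cell as the kernel of restriction to the center, invoke Lemma~\ref{lem:center} for surjectivity, and use the compatible inclusion of the center in all stabilizers to see that the induced restriction maps on the quotients $\op{H}^\bullet(G_\sigma;\mathbb{F}_\ell)/\op{D}^1$ are all the identity. Your write-up is in fact more careful than the paper's (which is quite terse), making explicit the case of cells outside the parabolic subgraph and the passage via the reduced model $\mathfrak{X}$; none of this changes the underlying argument.
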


\begin{proof}
For any stabilizer subgroup $G\subseteq\op{GL}_3(k[E])$, the restriction map for the inclusion $k^\times\cong\mathfrak{Z}(\op{GL}_3(k[E]))\hookrightarrow G$ of the center is surjective by Lemma~\ref{lem:center}. By Definition~\ref{def:e1filtration} the filtration step $\op{D}^1$ is exactly given by the kernels of the restriction maps to the center. Since the center stabilizes the whole quotient, there will be a copy of $\op{H}^\bullet(k^\times;\mathbb{F}_\ell)$ for every cell in the quotient $\op{GL}_3(k[E])\backslash\mathfrak{B}(E/k,3)$. Moreover, the induced maps between the quotients $\op{H}^\bullet/\op{D}^1$ of the cohomology of the stabilizers will always be the identity. This implies that the quotient $E_1/\op{D}^1E_1$ of the $E_1$-page has the form claimed above.
\end{proof}

\begin{corollary}
\label{cor:rankonee1}
In the situation of Proposition~\ref{prop:rankonee1}, the cohomology of the graded piece $\op{gr}^{\op{D}}_1E_1=E_1/\op{D}^1E_1$ is of the form 
\[
\op{H}^s(\op{gr}^{\op{D}}_1E_1)\cong \op{H}^\bullet(k^\times;\mathbb{F}_\ell)\otimes_{\mathbb{F}_\ell} \op{H}^s(\op{GL}_3(k[E])\backslash\mathfrak{B}(E/k,3);\mathbb{F}_\ell). 
\]
For the cohomology of the quotient we have 
\[
\op{H}^s(\op{GL}_3(k[E])\backslash\mathfrak{B}(E/k,3);\mathbb{F}_\ell)\cong \left\{\begin{array}{ll}
\mathbb{F}_\ell & s=0\\
\op{St}_3(k)\otimes\mathbb{F}_\ell & s=2\\
0 & \textrm{otherwise}\end{array}\right.
\]
where $\op{St}_3(k)$ is the Steinberg representation of $\op{GL}_3(k)$. 
Therefore, it is a free $\mathbb{F}_\ell[\op{c}_1]$-module of total rank $2(q^3+1)$ with Hilbert--Poincar{\'e} series (in the total grading $n=s+t$)
\[
\frac{(1+T)(1+q^3T^2)}{(1-T^2)}.
\]
\end{corollary}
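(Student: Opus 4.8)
The plan is to combine the tensor decomposition of $\op{gr}^{\op{D}}_1E_1$ furnished by Proposition~\ref{prop:rankonee1} with the Solomon--Tits theorem applied to the quotient described in Theorem~\ref{thm:quotient}.

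First I would use Proposition~\ref{prop:rankonee1}, which identifies the complex $\op{gr}^{\op{D}}_1E_1$ equipped with its $\op{d}_1$-differential with $\op{H}^\bullet(k^\times;\mathbb{F}_\ell)\otimes_{\mathbb{F}_\ell}\op{C}^\bullet(\op{GL}_3(k[E])\backslash\mathfrak{B}(E/k,3);\mathbb{F}_\ell)$, the differential being $\op{id}\otimes\op{d}_1^{\bullet,0}$ with $\op{d}_1^{\bullet,0}$ the cellular differential of the quotient. Since we work over the field $\mathbb{F}_\ell$ and the first factor is a fixed graded vector space on which the differential acts trivially, cohomology commutes with this tensor product, yielding $\op{H}^\bullet(\op{gr}^{\op{D}}_1E_1)\cong\op{H}^\bullet(k^\times;\mathbb{F}_\ell)\otimes_{\mathbb{F}_\ell}\op{H}^\bullet(\op{GL}_3(k[E])\backslash\mathfrak{B}(E/k,3);\mathbb{F}_\ell)$; keeping track of the $(s,t)$-bigrading gives the first displayed isomorphism of the corollary. (As in Section~\ref{prop:simplee1} the cochain complex is really that of the finite model $\mathfrak{X}$, but $\mathfrak{X}$ is homotopy equivalent to the full quotient, so its cohomology is the same.)

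Next I would compute $\op{H}^\bullet(\op{GL}_3(k[E])\backslash\mathfrak{B}(E/k,3);\mathbb{F}_\ell)$. By Theorem~\ref{thm:quotient} this quotient has the homotopy type of $\Sigma\mathcal{F}\ell_3(k)$, the suspension of the flag complex of $k^3$, i.e.\ of the Tits building of $\op{GL}_3(k)$. The complex $\mathcal{F}\ell_3(k)$ is connected and $1$-dimensional, and by the Solomon--Tits theorem it is homotopy equivalent to a wedge of circles with reduced homology concentrated in degree $1$, where $\widetilde{\op{H}}_1(\mathcal{F}\ell_3(k);\mathbb{Z})\cong\op{St}_3(k)$ is the Steinberg module, free of rank $q^3=q^{\binom{3}{2}}$ as an abelian group. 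Applying the suspension isomorphism together with universal coefficients, and using self-duality of $\op{St}_3(k)$, one gets $\op{H}^0\cong\mathbb{F}_\ell$, $\op{H}^2\cong\op{St}_3(k)\otimes\mathbb{F}_\ell$, and $\op{H}^s=0$ for $s\neq 0,2$; in particular the total $\mathbb{F}_\ell$-dimension is $1+q^3$ and the Hilbert--Poincar\'e series of the quotient is $1+q^3T^2$.

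Finally I would assemble the $\mathbb{F}_\ell[\op{c}_1]$-module structure. Since $\ell\neq 2$ we have $\op{H}^\bullet(k^\times;\mathbb{F}_\ell)\cong\mathbb{F}_\ell[X]\langle A\rangle$ with $\deg X=2$, $\deg A=1$, which is free of rank $2$ over $\mathbb{F}_\ell[X]$ on $\{1,A\}$. By Propositions~\ref{prop:multiplicative} and \ref{prop:rankonee1}, $\op{c}_1$ acts on the factor $\op{H}^\bullet(k^\times;\mathbb{F}_\ell)$ by cup product with the first Chern class of the inclusion of the center $\mathcal{Z}(\op{GL}_3(k[E]))\cong k^\times$ as homotheties, namely by multiplication with $3X$; as $\ell\neq 3$ this element generates $\mathbb{F}_\ell[X]$, so $\op{H}^\bullet(k^\times;\mathbb{F}_\ell)$ is free of rank $2$ over $\mathbb{F}_\ell[\op{c}_1]$. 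Tensoring with the finite-dimensional graded $\mathbb{F}_\ell$-vector space $\op{H}^\bullet(\op{GL}_3(k[E])\backslash\mathfrak{B}(E/k,3);\mathbb{F}_\ell)$ of dimension $1+q^3$ then shows $\op{H}^\bullet(\op{gr}^{\op{D}}_1E_1)$ is free over $\mathbb{F}_\ell[\op{c}_1]$ of total rank $2(q^3+1)$, and multiplicativity of Hilbert--Poincar\'e series under graded tensor products gives the series
\[
\frac{1+T}{1-T^2}\cdot(1+q^3T^2)=\frac{(1+T)(1+q^3T^2)}{1-T^2}.
\]
The only non-formal ingredient is the Solomon--Tits computation of $\widetilde{\op{H}}_\bullet(\mathcal{F}\ell_3(k))$ and its identification with the Steinberg module; everything else is the K\"unneth formula and bookkeeping, so I do not expect a genuine obstacle here.
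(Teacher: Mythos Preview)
Your proposal is correct and follows essentially the same route as the paper: tensor decomposition from Proposition~\ref{prop:rankonee1}, Solomon--Tits applied to the suspension of the flag complex from Theorem~\ref{thm:quotient}, and the Chern-class module structure via Proposition~\ref{prop:multiplicative}. You add a little more detail (the explicit $\op{c}_1\mapsto 3X$ restriction and the use of $\ell\neq 3$, the self-duality of the Steinberg module), but these are just elaborations of the same argument.
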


\begin{proof}
The general claim about the form of the cohomology of the graded piece $\op{gr}_1^{\op{D}}$ follows from Proposition~\ref{prop:rankonee1}. The description of the cohomology of the quotient follows from Theorem~\ref{thm:quotient} and the Solomon--Tits theorem which identifies the reduced cohomology of the flag complex for $k^n$ as being trivial except in degree $n-1$, where it is the Steinberg representation of $\op{GL}_n(k)$. 
For the Chern-class module structure, we use the identification of the multiplicative structure of the $E_1$-term in Proposition~\ref{prop:multiplicative}. Essentially, the universal Chern class  $\op{c}_1$ acts by cup-product with the restriction of $\op{c}_1$ to the center. But the cohomology of the center is $\mathbb{F}_\ell[\op{c}_1]\langle \op{e}_1\rangle$, a free module of rank $2$ over $\mathbb{F}_\ell[\op{c}_1]$. The remaining claims on the rank and Hilbert--Poincar{\'e} series now follow from the fact that the dimension of the Steinberg representation for $\op{GL}_n(\mathbb{F}_q)$ is $q^3$.
\end{proof}

\begin{remark}
Since the second column of the $E_1$-page only contains cohomology of cells stabilized by the center, the classes in the second column are torsion in $E_1$ for $\op{c}_2$ and $\op{c}_3$. However, the classes in the zero-column are non-torsion for $\op{c}_2$ and $\op{c}_3$; they only appear as torsion in the subquotient $E_1/\op{D}^1E_1$. In fact, the restriction of the universal classes $\op{e}_1$ and $\op{c}_1$ to the $E_1$-page of the isotropy spectral sequence for the  $\op{GL}_3(\mathbb{F}_q[E])$-action land in $E_1^{0,1}$ and $E_1^{0,2}$, respectively. The images of these elements in the $E_1$-page are natural lifts for the generators of the $s=0$-cohomology of the quotient $E_1/\op{D}^1E_1$. In particular, we see that these classes generate a free module of rank 2 over the Chern class ring $\mathbb{F}_\ell[\op{c}_1,\op{c}_2,\op{c}_3]$. Such matters are discussed below when we put together the detection-graded pieces to the $E_2$-page of the spectral seqeuence.
\end{remark}

\subsection{Essential rank 2 and the parabolic graph}

Now the most complicated subquotient for which we need to compute the cohomology is $\op{D}^1/\op{D}^2$ consisting of classes essentially of rank $2$. Its cohomology is very much related to the structure of the parabolic graph. From the results in Section~\ref{sec:detection}, we know the individual subquotients $\op{D}^1/\op{D}^2$ for the cohomology groups of stabilizers. 

By Lemma~\ref{lem:e1props}, the filtration step $\op{D}^1/\op{D}^2$ is a two-term complex 
\[
\overline{\op{d}}:\op{D}^1E^{0,\bullet}_1/\op{D}^2E^{0,\bullet}_1 \to\op{D}^1E^{1,\bullet}_1
\]
where the map $\overline{\op{d}}$ is the one induced from the $\op{d}^1$-differential of the isotropy spectral sequence. The term $\op{D}^1E^{1,\bullet}_1$ consists of the $\op{D}^1$-terms of the cohomology rings of edge stabilizers of the parabolic graph. We know from Proposition~\ref{prop:detectionrk2} that these are free graded $\mathbb{F}_\ell[\op{c}_1,\op{c}_2]$-modules of rank 8. The terms $\op{D}^1E^{0,\bullet}_1/\op{D}^2E^{0,\bullet}_1$ are the direct sums of $\op{D}^1/\op{D}^2$-subquotients of the cohomology rings of vertex stabilizers, which are free of $\mathbb{F}_\ell[\op{c}_1,\op{c}_2]$-modules of ranks described in Propositions~\ref{prop:detectiona}, \ref{prop:detectionb} and \ref{prop:detectionc}. 

\begin{remark}
The generators always arise in diamond-shaped groups of 4 elements. This is directly related to the structure of alternating polynomials in two variables. 
\end{remark}

The first important step is to show that the differential $\overline{\op{d}}$ of the complex $\op{D}^1/\op{D}^2$ is in fact linear over the Chern-class ring. This will allow to compute kernel and cokernel of the differential by only looking at the degrees in which the generators are living. 

\begin{lemma}
\label{lem:linearization}
The map $\overline{\op{d}}$ induced by the $\op{d}_1$-differential on the subquotient $\op{D}^1/\op{D}^2$ is linear over the Chern-class ring $\mathbb{F}_\ell[\op{c}_1,\op{c}_2]$. 
\end{lemma}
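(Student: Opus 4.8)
The plan is to read the statement off from the multiplicative structure of the spectral sequence recorded in Proposition~\ref{prop:multiplicative}, together with the compatibility of the detection filtration with $\op{d}_1$ from Lemma~\ref{lem:e1props}. The first step is to observe that the full differential $\op{d}_1$ is already linear over the whole Chern-class ring $\mathbb{F}_\ell[\op{c}_1,\op{c}_2,\op{c}_3]$ on the $E_1$-page. Indeed, by Proposition~\ref{prop:multiplicative} the class $\op{c}_i$ acts on the factor $\op{H}^\bullet(G_\tau;\mathbb{F}_\ell)$ by cup product with the $i$-th Chern class of the tautological representation $G_\tau\hookrightarrow\op{GL}_3(\overline{k(E)})$, whereas $\op{d}_1$ is, up to signs, the sum of the restriction maps $\op{res}^{G_\sigma}_{G_\tau}$ along the inclusions $G_\tau\subseteq G_\sigma$ of stabilizers of adjacent cells. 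Each such restriction map is a homomorphism of graded rings, hence commutes with $\cup$-products, and since the representation of $G_\tau$ is by construction the restriction of the one of $G_\sigma$, it carries the $i$-th Chern class of the latter to that of the former. Therefore $\op{d}_1(\op{c}_i\cup x)=\op{c}_i\cup\op{d}_1(x)$ in every factor, i.e. $\op{d}_1$ is $\mathbb{F}_\ell[\op{c}_1,\op{c}_2,\op{c}_3]$-linear. (Equivalently: the classes $\op{c}_i$ are permanent cycles, being restricted from $\op{H}^\bullet(\op{GL}_3(k[E]);\mathbb{F}_\ell)$ along $\op{res}\circ\delta$, and $\op{d}_1$ is a derivation.)

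The remaining step is formal. By Lemma~\ref{lem:e1props}(4) each filtration piece $\op{D}^iE_1^{s,t}$ is an $\mathbb{F}_\ell[\op{c}_1,\op{c}_2,\op{c}_3]$-submodule of $E_1^{s,t}$, by Lemma~\ref{lem:e1props}(1) the differential $\op{d}_1$ respects the filtration, and by Lemma~\ref{lem:e1props}(2) one has $\op{D}^2E^{1,\bullet}_1=0$. Hence $\op{d}_1$ induces an $\mathbb{F}_\ell[\op{c}_1,\op{c}_2,\op{c}_3]$-linear map between subquotients, which is exactly the two-term complex $\overline{\op{d}}\colon\op{D}^1E^{0,\bullet}_1/\op{D}^2E^{0,\bullet}_1 \to\op{D}^1E^{1,\bullet}_1$. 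Restricting scalars along $\mathbb{F}_\ell[\op{c}_1,\op{c}_2]\hookrightarrow\mathbb{F}_\ell[\op{c}_1,\op{c}_2,\op{c}_3]$ now yields the asserted $\mathbb{F}_\ell[\op{c}_1,\op{c}_2]$-linearity of $\overline{\op{d}}$.

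I do not expect a genuine obstacle: with Proposition~\ref{prop:multiplicative} and Lemma~\ref{lem:e1props} available the argument is essentially formal. The one point that deserves care — and is really the reason for isolating this as a lemma — is to match this $\mathbb{F}_\ell[\op{c}_1,\op{c}_2]$-module structure with the explicit free-module presentations of the various subquotients in Propositions~\ref{prop:detectionrk2}, \ref{prop:detectiona}, \ref{prop:detectionb} and \ref{prop:detectionc}, so that, as announced, $\ker\overline{\op{d}}$ and $\op{coker}\overline{\op{d}}$ may be computed simply by comparing the degrees in which the listed generators sit. This matching is a direct unwinding of the restriction formulas of Section~\ref{sec:restrictions}: every vertex-to-edge restriction is a ring homomorphism, hence automatically linear over the image of the vertex Chern classes, and one runs through the cases of a type (IIc) vertex with its single adjacent edge, a type (IIb) vertex with two, and a type (IIa) vertex with three — for the last of which the relevant rank-$2$ structure is the ``diagonal'' one across the three edges appearing in Proposition~\ref{prop:detectiona}(3). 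This is pure bookkeeping.
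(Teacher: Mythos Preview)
Your proof is correct and takes essentially the same approach as the paper: the paper's argument is precisely your parenthetical remark (the $\op{c}_i$ are permanent cycles and $\op{d}_1$ is a derivation, so the Leibniz rule gives linearity), while your main paragraph unpacks this via Proposition~\ref{prop:multiplicative} and naturality of Chern classes under restriction. You are in fact more careful than the paper in spelling out the formal passage to the subquotient via Lemma~\ref{lem:e1props}, which the paper leaves implicit; your third paragraph on matching the module structures to the explicit generators is extra commentary beyond what the lemma requires, but it is accurate.
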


\begin{proof}
The Leibniz rule for right multiplication with a Chern class is
\[
\op{d}_1(x\cup\op{c}_i)=(\op{d}_1 x)\cup \op{c}_i+(-1)^{2i}x\cup(\op{d}_1\op{c}_i)
\]
Since the Chern classes $\op{c}_i$ here are the restrictions of the universal Chern classes to $\op{H}^\bullet(\op{GL}_3(k[E]);\mathbb{F}_\ell)$, they survive to the $E_\infty$-page. In particular, their differentials are always trivial, hence the second term in the above Leibniz rule vanishes. 
\end{proof}

The identification of the cohomology of the complex $\op{D}^1/\op{D}^2$ will be done in several steps. We first consider the $\mathbb{F}_\ell[X,Y]$-generators of the subquotients in vertex cohomology rings which lie in degrees 1,2,2,3. These generators are directly given by partially alternating polynomials. The following result shows that this part of the cohomology of $\op{D}^1/\op{D}^2$ is related to the $\ell$-rank of the elliptic curve $\overline{E}/\mathbb{F}_q$. 

\begin{proposition}
\label{prop:step1}
\begin{enumerate}
\item 
The kernel of the differential $\overline{\op{d}}$ of the complex $\op{D}^1/\op{D}^2$ in degree 1 has $\mathbb{F}_\ell$-dimension $\op{rk}_\ell(\overline{E})$, i.e., the $\ell$-rank of the elliptic curve. 
\item The cokernel in degree 1 has $\mathbb{F}_\ell$-dimension $N_a+\op{rk}_\ell(\overline{E})$.
\item The differential $\overline{\op{d}}$ on the other alternating $\mathbb{F}_\ell[X,Y]$-generators in  degrees 2,2,3 is the same as in degree 1. 
\item
The intersection of the kernel of $\overline{\op{d}}$ with the submodule generated by the generators in degree 1,2,2,3 is a free $\mathbb{F}_\ell[\op{c}_1,\op{c}_2]$-module with Hilbert--Poincar{\'e} series
\[
\frac{\op{rk}_\ell(\overline{E})T(1+T)^2(1+T^2)}{(1-T^2)(1-T^4)}
\]
\item
The  cokernel of the restriction of $\overline{d}$ to the generators in degrees 1,2,2,3 is a free module over $\mathbb{F}_\ell[\op{c}_1,\op{c}_2]$ with  Hilbert--Poincar{\'e} series
\[
\frac{\left(N_a+\op{rk}_\ell(\overline{E})\right)T(1+T)^2(1+T^2)}{(1-T^2)(1-T^4)}
\]
\end{enumerate}
\end{proposition}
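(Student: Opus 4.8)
The plan is to reduce the whole statement to cohomological degree $1$, where the differential is governed by the combinatorics of the parabolic graph and by the presentation of $\op{K}_1$ in Proposition~\ref{prop:k1}. Write $\overline{\op{d}}^{(1)}$ for the restriction of $\overline{\op{d}}$ to cohomological degree $1$; parts (1) and (2) concern exactly this map, and, once assertion (3) is known, parts (4) and (5) reduce to it as well. By Lemma~\ref{lem:linearization} and the explicit generators of Section~\ref{sec:detection}, the first task is to identify $\overline{\op{d}}^{(1)}$: using the descriptions of $\op{D}^1$ on the stabilizers in Propositions~\ref{prop:detectionrk2}, \ref{prop:detectiona}, \ref{prop:detectionb} and \ref{prop:detectionc}, one checks that in cohomological degree $1$ we have $\op{D}^2=0$ on every stabilizer, that the degree-$1$ part of $\op{D}^1\op{H}^\bullet(G_\sigma;\mathbb{F}_\ell)$ is $\ker\bigl(\op{res}_{\mathcal{Z}}\colon\op{H}^1(G_\sigma;\mathbb{F}_\ell)\to\op{H}^1(\mathcal{Z};\mathbb{F}_\ell)\bigr)$, and that the degree-$1$ generators of the alternating diamonds in those propositions form a basis of that kernel. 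Dualizing, the degree-$1$ part of $\op{D}^1/\op{D}^2$ is precisely the chain complex $\bigoplus_e(G_e^{\op{ab}}/\mathcal{Z})\otimes\mathbb{F}_\ell\to\bigoplus_v(G_v^{\op{ab}}/\mathcal{Z})\otimes\mathbb{F}_\ell$, in which vertices of types (IIc) and (IIe) contribute nothing (since $\ell\nmid 3$ and $\ell\nmid q^2+q+1$) and whose cokernel, by Proposition~\ref{prop:k1} and right-exactness of $-\otimes\mathbb{F}_\ell$, is $\bigl((\overline{E}(k)\otimes_{\mathbb{Z}}k^\times)\oplus\bigoplus_P k(P)^\times/k^\times\bigr)\otimes_{\mathbb{Z}}\mathbb{F}_\ell$.

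For part (1), observe that $-\otimes\mathbb{F}_\ell$ annihilates $\bigoplus_P k(P)^\times/k^\times$ (each factor has order $q+1$, coprime to $\ell$) and leaves $(\overline{E}(k)\otimes k^\times)\otimes\mathbb{F}_\ell\cong\overline{E}(k)/\ell\,\overline{E}(k)$, of dimension $\op{rk}_\ell\overline{E}$; since dualizing is exact, $\dim\ker\overline{\op{d}}^{(1)}=\op{rk}_\ell\overline{E}$. Part (2) is then an Euler-characteristic count: for a two-term complex, $\dim\op{coker}\overline{\op{d}}^{(1)}-\dim\ker\overline{\op{d}}^{(1)}$ equals $N_e$ minus $\dim\bigoplus_v(G_v^{\op{ab}}/\mathcal{Z})\otimes\mathbb{F}_\ell=\#\overline{E}(k)+2N_a+N_b+N_d$, where $N_d$ is the number of degree-$2$ points and the coefficient $2$ is the only place a vertex of type (IIa) contributes more than one diamond. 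Using that every edge has a unique endpoint among the types (IIa)--(IId), so $N_e=3N_a+2N_b+N_c+N_d$, together with $N_e=(q+1)\#\overline{E}(k)$ and $N_b+N_c=\#\overline{E}(k)$ from Lemma~\ref{lem:ellform}, this difference equals $N_a$, whence $\dim\op{coker}\overline{\op{d}}^{(1)}=N_a+\op{rk}_\ell\overline{E}$.

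Assertion (3) is the technical core, and (4), (5) are formal consequences of it. For (3), I would go through the restriction maps of Section~\ref{sec:restrictions}, for source vertices of types (I), (IIa), (IIb) and (IIc), and examine their effect on the rank-$4$ submodule $P_{v,k}$ spanned by a single alternating diamond of generators in degrees $1,2,2,3$. Each such diamond is free over the polynomial subring in two of the torus coordinates and is assembled from partially alternating polynomials, hence is abstractly isomorphic, as a graded module, to a fixed $P$ with $\op{HP}(P,T)=T(1+T)^2/(1-T^2)^2$; and since every restriction map in sight merely identifies or separates torus coordinates and is a ring homomorphism, it carries $P_{v,k}$ either isomorphically (up to a nonzero scalar) onto an edge diamond or to zero, with the same choice and scalar in all four degrees -- which is exactly (3). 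Consequently, on the submodule $M=\bigoplus_{v,k}P_{v,k}$ generated by the degree-$1,2,2,3$ generators, $\overline{\op{d}}$ is obtained blockwise, and simultaneously in all four degrees, from the $\mathbb{F}_\ell$-matrix of $\overline{\op{d}}^{(1)}$ by tensoring each entry with $\op{id}_P$. As $P$ is $\mathbb{F}_\ell$-flat, $\ker(\overline{\op{d}}|_M)$ and $\op{coker}(\overline{\op{d}}|_M)$ are direct sums of $\dim\ker\overline{\op{d}}^{(1)}=\op{rk}_\ell\overline{E}$, resp.\ $\dim\op{coker}\overline{\op{d}}^{(1)}=N_a+\op{rk}_\ell\overline{E}$, copies of $P$; and since $\mathbb{F}_\ell[X,Y]$ is free of rank $2$ over $\mathbb{F}_\ell[\op{c}_1,\op{c}_2]$ with basis $\{1,X-Y\}$, so is $P$, which yields (4) and (5) with the stated Hilbert--Poincar\'e series $\op{rk}_\ell\overline{E}\cdot\frac{T(1+T)^2(1+T^2)}{(1-T^2)(1-T^4)}$ and $(N_a+\op{rk}_\ell\overline{E})\cdot\frac{T(1+T)^2(1+T^2)}{(1-T^2)(1-T^4)}$.

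The step I expect to be the main obstacle is the verification of (3): that $\overline{\op{d}}$ acts identically in all four degrees $1,2,2,3$ of each diamond. For source vertices of types (I), (IIa) and (IId) this is immediate, the relevant restriction maps being coordinate identifications; but for types (IIb) and (IIc), where the source stabilizer is $\op{GL}_2(\mathbb{F}_q)\times\mathbb{F}_q^\times$ respectively $\op{GL}_3(\mathbb{F}_q)$ and the diamond generators must be expressed through elementary symmetric polynomials rather than torus coordinates, the check is genuinely computational, and is where coefficient or sign errors are most likely to slip in.
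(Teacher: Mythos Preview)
Your proposal is correct and follows essentially the same route as the paper: reduce to degree~$1$ via Proposition~\ref{prop:k1} (after dualising and stripping the centre/determinant summand) for (1), do an Euler-characteristic count for (2), and deduce (4)--(5) from (3) together with the linearisation Lemma~\ref{lem:linearization}. Your ``diamond $\otimes\,\overline{\op{d}}^{(1)}$'' packaging of (3) is a clean restatement of what the paper calls ``the transition matrices are the same''; just be aware that for type (IIa) this works with the partially-alternating generators $(X-Y)(A+B),(Y-Z)(B+C)$ in degree~$3$ rather than the representatives $(X-Y)A,(Y-Z)A$ listed in Proposition~\ref{prop:detectiona}, so your claim that (IIa) is ``immediate'' still hides the same small check the paper leaves implicit.
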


\begin{proof}
We recall from Section~\ref{sec:detection} the description of the generators in degree 1 and from Section~\ref{sec:isotropy} the description of the differential. For vertices of type (IIa) we have generators $A-B$ and $B-C$, for vertices of type (IIb) we have the generator $\op{e}_1-2C$, and there is no generator for type (IIc) vertices. For stable bundles as well as the edges we have a generator $A-B$. Writing out the description of restriction maps on cohomology, we get the following matrices: 
\[
\left(\begin{array}{cc}0&1\\1&-1\\-1&0\end{array}\right),
\left(\begin{array}{c}2\\-1\end{array}\right),
\textrm{ and }(\underbrace{-1,\dots,-1}_{q+1})^{\op{t}}
\]
for type (IIa), (IIb) and the stable bundles, respectively. Using $-1$ instead of $1$ for the transition matrix of the stable bundles is due to our choice of orientation: all edges are oriented to point away from the stable bundles. Note that the bundles of type (IId) also contribute to the parabolic graph; the corresponding transition matrices are identity matrices.

We use the computation of Proposition~\ref{prop:k1}. Recall that the cokernel of the induced maps on abelianizations are given by 
\[
k^\times\oplus\left(\overline{E}(k)\otimes k^\times\right)\oplus \bigoplus_Pk(P)^\times/k^\times.
\]
The kernel of the above transition matrices is the $\mathbb{F}_\ell$-dual of the cokernel computed in Proposition~\ref{prop:k1}, up to the following modifications. First, the sum in the last term coming from degree 2 points on the curve does not contribute anything to the $\mathbb{F}_\ell$-dual of $\widehat{\op{H}}_1(E/k)$ because our assumption $\ell\mid q-1$ implies $k(P)^\times/k^\times\otimes k^\times=0$. The first summand $k^\times$ comes from the determinant, hence is detected on the diagonal. Since we are dealing with the subquotient $\op{D}^1/\op{D}^2$, this does not contribute. Therefore, the kernel we want to compute is the $\mathbb{F}_\ell$-dual of the middle term above. This proves that its $\mathbb{F}_\ell$-dimension equals the rank of the maximal elementary abelian $\ell$-subgroup of $\overline{E}(\mathbb{F}_q)$, which by definition is the $\ell$-rank. This proves (1). 

For (2) we can use an Euler characteristic computation together with part (4) of Lemma~\ref{lem:ellform} to deduce the dimension of the cokernel, cf. Corollary~\ref{cor:d1e2}. 

For (3), the argument given is essentially the same for the other degrees. The generators and restriction maps are explicitly identified, then the transition matrices are the same. The independence of the generators was established earlier, so the computation of kernel and cokernel as above applies. Statements (4) and (5) are then direct consequences.

The freeness statements in (4) and (5) follow immediately from the independence of the generators established in Section~\ref{sec:detection} and the linearization of Lemma~\ref{lem:linearization}. This implies that the kernel and cokernel are freely generated by respective bases in degrees 1,2,2,3. 
\end{proof}

\begin{remark}
\label{rem:cokernel}
We can also get a description of the kernel of the differential in Proposition~\ref{prop:k1}, which leads to a description of the generators of the cokernel of the restriction map in Proposition~\ref{prop:step1}. To determine the kernel of the differential, we label each edge with an automorphism of an edge bundle, i.e., an element of $k^\times\times k^\times$. The corresponding choices have to be compatible, in the sense that the differential of the labelling is $1$ (the unit of the stabilizer group). This means that at each vertex of the parabolic graph, the product of the labels of the edges must be $1$.\footnote{Note that there are no signs here. Our choice of orientation of the parabolic graph implies that at any vertex either all edges are inbound or all edges are outbound.} Then we need to check how many choices we have.
\begin{itemize}
\item At a point of type (IId), we have one choice of edge label for the one adjacent edge.
\item At a point of type (IIc), there is no choice. There is only one edge adjacent to this point, so in order for the differential to be trivial, the edge label must be $1$.
\item At a point of type (IIb), the label of one of the adjacent edges determines the other up to a square. Since we are only concerned with $\mathbb{F}_\ell$-coefficients for odd primes, this indeterminacy does not affect the end result. 
\item At a point of type (IIa), we can freely choose two edge labels, the third is then completely determined by the requirement that the differential be trivial. 
\item From the above possibilities, we have to subtract the conditions arising from points corresponding to stable bundles: there is one condition requiring that the product of the edge labels of the $q+1$ adjacent edges should be the unit of the vertex stabilizer.
\end{itemize}
Summing these possibilities gives the $N_a$-part in Proposition~\ref{prop:step1}, cf. also the Hilbert--Poincar{\'e} computation in Corollary~\ref{cor:d1e2}. The above way of choosing edge labels provides obvious generators for the $N_a$-part which corresponds to elements of the kernel where the differential is $0$ at each vertex. The elements in the $\ell$-rank part will not correspond to elements of the kernel; they only correspond to elements whose differential is $\ell$-divisible. Generators for this part are more complicated to describe, but we will not actually need a completely explicit description.
\end{remark}

Now we still need to consider the generators in degrees 3,4,4,5. By the computations in Section~\ref{sec:detection}, we always have a diamond of  generators which come from the cohomology of $\op{GL}_3(\mathbb{F}_q)$ and are the same for all vertices. However, since such a diamond of generators does not appear for the edge stabilizers if we consider the module structures over the polynomial ring $\mathbb{F}_\ell[X,Y]$, we now need to restrict attention to module structures over $\mathbb{F}_\ell[\op{c}_1,\op{c}_2]$.

\begin{proposition}
\label{prop:step2}
\begin{enumerate}
\item
The $\mathbb{F}_\ell[\op{c}_1,\op{c}_2]$-generators of $\op{D}^1/\op{D}^2$ in degrees 3,4,4,5 coming from the symmetric polynomials in the cohomology of $\op{GL}_3(\mathbb{F}_q)$ map surjectively onto the cokernel of the restricted differential identified in Proposition~\ref{prop:step1}. 
\item The intersection of the kernel of $\overline{d}$ with the $\mathbb{F}_\ell[X,Y]$-submodule generated by the degree 3,4,4,5 generators is  $\mathbb{F}_\ell[\op{c}_1,\op{c}_2]$-free linearly independent of the kernel identified in Proposition~\ref{prop:step1}, with Hilbert--Poincar{\'e} series
\[
\frac{\left((N_b+N_c-\op{rk}_\ell(\overline{E}))T^3+ (N_a+N_b+N_c)T^5\right)(1+T)^2}{(1-T^2)(1-T^4)}.
\]
\item The cokernel of the map (induced from $\overline{d}$) from the generators in degree 3,4,4,5 to the cokernel identified in Proposition~\ref{prop:step2} is a free $\mathbb{F}_\ell[\op{c}_1,\op{c}_2]$-module with Hilbert--Poincar{\'e} series
\[
\frac{\left(N_a+\op{rk}_\ell(\overline{E})\right)T(1+T)^2}{(1-T^2)(1-T^4)}.
\]
\end{enumerate}
\end{proposition}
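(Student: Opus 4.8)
The plan is to follow the same strategy as in the proof of Proposition~\ref{prop:step1}. By Lemma~\ref{lem:linearization} the map $\overline{\op{d}}$ on $\op{D}^1/\op{D}^2$ is $\mathbb{F}_\ell[\op{c}_1,\op{c}_2]$-linear, and all terms involved are free graded $\mathbb{F}_\ell[\op{c}_1,\op{c}_2]$-modules by the computations of Section~\ref{sec:detection}; since the relevant generators all sit in internal degrees between $1$ and $7$, computing $\ker$ and $\op{coker}$ degree by degree produces free modules again, so the only real work is to pin down the matrix of $\overline{\op{d}}$ on the new generators and to run the Hilbert--Poincar{\'e} bookkeeping. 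The new input, not used in Proposition~\ref{prop:step1}, is that the four classes $\gamma_3=2\op{c}_1\op{e}_1-3\op{e}_2$, $\gamma_4=\op{c}_1^2-3\op{c}_2$, $\gamma_4'=\op{e}_1\op{e}_2$, $\gamma_5=\op{c}_1\op{e}_2-2\op{c}_2\op{e}_1$ --- elementary symmetric polynomials in three variables --- appear, by Propositions~\ref{prop:detectiona}--\ref{prop:detectionc}, as literally the same elements of $\op{H}^\bullet((\mathbb{F}_q^\times)^3;\mathbb{F}_\ell)$ in the subquotient $\op{D}^1/\op{D}^2$ of every stabilizer of type (IIa), (IIb) and (IIc), so $\overline{\op{d}}$ acts on them uniformly.

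The core computation is the restriction of these four classes to an adjacent edge. In all three cases the inclusion of the edge stabilizer $(\mathbb{F}_q^\times)^2$ is conjugate to $\op{diag}(x,x,y)$, so restricting a symmetric polynomial amounts to setting two of the three polynomial variables equal to $U$ and the third to $V$ (and the exterior variables $A,B\mapsto D$, $C\mapsto E$ accordingly); by symmetry the choice of which two does not matter. A short direct calculation, using $\ell\neq 2,3$, gives
\[
\gamma_3\mapsto 2(X-Y)(A-B),\qquad \gamma_4\mapsto (X-Y)^2,\qquad \gamma_4'\mapsto 2(X-Y)AB,
\]
\[
\gamma_5\mapsto (X-Y)^2(A+B)-\op{c}_1(X-Y)(A-B),
\]
up to an overall orientation sign which, since our orientation of the parabolic graph makes every vertex of type (IIa--c) a sink, is the same for all edges incident to a fixed such vertex. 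In particular the image of the symmetric diamond lies inside the free rank-$4$ $\mathbb{F}_\ell[\op{c}_1,\op{c}_2]$-submodule $S_\tau\subset\op{D}^1(\tau)$ spanned by the ``second alternating diamond'' $(X-Y)(A-B)$, $(X-Y)^2$, $(X-Y)AB$, $(X-Y)^2(A+B)$ of Proposition~\ref{prop:detectionrk2}, and there the induced $4\times 4$ matrix over $\mathbb{F}_\ell[\op{c}_1]$ is upper triangular with diagonal $(2,1,2,1)$, hence invertible over $\mathbb{F}_\ell[\op{c}_1,\op{c}_2]$. Since every edge of the parabolic graph is incident to exactly one vertex of type (IIa--c) (or (IId)), the stars of these vertices are pairwise disjoint --- with $1$, $2$ or $3$ edges according to type (IIc), (IIb), (IIa) --- so $\overline{\op{d}}$ restricted to the symmetric diamonds is, up to the invertible identification $S_\tau\cong\op{D}^1(\tau)$, just the incidence map of the parabolic subgraph on these disjoint stars; in particular it is injective.

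It remains to combine this with the count from Proposition~\ref{prop:step1}, which analysed $\overline{\op{d}}$ on the partially-alternating generators (degrees $1,2,2,3$), giving its kernel ($\op{rk}_\ell\overline{E}$ copies of a rank-$8$ diamond) and cokernel ($N_a+\op{rk}_\ell\overline{E}$ copies). Feeding in the symmetric diamonds, their images fill the ``second diamond'' part $S_\tau$ of each edge module while the partially-alternating generators of degree $\leq 3$ accounted for the ``first diamond'' part plus a degree-$3$ overlap; assertion (1) is then the statement that these images surject onto precisely the piece of the Proposition~\ref{prop:step1} cokernel carried by the second diamonds, and granting it the residual cokernel of the full $\overline{\op{d}}$ is $N_a+\op{rk}_\ell\overline{E}$ copies of the remaining rank-$4$ diamond in degrees $1,2,2,3$, which is (3) and has the displayed Hilbert--Poincar{\'e} series. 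For (2) one runs the complementary count on the source: the symmetric-diamond submodule is $N_a+N_b+N_c$ copies of a rank-$8$ $\mathbb{F}_\ell[\op{c}_1,\op{c}_2]$-module concentrated in degrees $3,4,4,5,5,6,6,7$, and by additivity of Hilbert--Poincar{\'e} series, subtracting the part that maps isomorphically onto the aforementioned piece of the cokernel leaves exactly the series claimed in (2), after using $6N_a+3N_b+N_c=(\#\overline E(\mathbb F_q))^2$ and $N_b+N_c=\#\overline E(\mathbb F_q)$ from Lemma~\ref{lem:ellform}. Freeness of both the kernel and the cokernel, and $\mathbb{F}_\ell[\op{c}_1,\op{c}_2]$-independence of this kernel from the one of Proposition~\ref{prop:step1}, follow --- as in the proof of Proposition~\ref{prop:step1} --- from the linearity of $\overline{\op{d}}$ together with the fact that the explicit generators live in a bounded range of fixed degrees, so that a degreewise check of $\mathbb{F}_\ell$-linear independence suffices.

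The step I expect to be the main obstacle is precisely this last bookkeeping: unlike in Proposition~\ref{prop:step1}, the two families of generators overlap in internal degree $3$, because the partially-alternating generators of the (IIa) and (IIb) vertices (and of the type-(I) vertices) also contribute there and their restrictions to edges have nonzero second-diamond components --- for instance $(X-Y)A\mapsto\tfrac12(X-Y)(A+B)+\tfrac12(X-Y)(A-B)$. One therefore has to verify that, after passing to $\op{coker}\overline{\op{d}}$, the symmetric diamonds genuinely supply the missing surjectivity, and that the new kernel is a \emph{direct} complement of the Proposition~\ref{prop:step1} kernel rather than merely of the right Hilbert--Poincar{\'e} size; producing explicit generators of this new kernel and checking their $\mathbb{F}_\ell[\op{c}_1,\op{c}_2]$-independence degree by degree is where the hypotheses $\ell\nmid 2$, $\ell\nmid 3$ and the precise incidence structure of the parabolic graph are used together.
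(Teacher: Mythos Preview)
Your approach is essentially the paper's: compute that the four symmetric classes restrict to $(X_1-X_2)$ times the degree $1,2,2,3$ alternating generators on each adjacent edge, and then feed this into the framework of Proposition~\ref{prop:step1}. Your restriction formulas are correct (your expression for $\gamma_5$ in the $\mathbb{F}_\ell[\op{c}_1,\op{c}_2]$-basis is in fact a slightly more refined presentation than the paper's, which just records $2(X_1-X_2)(X_1Y_2-X_2Y_1)$), and your observation that the $4\times 4$ change-of-basis matrix onto the second diamond is invertible is the right local ingredient.

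Where your sketch stops short is exactly where you flag the ``main obstacle'', and the paper resolves it by a different mechanism than the Hilbert--Poincar\'e bookkeeping you propose. The crucial point---which your description of the map as ``the incidence map on disjoint stars'' slightly obscures---is that because the classes are \emph{symmetric}, each $\gamma_i$ restricts \emph{identically} along every edge adjacent to a given type-(II) vertex. So what one must verify is that this ``all-ones'' column, adjoined to the transition matrices from the proof of Proposition~\ref{prop:step1}, renders them full-rank. Concretely, for type (IIa) and (IIb) vertices the matrices become
\[
\left(\begin{array}{ccc}0&1&1\\1&-1&1\\-1&0&1\end{array}\right),
\qquad
\left(\begin{array}{cc}2&1\\-1&1\end{array}\right),
\]
both of which are invertible over $\mathbb{F}_\ell$ (here $\ell\neq 3$ is used). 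This handles the $N_a$ portion of the cokernel. For the $\op{rk}_\ell(\overline{E})$ portion, the paper argues separately: since the symmetric classes restrict identically on all edges, their contribution to $\op{coker}$ is governed by the $\mathbb{F}_\ell$-cohomology of the parabolic graph, and the $\ell$-rank classes from Remark~\ref{rem:cokernel} are not detected by graph loops but only by $\ell$-divisibility---hence they too lie in the image. Your route via additivity of Hilbert--Poincar\'e series and Lemma~\ref{lem:ellform} would recover the same numerics, but does not by itself establish surjectivity onto \emph{this particular} submodule; for that you need the explicit full-rank check, which is the one concrete step missing from your writeup.
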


\begin{proof}
Recall from Proposition~\ref{prop:detectionc} that the relevant generators of $\op{D}^1/\op{D}^2$ are the following ones, which also appear (explicitly written out) in Propositions~\ref{prop:detectiona} and \ref{prop:detectionb}. 
\[
2\op{c}_1\op{e}_1-3\op{e}_2, \;\op{e}_1\op{e}_2,\;\op{c}_1^2-3\op{c}_2, \; \op{c}_1\op{e}_2-2\op{c}_2\op{e}_1
\]
Their restrictions to edge stabilizers are written as follows: 
\begin{eqnarray*}
2\op{c}_1\op{e}_1-3\op{e}_2&\mapsto& 2(X_1-X_2)(Y_1-Y_2), \\
\op{e}_1\op{e}_2&\mapsto& 2Y_1Y_2(X_1-X_2),\\
\op{c}_1^2-3\op{c}_2&\mapsto & (X_1-X_2)^2\\
\op{c}_1\op{e}_2-2\op{c}_2\op{e}_1&\mapsto&2(X_1-X_2)(X_1Y_2-X_2Y_1).
\end{eqnarray*}
Note that all the classes are in the submodule generated by $(X_1-X_2)$, in fact they are given by the product of $(X_1-X_2)$ with a generator in degree 1,2,2,3; this produces exactly the $\mathbb{F}_\ell[\op{c}_1,\op{c}_2]$-generators in degrees 3,4,4,5 from Proposition~\ref{prop:step1}.

Note that the restriction maps are the same for all the vertices and all adjacent edges. We know from the description of generators of the cokernel in Remark~\ref{rem:cokernel} that there were choices of edge labels for the points of type (IIa) and (IIb). The restriction maps at these points were given by the matrices in the proof of Proposition~\ref{prop:step1}. The above computation of restrictions for the generators in degrees 3,4,4,5 means that the transition matrices are extended to 
\[
\left(\begin{array}{ccc}
0&1&1\\
1&-1&1\\
-1&0&1
\end{array}\right),\qquad
\left(\begin{array}{cc}
2&1\\
-1&1
\end{array}\right)
\]
These matrices now have full rank, which means that the restrictions of the generators in degrees 3,4,4,5 hit the $N_a$ generators of the cokernel in degree 1,2,2,3 determined in Proposition~\ref{prop:step1}. 

To deal with the $\ell$-rank part, we note that the transition maps above are the same ones we would use to compute the $\mathbb{F}_\ell$-cohomology of the graph. The kernel of the boundary map is one-dimensional giving all edges the same value. The cokernel detects the loops in the graph. Note that the global classes for the $\ell$-rank in Remark~\ref{rem:cokernel} do not come from loops of the graph, they only appear because some coefficient is $\ell$-divisible. Therefore, the classes contributing to the $\ell$-rank part of the cokernel in Proposition~\ref{prop:step1} lie in the image of the differential. 

Combining the above two paragraphs, we see that the symmetric generators in degree 3,4,4,5 map surjectively onto the generators of the cokernel from Proposition~\ref{prop:step1}. This proves (3). 
\end{proof}

\begin{remark}
\label{rem:split}
It is a bit complicated to identify the generators. Certainly the degree 3,4,4,5 generators of the points of type (IIa) are not in the kernel, as the proof shows. The $\mathbb{F}_\ell[\op{c}_1,\op{c}_2]$-generators in degrees 5,6,6,7 all lie in the kernel and generate a free submodule by the results of Section~\ref{sec:detection}. If the $\ell$-rank is $0$, then the same is true for the generators in degrees 3,4,4,5 for the points of type (IIb) and (IIc). However, if the $\ell$-rank is not zero, then there is only a linear subspace of the generators for (IIb) and (IIc) which lies in the kernel of the differential, and generators for this subspace are more difficult to identify. 

Anyway, the relevant point for later is that in the case of $\ell$-rank $0$, the kernel of $\overline{d}$ on $\op{D}^1/\op{D}^2$ is freely generated by the corresponding generators of the $\op{D}^1/\op{D}^2$-parts of the individual stabilizer cohomology groups, except the ones in degree 3,4,4,5 for the points of type (IIa). Put differently, the kernel of $\overline{d}$ is the direct sum of its intersections with summands in the direct sum decomposition of the $E_1$-page.
\end{remark}


\begin{corollary}
\label{cor:d1e2}
Let $k=\mathbb{F}_q$ be a finite field, let $\overline{E}$ be an elliptic curve over $k$ with a $k$-rational point $\op{O}$ and set $\overline{E}\setminus\{O\}$. 
\begin{enumerate}
\item The kernel of $\overline{\op{d}}$ is a free module over the Chern-class ring $\mathbb{F}_\ell[\op{c}_1,\op{c}_2]$ with Hilbert--Poincar{\'e} series
\[
\frac{T(1+T)^2\left(\op{rk}_\ell(\overline{E})+ (N_b+N_c)T^2+(N_a+N_b+N_c)T^4\right)}{(1-T^2)(1-T^4)}.
\]
The total rank is $4(N_a+\op{rk}_\ell(\overline{E}))+8\cdot\#\overline{E}(\mathbb{F}_q)$.
\item The cokernel of $\overline{\op{d}}$ is a free module over the Chern-class ring $\mathbb{F}_\ell[\op{c}_1,\op{c}_2]$ with Hilbert--Poincar{\'e} series 
\[
\frac{\left(N_a+\op{rk}_\ell(\overline{E})\right)T(1+T)^2}{(1-T^2)(1-T^4)}.
\]
The total rank is $4(N_a+\op{rk}_\ell(\overline{E}))$.
\end{enumerate}
\end{corollary}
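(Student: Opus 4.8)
The plan is to derive both statements by assembling the two partial computations of Propositions~\ref{prop:step1} and~\ref{prop:step2} together with Lemma~\ref{lem:ellform}. By Lemma~\ref{lem:linearization} the map $\overline{\op{d}}$ on $\op{D}^1/\op{D}^2$ is $\mathbb{F}_\ell[\op{c}_1,\op{c}_2]$-linear, so $\ker\overline{\op{d}}$ and $\op{coker}\overline{\op{d}}$ are graded $\mathbb{F}_\ell[\op{c}_1,\op{c}_2]$-modules. By the explicit ``diamond'' descriptions of the generators in Section~\ref{sec:detection}, the source $\op{D}^1E^{0,\bullet}_1/\op{D}^2E^{0,\bullet}_1$ splits, as an $\mathbb{F}_\ell[\op{c}_1,\op{c}_2]$-module, into the submodule spanned by the ``low'' partially alternating diamond generators (those in degrees $1,2,2,3$, treated in Proposition~\ref{prop:step1}) and the submodule spanned by the ``high'' diamond generators coming from the symmetric classes of $\op{H}^\bullet(\op{GL}_3(\mathbb{F}_q);\mathbb{F}_\ell)$ (those in degrees $3,4,4,5$, treated in Proposition~\ref{prop:step2}); the target $\op{D}^1E^{1,\bullet}_1$ is the direct sum over the edges of the rank-$8$ free modules of Proposition~\ref{prop:detectionrk2}.

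For the kernel I would combine Proposition~\ref{prop:step1}(4), which presents the intersection of $\ker\overline{\op{d}}$ with the low submodule as a free $\mathbb{F}_\ell[\op{c}_1,\op{c}_2]$-module of Hilbert--Poincar\'e series $\op{rk}_\ell(\overline{E})\,T(1+T)^2(1+T^2)/\bigl((1-T^2)(1-T^4)\bigr)$, with Proposition~\ref{prop:step2}(2), which presents the intersection of $\ker\overline{\op{d}}$ with the high submodule as a free $\mathbb{F}_\ell[\op{c}_1,\op{c}_2]$-module, $\mathbb{F}_\ell[\op{c}_1,\op{c}_2]$-independent from the first, of series $\bigl((N_b+N_c-\op{rk}_\ell(\overline{E}))T^3+(N_a+N_b+N_c)T^5\bigr)(1+T)^2/\bigl((1-T^2)(1-T^4)\bigr)$. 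Adding these, and rewriting $\op{rk}_\ell(\overline{E})\,T(1+T^2)=\op{rk}_\ell(\overline{E})(T+T^3)$, the $T^3$-coefficients cancel and one is left with exactly the series claimed in part~(1). For the cokernel, Proposition~\ref{prop:step1}(5) presents the cokernel of $\overline{\op{d}}$ on the low submodule as a free module of series $(N_a+\op{rk}_\ell(\overline{E}))\,T(1+T)^2(1+T^2)/\bigl((1-T^2)(1-T^4)\bigr)$; quotienting further by the image of the high submodule --- which by Proposition~\ref{prop:step2}(1) and~(3) absorbs exactly the ``$N_a$-shifted'' part and leaves a free module of series $(N_a+\op{rk}_\ell(\overline{E}))\,T(1+T)^2/\bigl((1-T^2)(1-T^4)\bigr)$ --- gives part~(2); this is indeed the full cokernel of $\overline{\op{d}}$, since the low and high generators together span the source.

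The total ranks then follow by evaluating the numerator polynomials at $T=1$: for a free graded module $M$ over $R=\mathbb{F}_\ell[\op{c}_1,\op{c}_2]$ one has $\op{HP}(M,T)=\op{HP}(R,T)\cdot F(T)$ with $F(1)=\op{rk}_R M$. For the kernel this is $F(1)=4\bigl(\op{rk}_\ell(\overline{E})+(N_b+N_c)+(N_a+N_b+N_c)\bigr)$, and substituting $N_b+N_c=\#\overline{E}(\mathbb{F}_q)$ from Lemma~\ref{lem:ellform}(2) turns it into $4(N_a+\op{rk}_\ell(\overline{E}))+8\cdot\#\overline{E}(\mathbb{F}_q)$; for the cokernel, $F(1)=4(N_a+\op{rk}_\ell(\overline{E}))$. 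The only part of this that is more than a formal manipulation of Hilbert--Poincar\'e series --- and the step I expect to be the main obstacle --- is the bookkeeping for how the $\ell$-rank-related and $N_a$-related classes are distributed between kernel and cokernel across the low/high split: one needs both that the two kernel pieces are genuinely $\mathbb{F}_\ell[\op{c}_1,\op{c}_2]$-independent and that the high diamonds hit exactly the $N_a$-part of the Proposition~\ref{prop:step1} cokernel rather than encroaching on the $\ell$-rank part. That analysis --- resting on the identification of the relevant cokernel with $\widehat{\op{H}}_1(E/k)$ via Proposition~\ref{prop:k1} and on the full-rank transition matrices in the proof of Proposition~\ref{prop:step2} --- is precisely what those two propositions supply, so for the corollary one merely invokes them and tallies.
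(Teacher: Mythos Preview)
Your proposal is correct and follows essentially the same approach as the paper's own proof: invoke the linearization Lemma~\ref{lem:linearization} and then read off the kernel and cokernel, with their freeness and Hilbert--Poincar\'e series, directly from Propositions~\ref{prop:step1} and~\ref{prop:step2}. The paper leaves the explicit addition of the two Hilbert--Poincar\'e series and the total-rank evaluation via Lemma~\ref{lem:ellform}(2) implicit, whereas you spell these out; your description of the source as a low/high split is slightly informal (the source also contains the type~(I) and type~(IId) contributions absorbed into the analysis of Proposition~\ref{prop:step1}), but this is harmless since you ultimately rely on those two propositions rather than on the split itself.
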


\begin{proof}
By the linearization in Lemma~\ref{lem:linearization}, the entries in the two-term complex are free modules over $\mathbb{F}_\ell[\op{c}_1,\op{c}_2]$ and the differential is linear. The result is proved by computing the kernel and cokernel on generators. A description of kernel and cokernel with the required freeness properties and the Hilbert--Poincar{\'e} series is provided by  Propositions~\ref{prop:step1} and \ref{prop:step2}. 
\end{proof}

\begin{remark}
As a sanity check, we show  that the alternating sum of Hilbert--Poincar{\'e} series for the complex 
\[
0\to\ker\to \op{D}^1/\op{D}^2 E^{0,\bullet}_1\to \op{D}^1 E^{1,\bullet}_1\to \op{coker}\to 0
\]
is actually zero. For improved readability, we only discuss the numerators, all denominators will be $(1-T^2)(1-T^4)$ since we have modules over $\mathbb{F}_\ell[\op{c}_1,\op{c}_2]$. In $\op{D}^1E^{0,\bullet}_1/\op{D}^2E^{0,\bullet}_1$, we have the following contribution coming from points of type (IIa-c):
\[
N_aT(1+T^2)(2+T^2)(1+T)^2+N_bT(1+T)^2(1+T^2)^2+N_cT^3(1+T^2)(1+T)^2
\]
Propositions~\ref{prop:detectiona}, \ref{prop:detectionb} and \ref{prop:detectionc}. There are additional contributions from points of type (IId) and the stable bundles. Denoting by $N_d$ the number of points of type (IId), the contribution is
\[
(N_d+\#\overline{E}(\mathbb{F}_q))T(1+T)^2(1+T^2)
\]
by Proposition~\ref{prop:detectionrk2}. In $\op{D}^1E^{1,\bullet}_1$, we only have the contribution from the edges, which equals 
\[
\#\overline{E}(\mathbb{F}_q)(q+1)T(1+T)^2(1+T^2).
\]
Subtracting the edge contributions from the vertex contributions we get
\[
T(1+T)^2(1+T^2)((N_a+N_b+N_c)T^2+2N_a+N_b+N_d- q\cdot\#\overline{E}(\mathbb{F}_q)).
\]

We need to show that the difference of the Hilbert--Poincar{\'e} series of the kernel and the cokernel yield exactly the same result. We can check easily
\begin{eqnarray*}
&&T(T+1)^2(\op{rk}_\ell(\overline{E})+(N_b+N_c)T^2+(N_a+N_b+N_c)T^4)\\&&- (N_a+\op{rk}_\ell(\overline{E}))T(1+T)^2 \\
&=&T(1+T)^2(-N_a+(N_b+N_c)T^2+(N_a+N_b+N_c)T^4).
\end{eqnarray*}
Comparing coefficients, what remains to be proved is the equality
\[
3N_a+N_b+N_d-q\cdot\#\overline{E}(\mathbb{F}_q)=0
\]
For this, we note that 
\[
\chi':=N_a+N_b+N_c+N_d-q\cdot\#\overline{E}(\mathbb{F}_q)
\]
is the Euler characteristic of the nontrivial connected component of the parabolic graph. Using part (4) of Lemma~\ref{lem:ellform}, we find the required formula
\[
0=\chi'+2N_a-N_c=3N_a+N_b+N_d-q\cdot\#\overline{E}(\mathbb{F}_q).
\]
\end{remark}


\subsection{Computation of the \texorpdfstring{$E_2$}{E2}-page}

Now that we have computed the cohomology of the subquotients of the detection filtration on the $E_1$-page of the isotropy spectral sequence, we need to combine these calculations to a description of the $E_2$-page. Via the detection filtration (modulo forgetting the additional cohomological grading), the $E_1$-page is a complex filtered by the detection filtration, so there is an associated spectral sequence converging to the cohomology of this filtered complex (which is the $E_2$-page) and starting with the cohomology of the subquotients of the filtration. 

\begin{lemma}
\label{lem:degenerate}
The detection spectral sequence degenerates.
\end{lemma}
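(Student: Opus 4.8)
The plan is to analyze the detection spectral sequence directly. For a fixed coefficient degree $t$, the detection filtration turns $(E_1^{\bullet,t},\op{d}_1)$ into a cochain complex in the simplicial degree $s$ filtered by subcomplexes (Lemma~\ref{lem:e1props}(1)); since this filtration has only the three steps $E_1\supseteq\op{D}^1E_1\supseteq\op{D}^2E_1\supseteq 0$ and $s$ ranges over $\{0,1,2\}$, the associated spectral sequence is bounded, has first page the direct sum of the cohomologies of the subquotients $E_1/\op{D}^1E_1$, $\op{D}^1/\op{D}^2$, $\op{D}^2E_1$ computed in Corollary~\ref{cor:rankonee1}, Corollary~\ref{cor:d1e2} and Proposition~\ref{prop:rankthree}, and converges to $E_2^{\bullet,t}$. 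The main work is then to see that there is essentially no room for a differential.

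First I would record the simplicial degrees in which the cohomology of the subquotients is concentrated. By Lemma~\ref{lem:e1props}(2), $\op{D}^2E_1$ lives only in $s=0$ and its induced differential vanishes (Lemma~\ref{lem:e1props}(3)), so its cohomology sits in $s=0$; the two-term complex $\op{D}^1/\op{D}^2$ is concentrated in $s\in\{0,1\}$ with cohomology $\ker\overline{\op{d}}$ in degree $0$ and $\op{coker}\overline{\op{d}}$ in degree $1$ (Corollary~\ref{cor:d1e2}); and $\op{H}^\bullet(E_1/\op{D}^1E_1)$ is supported only in $s=0$ and $s=2$ (Corollary~\ref{cor:rankonee1}). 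A differential $\op{d}_r$ of the detection spectral sequence raises the filtration degree by $r\geq 1$ and the simplicial degree by exactly $1$; running through the finitely many possibilities, every such differential has either zero source or zero target, the unique exception being the $\op{d}_1$-differential
\[
\op{d}_1^{\op{det}}\colon\op{H}^0(E_1/\op{D}^1E_1)\longrightarrow\op{H}^1(\op{D}^1/\op{D}^2)=\op{coker}\overline{\op{d}}
\]
from the rank-one subquotient in simplicial degree $0$ to the rank-two subquotient in simplicial degree $1$. Hence the detection spectral sequence degenerates at $E_1$ precisely when $\op{d}_1^{\op{det}}=0$.

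To kill $\op{d}_1^{\op{det}}$ I would exhibit its source as a collection of permanent cycles coming from honest cohomology of $G:=\op{GL}_3(k[E])$. Because the center is contained in every cell stabilizer, the composite
\[
\op{H}^\bullet(G;\mathbb{F}_\ell)\longrightarrow E_1^{0,\bullet}=\prod_\sigma\op{H}^\bullet(G_\sigma;\mathbb{F}_\ell)\longrightarrow E_1^{0,\bullet}/\op{D}^1E_1^{0,\bullet}
\]
is, in each factor, the restriction along $\mathcal{Z}(G)\cong k^\times\hookrightarrow G_\sigma$; under the identification of $E_1^{0,\bullet}/\op{D}^1E_1^{0,\bullet}$ with $\prod_\sigma\op{H}^\bullet(k^\times;\mathbb{F}_\ell)$ furnished by Proposition~\ref{prop:rankonee1} and Lemma~\ref{lem:center} its image is the diagonal copy, which by Corollary~\ref{cor:rankonee1} is exactly $\op{H}^0(E_1/\op{D}^1E_1)\cong\op{H}^\bullet(k^\times;\mathbb{F}_\ell)$. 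This restriction $\op{H}^\bullet(G;\mathbb{F}_\ell)\to\op{H}^\bullet(\mathcal{Z}(G);\mathbb{F}_\ell)=\mathbb{F}_\ell[X]\langle A\rangle$ is surjective: its image contains $1$, the class $\op{c}_1$ (which restricts to $\mathcal{Z}(G)$ as $3X$), and the class $\op{e}_1:=\det^\ast(\psi)$ for a generator $\psi$ of $\op{H}^1(k^\times;\mathbb{F}_\ell)$, where $\det\colon G\to k[E]^\times\cong k^\times$ (this is the class called $\op{e}_1$ in the remark after Corollary~\ref{cor:rankonee1}), and $\op{e}_1$ restricts to $\mathcal{Z}(G)$ as $3\psi$; since $\ell\nmid 3$ the algebra generators $X$ and $A$ of $\op{H}^\bullet(k^\times;\mathbb{F}_\ell)$ therefore lie in the image. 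Consequently every class of $\op{H}^0(E_1/\op{D}^1E_1)$ is represented by the image in $E_1^{0,\bullet}$ of a class of $G$; such an image is a genuine $\op{d}_1$-cocycle, since restriction to cell stabilizers is compatible with face inclusions and so lands in $\ker\op{d}_1$. For a representative of this form the connecting homomorphism $\op{d}_1^{\op{det}}$ returns the class of $\op{d}_1$ of the cocycle, which is $0$. Hence $\op{d}_1^{\op{det}}=0$ and all differentials of the detection spectral sequence vanish.

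The step I expect to be the main obstacle is the bookkeeping: first, verifying that the three-step filtration together with the precise ranges in simplicial degree of the cohomology of the subquotients (Section~\ref{sec:e2}) really leaves only the single candidate differential $\op{d}_1^{\op{det}}$; and second, checking that $\op{H}^0(E_1/\op{D}^1E_1)$ is the full image of $\op{H}^\bullet(G;\mathbb{F}_\ell)$ — equivalently that the restriction to the center is surjective — since it is this surjectivity, rather than mere $\mathbb{F}_\ell[\op{c}_1]$-linearity of $\op{d}_1^{\op{det}}$, that is needed to make $\op{d}_1^{\op{det}}$ vanish on all of the source.
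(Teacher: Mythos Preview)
Your proof is correct and follows essentially the same approach as the paper's: both reduce, via the support constraints on the cohomology of the subquotients, to the single potentially nonzero differential $\op{H}^0(E_1/\op{D}^1E_1)\to\op{H}^1(\op{D}^1/\op{D}^2)$, and then kill it by exhibiting the source as classes lifting to genuine $\op{d}_1$-cocycles in $E_1^{0,\bullet}$. The paper phrases the lift as coming from $\op{H}^\bullet(\op{GL}_3(\mathbb{F}_q))$ viewed as the stabilizer of the trivial bundle, whereas you use the edge map from $\op{H}^\bullet(\op{GL}_3(k[E]))$ together with surjectivity of restriction to the center; these are the same mechanism, and your version makes the cocycle property of the lift slightly more explicit.
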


\begin{proof}
We argue successively via long exact sequences. Consider the long exact sequence used to combine the cohomologies of $\op{D}^2$ and $\op{D}^1/\op{D}^2$ to compute the cohomology of $\op{D}^1$. The boundary map from the cohomology is defined by lifting a cohomology class in $\op{D}^1/\op{D}^2$ to $\op{D}^1$, applying the differential which lands in the cohomology of $\op{D}^2$. However, the complex $\op{D}^2$ is only concentrated in degree 0, where the complex $\op{D}^1/\op{D}^2$ sits in (cohomological) degrees 0 and 1. The boundary map is then necessarily trivial and the cohomology of $\op{D}^1$ is the direct sum of the cohomologies of $\op{D}^2$ and $\op{D}^1/\op{D}^2$. 

Now we want to combine the cohomology of $\op{D}^1$ with the cohomology of $\op{D}^0/\op{D}^1$. We know that the cohomology of $\op{D}^0/\op{D}^1$ is concentrated in degrees $0$ and $2$, and the earlier argument shows that the classes in degree $2$ do not support non-trivial boundary map. In principle, the classes in degree $0$ could have non-trivial boundary. However, these classes come from the cohomology of $\op{GL}_3(\mathbb{F}_q)$ viewed as stabilizer of the trivial bundle. Since these classes survive to the $E_\infty$-page as can be checked by restriction to the center, the corresponding boundary map must be trivial. 
\end{proof}

At this point, the additive structure of the $E_2$-page is already determined, because there are no extension problems for $\mathbb{F}_\ell$-vector spaces.

\begin{corollary}
The Hilbert--Poincar{\'e} series of the column $E_2^{0,\bullet}$ has denominator $(1-T^2)(1-T^4)(1-T^6)$ and the following numerator polynomial:
\begin{eqnarray*}
&&(1+T)^3(1-T+T^2)\left((2N_a-N_c+1)T^6
+(N_a+N_b+N_c-1)T^5\right. \\&&+(2N_a+N_b-\op{rk}_\ell(\overline{E})+1)T^4+
(N_b+N_c-2)T^3
\\&& \left.+(N_a+1)T^2+(\op{rk}_\ell(\overline{E})-1)T+1\right)
\end{eqnarray*}
\end{corollary}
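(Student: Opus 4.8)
The plan is to assemble the column $E_2^{0,\bullet}$ out of the three detection-graded pieces computed in Sections~\ref{sec:detection} and~\ref{sec:e2}, using the degeneration statement of Lemma~\ref{lem:degenerate}. That lemma says that the spectral sequence associated to the detection filtration on the complex $E_1^{\bullet,t}$ degenerates; hence, as a graded $\mathbb{F}_\ell$-vector space (with no extension problem, since we are over a field), $E_2^{0,\bullet}$ is the direct sum of the cohomology in column $s=0$ of the three graded complexes $\op{D}^2$, $\op{D}^1/\op{D}^2$ and $\op{D}^0/\op{D}^1=\op{gr}^{\op{D}}_1E_1$. The first step is therefore to record the Hilbert--Poincar\'e series of these three summands.

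By Lemma~\ref{lem:e1props}(2) the complex $\op{D}^2$ is concentrated in the column $s=0$ and therefore carries no differential, so its cohomology is $\op{D}^2E_1^{0,\bullet}$ itself, with Hilbert--Poincar\'e series given by Proposition~\ref{prop:rankthree}. By Lemma~\ref{lem:e1props} the complex $\op{D}^1/\op{D}^2$ is the two-term complex $\overline{\op{d}}\colon\op{D}^1E^{0,\bullet}_1/\op{D}^2E^{0,\bullet}_1\to\op{D}^1E^{1,\bullet}_1$, so its cohomology in column $s=0$ is $\ker\overline{\op{d}}$, with Hilbert--Poincar\'e series as in Corollary~\ref{cor:d1e2}(1). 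Finally, by Corollary~\ref{cor:rankonee1} the cohomology of $\op{gr}^{\op{D}}_1E_1$ in column $s=0$ is $\op{H}^\bullet(k^\times;\mathbb{F}_\ell)\otimes_{\mathbb{F}_\ell}\op{H}^0(\op{GL}_3(k[E])\backslash\mathfrak{B}(E/k,3);\mathbb{F}_\ell)\cong\op{H}^\bullet(k^\times;\mathbb{F}_\ell)$, with Hilbert--Poincar\'e series $(1+T)/(1-T^2)$.

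It then remains to add the three series. Using the common denominator $(1-T^2)(1-T^4)(1-T^6)$ — and the factorisation $(1-T^6)=(1+T)(1-T+T^2)(1-T^3)$ to clear denominators — one finds that all three numerators share the common factor $(1+T)^3(1-T+T^2)$. Cancelling it reduces the claim to the polynomial identity
\[
P(T)=T^2\,\Phi(T)+T\,\Psi(T)(1-T^3)+(1-T)(1+T^2)(1-T^3),
\]
where $P(T)$ denotes the stated numerator polynomial, $\Phi(T)=(1+2T^2+2T^4+T^6)N_a+(T^2+T^4+T^6)N_b+T^6N_c$ is the bracketed factor of Proposition~\ref{prop:rankthree}, and $\Psi(T)=\op{rk}_\ell(\overline{E})+(N_b+N_c)T^2+(N_a+N_b+N_c)T^4$ is the bracketed factor of Corollary~\ref{cor:d1e2}(1). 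This identity is checked by comparing coefficients in degrees $0$ through $8$ (the degree $7$ and degree $8$ coefficients of the right-hand side vanishing); along the way one may invoke the relations of Lemma~\ref{lem:ellform} to present the resulting coefficients in the displayed form, such as $2N_a-N_c+1$ in degree $6$.

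The computation is entirely mechanical; the only points that deserve care are that one genuinely extracts the column-$s=0$ summand of each graded cohomology group, and that the detection-filtration extensions split, which they do because the coefficients lie in a field. It is this bookkeeping, rather than any conceptual difficulty, that is the one part which has to be carried out with attention.
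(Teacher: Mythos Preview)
Your proof is correct and follows exactly the same approach as the paper: both argue that $E_2^{0,\bullet}$ decomposes additively (via Lemma~\ref{lem:degenerate} and the absence of extension problems over $\mathbb{F}_\ell$) into the three detection-graded pieces from Proposition~\ref{prop:rankthree}, Corollary~\ref{cor:d1e2}(1), and Corollary~\ref{cor:rankonee1}, and then sum the three Hilbert--Poincar\'e series over the common denominator. Your write-up is in fact more explicit than the paper's, which simply records the three numerators and says ``reorder a bit''; one minor remark is that the relations of Lemma~\ref{lem:ellform} are not actually needed in the coefficient verification---for instance the degree~$6$ coefficient arises directly as $(2N_a+N_b)-(N_b+N_c)+1=2N_a-N_c+1$.
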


\begin{proof}
We just need to add the previously identified terms from Propositions~\ref{prop:rankthree}, \ref{prop:rankonee1} and Corollary~\ref{cor:d1e2}: 
\begin{eqnarray*}
&&(1+T)(1-T^4)(1-T^6)\\
&+&(\op{rk}_\ell(\overline{E})+(N_b+N_c)T^2+(N_a+N_b+N_c)T^4)T(1+T)^2(1-T^6)\\&+&
T^2(1+T)^3(1-T+T^2)\left((1+2T^2+2T^4+T^6)N_a\right.\\&&\left.+(T^2+T^4+T^6)N_b+T^6N_c\right)
\end{eqnarray*}
and reorder a bit.
\end{proof}

However, we still want to establish a statement on freeness of module structure over the Chern-class rings. For the columns $E^{i,\bullet}_2$ with $i=1,2$ this is no problem because the  column $i=1$ comes only from $\op{D}^1/\op{D}^2$ and the column  $i=2$ comes only from $\op{D}^0/\op{D}^1$. At this point, we run into trouble establishing freeness for the column $E^{0,\bullet}_2$, essentially because the filtration part $\op{D}^1$ for the points of type (IIc) is not free. 

\begin{lemma}
\label{lem:helpfree}
The $\mathbb{F}_\ell[\op{c}_1,\op{c}_2,\op{c}_3]$-submodule $\op{D}^1\subset \op{H}^\bullet(\op{GL}_3(\mathbb{F}_q);\mathbb{F}_\ell)$ is not free. Its Hilbert--Poincar{\'e} series is
\[
\frac{T^3+2T^4+2T^5+2T^6+T^7+T^8+T^9-T^{10}-T^{11}}{(1-T^2)(1-T^4)(1-T^6)}
\]
\end{lemma}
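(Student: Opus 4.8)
The plan is to reduce the statement to Proposition~\ref{prop:detectionc} together with the positivity criterion for Hilbert--Poincar{\'e} series of free graded modules recalled in Section~\ref{sec:reccohom}. By Lemma~\ref{lem:e1props}(4) both $\op{D}^2$ and $\op{D}^1$ are graded $\mathbb{F}_\ell[\op{c}_1,\op{c}_2,\op{c}_3]$-submodules of $\op{H}^\bullet(\op{GL}_3(\mathbb{F}_q);\mathbb{F}_\ell)$, so there is a short exact sequence of graded $\mathbb{F}_\ell[\op{c}_1,\op{c}_2,\op{c}_3]$-modules
\[
0\longrightarrow \op{D}^2\longrightarrow \op{D}^1\longrightarrow \op{D}^1/\op{D}^2\longrightarrow 0,
\]
and, by additivity of Hilbert--Poincar{\'e} series, $\op{HP}(\op{D}^1,T)=\op{HP}(\op{D}^2,T)+\op{HP}(\op{D}^1/\op{D}^2,T)$.

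First I would plug in the two summands from Proposition~\ref{prop:detectionc}: part~(1) gives $\op{HP}(\op{D}^2,T)=\frac{T^8+2T^9+T^{10}+T^{11}+2T^{12}+T^{13}}{(1-T^2)(1-T^4)(1-T^6)}$, while part~(3) gives $\op{HP}(\op{D}^1/\op{D}^2,T)=\frac{T^3(1+T)^2(1+T^2)}{(1-T^2)(1-T^4)}$. Rewriting the second fraction over the common denominator $(1-T^2)(1-T^4)(1-T^6)$ amounts to multiplying its numerator by $(1-T^6)$; expanding $T^3(1+T)^2(1+T^2)=T^3+2T^4+2T^5+2T^6+T^7$, multiplying by $(1-T^6)$, and adding the numerator of $\op{HP}(\op{D}^2,T)$ yields after collecting terms exactly the numerator in the statement,
\[
T^3+2T^4+2T^5+2T^6+T^7+T^8+T^9-T^{10}-T^{11}.
\]
This step is routine; the only point worth noticing is that the negative contributions $-2T^{10}-2T^{11}$ arising in $(1-T^6)\cdot T^3(1+T)^2(1+T^2)$ are only partially offset by the $+T^{10}+T^{11}$ coming from $\op{D}^2$, so the numerator genuinely acquires negative coefficients.

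To finish, I would apply the non-freeness test from Section~\ref{sec:reccohom}: if $\op{D}^1$ were free over $\mathbb{F}_\ell[\op{c}_1,\op{c}_2,\op{c}_3]$, then dividing $\op{HP}(\op{D}^1,T)$ by $\op{HP}(\mathbb{F}_\ell[\op{c}_1,\op{c}_2,\op{c}_3],T)=\frac{1}{(1-T^2)(1-T^4)(1-T^6)}$ would produce the generating function of the degrees of a homogeneous basis, hence a polynomial with non-negative coefficients. But that quotient is precisely the displayed numerator, which has negative coefficients; so $\op{D}^1$ is not free. I do not expect any real obstacle here beyond the bookkeeping already carried out in Proposition~\ref{prop:detectionc}; the single point requiring a word of care is that the numerator must be taken relative to the standard denominator $(1-T^2)(1-T^4)(1-T^6)$ of the Chern-class ring in order for the positivity test to be the correct one, which is how it has been computed above.
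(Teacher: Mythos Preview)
Your proof is correct and follows essentially the same approach as the paper's own proof: the Hilbert--Poincar{\'e} series is obtained from Proposition~\ref{prop:detectionc} by adding the contributions of $\op{D}^2$ and $\op{D}^1/\op{D}^2$, and non-freeness is deduced from the negative coefficients in the numerator via the positivity criterion of Section~\ref{sec:reccohom}. Your write-up is simply more explicit about the intermediate arithmetic than the paper's one-line reference to Proposition~\ref{prop:detectionc}.
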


\begin{proof}
The computation of the Hilbert--Poincar{\'e} series follows directly from Proposition~\ref{prop:detectionc}. Free graded modules always have Hilbert--Poincar{\'e} series such that the coefficients of the numerator polynomial are all positive. Since this is not the case here, $\op{D}^1$ is not free as graded $\mathbb{F}_\ell[\op{c}_1,\op{c}_2,\op{c}_3]$-module.
\end{proof}

\begin{lemma}
\label{lem:splitd0}
Let $k=\mathbb{F}_q$ be a finite field. Assume the elliptic curve $\overline{E}$ has at least one $k$-rational point other than $\op{O}$. Then the $E_2$-page splits off a free $\mathbb{F}_\ell[\op{c}_1,\op{c}_2,\op{c}_3]$-submodule of rank 2 which canonically lifts the part $\op{D}^0/\op{D}^1$ part in $E^{0,\bullet}_2$. 
\end{lemma}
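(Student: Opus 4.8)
The plan is to exhibit the summand concretely as the span of two \emph{global} classes and to split it off using a single vertex stabiliser where everything is manifestly free. Recall from Section~\ref{sec:isotropy} that the $0$-th column receives no $\op{d}_1$-differential, so $E_2^{0,\bullet}=\ker\bigl(\op{d}_1\colon E_1^{0,\bullet}\to E_1^{1,\bullet}\bigr)$ is an honest $\mathbb{F}_\ell[\op{c}_1,\op{c}_2,\op{c}_3]$-submodule of $\bigoplus_{\sigma}\op{H}^\bullet(G_\sigma;\mathbb{F}_\ell)$, and projection onto any factor is linear over the Chern-class ring. The two lifting classes I would use are $1\in E_2^{0,0}$ and the image $\op{e}_1\in E_2^{0,1}$ of the degree-one exterior generator pulled back along the determinant $\op{GL}_3(k[E])\to k[E]^\times\cong k^\times$; both are restrictions of genuine cohomology classes of $\op{GL}_3(k[E])$, hence survive to $E_\infty$, and by Corollary~\ref{cor:rankonee1} together with the degeneration (Lemma~\ref{lem:degenerate}) their images in $E_2^{0,\bullet}/\op{D}^1E_2^{0,\bullet}\cong\op{H}^\bullet(k^\times;\mathbb{F}_\ell)$ are exactly the $\mathbb{F}_\ell[\op{c}_1]$-module generators $1$ and (a nonzero multiple of) the exterior generator. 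Let $M\subseteq E_2^{0,\bullet}$ be the $\mathbb{F}_\ell[\op{c}_1,\op{c}_2,\op{c}_3]$-submodule they generate; then $M$ visibly surjects onto $E_2^{0,\bullet}/\op{D}^1E_2^{0,\bullet}$, which is the sense in which it canonically lifts the $\op{D}^0/\op{D}^1$-part.

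First I would prove that $M$ is free of rank $2$ by restricting to the trivial-bundle vertex: under the hypothesis on $\overline{E}(k)$ the bundle $\mathcal{O}_{\overline{E}}^{\oplus 3}$ occurs in the reduced complex $\mathfrak{X}$ as a type (IIc) cell with stabiliser $\op{GL}_3(\mathbb{F}_q)$, and the projection $\pi\colon E_2^{0,\bullet}\to\op{H}^\bullet(\op{GL}_3(\mathbb{F}_q);\mathbb{F}_\ell)$ sends $1$ and $\op{e}_1$ to the genuine generators $1,\op{e}_1$ of $\op{H}^\bullet(\op{GL}_3(\mathbb{F}_q);\mathbb{F}_\ell)\cong\mathbb{F}_\ell[\op{c}_1,\op{c}_2,\op{c}_3]\langle\op{e}_1,\op{e}_2,\op{e}_3\rangle$ (Theorem~\ref{thm:glnfq}), which span the free rank-$2$ submodule $\mathbb{F}_\ell[\op{c}_1,\op{c}_2,\op{c}_3]\{1,\op{e}_1\}$. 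Hence any relation $P\cdot 1+Q\cdot\op{e}_1=0$ in $M$ maps under $\pi$ to a relation in a free module, forcing $P=Q=0$, so $M$ is free and $\pi|_M$ is an isomorphism onto $\mathbb{F}_\ell[\op{c}_1,\op{c}_2,\op{c}_3]\{1,\op{e}_1\}$. Next I would fix the $\mathbb{F}_\ell[\op{c}_1,\op{c}_2,\op{c}_3]$-linear retraction $r\colon\op{H}^\bullet(\op{GL}_3(\mathbb{F}_q);\mathbb{F}_\ell)\to\mathbb{F}_\ell[\op{c}_1,\op{c}_2,\op{c}_3]\{1,\op{e}_1\}$ that annihilates the remaining exterior basis monomials $\op{e}_2,\op{e}_3,\op{e}_1\op{e}_2,\op{e}_1\op{e}_3,\op{e}_2\op{e}_3,\op{e}_1\op{e}_2\op{e}_3$; then $r\circ\pi\colon E_2^{0,\bullet}\to M$ is Chern-class-linear and restricts to the identity on $M$, so it is a retraction and $E_2^{0,\bullet}=M\oplus\ker(r\circ\pi)$. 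Since the Chern classes act column-wise, this splitting of the $0$-column splits off $M$ from all of $E_2^{\bullet,\bullet}$, completing the argument.

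The one point that requires care, and which I expect to be the crux, is the freeness of $M$ over the \emph{full} Chern-class ring. On the subquotient $\op{D}^0/\op{D}^1\cong\op{H}^\bullet(k^\times;\mathbb{F}_\ell)$ the classes $\op{c}_2,\op{c}_3$ do \emph{not} act freely (on the cohomology of the centre they are forced to equal $\tfrac13\op{c}_1^2$ and $\tfrac1{27}\op{c}_1^3$), so the rank-$2$ free structure is invisible after passing to the detection-graded pieces and is only created by the non-split extension: the elements $\op{c}_2-\tfrac13\op{c}_1^2$ and $\op{c}_3-\tfrac1{27}\op{c}_1^3$ are nonzero in $\op{D}^1E_2^{0,\bullet}$ even though they vanish on the centre. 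Restricting to the $\op{GL}_3(\mathbb{F}_q)$-factor, where $\op{H}^\bullet$ is an honest free Chern-class module and $\{1,\op{e}_1\}$ is part of a basis, is precisely what turns this into a transparent statement and simultaneously supplies the retraction; this is also where the hypothesis on $\overline{E}(\mathbb{F}_q)$ enters, guaranteeing that this vertex is present in the reduced $E_1$-page. The remaining verifications — that $1$ and $\op{e}_1$ lift the generators of $\op{H}^\bullet(k^\times;\mathbb{F}_\ell)$ with the asserted module structure, and that $r\circ\pi$ is genuinely a retraction onto $M$ — are routine.
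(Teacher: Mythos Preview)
Your argument is correct, but it takes a different route from the paper's. The paper uses a group-theoretic splitting: the extra rational point $P\in E$ gives an evaluation homomorphism $\op{GL}_3(k[E])\to\op{GL}_3(k)$ retracting the inclusion of constants, so $\op{H}^\bullet(\op{GL}_3(k);\mathbb{F}_\ell)$ splits off from $\op{H}^\bullet(\op{GL}_3(k[E]);\mathbb{F}_\ell)$ itself, and a fortiori from the $E_2$-page, with the restriction to the centre providing the surjection onto $\op{D}^0/\op{D}^1$. You instead work internally to the spectral sequence, exhibiting the global classes $1,\op{e}_1$ and splitting them off via projection to the trivial-bundle vertex.

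Two remarks. First, you misidentify where the hypothesis enters: the trivial bundle $\mathcal{O}_{\overline{E}}^{\oplus3}$ is always a type (IIc) vertex regardless of $\#\overline{E}(k)$, so your argument in fact proves the lemma \emph{without} the extra-point assumption. In the paper's proof the hypothesis is used genuinely, to produce the evaluation map. Second, the paper's proof actually yields a stronger conclusion than the lemma states: the full $\op{H}^\bullet(\op{GL}_3(k);\mathbb{F}_\ell)$, free of rank $8$, splits off at the level of group cohomology. This stronger statement is what is tacitly invoked later, both in Theorem~\ref{thm:e2nonfree} (where the diagonal copy of $\op{D}^1\op{H}^\bullet(\op{GL}_3(\mathbb{F}_q);\mathbb{F}_\ell)$ is quotiented out) and in the $\op{d}_2$-argument of Section~\ref{sec:d2} (where survival to $E_\infty$ is needed). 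Your rank-$2$ module $M$ matches the lemma's literal assertion and suffices for the $\op{d}_2$-application, since $M$ consists of images of global classes; but for the use in Theorem~\ref{thm:e2nonfree} one would want the full rank-$8$ summand, which your retraction does not directly produce.
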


\begin{proof}
We always have the inclusion of constants $\op{GL}_3(k)\hookrightarrow\op{GL}_3(k[E])$. Under the assumption, we also have an evaluation at the rational point $\op{P}$ in $E=\overline{E}\setminus\{\op{O}\}$ which is a group homomorphism $\op{GL}_3(k[E])\to\op{GL}_3(k)$. The composition of both maps is the identity on $\op{GL}_3(k)$ and therefore induces a splitting in cohomology:
\[
\op{H}^\bullet(\op{GL}_3(k[E]);\mathbb{F}_\ell)\cong \op{H}^\bullet(\op{GL}_3(k);\mathbb{F}_\ell)\oplus\mathcal{Q}. 
\] 
The cohomology of $\op{GL}_3(k)$ above contains the image of the universal Chern-class ring, diagonally embedded in the vertex stabilizer cohomologies via restriction. The restriction map to the center provides a surjective map from the summand $\op{H}^\bullet(\op{GL}_3(k);\mathbb{F}_\ell)$ to the cohomology of the center, and therefore this summand surjects onto the $\op{D}^0/\op{D}^1$ part of $E^{0,\bullet}_2$ which was identified to be isomorphic to the cohomology of the center in Proposition~\ref{prop:rankonee1}. 
\end{proof}

\begin{theorem}
\label{thm:e2nonfree}
Let $k=\mathbb{F}_q$ be a finite field and let $\overline{E}$ be an elliptic curve over $\mathbb{F}_q$. Let $\ell\geq 5$ be a prime such that $\ell\mid q-1$. Assume that $\overline{E}$ satisfies $\op{rk}_\ell(\overline{E})=0$ and $\op{rk}_3(\overline{E})>0$. Then $E^{0,\bullet}_2$ is not free as $\mathbb{F}_\ell[\op{c}_1,\op{c}_2,\op{c}_3]$-module. 
\end{theorem}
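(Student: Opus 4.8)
The plan is to deduce non-freeness from Lemma~\ref{lem:helpfree} by realizing the non-free module $\op{D}^1\subset\op{H}^\bullet(\op{GL}_3(\mathbb{F}_q);\mathbb{F}_\ell)$ as a graded $\mathbb{F}_\ell[\op{c}_1,\op{c}_2,\op{c}_3]$-module direct summand of $E^{0,\bullet}_2$. First I would set up the reduction: since $\op{rk}_3(\overline{E})>0$ there is a nontrivial $3$-torsion point of $\overline{E}(\mathbb{F}_q)$, hence a $k$-rational point $\neq\op{O}$, so Lemma~\ref{lem:splitd0} applies and gives a splitting $E^{0,\bullet}_2\cong F\oplus\op{D}^1E^{0,\bullet}_2$ of graded $\mathbb{F}_\ell[\op{c}_1,\op{c}_2,\op{c}_3]$-modules with $F$ free of rank $2$. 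Graded finitely generated projective modules over the graded polynomial ring $\mathbb{F}_\ell[\op{c}_1,\op{c}_2,\op{c}_3]$ are free, so $E^{0,\bullet}_2$ is free iff $\op{D}^1E^{0,\bullet}_2$ is free, and a graded direct summand of a free module is free; hence it suffices to produce one non-free graded direct summand of $\op{D}^1E^{0,\bullet}_2$.

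The summand I would use is the contribution of the trivial bundle $\mathcal{O}_{\overline{E}}^{\oplus 3}$, whose stabilizer is $\op{GL}_3(\mathbb{F}_q)$, a type (IIc) vertex. By Lemma~\ref{lem:e1props}(3) the submodule $\op{D}^2\op{H}^\bullet(\op{GL}_3(\mathbb{F}_q);\mathbb{F}_\ell)$ lies in $\ker\op{d}_1$, so it contributes to $E^{0,\bullet}_2$; and since $\op{rk}_\ell(\overline{E})=0$, Remark~\ref{rem:split} shows that the cohomology $\ker(\overline{\op{d}})$ of the complex $\op{D}^1/\op{D}^2$ is the direct sum of its intersections with the cell summands, with the type (IIc) summand contributing the whole $\op{D}^1/\op{D}^2$-part of $\op{H}^\bullet(\op{GL}_3(\mathbb{F}_q);\mathbb{F}_\ell)$. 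Because the Chern-module structure on the $E_1$-page is the diagonal one (Proposition~\ref{prop:multiplicative}), the coordinate projection $E^{0,\bullet}_1\twoheadrightarrow\op{H}^\bullet(\op{GL}_3(\mathbb{F}_q);\mathbb{F}_\ell)$ onto the trivial-bundle factor is $\mathbb{F}_\ell[\op{c}_1,\op{c}_2,\op{c}_3]$-linear, and the previous two facts should combine to realize $\op{D}^1\op{H}^\bullet(\op{GL}_3(\mathbb{F}_q);\mathbb{F}_\ell)$ as a direct summand of $\op{D}^1E^{0,\bullet}_2$ with this projection as a retraction. By Lemma~\ref{lem:helpfree} this summand is not free, so neither is $\op{D}^1E^{0,\bullet}_2$, and therefore $E^{0,\bullet}_2$ is not a free $\mathbb{F}_\ell[\op{c}_1,\op{c}_2,\op{c}_3]$-module.

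The hard part is making the previous paragraph honest, i.e. exhibiting the genuine split summand. The difficulty is that classes in $\op{D}^1\setminus\op{D}^2$ at the trivial bundle have nonzero restriction to the unique adjacent stable-bundle edge, so they are not $\op{d}_1$-cocycles by themselves and must be corrected by terms supported elsewhere in the parabolic graph; one must verify that the $\op{rk}_\ell(\overline{E})=0$ bookkeeping of Section~\ref{sec:e2} (Propositions~\ref{prop:step1} and \ref{prop:step2} together with Remark~\ref{rem:split}) produces such a cocycle depending Chern-linearly on the original class and having the prescribed trivial-bundle coordinate, so that together with $\op{D}^2\op{H}^\bullet(\op{GL}_3(\mathbb{F}_q);\mathbb{F}_\ell)$ these cocycles form a split sub-extension of $0\to\op{D}^2E^{0,\bullet}_1\to\op{D}^1E^{0,\bullet}_2\to\ker(\overline{\op{d}})\to 0$. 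This is exactly where both hypotheses are used: $\op{rk}_\ell(\overline{E})=0$ for the clean direct-sum decomposition of $\ker(\overline{\op{d}})$, and $\op{rk}_3(\overline{E})>0$ so that Lemma~\ref{lem:splitd0} is available. The obstruction is genuinely module-theoretic and cannot be seen numerically: substituting $\op{rk}_\ell(\overline{E})=0$ into the numerator of the Hilbert--Poincar\'e series of $E^{0,\bullet}_2$ (the corollary to Lemma~\ref{lem:degenerate}) leaves all of its coefficients non-negative, so the Hilbert--Poincar\'e test for non-freeness is inconclusive and the argument must go through the structure of the $\op{GL}_3(\mathbb{F}_q)$-vertex.
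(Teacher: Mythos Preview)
Your first reduction step is where the argument breaks. You claim Lemma~\ref{lem:splitd0} gives a splitting $E^{0,\bullet}_2\cong F\oplus\op{D}^1E^{0,\bullet}_2$ with $F$ free of rank~$2$, but $\op{D}^1E^{0,\bullet}_2$ is \emph{not} a direct summand of $E^{0,\bullet}_2$. The quotient $E^{0,\bullet}_2/\op{D}^1E^{0,\bullet}_2$ is the center cohomology $\op{H}^\bullet(k^\times;\mathbb{F}_\ell)\cong\mathbb{F}_\ell[X]\langle A\rangle$ with its $\mathbb{F}_\ell[\op{c}_1,\op{c}_2,\op{c}_3]$-module structure via restriction; since $\op{c}_1\mapsto 3X$ and $\op{c}_2\mapsto 3X^2$, the element $\op{c}_1^2-3\op{c}_2$ annihilates it. So this quotient has nontrivial Chern-class torsion, while $E^{0,\bullet}_2$ itself is torsionless (it embeds in a finite free module). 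Hence no free $F$ can map isomorphically to the quotient, and the complement of any splitting from Lemma~\ref{lem:splitd0} is strictly larger than $\op{D}^1E^{0,\bullet}_2$. Without this reduction, knowing that $\op{D}^1E^{0,\bullet}_2$ has a non-free summand does not imply $E^{0,\bullet}_2$ is non-free.

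The paper circumvents this by quotienting out the \emph{entire} diagonal $\Delta=\Delta\bigl(\op{H}^\bullet(\op{GL}_3(k);\mathbb{F}_\ell)\bigr)$, which is a genuine direct summand by the splitting argument in the proof of Lemma~\ref{lem:splitd0}. Since $\Delta$ surjects onto $\op{D}^0/\op{D}^1$, one has $E^{0,\bullet}_2/\Delta\cong\op{D}^1E^{0,\bullet}_2/\Delta(\op{D}^1)$, and it is this quotient (not $\op{D}^1E^{0,\bullet}_2$ itself) in which a non-free summand is located. Under $\op{rk}_\ell(\overline{E})=0$ the (IIa)+(IIb) part splits off, and the remaining piece is (essentially) $N_c$ copies of $\op{D}^1\op{H}^\bullet(\op{GL}_3(k);\mathbb{F}_\ell)$ modulo one diagonal copy, with Hilbert--Poincar\'e series $(N_c-1)$ times the bad series of Lemma~\ref{lem:helpfree}. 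This is where $\op{rk}_3(\overline{E})>0$ is genuinely used: it gives $N_c=3\op{rk}_3(\overline{E})>1$, so the coefficient is nonzero. Your approach only invokes $\op{rk}_3>0$ to obtain a second rational point for Lemma~\ref{lem:splitd0}, which is a much weaker use of the hypothesis. Incidentally, your ``hard part'' is easier than you think: the diagonal embedding itself already furnishes a Chern-linear section $\op{D}^1\op{H}^\bullet(\op{GL}_3(k);\mathbb{F}_\ell)\to\op{D}^1E^{0,\bullet}_2$ with the correct trivial-bundle coordinate, no $\op{rk}_\ell=0$ bookkeeping required; the problem lies entirely in the first step.
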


\begin{proof}
The assumption $\op{rk}_\ell(\overline{E})=0$ is used to not have to deal with the $\ell$-rank contribution in $\op{D}^1/\op{D}^2$. It implies that $\op{D}^1E^{0,\bullet}_2$ is isomorphic to the direct sum of its intersections with the vertex stabilizer cohomologies in the $E_1$-page, cf.~Remark~\ref{rem:split}. Lemma~\ref{lem:splitd0} shows that the $\op{D}^0/\op{D}^1$ comes from a free rank one direct summand of $E_2^{0,\bullet}$, diagonally embedded. By intersection with $\op{D}^1$, the module $\op{D}^1E_2^{0,\bullet}$ contains a diagonally embedded copy of the module $\op{D}^1\op{H}^\bullet(\op{GL}_3(\mathbb{F}_q);\mathbb{F}_\ell)$. To prove that the module $E_2^{0,\bullet}$ is not free, it suffices to show that the quotient
\[
\op{D}^1E_2^{0,\bullet}/ \Delta(\op{D}^1\op{H}^\bullet(\op{GL}_3(\mathbb{F}_q);\mathbb{F}_\ell))
\]
is not free. 
Now take the intersection (in $\op{D}^1E_1$) of the product of $\op{D}^1$-submodules of vertex stabilizer cohomology rings for stabilizers of type (IIa) and (IIb) with the $\op{D}^1E_2^{0,\bullet}$. Since the product of stabilizer cohomologies is a direct summand of $E_1$, it is also a direct summand of the quotient $\op{D}^1E_2^{0,\bullet}/\Delta$. The complement is the direct sum of $N_c$ copies of $\op{D}^1\op{H}^\bullet(\op{GL}_3(\mathbb{F}_q);\mathbb{F}_\ell)$ modulo a diagonally embedded copy of $\op{D}^1\op{H}^\bullet(\op{GL}_3(\mathbb{F}_q);\mathbb{F}_\ell)$. The Hilbert--Poincar{\'e} series is now, by Lemma~\ref{lem:helpfree}
\[
(N_c-1)\frac{T^3+2T^4+2T^5+2T^6+T^7+T^8+T^9-T^{10}-T^{11}}{(1-T^2)(1-T^4)(1-T^6)}. 
\]
By assumption $N_c>1$ because $N_c=3\op{rk}_3(\overline{E})$, cf.~Lemma~\ref{lem:ellform}. We have thus exhibited a direct summand of $E^{0,\bullet}_2$ which is not free as a module over the Chern-class ring. By \cite{atiyah}, the Krull--Schmidt theorem holds for graded modules over the Chern-class ring because these can be identified with coherent sheaves on the weighted projective space. In particular, a graded module with a non-free graded direct summand is not free, which proves the claim.
\end{proof}

\begin{remark}
The curve $\overline{E}$ given by $Y^2=X^3+X+8$ over $\mathbb{F}_{11}$ has $\overline{E}(\mathbb{F}_{11})=\mathbb{Z}/6\mathbb{Z}$ and satisfies the conditions for $\ell=5$.
\end{remark}

\section{The \texorpdfstring{$\op{d}_2$}{d2}-differential} 
\label{sec:d2}

It remains to evaluate the $\op{d}_2$-differential. Since the building $\mathfrak{B}(E/k,3)$ has dimension 2, the differentials $\op{d}_i$ for $i\geq 3$ are obviously trivial, and the spectral sequence degenerates at the $E_3$-term. The only interesting second differential is of the form 
\[
\op{d}_2^{0,t}:E^{0,t}_2\to \left(\op{H}^{t-1}(\mathbb{F}_q^\times;\mathbb{F}_\ell)
\otimes\op{St}_3(\mathbb{F}_q)\right).
\]

Unfortunately, it turns out that the differential does not completely vanish; in particular, general-purpose vanishing results cannot be applied and at least some amount of computation is necessary. The computation of the $\op{d}_2$-differential will also be done with the help of the detection filtration, resp. the induced filtration on the $E_2$-page. 

\subsection{Vanishing on classes ``essentially of rank 3''} 

As a first step, we show that the $\op{d}_2$-differential vanishes on the submodule $\op{D}^2\subseteq E^{0,\bullet}_2$. It is an instructive example that the detection filtration is also very useful for dealing with higher differentials. 

\begin{lemma}
\label{lem:d2vanish}
The differential $\op{d}_2$ vanishes on $\op{D}^2\subseteq E^{0,\bullet}_2$. 
\end{lemma}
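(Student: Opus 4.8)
The plan is to exploit the detection filtration together with the multiplicative structure of the isotropy spectral sequence. The key observation is that $\op{D}^2\subseteq E^{0,\bullet}_2$ consists precisely of classes that are, on each type (IIa-c) stabilizer, products of the elementary alternating polynomials identified in Propositions~\ref{prop:detectiona}, \ref{prop:detectionb} and \ref{prop:detectionc}, living in the image of the universal Chern-class ring action. More precisely, by those propositions $\op{D}^2$ is a \emph{free} $\mathbb{F}_\ell[\op{c}_1,\op{c}_2,\op{c}_3]$-module; so it suffices to check that $\op{d}_2$ vanishes on a chosen set of free generators. Since $\op{d}_2$ is $\mathbb{F}_\ell[\op{c}_1,\op{c}_2,\op{c}_3]$-linear (the universal Chern classes survive to $E_\infty$, so they are permanent cycles and the Leibniz rule kills the cross-term exactly as in Lemma~\ref{lem:linearization}), vanishing on generators gives vanishing on all of $\op{D}^2$.

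First I would reduce to the type (IIa) case. The generators of $\op{D}^2$ for stabilizers of type (IIb) and (IIc) are obtained from the type (IIa) generators by multiplying with $(X-Y)$ resp. $(X-Y)(X-Z)(Y-Z)$, i.e.\ by elements of the respective Chern-class rings $\mathbb{F}_\ell[\op{c}_1,\op{c}_2]$ resp.\ $\mathbb{F}_\ell[\op{c}_1,\op{c}_2,\op{c}_3]$, as recorded in Propositions~\ref{prop:detectionb}(1) and \ref{prop:detectionc}(1); hence by Chern-class linearity it is enough to treat the classes supported on type (IIa) vertices, i.e.\ on stabilizers $(\mathbb{F}_q^\times)^3$. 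So the real content is: for a type (IIa) vertex $\sigma$ with $G_\sigma\cong(\mathbb{F}_q^\times)^3$, the restriction-trivial classes (the kernel of $\op{d}_1$, which are exactly the $\mathbb{F}_\ell[X,Y,Z]$-module generated by the alternating polynomials of Proposition~\ref{prop:eltalt3}) support trivial $\op{d}_2$.

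The main step is then a geometric/representation-theoretic argument: $\op{d}_2^{0,t}$ lands in $\op{H}^{t-1}(\mathbb{F}_q^\times;\mathbb{F}_\ell)\otimes\op{St}_3(\mathbb{F}_q)$, which is the cohomology in the second column coming from cells stabilized only by the center. The image of $\op{d}_2$ on a class $x\in\op{D}^2$ is represented by lifting $x$ to $E^{0,\bullet}_1$, applying $\op{d}_1$ (which is zero on $x$ by definition of $\op{D}^2$, since it is in the kernel of restriction to adjacent edges), and then tracking the next stage of the zig-zag through the $2$-cells. I would argue that this zig-zag can be computed entirely inside the $\op{GL}_3(\mathbb{F}_q)$-equivariant structure: a type (IIa) vertex $\sigma$, together with the finitely many $2$-cells adjacent to it in the $\mathcal{H}_G^\ast$-reduction $\mathfrak{X}$, sits in (a copy of) the link of a stable bundle, whose combinatorics is governed by the flag complex $\mathcal{F}\ell_3(\mathbb{F}_q)$; and the alternating polynomials are precisely the classes that vanish after restriction along any diagonal inclusion $(\mathbb{F}_q^\times)^2\hookrightarrow(\mathbb{F}_q^\times)^3$. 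Concretely, one uses that $\op{St}_3(\mathbb{F}_q)$, as a $\op{GL}_3(\mathbb{F}_q)$-representation, contains no copy of the trivial representation after the relevant twist, together with the fact that the detection filtration is $\op{d}_2$-compatible (so $\op{d}_2$ of a class detected only on rank-$3$ subgroups would have to be detected on rank-$3$ subgroups of center-stabilized cells, but there are none). The cleanest formulation is: $\op{d}_2(\op{D}^2)\subseteq \op{D}^2 E_2^{2,\bullet}$, and $E_2^{2,\bullet}$ lies in $\op{D}^0$ only (cells stabilized by the center have no rank-$\geq 2$ elementary abelian subgroups), so $\op{D}^2 E_2^{2,\bullet}=0$.

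I expect the main obstacle to be making the last filtration-compatibility statement fully rigorous, i.e.\ verifying that the detection filtration is compatible with \emph{all} differentials $\op{d}_r$, not just $\op{d}_1$. For $\op{d}_1$ this was Lemma~\ref{lem:e1props}(1); for $\op{d}_2$ one needs that the zig-zag defining $\op{d}_2$ respects the restriction-to-elementary-abelian-subgroup structure, which follows because each stage of the zig-zag is a boundary or restriction map and the detection filtration is defined purely in terms of restriction maps — but this requires a small diagram chase to articulate cleanly. Once that is in place, the conclusion is immediate: the target of $\op{d}_2$ on $\op{D}^2$ lands in $\op{D}^2 E^{2,\bullet}_2$, and since no cell in column $2$ of $\mathfrak{X}$ has a stabilizer containing an elementary abelian $\ell$-subgroup of rank $\geq 2$ (cf.\ the last observation in \ref{prop:simplee1}: a rank-$m$ elementary abelian group stabilizing an $n$-cell forces $n\leq 3-m$), we get $\op{D}^2 E^{2,\bullet}_2=0$, hence $\op{d}_2|_{\op{D}^2}=0$.
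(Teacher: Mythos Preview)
Your central claim—that $\op{d}_2$ respects the detection filtration, so that $\op{d}_2(\op{D}^2)\subseteq \op{D}^2 E_2^{2,\bullet}=0$—does not hold, and this is not a bookkeeping issue but a genuine gap. The paper's own Lemma~\ref{lem:d2lower} shows that $\op{d}_2$ is \emph{nonzero} on the $\ell$-rank classes in $\op{D}^1 E_2^{0,\bullet}$, with image in $E_2^{2,\bullet}$; since $\op{D}^1 E_2^{2,\bullet}=0$ (the $2$-cells in the reduced complex are stabilized only by the center), $\op{d}_2$ already fails to preserve $\op{D}^1$. Your justification, that ``each stage of the zig-zag is a boundary or restriction map,'' overlooks that the middle step—choosing a cochain-level primitive $y_1$ with $d_v y_1=d_h x_0$—is neither. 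That is exactly where filtration control is lost in general, so the ``small diagram chase'' you anticipate cannot succeed.

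What makes $\op{D}^2$ special, and what the paper's argument actually exploits, is a \emph{locality} property you never invoke: by Lemma~\ref{lem:e1props}(3), a class in $\op{D}^2$ lies in the kernel of $\op{d}_1$ already when restricted to the cohomology of each individual vertex stabilizer, not merely in the kernel of the global alternating sum. (For the $\ell$-rank classes in $\op{D}^1/\op{D}^2$ this fails: individual restrictions to adjacent edges are nonzero and only cancel in the sum.) The paper then passes to a minimal resolution for the relevant elementary abelian $\ell$-groups, so that ``restricts to the zero cohomology class'' becomes ``restricts to the zero cochain''; one may take $y_1=0$ and conclude $\op{d}_2 x=[d_h y_1]=0$ directly. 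Your reductions to free generators and to type~(IIa) are unnecessary for either approach, and the specific claim that $(X-Y)$ and $(X-Y)(X-Z)(Y-Z)$ lie in the Chern-class rings is incorrect (they are alternating, not symmetric, in the relevant variables). The representation-theoretic sketch involving $\op{St}_3(\mathbb{F}_q)$ is not carried through and is not how the paper proceeds.
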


\begin{proof}
By definition, the submodule $\op{D}^2\subseteq E^{0,\bullet}_2$ is the direct sum of the submodules $\op{D}^2\subseteq \op{H}^\bullet(G_\sigma;\mathbb{F}_\ell)$ of vertex stabilizer cohomologies. Moreover, a class $\gamma\in \op{H}^t(G_\sigma;\mathbb{F}_\ell)$ is in $\op{D}^2$ if and only if it is in the kernel of $\op{d}^1$ restricted to $\op{H}^t(G_\sigma;\mathbb{F}_\ell)$.\footnote{Note that this is much stronger than  what can usually be said about classes in kernels of differentials in spectral sequences.} 
Now we can represent the class as a cohomology class for an elementary abelian $\ell$-group and take a minimal complex for the elementary abelian $\ell$-groups. In this particular model, the cohomology class in the $\op{D}^2$ will have restrictions which are trivial (as opposed to being null-cohomologous). But then we can choose the coboundaries to be trivial as well and therefore the differential $\op{d}_2$ is trivial on $\op{D}^2$. 
\end{proof}

\begin{remark}
The conceptual remark to be made here is that the detection filtration is defined using stronger vanishing restrictions for the restriction maps in group cohomology. The stronger such a local vanishing condition is, the more differentials of the spectral sequence will vanish on the corresponding filtration step. This restriction for the higher differentials is very similar to the expectations for rank filtrations in algebraic K-theory -- the rank-graded pieces of the group homology spectral sequence should actually degenerate to complexes, cf. Goncharov's work on the trilogarithm and the work of Dupont and Sah on cohomology of $\op{PGL}_3(k)$, cf. \cite{dupont:book}. Of course, part of the present work is inspired by the study of rank filtrations in K-theoretic contexts.
\end{remark}

\subsection{Reduction to degree $1$}

In the next step, we investigate the $\op{d}_2$-differential on the kernel of $\overline{\op{d}}:\op{D}^1E^{0,\bullet}_1/\op{D}^2E^{0,\bullet}_1\to\op{D}^1E^{1,\bullet}_1$. This is well-defined by the earlier Lemma~\ref{lem:d2vanish}. 

\begin{lemma}
\label{lem:d2lower}
Let $\gamma\in\ker\overline{\op{d}}$ be a class in the $\ell$-rank contribution identified in Proposition~\ref{prop:step1}. Then $\gamma$ is obtained from a class in degree 1 or 2 via multiplication with the universal class $\op{e}_1$ and possibly a diagonal alternating polynomial $X_1-X_2$. 
As a consequence, the differential $\op{d}_2$ on these classes maps surjectively to a free $\mathbb{F}_\ell[\op{c}_1]\langle\op{e}_1\rangle$-submodule of $E^{2,\bullet}_2$ of rank $\op{rk}_\ell(\overline{E})$. 
\end{lemma}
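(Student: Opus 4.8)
The plan is to split the statement into its two halves: a purely bookkeeping claim about which classes generate the $\ell$-rank part, and then a computation of $\op{d}_2$ on those generators. For the first half I would argue as follows. From the explicit generator lists in Propositions~\ref{prop:detectiona}, \ref{prop:detectionb}, \ref{prop:detectionc} and \ref{prop:detectionrk2} one sees that the $\op{D}^1/\op{D}^2$-generators of each vertex stabilizer occur in ``diamonds'' $\{\gamma_0,\op{e}_1\gamma_0,h_0,\op{e}_1 h_0\}$ sitting in degrees $(d,d{+}1,d{+}1,d{+}2)$, where $h_0$ is a diagonal alternating polynomial and one uses the elementary exterior identities $AB=-\tfrac12\op{e}_1(A-B)$, $(X-Y)(A+B)=\op{e}_1(X-Y)$; and, passing from the $\mathbb{F}_\ell[X,Y]$-module structure to the $\mathbb{F}_\ell[\op{c}_1,\op{c}_2]$-module structure (recall $\mathbb{F}_\ell[X,Y]$ is free of rank $2$ over $\mathbb{F}_\ell[\op{c}_1,\op{c}_2]$ on $\{1,X_1-X_2\}$, and $(X_1-X_2)^2=\op{c}_1^2-4\op{c}_2$), the remaining generators of the $\mathbb{F}_\ell[\op{c}_1,\op{c}_2]$-module are the $(X_1-X_2)$-multiples of these. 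Feeding this into Proposition~\ref{prop:step1}(1),(3) --- which identifies the $\ell$-rank part of $\ker\overline{\op{d}}$ as an $\op{rk}_\ell(\overline E)$-dimensional space of classes in each of the degrees $1,2,2,3$ of a base diamond, on which $\overline{\op{d}}$ acts identically --- yields that the $\ell$-rank part of $\ker\overline{\op{d}}$ is generated as an $\mathbb{F}_\ell[\op{c}_1,\op{c}_2]$-module by the $\op{rk}_\ell$ degree-$1$ classes $g_i$ of Proposition~\ref{prop:step1}(1), the companion degree-$2$ diagonal-alternating classes $h_i$, and their products with $\op{e}_1$ and $X_1-X_2$. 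This is the first assertion.

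For the second half the idea is to propagate the $\op{d}_2$-computation upward by the multiplicative structure of the isotropy spectral sequence (Proposition~\ref{prop:multiplicative} and the derivation property of $\op{d}_r$). The classes $\op{c}_1,\op{c}_2,\op{e}_1$ are restrictions of honest elements of $\op{H}^\bullet(\op{GL}_3(k[E]);\mathbb{F}_\ell)$ coming from the center (cf.\ the Remark after Corollary~\ref{cor:rankonee1}), hence permanent cycles; thus $\op{d}_2$ is $\mathbb{F}_\ell[\op{c}_1,\op{c}_2]$-linear and anticommutes with $\cup\op{e}_1$, and $(X_1-X_2)^2=\op{c}_1^2-4\op{c}_2$ is a permanent cycle as well, so the only ``new'' input is the single factor $X_1-X_2$ --- which, not being itself a $\op{d}_1$-cocycle, contributes a secondary (Massey/Čech-type) operation that I would evaluate directly from the explicit $\op{d}_1$-restriction matrices of Section~\ref{sec:restrictions}. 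Also note that the degree-$1$ generators $g_i$ carry no $\op{d}_2$: a $\op{d}_1$-cocycle of cohomological degree $1$ over finite stabilizer groups leaves no room for a secondary operation into the degree-$0$ cohomology of a $2$-cell group. Consequently $\op{d}_2$ on the whole $\ell$-rank part is pinned down by the $\op{rk}_\ell$ classes $w_i:=\op{d}_2(h_i)\in E^{2,\bullet}_2=\op{H}^\bullet(\mathbb{F}_q^\times;\mathbb{F}_\ell)\otimes\op{St}_3(\mathbb{F}_q)$ together with the $\op{rk}_\ell$ secondary values $\op{d}_2\!\big((X_1-X_2)g_i\big)$, which the graph-theoretic evaluation should show lie in the $\mathbb{F}_\ell[\op{c}_1]\langle\op{e}_1\rangle$-span of the $w_i$; one also records here that $\op{c}_2$ acts on $E^{2,\bullet}_2$ through the projection $\mathbb{F}_\ell[\op{c}_1,\op{c}_2]\to\mathbb{F}_\ell[\op{c}_1]$, since the center sits diagonally in $\op{GL}_3$, so ``free over $\mathbb{F}_\ell[\op{c}_1]\langle\op{e}_1\rangle$'' is the correct target.

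The heart of the matter, and the step I expect to be the main obstacle, is then the $\mathbb{F}_\ell$-linear independence of $w_1,\dots,w_{\op{rk}_\ell}$ in $\op{St}_3(\mathbb{F}_q)$; granted this, freeness of the generated submodule over $\mathbb{F}_\ell[\op{c}_1]\langle\op{e}_1\rangle$ is automatic because $E^{2,\bullet}_2$ is itself free over that ring. I would prove the independence by an explicit identification of $\op{d}_2(h_i)$: via Proposition~\ref{prop:k1} the $g_i$ (hence the $h_i$, which are the polynomial ``Bockstein'' companions of the $g_i$) are dual to a basis of $\big(\overline E(\mathbb{F}_q)\otimes_{\mathbb{Z}}\mathbb{F}_q^\times\big)\otimes\mathbb{F}_\ell$, and $\op{d}_2(h_i)$ is exactly the obstruction cocycle obtained by choosing $1$-cochains on the edge groups that witness the class-level vanishing of $\op{d}_1 h_i$ and forming their alternating sum over the $2$-cells of the link of the two stable bundles; by Remark~\ref{rem:cokernel} this cocycle is nonzero precisely because the corresponding $1$-cycle in $\widehat{\op{H}}_1(E/k)$ is killed not by a genuine relation of the parabolic graph but only ``$\ell$-divisibly'', and distinct $g_i$ produce independent such cocycles by non-degeneracy of the pairing between $\overline E(\mathbb{F}_q)\otimes\mathbb{F}_q^\times$ and the Steinberg module. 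A rank count then gives that $\op{d}_2$ surjects onto a free $\mathbb{F}_\ell[\op{c}_1]\langle\op{e}_1\rangle$-module of rank $\op{rk}_\ell(\overline E)$, completing the proof.
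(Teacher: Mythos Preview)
Your first half --- the bookkeeping that the $\ell$-rank part of $\ker\overline{\op{d}}$ is generated over $\mathbb{F}_\ell[\op{c}_1,\op{c}_2]$ from degree-$1$ and degree-$2$ classes by cupping with $\op{e}_1$ and the diagonal alternating polynomial $X_1-X_2$ --- is essentially the paper's argument and is fine.

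The second half contains a genuine error. You assert that the degree-$1$ generators $g_i$ carry no $\op{d}_2$ because there is ``no room for a secondary operation into the degree-$0$ cohomology of a $2$-cell group''. This is false: the differential $\op{d}_2^{0,1}$ lands in $E^{2,0}_2$, and by Corollary~\ref{cor:rankonee1} that group is $\op{St}_3(k)\otimes\mathbb{F}_\ell$, an $\mathbb{F}_\ell$-vector space of dimension $q^3$. There is plenty of room, and in fact the paper shows that $\op{d}_2^{0,1}$ is \emph{injective} on the $\ell$-rank part. So your claim $\op{d}_2(g_i)=0$ contradicts the actual behaviour of the spectral sequence, and the whole strategy of reducing to the degree-$2$ classes $h_i$ collapses.

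The paper's route is quite different from your proposed explicit cocycle computation, and much shorter. Since $E^{1,0}_2=0$, convergence gives $\ker\op{d}_2^{0,1}\cong\op{H}^1(\op{GL}_3(k[E]);\mathbb{F}_\ell)$. This group is known by homological stability together with the computation of $\op{K}_1$ of a curve over a finite field to be one-dimensional, detected on the center. Hence the $\op{rk}_\ell(\overline{E})$ degree-$1$ classes in $\op{D}^1/\op{D}^2$ must inject under $\op{d}_2^{0,1}$, and the Leibniz rule (together with the observation that $(X_1-X_2)$ restricts to zero on the center, so $(X_1-X_2)\cup\op{d}_2(a)=0$ in $E^{2,\bullet}$) propagates this to the full $\ell$-rank contribution. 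No explicit Steinberg-module pairing is needed; the K-theoretic input replaces the hardest step of your sketch, and your appeal to a ``non-degeneracy of the pairing between $\overline{E}(\mathbb{F}_q)\otimes\mathbb{F}_q^\times$ and the Steinberg module'' is left unjustified and would be considerably more work than the paper's one-line argument.
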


\begin{proof}
The statement about multiplicative generation from  degrees 1 and 2  follows from the computations in Section~\ref{sec:detection}. 
Now consider the Leibniz rule for the differential. For multiplication with the universal $\op{e}_1$, we have 
\[
\op{d}_2^{0,2+i}(\op{e}_1\cup a)=(-1)^i\op{e}_1\cup \op{d}_2^{0,i}(a)
\]
because the universal $\op{e}_1$ survives to the $E_3$ page and hence its differential is trivial. 

As identified in Proposition~\ref{prop:step1}, the classes in degree 1 and 2 are linear combinations of classes of the form $A_1-A_2$ and $X_1-X_2$ in the various stabilizer cohomologies, respectively. For multiplication with $(X_1-X_2)$ we have
\[
\op{d}_2((X_1-X_2)\cup a)=\op{d}_2(X_1-X_2)\cup a +(-1)^{i}(X_1-X_2)\cup(\op{d}_2a)
\]
The cup product $(X_1-X_2)\cup(\op{d}_2a)$ is computed by first restricting $(X_1-X_2)$ to the relevant stabilizer subgroups and then taking cup product with $\op{d}_2a$. But the restriction of the alternating polynomial to the center will be trivial. This shows $\op{d}_2((X_1-X_2)\cup a)=\op{d}_2(X_1-X_2)\cup a$. As a consequence, the differential applied to the classes from Proposition~\ref{prop:step1} will always be cup-multiples of the corresponding classes in degree 1 and 2. 

To prove the claim, it now suffices to compute the differential in degree 1. In this degree, we have $E^{0,1}_2\cong \mathbb{F}_\ell^{1+\op{rk}_\ell(\overline{E})}$ and the differential is
\[
\op{d}_2^{0,1}:E^{0,1}_2\to \op{St}_3(k)\otimes\mathbb{F}_\ell.
\]
Since $E^{1,0}_2=0$ and the spectral sequence converges to $\op{H}^\bullet(\op{GL}_3(k[E]);\mathbb{F}_\ell)$, we have 
\[
\ker\op{d}_2^{0,1}\cong \op{H}^1(\op{GL}_3(k[E]);\mathbb{F}_\ell).
\]
The latter is known, by stabilization for group homology and the computation of K-theory of curves over finite fields, cf.~e.g.~\cite{weibel:kbook}, to be isomorphic to $\mathbb{F}_\ell$, and the corresponding cohomology classes are detected on the center. In particular, the differential $\op{d}_2^{0,1}$ must be injective. This proves the assertion about the rank of the image in $E^{2,0}_2$. Since the generators of $E^{2,1}_2$ are cup-products of classes from $E^{2,0}$ with the universal $\op{e}_1$, we get the same statement for $E^{2,1}_2$. The rest now follows from the above reduction to rank 1 and 2.
\end{proof}

\begin{lemma}
\label{lem:d2upper}
Let $\gamma\in\ker\overline{\op{d}}$ be among the classes identified in Proposition~\ref{prop:step2}. Then its $\op{d}_2$-differential is trivial. 
\end{lemma}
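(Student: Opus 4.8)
The plan is to mimic the structure of the proof of Lemma~\ref{lem:d2lower}, but exploit the fact that these classes live in higher cohomological degrees where there is simply no room for a nontrivial target. First I would recall from Proposition~\ref{prop:step2} that the relevant $\gamma$'s form a free $\mathbb{F}_\ell[\op{c}_1,\op{c}_2]$-module whose generators sit in total degrees $3,4,4,5$ (for the $N_b+N_c-\op{rk}_\ell(\overline{E})$ generators) and $5,6,6,7$ (for the $N_a+N_b+N_c$ generators), all of them being the universal classes $2\op{c}_1\op{e}_1-3\op{e}_2$, $\op{e}_1\op{e}_2$, $\op{c}_1^2-3\op{c}_2$, $\op{c}_1\op{e}_2-2\op{c}_2\op{e}_1$ (or their products with a diagonal alternating polynomial $X_1-X_2$) restricted to the various type (IIa--c) stabilizers. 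The key point is that, exactly as in Lemma~\ref{lem:d2lower}, the Leibniz rule reduces the computation of $\op{d}_2$ on these classes to a computation in low cohomological degree: multiplication by the universal Chern classes $\op{c}_1,\op{c}_2$ and by the universal exterior class $\op{e}_1$ commutes with $\op{d}_2$ up to sign (since these survive to $E_3$), and cup product with a diagonal alternating polynomial $X_1-X_2$ against any $\op{d}_2$-image is killed because $X_1-X_2$ restricts trivially to the center, hence trivially to $\op{St}_3(\mathbb{F}_q)\otimes\mathbb{F}_\ell$ which is the $E_2^{2,\bullet}$-column. So $\op{d}_2\gamma$ is always a universal-class multiple of $\op{d}_2$ applied to one of $2\op{c}_1\op{e}_1-3\op{e}_2$, $\op{e}_1\op{e}_2$, $\op{c}_1^2-3\op{c}_2$, $\op{c}_1\op{e}_2-2\op{c}_2\op{e}_1$ viewed in $E_2^{0,\bullet}$.

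Next I would argue that each of these four distinguished classes itself lies in $\op{D}^2 E_2^{0,\bullet}$, namely that $2\op{c}_1\op{e}_1-3\op{e}_2$, $\op{e}_1\op{e}_2$, $\op{c}_1^2-3\op{c}_2$, $\op{c}_1\op{e}_2-2\op{c}_2\op{e}_1$, as elements of the cohomology of a type (IIc) stabilizer $\op{GL}_3(\mathbb{F}_q)$ (and likewise their images in type (IIa), (IIb) stabilizers), are in the kernel of $\op{d}_1$ restricted to the single factor — this is precisely the content of Lemma~\ref{lem:e1props}(3) together with Proposition~\ref{prop:detectionc}(3), which identifies exactly these four classes as the rank-2 subquotient generators and hence as elements of $\op{D}^1$ whose restriction to \emph{adjacent edges} need not vanish. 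Hmm — this is where I must be careful: these classes are in $\op{D}^1$, not $\op{D}^2$, so Lemma~\ref{lem:d2vanish} does not apply directly. The cleaner route is to observe that the four classes, being the symmetric-polynomial generators from $\op{H}^\bullet(\op{GL}_3(\mathbb{F}_q);\mathbb{F}_\ell)$, are restrictions of classes in $\op{H}^\bullet(\op{GL}_3(k[E]);\mathbb{F}_\ell)$ coming from the inclusion of constants $\op{GL}_3(k)\hookrightarrow\op{GL}_3(k[E])$ — this is the splitting of Lemma~\ref{lem:splitd0} — hence they survive to $E_\infty$, so their $\op{d}_2$ vanishes. Then the Leibniz reduction above finishes the proof: $\op{d}_2\gamma$ is a universal-class multiple of $0$.

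The main obstacle is making the reduction-to-the-four-generators rigorous without circularity, since those four classes and the classes $\gamma$ are close cousins. I expect the correct bookkeeping to be: every generator listed in Proposition~\ref{prop:step2} is a product of one of the four universal classes $\{2\op{c}_1\op{e}_1-3\op{e}_2,\ \op{e}_1\op{e}_2,\ \op{c}_1^2-3\op{c}_2,\ \op{c}_1\op{e}_2-2\op{c}_2\op{e}_1\}$ with a (possibly trivial) element of $\mathbb{F}_\ell[\op{c}_1,\op{c}_2]$ and possibly one diagonal alternating factor $X_1-X_2$; Leibniz plus the two vanishing observations (universal classes survive; $X_1-X_2$ restricts trivially to the $E_2^{2,\bullet}$-column) then shows $\op{d}_2\gamma=0$ once one knows the four universal classes have vanishing $\op{d}_2$, which in turn follows from the constant-subgroup splitting of Lemma~\ref{lem:splitd0}. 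One subtlety to check is that the products of $(X_1-X_2)$ with the degree $3,4,4,5$ generators are genuinely the degree $5,6,6,7$ generators as stated in the proof of Proposition~\ref{prop:step2}, so that no generator escapes this description; this is exactly the last displayed computation in that proof and may be quoted verbatim.
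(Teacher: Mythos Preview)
Your approach differs from the paper's one-line proof, which simply invokes the argument of Lemma~\ref{lem:d2vanish}: working with minimal resolutions for the elementary abelian stabilizers, a class whose restrictions to adjacent cells are \emph{literally} the zero cochain (not merely cohomologous to zero) has $\op{d}_2=0$ because the primitive in the zig-zag recipe for $\op{d}_2$ may be chosen to be the zero cochain.

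Your Leibniz-rule route, modelled on Lemma~\ref{lem:d2lower}, has a genuine gap. The identity $\op{d}_2(\alpha\cup\beta)=\op{d}_2\alpha\cup\beta\pm\alpha\cup\op{d}_2\beta$ is only available when both $\alpha$ and $\beta$ are honest $E_2$-classes. You are right that the four \emph{global} symmetric classes $2\op{c}_1\op{e}_1-3\op{e}_2$, $\op{e}_1\op{e}_2$, $\op{c}_1^2-3\op{c}_2$, $\op{c}_1\op{e}_2-2\op{c}_2\op{e}_1$, diagonally embedded via the constant-subgroup splitting of Lemma~\ref{lem:splitd0}, survive to $E_\infty$ and hence have $\op{d}_2=0$. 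But the kernel classes of Proposition~\ref{prop:step2} are not that single diagonal copy: the Hilbert--Poincar\'e series there records $N_b+N_c-\op{rk}_\ell(\overline{E})$ independent generators already in degree~$3$ and $N_a+N_b+N_c$ in degree~$5$, so the kernel consists of many classes supported on (linear combinations over) individual type~(IIa--c) vertices. To write such a $\gamma$ as a cup product with one of your global surviving classes $\alpha$, the complementary factor $\beta$ would have to be something like the indicator $1_\sigma\in\op{H}^0(G_\sigma;\mathbb{F}_\ell)$ or the local alternating polynomial $(X_1-X_2)_\sigma$, and neither lies in $\ker\op{d}_1$ --- for instance $\op{d}_1(1_\sigma)$ is the nonzero signed sum of the identities on the adjacent edges. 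Hence $\op{d}_2\beta$ is not even defined, and the claimed reduction ``$\op{d}_2\gamma$ is a universal-class multiple of~$0$'' does not go through. The superficially similar manipulation in the proof of Lemma~\ref{lem:d2lower} is used only to propagate the computation down to degree~$1$, where it is closed off by the independent K-theoretic determination of $\op{H}^1(\op{GL}_3(k[E]);\mathbb{F}_\ell)$; by contrast you need the product decomposition to hold literally in $E_2$ in order to conclude vanishing, and it does not.
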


\begin{proof}
This is now the same argument as in Lemma~\ref{lem:d2vanish}. 
\end{proof}

It remains to say something about the $\op{d}_2$-differential on those classes not in the filtration step $\op{D}^1$. By Lemma~\ref{lem:splitd0}, we can canonically lift the $\op{D}^0/\op{D}^1$-part to the $E_2^{0,\bullet}$-column, as the cohomology $\op{H}^\bullet(\op{GL}_3(\mathbb{F}_q);\mathbb{F}_\ell)$ of the stabilizer of the trivial bundle. Since this splits off the cohomology, it survives to the $E_\infty$-page and therefore the $\op{d}_2$-differential is trivial on the summand $\op{H}^\bullet(\op{GL}_3(\mathbb{F}_q);\mathbb{F}_\ell)$. This implies that we have completely computed the $\op{d}_2$-differential. As a consequence, the $E_3$-page only differs from the $E_2$-page by a free $\mathbb{F}_\ell[\op{c}_1]\langle \op{e}_1\rangle$-module of rank $\op{rk}_\ell(\overline{E})$ in the columns $E^{0,\bullet}$ and $E^{2,\bullet}$, respectively.

\section{Quillen's conjecture on cohomology of arithmetic groups}
\label{sec:quillenconj}

In this section, we discuss Quillen's conjecture on the structure of cohomology rings of $S$-arithmetic groups. The computations of $\op{H}^\bullet(\op{GL}_3(k[E]);\mathbb{F}_\ell)$ provide new insights into the nature of the possible failures of the conjecture and a variation of the original formulation of the conjecture in \cite{quillen:spectrum}.

\subsection{Conjecture and known results} 
\label{known}
First, let us state Quillen's original conjecture, cf. \cite[Conjecture 14.7, p. 591]{quillen:spectrum}.  For any number field $K$, and any set of places $S$ of $K$, the natural embedding $\op{GL}_n(\mathcal{O}_{K,S})\hookrightarrow\op{GL}_n(\mathbb{C})$ induces a  restriction map in cohomology
\[
\op{res}_{K,S}:
\op{H}^\bullet(\op{GL}_n(\mathbb{C});\mathbb{F}_\ell)\to  
\op{H}^\bullet(\op{GL}_n(\mathcal{O}_{K,S});\mathbb{F}_\ell). 
\]
Moreover, denoting by $\op{c}_i$ the $i$-th Chern class in $\op{H}^\bullet_{\op{cts}}(\op{GL}_n(\mathbb{C});\mathbb{F}_\ell)$,
there is a change-of-topology map 
\[
\delta:\mathbb{F}_\ell[\op{c}_1,\dots,\op{c}_n]\cong 
\op{H}^\bullet_{\op{cts}}(\op{GL}_n(\mathbb{C});\mathbb{F}_\ell)\to 
\op{H}^\bullet(\op{GL}_n(\mathbb{C});\mathbb{F}_\ell).
\]
The conjecture of Quillen can now be stated as follows: 
\begin{conjecture}[Quillen]
\label{conj:quillen}
Let $\ell$ be a prime number. Let $K$ be a number field with $\zeta_\ell\in K$, and $S$ a finite set of places containing the infinite places and the places over $\ell$. The the cohomology ring $\op{H}^\bullet(\op{GL}_n(\mathcal{O}_{K,S});\mathbb{F}_\ell)$ is a free
module over the cohomology ring $\op{H}^\bullet_{\op{cts}}(\op{GL}_n(\mathbb{C});\mathbb{F}_\ell)\cong
\mathbb{F}_\ell[\op{c}_1,\dots,\op{c}_n]$ via the composition $\op{res}_{K,S}\circ \delta$.  
\end{conjecture}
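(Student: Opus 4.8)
The displayed statement is Quillen's conjecture itself, so strictly there is nothing to \emph{prove}: the whole thrust of the present paper is that this statement is \emph{false}, already for $\op{GL}_3(k[E])$ and hence, via Corollary~\ref{cor:henn}, for all $\op{GL}_n(k[E])$ with $n\geq 3$; the conjecture is known to hold only in restricted low-rank situations, and counterexamples were already known (in cruder form, and only in ranks $\geq 14$) over rings such as $\mathbb{Z}[1/2]$. So rather than a proof I will describe the strategy by which the conjecture \emph{would} be established when it does hold — phrased in the language set up above — and pinpoint exactly where it must break down.

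The engine is the Quillen homomorphism. For $\Gamma=\op{GL}_n(\mathcal{O}_{K,S})$ one forms
\[
q_\Gamma:\op{H}^\bullet(\Gamma;\mathbb{F}_\ell)\longrightarrow \lim_{A}\op{H}^\bullet(A;\mathbb{F}_\ell),
\]
the limit taken over the category of elementary abelian $\ell$-subgroups of $\Gamma$. Under the hypothesis $\zeta_\ell\in K$ every such $A$ is conjugate into the diagonal torus of $\op{GL}_n(\mathbb{F}_\ell)$, so by the recollections of Section~\ref{sec:reccohom} each $\op{H}^\bullet(A;\mathbb{F}_\ell)$ is free over its symmetric-polynomial Chern-class subring and Quillen's analysis identifies the target of $q_\Gamma$ as a finitely generated free $\mathbb{F}_\ell[\op{c}_1,\dots,\op{c}_n]$-module onto which $\op{res}_{K,S}\circ\delta$ maps. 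Consequently the conjecture is equivalent to the conjunction of two assertions: (i) $q_\Gamma$ is injective; and (ii) the image of $q_\Gamma$ is itself free over the Chern-class ring.

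Both assertions are attacked through the isotropy spectral sequence for the $\Gamma$-action on the associated space — the symmetric space in Quillen's original setting, the Bruhat--Tits building $\mathfrak{B}(E/k,3)$ in the situation of this paper — together with Quillen's comparison: writing $\op{R}_\bullet\op{H}^\bullet$ for the induced ascending filtration by Chern-class submodules, one has $\op{R}_0=0$, $\op{R}_3=\op{H}^\bullet$, $\op{R}_2=\ker q_\Gamma$, and $E^{0,\bullet}_\infty\cong\op{im}\,q_\Gamma$. Thus assertion (i) — that $\ker q_\Gamma=0$ — is equivalent to the vanishing of \emph{both} subquotients $\op{R}_1/\op{R}_0$ and $\op{R}_2/\op{R}_1$, i.e. to the injectivity of every $\op{d}_r$-differential issuing from the lower-$\ell$-rank columns $E^{1,\bullet}_r$ and $E^{2,\bullet}_r$, so that no torsion classes survive there; and once (i) holds, $q_\Gamma$ identifies $\op{H}^\bullet(\Gamma;\mathbb{F}_\ell)$ with $E^{0,\bullet}_\infty$, reducing (ii) to the statement that $E^{0,\bullet}_\infty$ — built from the cohomology of the maximal-rank elementary abelian $\ell$-subgroups, with the lower-column differentials having killed exactly its would-be torsion — is a free Chern-class module. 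Granting both, $\op{H}^\bullet(\Gamma;\mathbb{F}_\ell)$ is free by direct identification with a free module, with no extension problems to resolve.

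The main obstacle is that \emph{both} steps fail, which is the real content of the paper. The detection filtration of Section~\ref{sec:detection} shows that the lower-$\ell$-rank strata carry genuine, permanent homology: by Theorem~\ref{thm:kernel}, $\op{R}_1/\op{R}_0$ is the contribution of the (cuspidal) cohomology of the quotient $\op{GL}_3(k[E])\backslash\mathfrak{B}$, with Hilbert--Poincar\'e series $(1+T)(q^3-\op{rk}_\ell(\overline{E}))/(1-T^2)\neq 0$, and $\op{R}_2/\op{R}_1$ is governed by the parabolic graph of semistable rank-$3$ bundles on $\overline{E}$ and is non-zero whenever $N_a+\op{rk}_\ell(\overline{E})>0$ — which, by the numerology of Lemma~\ref{lem:ellform} together with the $\op{d}_2$-computation of Section~\ref{sec:d2}, always occurs. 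Hence $\ker q_\Gamma\neq 0$ and (i) fails outright. Worse, by Theorem~\ref{thm:free} even the surviving term $E^{0,\bullet}_\infty=\op{im}\,q_\Gamma$ can be a non-free (though torsionless) Chern-class module — by the Krull--Schmidt argument over the Chern-class ring used in the proof of Theorem~\ref{thm:e2nonfree}, via \cite{atiyah} — so (ii) fails as well. Thus there is no proof of the statement as worded; any surviving version of the conjecture must at least weaken "free", and even then, as the discussion of Section~\ref{sec:quillenconj} shows, care is needed for $n\geq 3$.
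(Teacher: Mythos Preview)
Your identification is correct: the displayed statement is a \emph{conjecture}, not a theorem, and the paper contains no proof of it. Indeed, the paper's contribution runs in the opposite direction---Theorems~\ref{thm:kernel}, \ref{thm:image}, and \ref{thm:free} together with Corollary~\ref{cor:henn} establish that the conjecture fails for $\op{GL}_n(k[E])$ whenever $n\geq 3$. So there is no ``paper's own proof'' against which to compare your write-up, and your decision to instead sketch the mechanism (Quillen homomorphism, isotropy filtration) and explain where it breaks is the appropriate response.

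One small technical imprecision worth flagging: you write that the vanishing of $\op{R}_1$ and $\op{R}_2/\op{R}_1$ is equivalent to ``the injectivity of every $\op{d}_r$-differential issuing from the lower-$\ell$-rank columns $E^{1,\bullet}_r$ and $E^{2,\bullet}_r$''. This is not quite right. Differentials go $\op{d}_r:E_r^{i,j}\to E_r^{i+r,j-r+1}$, so nothing issues \emph{from} $E^{2,\bullet}$ (the building is $2$-dimensional), and what issues from $E^{1,\bullet}$ lands in $E^{2,\bullet}$, not outside the picture. Classes in $E^{2,\bullet}_\infty$ survive unless they lie in the image of $\op{d}_2:E^{0,\bullet}_2\to E^{2,\bullet}_2$; classes in $E^{1,\bullet}_2$ survive unconditionally since no $\op{d}_r$ with $r\geq 2$ can hit or leave that column. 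So the correct formulation of (i) is that $E^{1,\bullet}_2=0$ and that $\op{d}_2$ surjects onto $E^{2,\bullet}_2$. This does not affect your overall point---the paper shows neither condition holds---but the phrasing as stated would confuse a reader trying to track the spectral sequence.
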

The range of validity of the conjecture has been unclear for quite a long time. It has been proved in the cases of the group $\op{GL}_2(\mathbb{Z}[1/2])$ by Mitchell, the group $\op{GL}_3(\mathbb{Z}[1/2])$ by Henn, and the group $\op{GL}_2(\mathbb{Z}[\zeta_3,1/3])$ by Anton. A more recent positive result is the fact that the Quillen conjecture is true above the virtual cohomological dimension for groups $\op{SL}_2(\mathcal{O}_{K,S})$, cf. \cite{quillen-note}. Based on arguments of Henn--Lannes--Schwartz, counterexamples to Quillen's conjecture have been established by  Dwyer  for $\op{GL}_n(\mathbb{Z}[1/2])$, $n\geq 32$, Henn and Lannes for $\op{GL}_n(\mathbb{Z}[1/2])$, $n\geq 14$, and by Anton for $\op{GL}_n(\mathbb{Z}[\zeta_3,1/3])$, $n\geq 27$. For precise literature references, we refer to the discussion in \cite{knudson:book} or \cite{quillen-note}. Note that all the counterexamples to the Quillen conjecture known so far are established by  first showing that Quillen's conjecture implies detection of cohomology classes on the diagonal matrices, and then producing examples of this failure of detection. Because of this rather indirect method, known cases or counterexamples to Quillen's conjecture do not provide us with general information on the structure of cohomology rings of $S$-arithmetic groups.

To get a better feeling for the range of validity of Quillen's conjecture for $S$-integer rings, its tendency to be true or false, it is also useful to look at the analogous question in the function field situation. Let $K/\mathbb{F}_q(T)$ be a global function field of characteristic $p$ with chosen algebraic closure $\overline{K}$, let $S$ be a non-empty finite set of places and let $\ell$ be a prime different from $p$. As in the number field case, we can embed $\op{GL}_n(\mathcal{O}_{K,S})\hookrightarrow\op{GL}_n(\overline{K})$ and get an induced restriction map 
\[
\op{res}_{K,S}:\op{H}^\bullet(\op{GL}_n(\overline{K});\mathbb{F}_\ell)\to \op{H}^\bullet(\op{GL}_n(\mathcal{O}_{K,S});\mathbb{F}_\ell). 
\]
At this point there is a slight problem because the cohomology of $\op{H}^\bullet(\op{GL}_n(\overline{K});\mathbb{F}_\ell)$ is not presently known, though predicted to be isomorphic to  $\op{H}^\bullet(\op{GL}_n(\overline{\mathbb{F}_q});\mathbb{F}_\ell)$ by the Friedlander--Milnor conjecture. The proper analogue of continuous cohomology to use in this situation is then \'etale cohomology. As before, there is a change-of-topology map and we get
\[
\delta:\mathbb{F}_\ell[\op{c}_1,\dots,\op{c}_n]\cong \op{H}^\bullet_{\rm \acute{e}t}(\op{GL}_n(\overline{K});\mathbb{F}_\ell)\to
\op{H}^\bullet(\op{GL}_n(\overline{K});\mathbb{F}_\ell).
\]
Alternatively, we can use the following argument: using Quillen's computations of cohomology of linear groups over finite fields, which will also be recalled in Section~\ref{sec:reccohom}, combined with the map induced from the inclusion of $\op{GL}_n(\overline{\mathbb{F}_q})\hookrightarrow\op{GL}_n(\overline{K})$: 
\[
\op{H}^\bullet(\op{GL}_n(\overline{K});\mathbb{F}_\ell)\to \op{H}^\bullet(\op{GL}_n(\overline{\mathbb{F}_q});\mathbb{F}_\ell)\cong \mathbb{F}_\ell[\op{c}_1,\dots,\op{c}_n].
\]
Now consider the composition
\[
\mathbb{F}_\ell[\op{c}_1,\dots,\op{c}_n]\hookrightarrow \mathbb{F}_\ell[\op{c}_1,\dots]\cong\op{H}^\bullet(\op{GL}_\infty(\overline{K});\mathbb{F}_\ell)\to \op{H}^\bullet(\op{GL}_n(\overline{K});\mathbb{F}_\ell),
\]
where the first map is the inclusion of the subring generated by the first $n$ variables, the second is Suslin's rigidity for K-theory with finite coefficients, and the last map is induced from the  inclusion $\op{GL}_n(\overline{K})\hookrightarrow\op{GL}_\infty(\overline{K})$. This composition splits the former map, so that we obtain a split injective map
\[
\delta:\mathbb{F}_\ell[\op{c}_1,\dots,\op{c}_n]\hookrightarrow \op{H}^\bullet(\op{GL}_n(\overline{K});\mathbb{F}_\ell).
\]
These two descriptions of $\delta$ actually agree, as can be seen from stabilization to $\op{GL}_\infty$ and the rigidity theorem. After these preparations, we are ready to state the following analogue of Quillen's conjecture.

\begin{conjecture}[Function field version of Quillen's conjecture]
  \label{conj:quillenff}
Let $\ell$ be a prime number. Let $K$ be a function field of a smooth projective curve $C/\mathbb{F}_q$ with $\ell\mid q-1$, and let $S$ be a nonempty finite set of places of $K$. Then the cohomology ring $\op{H}^\bullet(\op{GL}_n(\mathcal{O}_{K,S});\mathbb{F}_\ell)$ is a free module over the Chern class ring  $\mathbb{F}_\ell[\op{c}_1,\dots,\op{c}_n]$ via the 
composition $\op{res}_{K,S}\circ \delta$.
\end{conjecture}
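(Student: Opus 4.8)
The plan is to run Quillen's own machine for the $S$-arithmetic group $\Gamma=\op{GL}_n(\mathcal{O}_{K,S})$ acting on the product $\mathfrak{B}=\prod_{v\in S}\mathfrak{B}(K_v,n)$ of Bruhat--Tits buildings, and to read freeness over $\mathbb{F}_\ell[\op{c}_1,\dots,\op{c}_n]$ off the isotropy spectral sequence
\[
E^{s,t}_1=\prod_{\sigma\in(\Gamma\backslash\mathfrak{B})_{(s)}}\op{H}^t(\Gamma_\sigma;\mathbb{F}_\ell)\ \Rightarrow\ \op{H}^{s+t}(\Gamma;\mathbb{F}_\ell),
\]
which is a spectral sequence of $\mathbb{F}_\ell[\op{c}_1,\dots,\op{c}_n]$-modules by the analogue of Proposition~\ref{prop:multiplicative} (each Chern class acts on the factor $\op{H}^\bullet(\Gamma_\sigma;\mathbb{F}_\ell)$ by cup product with the corresponding Chern class of the tautological representation of $\Gamma_\sigma$). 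Quillen's analysis produces the ascending filtration $\op{R}_\bullet\op{H}^\bullet$ by Chern-class submodules with $\op{R}_0=0$, $\op{R}_n=\op{H}^\bullet$, and $\op{R}_{n-1}=\ker(\text{Quillen homomorphism})$, together with an identification of the top quotient $\op{H}^\bullet/\op{R}_{n-1}$ with the edge term $E^{0,\bullet}_\infty$. Proving the conjecture then amounts to three assertions: \textbf{(a)} detection, $\op{R}_{n-1}=0$; \textbf{(b)} the image $E^{0,\bullet}_\infty$ is a free $\mathbb{F}_\ell[\op{c}_1,\dots,\op{c}_n]$-module; and \textbf{(c)} the filtration splits compatibly with the Chern-class action, so that $\op{H}^\bullet$ is free.

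For \textbf{(a)} one would bound the interior columns $E^{s,\bullet}_\infty$, $0<s<n-1$, from above using an explicit description of $\Gamma\backslash\mathfrak{B}$ and its stabilizers: under $\ell\mid q-1$ every cell stabilizer is, up to an invisible $p$-group, a reductive group over a finite field, so by Theorem~\ref{thm:glnfq} its mod $\ell$ cohomology is already free over its own Chern-class ring, and the $\op{d}_1$-differential is assembled from symmetric-polynomial restriction maps attached to inclusions of tori and Levi subgroups. The goal is to show this complex has no homology away from the edge columns, equivalently that the poset of elementary abelian $\ell$-subgroups of $\Gamma$ is connected enough that cohomology is controlled by the subgroups of maximal rank. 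For \textbf{(b)} the mechanism of Sections~\ref{sec:detection}--\ref{sec:e2} is the tool: impose the detection filtration $\op{D}^\bullet$ on the $E_1$-page, observe that $\op{d}_1$ is $\mathbb{F}_\ell[\op{c}_1,\dots,\op{c}_i]$-linear on each graded piece $\op{D}^i/\op{D}^{i+1}$ (Lemma~\ref{lem:linearization}), compute the homology of each piece on generators and check freeness, then verify that the detection spectral sequence degenerates with no extension problems (Lemma~\ref{lem:degenerate}) and that $\op{d}_2$ and all higher differentials vanish. For \textbf{(c)} one would split off the constant matrices $\op{GL}_n(\mathbb{F}_q)\hookrightarrow\Gamma$, which provides a split injection $\mathbb{F}_\ell[\op{c}_1,\dots,\op{c}_n]\hookrightarrow\op{H}^\bullet$ lifting the bottom filtration quotient, and bootstrap upward through the filtration to exhibit $\op{H}^\bullet$ as a sum of free summands.

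The hard part is exactly the conjunction of \textbf{(a)} and \textbf{(b)}, and the computations of this paper show it genuinely breaks as soon as $n\ge3$. Detection fails because $\op{R}_{n-1}\ne0$: already for $\op{GL}_3(k[E])$, Theorem~\ref{thm:kernel} produces explicit nonzero classes in the kernel of the Quillen homomorphism — the submodule $\op{R}_1$, built from the cuspidal cohomology of $\Gamma\backslash\mathfrak{B}$ and governed by $\widehat{\op{H}}_1(E/k)\cong k^\times\oplus(\overline{E}(k)\otimes_{\mathbb{Z}}k^\times)\oplus\bigoplus_P k(P)^\times/k^\times$ of Proposition~\ref{prop:k1}, and $\op{R}_2/\op{R}_1$, built from the non-triviality of the parabolic graph of semistable rank $3$ bundles, of rank proportional to $N_a+\op{rk}_\ell(\overline{E})$. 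Conceptually the subgroup complex is \emph{not} contractible, so cohomology is not controlled by the maximal-rank elementary abelian $\ell$-subgroups, contradicting what \textbf{(a)} requires. And even the torsionless image can fail \textbf{(b)}: by Theorem~\ref{thm:free} (established as Theorem~\ref{thm:e2nonfree}), if $\op{rk}_\ell(\overline{E})=0$ and $\op{rk}_3(\overline{E})>0$ then $E^{0,\bullet}_2$ is torsionless but not free, because the piece $\op{D}^1$ inside $\op{H}^\bullet(\op{GL}_3(\mathbb{F}_q);\mathbb{F}_\ell)$ already has a Hilbert--Poincar\'e series with negative numerator coefficients. Consequently the strategy carries through — and actually proves the conjecture — only for $n\le2$, where $\Gamma\backslash\mathfrak{B}$ is a tree (or a product of trees) with abelian cell stabilizers, the parabolic-graph and cuspidal obstructions are absent, the $E_1$-page lives in two columns with visibly free $\op{d}_1$-homology, and no higher differentials occur; for $n\ge3$ the two obstructions are unavoidable, and by Henn's propagation argument (Appendix~\ref{sec:henn}, Corollary~\ref{cor:henn}) the failure persists in every higher rank. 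Thus, as worded the conjecture should be restricted to $n\le2$, and the discussion in Section~\ref{sec:quillenconj} is where a corrected replacement is formulated.
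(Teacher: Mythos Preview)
The statement you were asked to ``prove'' is a \emph{conjecture}, and the paper does not prove it --- on the contrary, the central point of the paper is to exhibit counterexamples (Theorems~\ref{thm:kernel}, \ref{thm:image}, \ref{thm:free}, Corollary~\ref{cor:henn}). There is no ``paper's own proof'' to compare against. What you have written is not a proof attempt but an informed discussion of the obstruction structure, and in that respect it tracks the paper's narrative reasonably well: the three-step plan \textbf{(a)}--\textbf{(c)} is a fair summary of what freeness would require, and your identification of the failure of detection (via $\op{R}_1$ and $\op{R}_2/\op{R}_1$) and of freeness of the image (Theorem~\ref{thm:e2nonfree}) matches the paper.

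That said, your closing claim that the strategy ``actually proves the conjecture'' for $n\le 2$ is not correct as stated. Corollary~\ref{cor:henn} gives the ``if and only if $n\ge 3$'' statement \emph{only for the specific case} $\mathcal{O}_{K,S}=k[E]$ with a single place removed. The paper explicitly notes (in the list of known results in Section~\ref{known}) that the conjecture already fails for $\op{SL}_2(k[\mathbb{P}^1\setminus\{P_1,\dots,P_s\}])$ when $s\ge 4$: classes in $\op{H}^s$ appear that are not detected on any finite subgroup and are annihilated by $\op{c}_2$. So for general $(K,S)$ with $\#S$ large the $n=2$ case is not safe either, and your parenthetical ``or a product of trees'' hides exactly the mechanism by which it breaks --- the quotient of a product of trees can have nontrivial higher cohomology supporting torsion classes. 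The honest summary is that Conjecture~\ref{conj:quillenff} is false in general, and the paper's proposed salvage is not ``restrict to $n\le 2$'' but rather the filtered version in Conjecture~\ref{conj:filtered}.
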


\begin{remark}
As an aside, it would also be possible to generalize a bit further: if we take a split Chevalley group $G/\mathbb{Z}$, we can ask if the natural map $\op{H}^\bullet_{\op{top}}(G(\mathbb{C});\mathbb{F}_\ell)\to\op{H}^\bullet(G(\mathcal{O}_K);\mathbb{F}_\ell)$ makes the target a free module. Versions of the present results for $\op{SL}_3$ and $\op{PGL}_3$ are possible but not included in the paper.
\end{remark}

We now give a short list of the known positive and negative results pertaining to the function field version of Quillen's conjecture. As in the number field cases, these examples are not sufficiently general because either they concern arithmetically trivial situations or groups of rank one.

\begin{itemize}
\item The Quillen conjecture is generally true for all $\op{GL}_n$ for $C=\mathbb{P}^1$ and $S=\{\infty\}$. This is a consequence of Soul{\'e}'s computations which show 
\[
\op{H}^\bullet(\op{GL}_n(\mathbb{F}_q[T]);\mathbb{F}_\ell)\cong \op{H}^\bullet(\op{GL}_n(\mathbb{F}_q);\mathbb{F}_\ell),
\]
again combined with Quillen's computations of cohomology of general linear  groups over finite fields.
\item The Quillen conjecture is true for $\op{GL}_2$ and the curves $C=\mathbb{P}^1\setminus\{0,\infty\}$,  $C=\mathbb{P}^1\setminus\{0,1,\infty\}$.  This follows from \cite{sl2parabolic}
\item The Quillen conjecture is true for $\op{GL}_2$ and $C=\overline{E}\setminus\{\op{O}\}$ with $\overline{E}$ an elliptic curve with a $k$-rational point $\op{O}$, by the computations in \cite[Section 4.5]{knudson:book}.  
\item More generally, the Quillen conjecture is true for $\op{SL}_2(\mathcal{O}_{K,S})$ in degrees bigger than $\# S$, by \cite{sl2parabolic,quillen-note}. 
\item An example of the failure of the Quillen conjecture can be obtained by computing $\op{H}^\bullet(\op{SL}_2(k[\mathbb{P}^1\setminus\{P_1,\dots,P_s\}]);\mathbb{F}_\ell)$ for $s\geq 4$ and sufficiently big ground field of odd characteristic. A new phenomenon appears in this example: for $s\geq 4$, there are classes in $\op{H}^s$ which can not be detected on any finite subgroup of $\op{SL}_2(k[\mathbb{P}^1\setminus\{P_1,\dots,P_s\}])$, and which are annihilated by the second Chern class. 
\end{itemize}

\subsection{Elementary abelian $\ell$-groups and Quillen homomorphism}

Now we want to outline Quillen's analysis of the $E_2$-term of the isotropy spectral sequence in terms of the category of the elementary abelian $\ell$-subgroups, cf. \cite[Part I]{quillen:spectrum}. This allows to translate statements about the Quillen homomorphism, its kernel, image and related things, into statements about the isotropy spectral sequence.

Consider the action of the group $G=\op{GL}_3(k[E])$ on the associated Bruhat--Tits building $X=\mathfrak{B}(E/k,3)$. On the orbit space $X/G$, consider the sheaf  $\mathcal{H}_G$ of Definition~\ref{def:hg}. Using the identifications 
\[
E^{s,t}_2\cong \op{H}^s(X/G;\mathcal{H}_G^t),
\]
from \cite[Section 3]{quillen:spectrum}, Proposition 3.2 of \cite{quillen:spectrum} implies that the natural map $\op{H}^\bullet(\op{GL}_3(k[E]);\mathbb{F}_\ell)\to E^{0,\bullet}_2$ is an F-isomorphism. 

For the group $G=\op{GL}_3(k[E])$, denote by $\mathcal{A}_G$ the category of elementary abelian $\ell$-subgroups of $G$ with morphisms $\theta:A\to A'$ given by conjugation $\theta(a)=g^{-1}ag$ for $g\in G$. There is an associated contravariant diagram of graded-commutative rings given by sending
\[
A\in\mathcal{A}_G\mapsto \op{H}^\bullet(A;\mathbb{F}_\ell)
\]
and sending morphisms of elementary abelian groups to the associated restriction maps on cohomology. Taking the restriction map associated to $A\leq G$ for $A\in\mathcal{A}_G$ provides a cone on this diagram, and the associated universal arrow is called Quillen homomorphism.

Since all the relevant maps are restriction maps associated to subgroup inclusions, the Quillen homomorphism factors as 
\[
\op{H}^\bullet(\op{GL}_3(k[E]);\mathbb{F}_\ell)\stackrel{\pi}{\longrightarrow} E^{0,\bullet}_\infty\hookrightarrow E^{0,\bullet}_2\cong \op{H}^0(X/G;\mathcal{H}_G^\bullet)\to \lim_{A\in\mathcal{A}_G}\op{H}^\bullet(A;\mathbb{F}_\ell).  
\]
Since the Bruhat--Tits building over a finite field is locally compact and contractible, the combination of Proposition 5.11 and Lemma 6.3 of \cite{quillen:spectrum} shows that the last arrow, comparing $E^{0,\bullet}_2$ and $\lim_{A\in\mathcal{A}_G}\op{H}^\bullet(A;\mathbb{F}_\ell)$ is an isomorphism, cf. also \cite[Theorem 7.1]{quillen:spectrum}. 

Denote by $\op{R}_\bullet\op{H}^\bullet$ the filtration associated to the isotropy spectral sequence. The description above implies immediately that the kernel of the Quillen homomorphism is exactly given by the filtration steps containing $E^{\geq 1,\bullet}_\infty$ and the image coincides with $E^{0,\bullet}_\infty\subset E^{0,\bullet}_2$. In particular, kernel and image of the Quillen homomorphism can be investigated using the isotropy spectral sequence, and it seems that the spectral sequence is quite well-suited for this investigation.

\subsection{Structure of the kernel}
We first investigate the structure of the kernel of the Quillen homomorphism. By the previous description, the kernel is the filtration step $E^{\geq 1,\bullet}_\infty$ of the isotropy filtration on the cohomology ring $\op{H}^\bullet(\op{GL}_3(k[E]);\mathbb{F}_\ell)$. Moreover, the filtration by columns $E^{\geq i,\bullet}_\infty$ is the filtration of the kernel in the statement of Theorem~\ref{thm:kernel}.

\begin{proposition}
\label{prop:torsion}
The kernel of the Quillen homomorphism  
\[
\op{H}^\bullet(\op{GL}_3(k[E]);\mathbb{F}_\ell)\to E^{0,\bullet}_2
\]
is exactly the sub-$\mathbb{F}_\ell[\op{c}_1,\op{c}_2,\op{c}_3]$-module of torsion elements. 
\end{proposition}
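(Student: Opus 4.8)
The plan is to translate the statement into structural properties of the isotropy spectral sequence and verify them column by column. By the discussion in Section~\ref{sec:quillenconj}, the kernel of the Quillen homomorphism $\op{H}^\bullet(\op{GL}_3(k[E]);\mathbb{F}_\ell)\to E^{0,\bullet}_2$ is exactly the filtration step $\op{R}_2=E^{\geq 1,\bullet}_\infty$, and $\op{H}^\bullet(\op{GL}_3(k[E]);\mathbb{F}_\ell)/\op{R}_2\cong E^{0,\bullet}_\infty\subseteq E^{0,\bullet}_2$ as $\mathbb{F}_\ell[\op{c}_1,\op{c}_2,\op{c}_3]$-modules, the multiplicativity of Proposition~\ref{prop:multiplicative} ensuring these identifications are Chern-class equivariant. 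Since over the domain $\mathbb{F}_\ell[\op{c}_1,\op{c}_2,\op{c}_3]$ the torsion submodule of any module is closed under extensions, the assertion splits into: \textbf{(i)} $\op{R}_2$ is a torsion module; \textbf{(ii)} $E^{0,\bullet}_\infty$ is torsion-free, which then forces every torsion class of $\op{H}^\bullet$ to die in $E^{0,\bullet}_\infty$ and hence lie in $\op{R}_2$.

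For \textbf{(i)}, I would filter $\op{R}_2$ by $\op{R}_1=E^{2,\bullet}_\infty$ with quotient $E^{1,\bullet}_\infty$ and show both are torsion. The column $E^{2,\bullet}$ is built only from cells stabilized by the center $\mathbb{F}_q^\times\cong\mathcal{Z}(\op{GL}_3(k[E]))$, whose $3$-dimensional representation is the homothety $x\mapsto\op{diag}(x,x,x)$; hence the Chern-class action on $E^{2,\bullet}_\infty$ factors through $\op{H}^\bullet(\mathbb{F}_q^\times;\mathbb{F}_\ell)$ via $\op{c}_1\mapsto 3X$, $\op{c}_2\mapsto 3X^2$, $\op{c}_3\mapsto X^3$, so the nonzero element $\op{c}_1^2-3\op{c}_2$ (using $\ell\neq 3$) annihilates it. Likewise $E^{1,\bullet}$ is assembled (Proposition~\ref{prop:rankonee1}, Corollary~\ref{cor:d1e2}) from cohomology of rank-$2$ edge stabilizers $(\mathbb{F}_q^\times)^2$ with representation $\op{diag}(x,x,y)$, whose Chern roots $X,X,Y$ are not pairwise distinct; thus the image of $\mathbb{F}_\ell[\op{c}_1,\op{c}_2,\op{c}_3]$ lies in a subring of Krull dimension $2$ of $\op{H}^\bullet((\mathbb{F}_q^\times)^2;\mathbb{F}_\ell)$, and any nonzero element of the kernel of $\mathbb{F}_\ell[\op{c}_1,\op{c}_2,\op{c}_3]\to\op{H}^\bullet((\mathbb{F}_q^\times)^2;\mathbb{F}_\ell)$ (for instance the discriminant of $t^3-\op{c}_1t^2+\op{c}_2t-\op{c}_3$, which vanishes because $X$ is a repeated root) annihilates $E^{1,\bullet}_\infty$. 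Alternatively one may simply invoke Theorem~\ref{thm:kernel}(2),(3), which identify $\op{R}_1$ and $\op{R}_2/\op{R}_1$ as restrictions of modules over $\mathbb{F}_\ell[\op{c}_1]$, resp.\ $\mathbb{F}_\ell[\op{c}_1,\op{c}_2]$, along graded surjections with nonzero kernels. Either way $\op{R}_2$ is torsion.

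For \textbf{(ii)}, since a submodule of a torsion-free module is torsion-free and $E^{0,\bullet}_\infty=\ker\op{d}_2\subseteq E^{0,\bullet}_2=\ker(\op{d}_1\colon E^{0,\bullet}_1\to E^{1,\bullet}_1)$, it is enough to embed $E^{0,\bullet}_2$ Chern-equivariantly into a torsion-free module. The summands of $E^{0,\bullet}_1$ are the $\op{H}^\bullet(G_\sigma;\mathbb{F}_\ell)$ over vertex stabilizers. For the stabilizers of types (IIa), (IIb), (IIc) — namely $(\mathbb{F}_q^\times)^3$, $\op{GL}_2(\mathbb{F}_q)\times\mathbb{F}_q^\times$, $\op{GL}_3(\mathbb{F}_q)$ in their standard $3$-dimensional representations — the resulting $\mathbb{F}_\ell[\op{c}_1,\op{c}_2,\op{c}_3]$-module structures are torsion-free: the first and third are even free (elementary symmetric polynomials; resp.\ the standard Chern classes), while the second is a free exterior module over $\mathbb{F}_\ell[\op{c}_1+Z,\op{c}_2+\op{c}_1 Z,\op{c}_2 Z]$, a polynomial subring over which the domain $\mathbb{F}_\ell[\op{c}_1,\op{c}_2,Z]$ is a finite extension, hence torsion-free over $\mathbb{F}_\ell[\op{c}_1,\op{c}_2,\op{c}_3]$. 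For the stabilizers of types (I), (IId) and (IIe) the restriction maps into adjacent stabilizers are identity isomorphisms on $\mathbb{F}_\ell$-cohomology (Section~\ref{sec:restrictions}; for (IIe) the restriction to the center is already an isomorphism), so $\op{d}_1$ is injective on these summands. Using the incidence structure of the parabolic graph from Theorem~\ref{thm:parabolic} — every edge joins a type-(I) vertex to a type-(II) vertex, and every type-(I) vertex has a neighbour of type (IIa), (IIb) or (IIc) (e.g.\ the splitting $\mathcal{O}\oplus\mathcal{L}\oplus\mathcal{L}^{-1}$) — one checks that a $\op{d}_1$-cocycle is determined by its components on the type-(IIa), (IIb), (IIc) vertices, hence $E^{0,\bullet}_2$ injects into the direct sum of those torsion-free modules. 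This gives (ii) and completes the proof.

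The Chern-root and finite-extension computations are routine. The step demanding real care is the combinatorial claim in (ii): the rank-$\le 2$ vertex stabilizers genuinely contribute $\mathbb{F}_\ell[\op{c}_1,\op{c}_2,\op{c}_3]$-torsion to $E^{0,\bullet}_1$, and indeed the detection-filtration subquotients $\op{D}^1/\op{D}^2$ and $\op{D}^0/\op{D}^1$ of $E^{0,\bullet}_2$ are themselves torsion, so torsion-freeness cannot be read off the detection filtration; one must use that passing to $\ker\op{d}_1$ kills the type-(I), (IId), (IIe) contributions, which rests on the explicit restriction formulas of Section~\ref{sec:restrictions} together with the bipartite structure of the parabolic graph. (The tempting shortcut of embedding $\lim_A\op{H}^\bullet(A;\mathbb{F}_\ell)\cong E^{0,\bullet}_2$ into $\prod_{A}\op{H}^\bullet(A;\mathbb{F}_\ell)$ over maximal-rank $A$ fails, since not every elementary abelian $\ell$-subgroup of $\op{GL}_3(k[E])$ lies in one of rank $3$ — for instance inside a type-(IId) stabilizer.)
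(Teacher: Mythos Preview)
Your strategy coincides with the paper's: show that the kernel $\op{R}_2=E^{\geq 1,\bullet}_\infty$ is torsion and that $E^{0,\bullet}_\infty$ is torsionless. Two remarks on how your write-up compares.

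\emph{On part (i).} Your argument is actually sharper than the paper's. The paper asserts that $\op{c}_3$ annihilates $E^{\geq 1,\bullet}_1$, but this is not literally true: on an edge stabilizer $(k^\times)^2$ in its representation $(x,y)\mapsto\op{diag}(x,x,y)$ one has $\op{c}_3\mapsto X^2Y\neq 0$. What is true, and what you use, is that the Chern-class map into $\op{H}^\bullet((k^\times)^2;\mathbb{F}_\ell)$ has image of Krull dimension $\leq 2$ (resp.\ $\leq 1$ for the center), so some nonzero polynomial in the $\op{c}_i$ annihilates; your explicit choices $\op{c}_1^2-3\op{c}_2$ and the discriminant are correct.

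\emph{On part (ii).} The paper simply asserts that $E^{0,\bullet}_2$ lies in a direct sum of cohomology rings of vertex stabilizers containing rank-$3$ elementary abelian subgroups and cites Section~\ref{sec:reccohom} for their freeness over $\mathbb{F}_\ell[\op{c}_1,\op{c}_2,\op{c}_3]$. You supply the missing justification via the bipartite structure of the parabolic graph, which is the right idea. There is, however, one case you do not cover: the reduced subcomplex $\mathfrak{X}$ of Section~\ref{prop:simplee1} contains $0$-cells outside the parabolic graph, namely the two stable rank-$3$ bundles that serve as suspension points and have stabilizer exactly $k^\times$. Your ``one checks'' sentence handles types (I), (IId), (IIe), but not these. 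The fix is immediate once you invoke the piece of Corollary~\ref{cor:rankonee1} you are already implicitly using: since the quotient is connected, the $\op{D}^0/\op{D}^1$-part of a $\op{d}_1$-cocycle is constant, i.e.\ the restriction to the center agrees across all $0$-cells. Vanishing on a single type-(IIa,b,c) vertex therefore forces the common center-restriction to be zero, and then every $0$-cell with stabilizer of $\ell$-rank $1$ (the cone points and the (IIe) vertices alike) has vanishing component. With this addition your argument for the injection $E^{0,\bullet}_2\hookrightarrow\bigoplus_{\sigma\in(\text{IIa,b,c})}\op{H}^\bullet(G_\sigma;\mathbb{F}_\ell)$ is complete, and the torsion-freeness follows from the freeness recorded in Section~\ref{sec:reccohom}.
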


\begin{proof}
This follows directly from Proposition~\ref{prop:multiplicative} describing the multiplicative structure of the isotropy spectral sequence. The Chern-class module structure on the $E_1$-term is given by multiplication with the Chern-classes of the representation $A\leq\op{GL}_3(k[E])\hookrightarrow\op{GL}_3(\overline{k(E)})$ of the stabilizer subgroups. The identification of the simplified $E_1$-term in \ref{prop:simplee1} shows elementary abelian subgroups of rank $3$ only appear as vertex stabilizers. Stabilizers of edges and 2-simplices contain elementary abelian subgroups of rank $\leq 2$. Therefore, any cohomology class in $E^{\geq 1,\bullet}_1$ is annihilated by $\op{c}_3$. This property is preserved by passing to subquotients, in particular it is true for $E^{\geq 1,\bullet}_\infty$ so that all these classes are torsion for $\mathbb{F}_\ell[\op{c}_1,\op{c}_2,\op{c}_3]$. 

On the other hand, $E^{0,\bullet}_\infty\subseteq E^{0,\bullet}_2$ and the latter is contained in a direct sum of cohomology rings $\op{H}^\bullet(G_\sigma;\mathbb{F}_\ell)$ of vertex stabilizers containing elementary abelian groups of rank 3. All the stabilizers appearing in our case have cohomology rings which are graded free modules over the Chern-class ring $\mathbb{F}_\ell[\op{c}_1,\op{c}_2,\op{c}_3]$, cf. Section~\ref{sec:reccohom}. In particular, the module $E^{0,\bullet}_\infty$ is torsion-less. 

The two statements above now imply the claim.
\end{proof}

We are now ready to prove the main result about the structure of the kernel of the Quillen homomorphism.

\begin{proofof}{Theorem~\ref{thm:kernel}}
\label{pfkernel}
(1) has been proved in Proposition~\ref{prop:torsion}. 

(2) This follows from the computation of $E^{2,\bullet}_\infty$. By Lemma~\ref{lem:e1props}, the only subquotient of the detection filtration contributing to the column $E^{2,\bullet}_\infty$ is $\op{D}^0/\op{D}^1$. The description of $E^{2,\bullet}_2$ as a free $\mathbb{F}_\ell[\op{c}_1]$-module with Hilbert--Poincar{\'e} series 
\[
\frac{(1+T)q^3}{(1-T^2)}
\]
is then a consequence of Proposition~\ref{prop:rankonee1} and Corollary~\ref{cor:rankonee1}. The computations in Lemmas~\ref{lem:d2lower} and \ref{lem:d2upper} show that the $\op{d}_2$-differential removes an $\mathbb{F}_\ell[\op{c}_1]$-free direct summand, changing the numerator polynomial to $(1+T)(q^3-\op{rk}_\ell(\overline{E}))$. 

(3) This follows from the computation of $E^{1,\bullet}_\infty$. For degree reasons, $E^{1,\bullet}_\infty=E^{1,\bullet}_2$. Moreover, subquotients of the detection filtration contributing to $E^{1,\bullet}_2$ are $\op{D}^0/\op{D}^1$ and $\op{D}^1/\op{D}^2$. By Proposition~\ref{prop:rankonee1} and Corollary~\ref{cor:rankonee1}, there is no contribution from $\op{D}^0/\op{D}^1$. Point (2) of Corollary~\ref{cor:d1e2} provides the freeness statement and Hilbert--Poincar{\'e} series for the contribution from  $\op{D}^1/\op{D}^2$. 
\end{proofof}

At this point, we have proved that the kernel of the Quillen homomorphism has a filtration whose subquotients are free over smaller Chern-class rings. 

\begin{remark}
We also see that the kernel of the Quillen homomorphism is fairly big. The contribution from $E^{2,\bullet}_\infty$ is a free $\mathbb{F}_\ell[\op{c}_1]$-module of rank $2(q^3-\op{rk}_\ell(\overline{E}))$. Under our general assumptions, $q\geq 11$ but $\op{rk}_\ell(\overline{E})$ is at most $2$, implying that the rank of $E^{2,\bullet}_\infty$ is at least 2658. 

The contribution from $E^{1,\bullet}$ is a free $\mathbb{F}_\ell[\op{c}_1,\op{c}_2]$-module of rank $4(N_a+\op{rk}_\ell)$. The number $N_a$ is not so easy to determine, but by Corollary~\ref{cor:631} is roughly of order 
\[
\frac{1}{6}\left(\#\overline{E}(\mathbb{F}_q)^2- 3\#\overline{E}(\mathbb{F}_q)-6\right).
\]
While this may not be so big for elliptic curves with a small group of $\mathbb{F}_q$-points, it is a free graded module over $\mathbb{F}_\ell[\op{c}_1,\op{c}_2]$ and hence becomes arbitrarily large in high-enough cohomological degrees. It is also interesting to note that the number $2N_a-N_c$ is the Euler characteristic of the nontrivial connected component of the parabolic graph.
\end{remark}

It should also be noted that the classes in the kernel of the Quillen homomorphism are very explicit. The classes in $E^{2,\bullet}_\infty$ can be thought of as cuspidal cohomology classes; they come from the compactly supported cohomology of the quotient $\op{GL}_3(k[E])\backslash\mathfrak{B}(E/k,3)$ of the building. In fact, the classes in degree 2 are visible in rational cohomology, cf. \cite{harder}. The classes in $E^{1,\bullet}_\infty$ come from the stabilizer groups of edges of the parabolic graph, i.e., they are related to stable vector bundles of rank 2 on $\overline{E}$. As a consequence, it should be expected that the Quillen conjecture will fail for groups of rank 2 in any arithmetically non-trivial situation: for curves of higher genus because of moduli of stable vector bundles, and in number-theoretic situations whenever there are non-trivial cusp forms. 


\subsection{Structure of the image: detection filtration}

Now we describe the structure of the image of the Quillen homomorphism. It seems that the refined information contained in the detection filtration is not only very useful for the explicit calculations, but has some conceptual meaning. Anyway, the analysis of the subquotients of the detection filtration on the isotropy spectral sequence yields the argument to prove Theorem~\ref{thm:image}. 

\begin{proofof}{Theorem~\ref{thm:image}}
\label{pfimage}
By Lemma~\ref{lem:degenerate}, it suffices to identify the $0$-th cohomology groups of the detection subquotients of the $E_\infty$-page. By Lemma~\ref{lem:d2vanish}, the $\op{d}_2$-differential vanishes on the submodule $\op{D}^2$, and the $\op{D}^2$-part of the $E_2$-page is identified in Proposition~\ref{prop:rankthree}. This proves part (1) of the theorem.

Part (1) of Corollary~\ref{cor:d1e2} identifies the $\op{D}^1/\op{D}^2$-subquotient of the $E_2$-page. Lemmas~\ref{lem:d2lower} and \ref{lem:d2upper} show that the $\op{d}_2$-differential is non-trivial and removes a quotient module with Hilbert--Poincar{\'e} series 
\[
\frac{\op{rk}_\ell(\overline{E})T(1+T)}{(1-T^2)}.
\]
This proves part (2) of the theorem.

Proposition~\ref{prop:rankonee1} and Corollary~\ref{cor:rankonee1} describe the $\op{D}^0/\op{D}^1$-contribution to the $E_2$-page. The $\op{d}_2$-differential on this contribution is trivial by the lifting argument at the end of Section~\ref{sec:d2}. Combining these proves part (3) of the theorem.
\end{proofof}

It is somewhat hidden in the above statements but rather easy to see in the spectral sequence calculations that the image of the Quillen homomorphism is torsionless because it embeds in the direct sum of the cohomology rings of the stabilizers which contain an elementary abelian group of rank 3 -- but this is a finitely generated free graded module over the Chern-class ring $\mathbb{F}_\ell[\op{c}_1,\op{c}_2,\op{c}_3]$. In light of the many torsion classes exhibited by Theorem~\ref{thm:kernel}, the next best thing to hope for Quillen's conjecture would be that the image of the Quillen homomorphism, which is the torsion-free quotient of the cohomology ring, would be free over the Chern-class ring. However, Theorem~\ref{thm:free} shows that even this is not the case. The basic reason is that, while the subquotients of the detection filtration are free over the individual Chern-class rings, the filtration steps are not necessarily free. The situation is very similar to ideals in polynomial rings of dimension $\geq 2$, only with an additional grading. 

\begin{proofof}{Theorem~\ref{thm:free}}
Not much to do here. The failure of freeness was already established in Theorem~\ref{thm:e2nonfree}. It remains to note that the condition $\op{rk}_\ell=0$ implies that the $\op{d}_2$-differential is trivial, by the computations in Section~\ref{sec:d2}. 
\end{proofof}

Of course it would now be necessary to investigate possible variations of the conjecture. Such reformulations should be guided by Quillen's original motivation of trying to understand structural properties of cohomology rings of arithmetic groups which could help in computations. Seeing that the module on the torsion-free part may fail to be free is a significant setback because of the wildness of torsion-free modules over polynomial rings. At the moment, the freeness of the subquotients of the detection filtration seems the closest thing we could expect to be true which could also be helpful in computations. 

\begin{conjecture}[Filtered Quillen conjecture]
\label{conj:filtered}
Let $K$ be a global field and let $\ell$ be a prime different from the characteristic of $K$ such that $\zeta_\ell\in K$. Let $S$ be a finite set of places containing the places dividing $\infty\cdot\ell$. If $\ell\nmid n!$ then the detection filtration of Definition~\ref{def:e1filtration} on the image of the Quillen homomorphism
\[
\op{H}^\bullet(\op{GL}_n(\mathcal{O}_{K,S});\mathbb{F}_\ell)\to \lim_A\op{H}^\bullet(A;\mathbb{F}_\ell)
\]
has the property that each subquotient $\op{D}^{i-1}/\op{D}^i$ is a free module over the corresponding Chern-class ring $\mathbb{F}_\ell[\op{c}_1,\dots,\op{c}_i]$. 
\end{conjecture}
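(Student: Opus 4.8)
The statement is a conjecture, so what follows is a strategy rather than a complete argument; the case $n=3$ with $\ell\geq 5$ is exactly what Theorems~\ref{thm:kernel} and \ref{thm:image} establish (for $n=3$ the hypothesis $\ell\geq 5$ prime is precisely $\ell\nmid 3!$), so the plan is to isolate the structural mechanism behind those computations and ask for its generalization. Concretely, the plan is to run the isotropy spectral sequence for the action of $\op{GL}_n(\mathcal{O}_{K,S})$ on the product $X$ of the Bruhat--Tits buildings attached to the finite places in $S$ together with the symmetric space at the archimedean places, to equip the $E_1$-page with the detection filtration $\op{D}^\bullet$ of Definition~\ref{def:e1filtration}, and to reduce the conjecture to the assertion that for each $i$ the cohomology of the detection-graded subquotient complex $\op{gr}^{\op{D}}_i E_1^{\bullet,\bullet}$ is a free graded module over $\mathbb{F}_\ell[\op{c}_1,\dots,\op{c}_i]$. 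Since the detection spectral sequence abutting to $E^{0,\bullet}_\infty$ should degenerate for the same reasons as in Lemma~\ref{lem:degenerate} --- the $\op{D}^{i}$-part of $E_1$ being supported in low cohomological columns, and the classes that could carry a boundary being those surviving to $E_\infty$ via restriction to the constant subgroup $\op{GL}_n(\mathbb{F}_q)$ --- the induced filtration on the image of the Quillen homomorphism would then inherit exactly the asserted free subquotients.

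The first genuine step is the \emph{linearization} of the differentials on the detection-graded pieces, generalizing Lemma~\ref{lem:linearization}. Each entry of $\op{gr}^{\op{D}}_i E_1$ is a direct sum of kernels and subquotients of restriction maps between cohomology rings of groups whose maximal elementary abelian $\ell$-subgroup has rank exactly $i$; since $\op{c}_j$ with $j>i$ acts trivially on the cohomology of a rank-$i$ elementary abelian group, while the universal classes $\op{c}_1,\dots,\op{c}_i$ survive the whole spectral sequence, the Leibniz rule forces every differential $\op{d}_r$ on this subquotient to be $\mathbb{F}_\ell[\op{c}_1,\dots,\op{c}_i]$-linear. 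Here the hypothesis $\ell\nmid n!$ is what guarantees, via Quillen's computation of $\op{H}^\bullet(\op{GL}_m(\mathbb{F}_q);\mathbb{F}_\ell)$ recalled in Section~\ref{sec:reccohom}, that every cell-stabilizer cohomology ring appearing is a \emph{free} module over its Chern-class subring, so that $\op{gr}^{\op{D}}_i E_1^{\bullet,\bullet}$ is a bounded complex of finitely generated free $\mathbb{F}_\ell[\op{c}_1,\dots,\op{c}_i]$-modules with linear differential.

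The combinatorial input is then a higher-rank version of the alternating-polynomial computations of Appendix~\ref{sec:eltalt}, in particular Proposition~\ref{prop:eltalt3}: one needs that the kernel of the restriction from $\op{H}^\bullet((\mathbb{F}_q^\times)^m;\mathbb{F}_\ell)$ to its various rank-$i$ ``sub-stabilizers'' is, as a module over $\mathbb{F}_\ell[\op{c}_1,\dots,\op{c}_i]$, free and generated by explicit products of Vandermonde-type factors with exterior alternating polynomials; this is precisely the content of the general picture explained to the author by Soergel, cf.~Remark~\ref{rem:soergel}, and it already gives freeness of the modules $\op{D}^i E_1^{s,t}$ themselves. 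What then remains is to identify the subquotient complex $\op{gr}^{\op{D}}_i E_1$ with a complex assembled from the ``$i$-th rank stratum'' of the quotient $\op{GL}_n(\mathcal{O}_{K,S})\backslash X$ --- for $i=n-1$ this is the Steinberg/quotient-cohomology contribution, for $i=1$ and $n=3$ it was the parabolic graph of semistable rank-$2$ bundles, and in general one expects a ``parabolic complex'' built from the stabilizer groups of the intermediate cells --- and to compute the cohomology of that complex degree by degree on the explicit generators, as was done in Propositions~\ref{prop:step1} and \ref{prop:step2}.

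The main obstacle is this last point: a bounded complex of finitely generated free graded $\mathbb{F}_\ell[\op{c}_1,\dots,\op{c}_i]$-modules with $\mathbb{F}_\ell[\op{c}_1,\dots,\op{c}_i]$-linear differential need \emph{not} have free cohomology once $i\geq 2$, so linearity together with freeness of the terms is far from sufficient. In the $\op{GL}_3$ case this was rescued by hand: the differential on $\op{D}^1/\op{D}^2$ was shown to coincide, on the diamond-shaped families of generators, with the boundary map computing $\widehat{\op{H}}_1(E/k)$, whose kernel and cokernel are governed by the arithmetic of the elliptic curve, cf.~Proposition~\ref{prop:k1}. A general proof would therefore need a structural ``degeneration'' statement --- that, after linearization, the rank-graded pieces of the equivariant cohomology spectral sequence are quasi-isomorphic to complexes whose differential is diagonal in a homogeneous basis, so that kernel and cokernel are automatically spanned by subsets of that basis --- which is morally the same expectation as for rank filtrations in algebraic $K$-theory alluded to at the end of Section~\ref{sec:d2}. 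Proving this, together with the needed generalization of the quotient and parabolic-complex computations from $\op{GL}_3$ to $\op{GL}_n$, is the hard part, and the reason the statement is posed as a conjecture rather than a theorem.
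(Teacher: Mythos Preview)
Your treatment is appropriate: this is a conjecture, the paper offers no proof, and your outline correctly isolates the mechanism that drove the $n=3$ case --- linearization of the differential over $\mathbb{F}_\ell[\op{c}_1,\dots,\op{c}_i]$ via the Leibniz rule and survival of universal Chern classes, freeness of the individual stabilizer cohomologies under $\ell\nmid n!$, and the Soergel picture for alternating polynomials in higher rank (Remark~\ref{rem:soergel}). You are also right to flag the genuine obstruction: a bounded complex of free graded modules with linear differential has no reason to have free cohomology once $i\geq 2$, and in the paper this was handled by an explicit, arithmetic computation (Propositions~\ref{prop:step1}, \ref{prop:step2}, ultimately Proposition~\ref{prop:k1}) rather than by any structural principle. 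The paper itself is more circumspect than you are --- it says only that one ``can hope'' the alternating-polynomial structure permits an analogous analysis, and that there is ``too little evidence'' to generalize the rest --- so your outline goes somewhat further than the paper in articulating what would need to be true.

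One small slip: your identification of the strata is inverted. In the paper's convention $\op{gr}^{\op{D}}_1=E_1/\op{D}^1$ is the essential-rank-$1$ piece, and for $n=3$ this is the Steinberg/quotient-cohomology contribution (Proposition~\ref{prop:rankonee1}, Corollary~\ref{cor:rankonee1}); the parabolic graph governs $\op{gr}^{\op{D}}_2=\op{D}^1/\op{D}^2$, the essential-rank-$2$ piece. So your sentence ``for $i=n-1$ this is the Steinberg/quotient-cohomology contribution, for $i=1$ and $n=3$ it was the parabolic graph'' has the two cases swapped. Also, the degeneration argument of Lemma~\ref{lem:degenerate} leans on the specific column-support pattern for $n=3$ (Lemma~\ref{lem:e1props}(2)) and on the splitting via a rational point (Lemma~\ref{lem:splitd0}); for general $n$ and general $\mathcal{O}_{K,S}$ neither ingredient is automatic, so degeneration of the detection spectral sequence should itself be listed among the open problems rather than taken as expected.
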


The requirements on $K$ and $S$ relative to coefficient prime $\ell$ are the usual ones assumed in Quillen's conjecture, cf.~Conjectures~\ref{conj:quillen} and \ref{conj:quillenff}. The additional requirement $\ell\nmid n!$ is a natural strengthening leading to significant simplication in many places. For instance, it simplifies the structure of possible finite stabilizer subgroups for the action on the symmetric space and it allows to use standard character theory for the study of group actions of subgroups of the Weyl group $\Sigma_n$ on the cohomology of elementary abelian groups. The theory of alternating polynomials discussed in Appendix~\ref{sec:eltalt} then has a natural generalization to higher ranks and it can be hoped that these additional structures allow an analysis of the spectral sequence analogous to the one presented here. 

One could also ask specific questions about the size of the filtration steps. One reasonable guess for the rank of $\op{D}^{n-1}$ would be $2^n\cdot\left(\#\op{Pic}^0(\mathcal{O}_{K,S})\right)^{n-1}$, generalizing the statement in part (1) of Theorem~\ref{thm:image}. This is the product of the number of alternating polynomials in the cohomology of elementary abelian $\ell$-groups of rank $n$ with the weighted number of completely split vector bundles giving rise to stabilizers containing maximal rank elementary abelian subgroups. 

At the moment, there seems to be too little evidence for attempting to generalize the other aspects of the cohomology computations in this paper. While the isotropy spectral sequence always allows to provide a filtration on the kernel of the Quillen homomorphism, it seems unlikely that the subquotients will be free in general. 

As mentioned earlier, Quillen's motivation for posing his conjecture was to find structural properties of cohomology rings of arithmetic groups which could be used for computations. The calculations of the present paper show that the image of the Quillen homomorphism contains only the simplest part of unstable group cohomology. In the K-theoretic rank conjecture philosophy, the part detected on elementary abelian $\ell$-subgroups corresponds to Milnor K-theory as the simplest part of algebraic K-theory. Therefore, the arithmetically more interesting part of group cohomology (such as the cuspidal cohomology or K-theory classes related to stable bundles) is in the Chern class torsion. This means, in particular, that even a proof of Conjecture~\ref{conj:filtered} above would not go as long a way towards computation of $\op{H}^\bullet(\op{GL}_n(\mathcal{O}_{K,S});\mathbb{F}_\ell)$ as hoped in the original formulation of \cite[Conjecture 14.7]{quillen:spectrum}.

\appendix

\section{Alternating elements in cohomology rings}
\label{sec:eltalt}

The identification of alternating polynomials in polynomial rings is fairly standard. For the computation of the $E_2$-page, we need a variant of this standard description for the slightly different case of free graded-commutative algebras.

For a field $k$, we denote the free graded-commutative $k$-algebra by 
$$
\mathfrak{F}(n):=k[X_1,\dots,X_n]\langle A_1,\dots,A_n\rangle.
$$
This algebra has a natural action of the symmetric group $\Sigma_n$, given by the diagonal permutation action on the sets of generators $\{X_1,\dots,X_n\}$ and $\{A_1,\dots,A_n\}$. In the following, we will be interested in the special cases $n=2,3$ and $k=\mathbb{F}_\ell$ where $\ell$ a prime different from $2$ and $3$. Requiring $\ell\neq 2$ is a natural thing to do because the cohomology ring structure for elementary abelian $\ell$-groups is different for $\ell=2$. On the other hand, our analysis of the special case $n=3$ makes significant use of the representation theory of $\Sigma_3$, in particular of complete decomposability and the character table -- which is why we exclude the prime $\ell=3$ as well. 


\begin{definition}
\label{def:alternating}
An element $f(X_1,\dots,X_n,A_1,\dots,A_n)\in\mathfrak{F}(n)$ is called (in slight abuse of notation) \emph{symmetric polynomial}, if it satisfies 
$$
f(\underline{X},\underline{A})=f(\sigma(\underline{X}),\sigma(\underline{A})).
$$

An element $f(X_1,\dots,X_n,A_1,\dots,A_n)\in\mathfrak{F}(n)$ is called  an \emph{alternating polynomial} if it satisfies 
$$
f(\underline{X},\underline{A})
=\op{sgn}(\sigma)f(\sigma(\underline{X}),\sigma(\underline{A})).
$$ 

The sets of symmetric and alternating polynomials in $\mathfrak{F}(n)$ will be denoted by $\mathfrak{S}(n)$ and $\mathfrak{A}(n)$, respectively. 
\end{definition}

\begin{remark}
Some aspects of the situation are closely parallel to the case of symmetric and alternating polynomials in commuting variables: 

The symmetric polynomials form a subalgebra $\mathfrak{S}(n)$. In terms of cohomology of finite groups, $\mathfrak{F}(n)$ is the $\mathbb{F}_\ell$-cohomology of an elementary abelian $\ell$-group of rank $n$. The subalgebra $\mathfrak{S}(n)$ is the $\mathbb{F}_\ell$-cohomology of the group $\op{GL}_n(\mathbb{F}_q)$ whenever $\ell\mid q-1$. This follows from Quillen's computation of the cohomology of $\op{GL}_n(\mathbb{F}_q)$, cf. Theorem~\ref{thm:glnfq} or better \cite{quillen:ktheory}.

The alternating polynomials do not form a subalgebra. However, $\mathfrak{A}(n)$ is a sub-$\mathfrak{S}(n)$-module of $\mathfrak{F}(n)$. This, in particular, also implies that $\mathfrak{A}(n)$ is a submodule of $\mathfrak{F}(n)$ for the Chern class ring $\mathbb{F}_\ell[\op{c}_1,\dots,\op{c}_n]$. 
\end{remark}

If $\ell\nmid n!$, then the degree $i$ part of $\mathfrak{F}(n)$, denoted by $\mathfrak{F}(n)_i$ is a direct sum of irreducible $\Sigma_n$-representations. The subsets $\mathfrak{S}(n)_i$ and $\mathfrak{A}(n)_i$ are the subrepresentations given by the direct sum of all trivial and sign representations, respectively. 

While this seems to be a natural construction, it is difficult to find it in the literature, cf. the (unfortunately unanswered) MathOverflow-question 221399. Fortunately, the present paper only requires the cases $n=2,3$, which can be dealt with in a fairly straightforward fashion. For a more conceptual explanation due to Wolfgang Soergel, cf. Remark~\ref{rem:soergel}. 

\begin{proposition}
\label{prop:eltalt2}
Let $\ell$ be an odd prime. 
The $\mathfrak{S}(2)$-submodule $\mathfrak{A}(2)$ of $\mathfrak{F}(2)$ is generated by the alternating polynomials $A_1-A_2$ and $X_1-X_2$. As a module over the Chern-class ring $\mathbb{F}_\ell[\op{c}_1,\op{c}_2]$, it is freely generated by the alternating polynomials
\[
A_1-A_2,\, A_1A_2,\, X_1-X_2,\, (X_1-X_2)(A_1+A_2).
\]
In particular, it is a free module of rank $4$ with Hilbert--Poincar{\'e} series 
\[
\frac{T+2T^2+T^3}{(1-T^2)(1-T^4)}.
\]
\end{proposition}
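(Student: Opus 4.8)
The plan is to work directly with the $\Sigma_2$-action on the free graded-commutative algebra $\mathfrak{F}(2)=\mathbb{F}_\ell[X_1,X_2]\langle A_1,A_2\rangle$, where the nontrivial element of $\Sigma_2$ swaps $X_1\leftrightarrow X_2$ and $A_1\leftrightarrow A_2$ simultaneously. First I would decompose $\mathfrak{F}(2)$ as a module over its symmetric subalgebra. Since $\ell$ is odd, the averaging idempotents $\tfrac12(1\pm\sigma)$ are available, so $\mathfrak{F}(2)=\mathfrak{S}(2)\oplus\mathfrak{A}(2)$ as $\mathfrak{S}(2)$-modules, and we only need to pin down $\mathfrak{A}(2)$. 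Writing $\mathbb{F}_\ell[X_1,X_2]\cong\mathbb{F}_\ell[\op{c}_1,\op{c}_2]\oplus\mathbb{F}_\ell[\op{c}_1,\op{c}_2]\cdot(X_1-X_2)$ for the classical decomposition of the polynomial part into symmetric and alternating polynomials (this uses $\ell$ odd, so that $(X_1-X_2)$ is an honest alternating generator and $\op{c}_1=X_1+X_2$, $\op{c}_2=X_1X_2$ generate the invariants freely), and $\bigwedge(A_1,A_2)\cong\mathbb{F}_\ell\oplus(\mathbb{F}_\ell\cdot A_1\oplus\mathbb{F}_\ell\cdot A_2)\oplus\mathbb{F}_\ell\cdot A_1A_2$, the exterior part decomposes as $\mathbb{F}_\ell$ (trivial) $\oplus\,\mathbb{F}_\ell\cdot(A_1+A_2)$ (trivial) $\oplus\,\mathbb{F}_\ell\cdot(A_1-A_2)$ (sign) $\oplus\,\mathbb{F}_\ell\cdot A_1A_2$ (trivial). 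Tensoring these two decompositions over $\mathbb{F}_\ell$ and picking out the sign-isotypic part, the alternating polynomials $\mathfrak{A}(2)$ is the $\mathbb{F}_\ell[\op{c}_1,\op{c}_2]$-span of $\{(X_1-X_2)\}\otimes\{1,\,A_1+A_2,\,A_1A_2\}$ together with $\{1\}\otimes\{A_1-A_2\}$; that is,
\[
(X_1-X_2),\quad (X_1-X_2)(A_1+A_2),\quad (X_1-X_2)A_1A_2,\quad (A_1-A_2),\quad (A_1-A_2)(A_1+A_2),\quad \dots
\]
but $(A_1-A_2)(A_1+A_2)=A_1^2-A_2^2=0$ since $A_i^2=0$, and $(X_1-X_2)A_1A_2$ is already $(X_1-X_2)$ times the symmetric element $A_1A_2$. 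So the four stated generators $A_1-A_2,\ A_1A_2,\ X_1-X_2,\ (X_1-X_2)(A_1+A_2)$ exhaust the sign-isotypic components after accounting for $\bigwedge$-relations, and the module is free over $\mathbb{F}_\ell[\op{c}_1,\op{c}_2]$ because the classical polynomial and exterior decompositions used above are free decompositions.

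For the $\mathfrak{S}(2)$-module statement (generation by just $A_1-A_2$ and $X_1-X_2$), I would observe that $A_1A_2$ lies in $\mathfrak{S}(2)$ already, so $(X_1-X_2)A_1A_2$ is a symmetric multiple of $X_1-X_2$; and $(X_1-X_2)(A_1+A_2)$ equals $\op{e}_1\cdot(X_1-X_2)-2(X_1-X_2)A_2$... more cleanly, $(X_1-X_2)(A_1+A_2)$ is $(X_1-X_2)$ times the symmetric polynomial $A_1+A_2=\op{e}_1$, hence is a $\mathfrak{S}(2)$-multiple of $X_1-X_2$. Thus two generators suffice over $\mathfrak{S}(2)$, and conversely neither can be dropped for degree reasons (the degree-1 part of $\mathfrak{A}(2)$ is one-dimensional, spanned by $A_1-A_2$, and is not in the $\mathfrak{S}(2)$-span of $X_1-X_2$ since $\mathfrak{S}(2)$ has no odd-degree elements below degree $3$).

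Finally the Hilbert--Poincaré series: the four free generators sit in degrees $1,2,2,3$ (recall $\deg X_i=2$, $\deg A_i=1$, so $A_1-A_2$ has degree $1$, $A_1A_2$ has degree $2$, $X_1-X_2$ has degree $2$, and $(X_1-X_2)(A_1+A_2)$ has degree $3$), so
\[
\op{HP}(\mathfrak{A}(2),T)=\frac{T+2T^2+T^3}{(1-T^2)(1-T^4)},
\]
as claimed. The main obstacle, such as it is, is making sure the count of sign-isotypic pieces is complete and that the four candidate generators are genuinely $\mathbb{F}_\ell[\op{c}_1,\op{c}_2]$-independent; independence follows because $1,\,A_1-A_2,\,A_1+A_2,\,A_1A_2$ are an $\mathbb{F}_\ell[X_1,X_2]$-basis of $\mathfrak{F}(2)$ (equivalently an $\mathbb{F}_\ell[\op{c}_1,\op{c}_2]$-basis after the further splitting by $(X_1-X_2)$), so there is no room for a nontrivial relation among the four.
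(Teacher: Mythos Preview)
Your overall strategy --- decompose the polynomial part and the exterior part into their trivial and sign isotypic pieces, then tensor and collect the sign summands --- is exactly the paper's approach. However, there is a genuine computational error that breaks the argument as written.

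You classify $A_1A_2$ as lying in the \emph{trivial} representation of $\Sigma_2$ on $\bigwedge(A_1,A_2)$. That is false: since the $A_i$ anticommute, the transposition $\sigma$ sends
\[
A_1A_2 \longmapsto A_2A_1 = -A_1A_2,
\]
so $A_1A_2$ lies in the \emph{sign} representation. This propagates through the rest of your argument. With the corrected decomposition (invariants in $\bigwedge(A_1,A_2)$ are spanned by $1$ and $A_1+A_2$; anti-invariants by $A_1-A_2$ and $A_1A_2$), the sign-isotypic part of the tensor product is
\[
\mathbb{F}_\ell[\op{c}_1,\op{c}_2]\cdot\{A_1-A_2,\ A_1A_2\}\ \oplus\ \mathbb{F}_\ell[\op{c}_1,\op{c}_2]\cdot(X_1-X_2)\cdot\{1,\ A_1+A_2\},
\]
which gives the four generators of the Proposition directly and cleanly, with freeness immediate. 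By contrast, your list contains $(X_1-X_2)A_1A_2$, which is sign~$\otimes$~sign and hence \emph{symmetric}, not alternating; and you are missing $A_1A_2$ itself.

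The same error resurfaces in your $\mathfrak{S}(2)$-generation paragraph: the claim ``$A_1A_2$ lies in $\mathfrak{S}(2)$ already'' is false. The correct observation is that $(A_1+A_2)(A_1-A_2)=-2A_1A_2$, so $A_1A_2=-\tfrac12\,\op{e}_1\cdot(A_1-A_2)$ is a $\mathfrak{S}(2)$-multiple of $A_1-A_2$; combined with $(X_1-X_2)(A_1+A_2)=\op{e}_1\cdot(X_1-X_2)$, this shows that $A_1-A_2$ and $X_1-X_2$ generate $\mathfrak{A}(2)$ over $\mathfrak{S}(2)$. Once you fix the sign of $\sigma$ on $A_1A_2$, everything else you wrote (idempotent splitting, independence via the $\mathbb{F}_\ell[X_1,X_2]$-basis $1,\,A_1\pm A_2,\,A_1A_2$, and the Hilbert--Poincar{\'e} computation) goes through and matches the paper.
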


\begin{proof}
In characteristic $\neq 2$, the algebra $\mathfrak{F}(2)$ decomposes completely as direct sum of trivial and sign representations. 

The polynomial ring $\mathbb{F}_\ell[X_1,X_2]$ decomposes as follows. The subring of symmetric polynomials is generated by $X_1+X_2$ and $X_1X_2$ hence its Hilbert--Poincar{\'e} series is
\[
\frac{1}{(1-T^2)(1-T^4)}.
\]
The submodule of alternating polynomials is free of rank 1 over the symmetric polynomials, generated by the discriminant $(X_1-X_2)$ and hence has the Hilbert--Poincar{\'e} series of the symmetric polynomials with an additional factor $T^2$. We can check that the sum of the two Hilbert--Poincar{\'e} series is the Hilbert--Poincar{\'e} series of the polynomial ring $\mathbb{F}_\ell[X_1,X_2]$. 

The exterior algebra $\mathbb{F}_\ell\langle A_1,A_2\rangle$ decomposes under the $\Sigma_2$-action as an invariant subring $\mathbb{F}_\ell\langle A_1+A_2\rangle$, and the alternating part which is a free module over the invariant part generated by $A_1-A_2$. 

In the tensor product $\mathfrak{F}(2)$, we now find that the alternating part is the direct sum of the alternating part of the polynomial algebra with the symmetric part of the exterior algebra, and the symmetric part of the polynomial algebra with the alternating part of the exterior algebra. Freeness over the Chern-class ring follows since $1,A_1-A_2, A_1+A_2$ and $A_1A_2$ are obviously $\mathbb{F}_\ell[\op{c}_1,\op{c}_2]$-linearly independent. This establishes the claims concerning the generators and the Hilbert--Poincar{\'e} series. 
\end{proof}

\begin{remark}
It is interesting to remark a difference to the classical case of alternating and symmetric polynomials in commuting variables:  $\mathfrak{A}(2)$ is not a free $\mathfrak{S}(2)$-module. For example, the element $(A_1-A_2)$ is annihilated by 
\begin{eqnarray*}
\op{c}_1\op{e}_1-2\op{e}_2&=&(X_1+X_2)(A_1+A_2)-2(X_2A_1+X_1A_2)\\
&=&X_1A_1-X_1A_2-X_2A_1+X_2A_2.
\end{eqnarray*}
\end{remark}

\begin{proposition}
\label{prop:eltalt3}
Let $\ell$ be a prime different from $2$ and $3$. Then the $\mathfrak{S}(3)$-submodule $\mathfrak{A}(3)$ of $\mathfrak{F}(3)$ is generated by the alternating polynomials 
\begin{itemize}
\item $(A_1-A_2)(A_2-A_3)$,
\item $-A_1(X_2-X_3)+A_2(X_1-X_3)-A_3(X_1-X_2)$, 
\item $A_1X_1(X_2-X_3)-A_2X_2(X_1-X_3)+A_3X_3(X_1-X_2)$, and
\item $(X_1-X_2)(X_1-X_3)(X_2-X_3)$.
\end{itemize}
As a module over the Chern-class ring $\mathbb{F}_\ell[\op{c}_1,\op{c}_2,\op{c}_3]$, it is by the above alternating polynomials together with the following (which essentially arise from the previous ones by multiplication with $\op{e}_1$)
\begin{itemize}
\item $A_1A_2A_3$
\item $-X_1A_2A_3+X_2A_1A_3-X_3A_1A_2$
\item $-X_1X_2A_1A_2+X_1X_3A_1A_3-X_2X_3A_2A_3$, and
\item $(A_1+A_2+A_3)(X_1-X_2)(X_1-X_3)(X_2-X_3)$.
\end{itemize}
The given eight generators are $\mathbb{F}_\ell[X_1,X_2,X_3]$-linearly independent. In particular, $\mathfrak{A}(n)$ is a free graded module  of rank $8$ over the Chern class ring $\mathbb{F}_\ell[\op{c}_1,\op{c}_2,\op{c}_3]$ with Hilbert--Poincar{\'e} series 
\[
\frac{T^2+2T^3+T^4+T^5+2T^6+T^7}{(1-T^2)(1-T^4)(1-T^6)}
\]
\end{proposition}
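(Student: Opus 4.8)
The plan is to follow the same strategy as in the proof of Proposition~\ref{prop:eltalt2}: write $\mathfrak{F}(3)=\mathbb{F}_\ell[X_1,X_2,X_3]\otimes_{\mathbb{F}_\ell}\bigwedge(A_1,A_2,A_3)$, decompose both tensor factors as graded $\Sigma_3$-representations (legitimate since $\ell\nmid 6$, so $\mathbb{F}_\ell[\Sigma_3]$ is semisimple with irreducibles the trivial representation $\mathbf 1$, the sign representation $\varepsilon$, and the standard $2$-dimensional representation $\rho$), and then read off the sign-isotypic part $\mathfrak A(3)$. For the exterior factor, writing $V=\langle A_1,A_2,A_3\rangle$ for the permutation representation (in cohomological degree $1$), I would use $\bigwedge^0V=\mathbf 1$, $\bigwedge^1V=\mathbf 1\oplus\rho$, $\bigwedge^2V\cong V^\vee\otimes\bigwedge^3V\cong\rho\oplus\varepsilon$, and $\bigwedge^3V=\varepsilon$. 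For the polynomial factor one first records two facts valid in any characteristic $\neq 2$: any alternating polynomial in $\mathbb{F}_\ell[X_1,X_2,X_3]$ is divisible by the Vandermonde $\prod_{i<j}(X_i-X_j)$, which has cohomological degree $6$ (since $\deg X_i=2$), so $\mathbb{F}_\ell[X_1,X_2,X_3]$ contains no copy of $\varepsilon$ in cohomological degrees $<6$; and $\op{c}_1,\op{c}_2,\op{c}_3$ is a regular sequence, so the coinvariant algebra $C:=\mathbb{F}_\ell[X_1,X_2,X_3]/(\op{c}_1,\op{c}_2,\op{c}_3)$ has Hilbert series $(1+T^2)(1+T^2+T^4)=1+2T^2+2T^4+T^6$. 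Combining these, $C_0=\mathbf 1$; $C_2$ and $C_4$ are $2$-dimensional, contain no $\mathbf 1$ (as $C$ kills $\op{c}_1$) and no $\varepsilon$ (by the degree bound), hence are each $\cong\rho$; and $C_6\cong\varepsilon$, spanned by the image of the Vandermonde.

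Since $\ell\nmid 6$, $\mathfrak F(3)$ is a free graded $\mathbb{F}_\ell[\op{c}_1,\op{c}_2,\op{c}_3]$-module and $\mathfrak A(3)=e_\varepsilon\mathfrak F(3)$ (with $e_\varepsilon=\tfrac16\sum_\sigma\op{sgn}(\sigma)\sigma$, which is $\mathbb{F}_\ell[\op{c}_1,\op{c}_2,\op{c}_3]$-linear because the $\op{c}_i$ are symmetric) is a graded $\mathbb{F}_\ell[\op{c}_1,\op{c}_2,\op{c}_3]$-direct summand, hence itself graded free. To compute its rank and Hilbert--Poincar{\'e} series it then suffices to compute $\mathfrak A(3)/(\op{c}_1,\op{c}_2,\op{c}_3)\mathfrak A(3)\cong e_\varepsilon\bigl(C\otimes_{\mathbb{F}_\ell}\bigwedge(A_1,A_2,A_3)\bigr)$. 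Using that $\varepsilon$ occurs, with multiplicity one, in a tensor product of two irreducibles exactly in the cases $\mathbf 1\otimes\varepsilon$, $\varepsilon\otimes\mathbf 1$ and $\rho\otimes\rho$, I would run through the graded pieces of $C\otimes\bigwedge(A)$ and find a basis in cohomological degrees $2,3,3,4,5,6,6,7$: the degree-$0$ part of $C$ pairs with the $\varepsilon$-parts of $\bigwedge^2V,\bigwedge^3V$ (degrees $2,3$); the degree-$2$ part of $C$ with the $\rho$-parts of $\bigwedge^1V,\bigwedge^2V$ (degrees $3,4$); the degree-$4$ part of $C$ likewise (degrees $5,6$); and the degree-$6$ part of $C$ with the $\mathbf 1$-parts of $\bigwedge^0V,\bigwedge^1V$ (degrees $6,7$). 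Hence $\mathfrak A(3)$ is free of rank $8$ over $\mathbb{F}_\ell[\op{c}_1,\op{c}_2,\op{c}_3]$ with Hilbert--Poincar{\'e} series $\dfrac{T^2+2T^3+T^4+T^5+2T^6+T^7}{(1-T^2)(1-T^4)(1-T^6)}$, as claimed.

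It remains to match this with the eight explicit polynomials. One then checks directly that each of them is alternating (each is, up to a scalar, a product of one alternating and one symmetric generator exhibited above — e.g. the Vandermonde, $(A_1-A_2)(A_2-A_3)$, and $A_1A_2A_3$ generate the $\varepsilon$-parts of $C_6$, $\bigwedge^2V$, $\bigwedge^3V$ respectively) and that their cohomological degrees are $2,3,5,6$ and $3,4,6,7$, which is exactly the degree multiset found above. By the graded Nakayama lemma these eight elements form an $\mathbb{F}_\ell[\op{c}_1,\op{c}_2,\op{c}_3]$-basis of $\mathfrak A(3)$ provided their images in $\mathfrak A(3)/(\op{c}_1,\op{c}_2,\op{c}_3)$ are $\mathbb{F}_\ell$-linearly independent; since the grading separates them into singletons in degrees $2,4,5,7$ and pairs in degrees $3$ and $6$, this reduces to two $2\times 2$ non-vanishing determinants plus four trivial checks, and the same computation yields the $\mathbb{F}_\ell[X_1,X_2,X_3]$-linear independence. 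Finally, since $\op{e}_1=A_1+A_2+A_3\in\mathfrak S(3)$ and a short computation gives $\op{e}_1\cdot(A_1-A_2)(A_2-A_3)=3A_1A_2A_3$, and analogously each of the last four generators is a nonzero scalar times $\op{e}_1$ times one of the first four, the first four generators already generate $\mathfrak A(3)$ as an $\mathfrak S(3)$-module.

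The step I expect to require the most care is pinning down the graded $\Sigma_3$-module structure of the coinvariant algebra $C$ in positive characteristic — this is exactly where the regular-sequence Hilbert series computation and the ``no sign representation in low degree'' observation are needed to force $C_2\cong C_4\cong\rho$ — together with the bookkeeping of sign-isotypic components in $C\otimes\bigwedge(A)$; the closing explicit verifications and the reduction from eight generators over $\mathbb{F}_\ell[\op{c}_1,\op{c}_2,\op{c}_3]$ to four over $\mathfrak S(3)$ are routine finite computations.
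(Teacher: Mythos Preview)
Your argument is correct and follows essentially the same representation-theoretic route as the paper: decompose both the polynomial and exterior factors as graded $\Sigma_3$-modules and read off the sign-isotypic part. The paper does this slightly more computationally, analysing $\mathbb{F}_\ell[X_1,X_2,X_3]$ directly rather than passing to the coinvariant algebra $C$, and then establishing freeness by verifying $\mathbb{F}_\ell[X_1,X_2,X_3]$-linear independence of the eight generators plus a Hilbert--Poincar\'e series match. Your use of the idempotent $e_\varepsilon$ to exhibit $\mathfrak{A}(3)$ as a graded $\mathbb{F}_\ell[\op{c}_1,\op{c}_2,\op{c}_3]$-direct summand of the free module $\mathfrak{F}(3)$, hence graded free, is a genuinely cleaner way to get freeness and makes the rank and Hilbert--Poincar\'e series drop out of the coinvariant computation.

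One small point to tighten: the sentence ``the same computation yields the $\mathbb{F}_\ell[X_1,X_2,X_3]$-linear independence'' is not quite right as stated. The total cohomological degree separates the eight generators for the purposes of graded Nakayama over $\mathbb{F}_\ell[\op{c}_1,\op{c}_2,\op{c}_3]$, but it does not separate them for $\mathbb{F}_\ell[X_1,X_2,X_3]$-linear independence, since coefficients in $\mathbb{F}_\ell[X_1,X_2,X_3]$ can have arbitrary even degree. What does work (and what the paper does) is to separate the eight generators by \emph{exterior} degree: one generator each in exterior degrees $0$ and $3$, and three each in exterior degrees $1$ and $2$; then $\mathbb{F}_\ell[X_1,X_2,X_3]$-independence within each exterior degree is a short explicit check. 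This is a routine fix, but the grading you invoke needs to be the exterior one, not the total one.
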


\begin{proof}
Consider the polynomial ring $\mathbb{F}_\ell[X_1,X_2,X_3]$, graded by $\deg X_i=2$, with the standard permutation action on $\{X_1,X_2,X_3\}$. The decomposition as $\Sigma_3$-representations is known: the trivial representations make up the ring of symmetric polynomials $\mathbb{F}_\ell[\op{c}_1,\op{c}_2,\op{c}_3]$ and the alternating polynomials make up a free $\mathbb{F}_\ell[\op{c}_1,\op{c}_2,\op{c}_3]$-module of rank one generated by the Vandermonde polynomial $(X_1-X_2)(X_1-X_3)(X_2-X_3)$ in degree $6$. Finally, the  remainder, made up of the standard representations, is generated by one standard representation in degree $2$ and one standard representation in degree $4$. This can be seen from the equality of Hilbert--Poincar{\'e} series
\[
\frac{1}{(1-T^2)^3}=\frac{1+2(T^2+T^4)+T^6}{(1-T^2)(1-T^4)(1-T^6)}
\]

Now consider the exterior algebra $\mathbb{F}_\ell\langle A_1,A_2,A_3\rangle$, graded by $\deg A_i=1$, with the standard permutation action on $\{A_1,A_2,A_3\}$. This decomposes as follows:
\begin{itemize}
\item a trivial representation in degree $0$,
\item a trivial and a standard representation in degree $1$,
\item a sign and a standard representation in degree $2$, and
\item  a sign representation in degree $3$. 
\end{itemize}
More precisely, the trivial representation is given by the subalgebra $\mathbb{F}_\ell\langle \op{e}_1\rangle$, the sign representation is the $\mathbb{F}_\ell\langle\op{e}_1\rangle$-submodule generated by $A_1A_2-A_1A_3+A_2A_3$, and also the standard representations form an $\mathbb{F}_\ell\langle \op{e}_1\rangle$-submodule. 

Now we consider the tensor product of the polynomial and exterior algebra. The character table for $\Sigma_3$ tells us completely how tensor products of representations decompose. The tensor product $\rho_1\otimes\rho_2$ of two irreducible representations contains a sign representation in the following cases: $\op{id}\otimes\sigma$, $\sigma\otimes\op{id}$ and $\op{s}\otimes \op{s}\cong\op{s}\oplus\op{id}\oplus\sigma$, where $\op{id}$ denotes the trivial representation, $\sigma$ the sign representation and $\op{s}$ the standard representation. 

Now we combine all this information, based on the decomposition of the exterior algebra to obtain the $\mathfrak{S}(3)$-generators of the alternating part. The alternating part of the exterior algebra tensored with the symmetric polynomials provides the first generator. The next two generators arise from the tensor products of the two standard representations (degree 2 and degree 4) in the polynomial ring with the standard representation (degree 1) of the exterior algebra. The last generator arises from tensoring the invariant part of the exterior algebra with the alternating polynomials. 
The $\mathbb{F}_\ell[\op{c}_1,\op{c}_2,\op{c}_3]$-generators are obtained from these by multiplication with the exterior algebra in $\mathfrak{S}(3)$, but only multiplication with $\op{e}_1$ provides new generators, as listed.

It remains to show that the eight generators given are in fact $\mathbb{F}_\ell[X_1,X_2,X_3]$-linearly independent. Because the exterior elements are linearly independent over the polynomial ring, we can argue by degree in the exterior elements. Degree $0$ and $3$ are clear because these only contain a single generator. Consider degree 1, degree 2 is proved similarly. We rewrite these generators as
\[
P=(X_3-X_2)(A_1-A_2)+(X_1-X_2)(A_2-A_3)\textrm{ and }
\]
\[
Q=(X_1X_2-X_1X_3)(A_1-A_2)+(X_2X_3-X_1X_3)(A_2-A_3),
\]
decomposing them as polynomial multiples of the two independent elements $(A_1-A_2)$ and $(A_2-A_3)$. There is an additional generator
\[
(A_1+A_2+A_3)(X_1-X_2)(X_1-X_3)(X_2-X_3),
\]
but this has to be independent of the other two because the invariant and alternating part of the exterior degree 1 are linearly independent over the polynomial ring. It remains to show that there can be no $\mathbb{F}_\ell[X_1,X_2,X_3]$-linear dependence $fP+gQ=0$ between the generators $P$ and $Q$. However, such a dependence would imply $f-gX_1=f-gX_3=0$ which is impossible. This proves the claim on linear independence, proving also linear independence over the Chern-class ring $\mathbb{F}_\ell[\op{c}_1,\op{c}_2,\op{c}_3]$. We have thus found an $\mathbb{F}_\ell[\op{c}_1,\op{c}_2,\op{c}_3]$-free submodule of $\mathfrak{A}(3)$ with the right Hilbert--Poincar{\'e} series, proving the claim. 
\end{proof}

\begin{remark}
\label{rem:soergel}
I am very grateful to Wolfgang Soergel for kindly explaining to me the right generalization of the above computations (for $\ell\nmid n!$): the polynomial ring $k[X_1,\dots,X_n]$ is a module of rank $\dim\op{H}^\bullet(\op{GL}_n/B;\mathbb{F}_\ell)$ over the subalgebra of symmetric polynomials, such that the quotient can in fact be identified with $k[\Sigma_n]$. Kostka--Foulkes polynomials determine the cohomological degrees of the irreducible representations in $k[\Sigma_n]$. Tensoring the polynomial ring with the exterior algebra, each irreducible $\Sigma_n$-representation in the exterior algebra contributes a sign representation with multiplicity the dimension of the irreducible factor. This implies that the sub-$\mathbb{F}_\ell[\op{c}_1,\dots,\op{c}_n]$-module $\mathfrak{A}(n)$ of $\mathfrak{F}(n)$ is free of rank $2^n$. A more detailed analysis could also allow to explicitly identify generators, their degrees and the resulting Hilbert--Poincar{\'e} series. The consequences for the theory of detection filtrations in higher ranks will be developed elsewhere, but the conceptual explanation of the alternating polynomials suggests that a proof of Conjecture~\ref{conj:filtered} may be within reach. 
\end{remark}

\begin{remark}
In retrospect, using the alternating polynomials in cohomology of elementary abelian groups is similar to the appearance of the sign representations in the analysis of the spectral sequence in \cite[Section 13]{dupont:book} (though loc.cit. is mostly interested in rational coefficents). This is further evidence that the theory of alternating polynomials could also be useful for higher-rank computations and even for computations of cohomology of linear groups of curves over infinite fields.
\end{remark}

\section{A sheaf view on torsion subcomplex reduction}
\label{sec:reduction}

In this appendix we discuss a technical result which allows to replace the $E_1$-page of the isotropy spectral sequence for the $\op{GL}_3(k[E])$-action on the Bruhat--Tits building $\mathfrak{B}(E/k,3)$ by a subcomplex which is  finitely generated over $\mathbb{F}_\ell[\op{c}_1,\op{c}_2,\op{c}_3]$. this is similar to the torsion subcomplex reduction of \cite{rahm}. Recall that the action of $\op{GL}_3(k[E])$ on the building is not cocompact and the quotient has infinitely many cells, leading to unpleasantries involving infinite direct products in the $E_1$-page of the isotropy spectral sequence. 

It has been known since the works of Serre, Harder, Stuhler and others that for a ring $\mathcal{O}_{K,S}$ of $S$-integers in a global field $K$, there is a $\op{GL}_n(\mathcal{O}_{K,S})$-stable  subcomplex $\mathfrak{X}$ in the associated Bruhat--Tits building $\mathfrak{B}(K,S,n)$ such that the quotient $\op{GL}_n(\mathcal{O}_{K,S})\backslash\mathfrak{X}(K,S,n)$ has only finitely many cells and the inclusion $\mathfrak{X}\hookrightarrow\mathfrak{B}(K,S,n)$ induces isomorphisms in $\op{GL}_n(\mathcal{O}_{K,S})$-equivariant cohomology with $\mathbb{F}_\ell$-coefficients where $\ell$ is different from the characteristic of $K$. This means that the isotropy spectral sequence is essentially controlled by only finitely many cell stabilizers, allowing to get rid of the infinite direct products. 

We provide some details on this reduction of the isotropy spectral sequence. This is well-known, but to the best of my knowledge not well-documented in the literature. 

\begin{definition}
\label{def:hg}
Let $X$ be a CW-complex of finite $\ell$-cohomological dimension and let $G$ be a group acting continuously on $X$ with finite stabilizers. For each $t\geq 0$ consider the sheaf  $\mathcal{H}_G^t$ on the orbit space $X/G$ which is associated to the presheaf given by 
\[
U\mapsto \op{H}^t_{G}(q^{-1}(U);\mathbb{F}_\ell)
\]
where $q:X\to X/G$ is the canonical quotient map.
\end{definition}

In \cite[Section 3]{quillen:spectrum}, Quillen proved that there are canonical isomorphisms
\[
E^{s,t}_2\cong \op{H}^s(X/G;\mathcal{H}_G^t)
\]
identifying the $E_2$-page of the isotropy spectral sequence associated to the $G$-action on $X$ with the cohomology of the sheaves $\mathcal{H}_G^\bullet$.

\begin{definition}
Let $X$ be a CW-complex of finite $\ell$-cohomological dimension, and let $G$ be a group acting continuously on $X$ with finite stabilizers. A $\mathcal{H}_G^\ast$-reduction of $X/G$ is a subcomplex $Y\subset X/G$ together with a cellular retraction $r:X/G\to Y$ such for each cell $\sigma$ of $Y$ there is a filtration 
\[
F^{(0)}\subset F^{(1)}\subset \cdots\subset F^{(k)}=r^{-1}(\sigma)
\]
such that 
\begin{enumerate}
\item each $F^{(i)}$ and each component of $F^{(i)}\setminus F^{(i-1)}$ is contractible, and 
\item the sheaves $\mathcal{H}_G^t$ are locally constant on each $F^{(i)}\setminus F^{(i-1)}$.
\end{enumerate}
\end{definition}

With this definition, we can now provide a generalization and cohomology version of \cite[Proposition A.2.7]{knudson:book}.

\begin{proposition}
\label{prop:reduction}
Let $X$ be a CW-complex of finite $\ell$-cohomological dimension and let $G$ be a group acting continuously on $X$ with finite stabilizers. Let $Y\subset X$ be a $\mathcal{H}_G^\ast$-reduction of $X/G$. Then the retraction $r:X/G\to Y$ induces an isomorphism 
\[
\op{H}^s(Y;\mathcal{H}^t_G)\to \op{H}^s(X/G;\mathcal{H}^t_G).
\]
In particular, if we consider the subcomplex $\tilde{E}^{s,t}_1\subset E^{s,t}_1$ of the isotropy spectral sequence for $G\looparrowright X$ given by those cohomology of stabilizers of cells contained in $Y$, then the subcomplex inclusion $\tilde{E}^{s,t}_1\subset E^{s,t}_1$ is a quasi-isomorphism. Under the identification $E^{s,t}_2\cong \op{H}^s(X/G;\mathcal{H}_G^t)$, the above isomorphisms of sheaf cohomology are equal to the induced isomorphisms on the $E_2$-page.
\end{proposition}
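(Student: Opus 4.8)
The plan is to prove first the sheaf-cohomology statement, by a Leray spectral sequence for the retraction $r\colon X/G\to Y$, and then to deduce the statement about $E_1$-pages from Quillen's identification $E_2^{s,t}\cong\op{H}^s(X/G;\mathcal{H}_G^t)$ recalled above. Write $i\colon Y\hookrightarrow X/G$ for the inclusion, so $r\circ i=\op{id}_Y$. The Leray spectral sequence of $r$ with coefficients in $\mathcal{H}_G^t$ reads
\[
E_2^{p,q}=\op{H}^p\bigl(Y;R^qr_\ast\mathcal{H}_G^t\bigr)\ \Longrightarrow\ \op{H}^{p+q}\bigl(X/G;\mathcal{H}_G^t\bigr),
\]
so it suffices to show $R^qr_\ast\mathcal{H}_G^t=0$ for $q>0$ and that the canonical map $r_\ast\mathcal{H}_G^t\to i^\ast\mathcal{H}_G^t$ is an isomorphism; then the edge homomorphism gives the asserted isomorphism $\op{H}^s(Y;\mathcal{H}_G^t)\xrightarrow{\ \sim\ }\op{H}^s(X/G;\mathcal{H}_G^t)$, which is the one induced by $r$ (and which is split by $i^\ast$ since $ri=\op{id}$). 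Both assertions are statements about stalks: for $y$ in the interior of a cell $\sigma$ of $Y$, the stalk of $R^qr_\ast\mathcal{H}_G^t$ at $y$ is $\varinjlim_{U\ni y}\op{H}^q\bigl(r^{-1}(U);\mathcal{H}_G^t\bigr)$, and taking for $U$ the open stars of $\sigma$ one has, in the CW setting and up to $\mathcal{H}_G^t$-cohomology, $r^{-1}(U)\simeq r^{-1}(\sigma)$. Thus everything reduces to the single local claim that $\op{H}^\bullet\bigl(r^{-1}(\sigma);\mathcal{H}_G^t\bigr)$ is concentrated in degree $0$, where it coincides with the stalk of $\mathcal{H}_G^t$ at $y$.

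This local claim is precisely what the definition of an $\mathcal{H}_G^\ast$-reduction is built to deliver. I would induct along the given filtration $F^{(0)}\subset\cdots\subset F^{(k)}=r^{-1}(\sigma)$. For the inductive step, use the open--closed decomposition $F^{(i)}=\bigl(F^{(i)}\setminus F^{(i-1)}\bigr)\sqcup F^{(i-1)}$ with $F^{(i-1)}$ closed in $F^{(i)}$; the short exact sequence $0\to j_!j^\ast\mathcal{H}_G^t\to\mathcal{H}_G^t\to\iota_\ast\iota^\ast\mathcal{H}_G^t\to 0$, with $j$ the open inclusion of $F^{(i)}\setminus F^{(i-1)}$ and $\iota$ the closed inclusion of $F^{(i-1)}$, yields a long exact sequence relating $\op{H}^\bullet(F^{(i)};\mathcal{H}_G^t)$, $\op{H}^\bullet(F^{(i-1)};\mathcal{H}_G^t)$ and the cohomology of $F^{(i)}$ with supports in the stratum $F^{(i)}\setminus F^{(i-1)}$. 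On that stratum $\mathcal{H}_G^t$ is locally constant, hence constant on each of its (contractible) components, so the supports term is concentrated in the top cell-dimension of the stratum; since each $F^{(i)}$ — in particular $F^{(k)}=r^{-1}(\sigma)$ — is itself contractible, splicing the long exact sequences shows the total complex is acyclic in positive degrees and equal to the constant coefficient module in degree $0$. The base case $i=0$ is just the computation of a constant sheaf on the contractible space $F^{(0)}$. In effect this reproves, in the sheaf-with-supports language needed to keep track of the local systems on the strata, the familiar fact that the cellular cochain complex of a contractible CW-complex resolves the coefficients.

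For the second part I recall that, for fixed $t$, the complex $\bigl(E_1^{\bullet,t},\op{d}_1\bigr)$ is by its very construction the cellular cochain complex of $X/G$ with coefficients in the cellular sheaf $\sigma\mapsto\op{H}^t(G_\sigma;M_\sigma)$, so $\op{H}^s$ of it is $\op{H}^s(X/G;\mathcal{H}_G^t)$ — this is precisely Quillen's identification $E_2^{s,t}\cong\op{H}^s(X/G;\mathcal{H}_G^t)$ — and likewise $\bigl(\tilde E_1^{\bullet,t},\op{d}_1\bigr)$ is the cellular cochain complex of the subcomplex $Y$, computing $\op{H}^s(Y;\mathcal{H}_G^t)$. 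Restriction of cochains along $i$ is a surjection of complexes $E_1^{\bullet,t}\twoheadrightarrow\tilde E_1^{\bullet,t}$ (its kernel, the cochains vanishing on cells of $Y$, is a subcomplex because $Y$ is), and the cellular retraction $r$, which fixes $Y$ cellwise, furnishes a section $\psi\mapsto\bigl(\tau\mapsto\psi(r\tau)\bigr)$ — extended by zero on cells collapsed by $r$ — whose chain-map property is a cochain reformulation of the deformation-retraction data packaged in the $\mathcal{H}_G^\ast$-reduction. This section realizes $\tilde E_1^{\bullet,t}$ as a subcomplex of $E_1^{\bullet,t}$, and on cohomology it induces the map $r^\ast$ of the first part, hence is a quasi-isomorphism. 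Finally, Quillen's identification $E_2^{\bullet,\bullet}\cong\op{H}^\bullet(X/G;\mathcal{H}_G^\bullet)$ is natural for $Y\hookrightarrow X/G$ (both the cellular and the sheaf descriptions are functorial in the pair), so the isomorphism of $E_2$-pages induced by $\tilde E_1\subset E_1$ coincides with the sheaf-cohomology isomorphism.

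The main obstacle is the local computation of the second paragraph: one must carefully handle cohomology with supports in the non-closed strata $F^{(i)}\setminus F^{(i-1)}$ and verify that the long exact sequences of the filtration really splice, over the contractible $F^{(k)}$, into an acyclic complex. Everything else — the comparison $r^{-1}(U)\simeq r^{-1}(\sigma)$ for small $U$, the identification $r_\ast\mathcal{H}_G^t\cong i^\ast\mathcal{H}_G^t$, and the chain-map property of the section — is routine but needs some care with the CW bookkeeping.
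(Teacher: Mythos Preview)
Your proof is correct and follows essentially the same route as the paper: the Leray spectral sequence for $r$ reduces the claim to a stalkwise computation of $R^qr_\ast\mathcal{H}_G^t$, which is then handled via the filtration hypotheses on $r^{-1}(\sigma)$. The paper is terser at the key local step, deferring to a cohomology version of \cite[Proposition~A.2.7]{knudson:book} rather than spelling out the open--closed induction you give; your added detail on the $E_1$-page comparison likewise expands what the paper states without further argument.
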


\begin{proof}
Recall that the higher direct image sheaves are the sheafifications of the presheaf $U\mapsto \op{H}^t(r^{-1}(U);\mathcal{H}^i_G)$. Associated to the morphism $:X/G\to Y$ and the coefficient sheaf $\mathcal{H}^i_G$ we have a Leray spectral sequence:
\[
E^{s,t}_2=\op{H}^s(Y;\op{R}^tr_\ast\mathcal{H}^i_G)\Rightarrow \op{H}^{s+t}(X/G;\mathcal{H}^i_G).
\] 
To prove the claims, it suffices to show 
\[
\op{R}^tr_\ast\mathcal{H}^i_G\cong \left\{\begin{array}{ll}
\mathcal{H}^i_G & t=0\\
0&\textrm{otherwise}\end{array}\right.
\]
The stalk of $\op{R}^tr_\ast\mathcal{H}^i_G$ at a point $y\in Y$ is $\op{H}^t(r^{-1}(y);\mathcal{H}^i_G)$. Now for any point $y\in Y$ consider the preimage $r^{-1}(y)$. By assumption, there is a filtration of $r^{-1}(y)$ such that the conditions of (a cohomology version of) \cite[Proposition A.2.7]{knudson:book} are satisfied. As a consequence, the stalk of $\op{R}^tr_\ast\mathcal{H}^i_G$ at a point $y$ is 
\[
\op{H}^t(\{y\};\mathcal{H}^i_G)\cong \left\{\begin{array}{ll}
(\mathcal{H}^i_G)_y& t=0\\
0&\textrm{otherwise}\end{array}\right.
\]
Restriction of sections from $r^{-1}(U)$ to $U$ defines a morphism of sheaves $\op{R}^tr_\ast\mathcal{H}^i_G\to \mathcal{H}^i_G$ which can be seen as an instance of the base-change map $\op{id}^\ast r_\ast\mathcal{H}\to \op{id}_\ast i^\ast\mathcal{H}$ from the six-functor formalism. The computations of stalks above shows that the inclusion induces isomorphisms on stalks. 
\end{proof}

\section{Generalization of a theorem of Henn}
\label{sec:henn}

In this section, we explain the generalization of Henn's result \cite[Theorem 0.6]{henn} which shows that the failure of the Quillen conjecture for rank $n_0$ implies the failure in all ranks $n\geq n_0$. 

\subsection{Finiteness properties for arithmetic groups, function field case} 

We first have to check that the arguments of \cite{henn} can be applied to arithmetic groups of function field type. Note that the formulation of Theorem 0.2 in \cite{henn} does not apply directly to groups of the form $\op{GL}_n(k[C])$ where $k=\mathbb{F}_q$ is a finite field and $C$ is a smooth affine curve over $k$. The reason is that the action of $\op{GL}_n(k[C])$ on the associated Bruhat--Tits building fails to be cocompact. In the number-field situations, this is also true, but it is possible to use the Borel--Serre compactification or suitable deformation retracts of the symmetric space to obtain acyclic spaces with finitely many cells. This is no longer true in the function field situation. Therefore, we will shortly explain how to modify the arguments in \cite{henn} so that they apply to the function-field situation. 

\begin{proposition}
\label{prop:a1}
Let $k=\mathbb{F}_q$ be a finite field, let $C$ be a smooth affine curve over $k$ and let $\ell$ be a prime different from the characteristic of $k$. 
\begin{enumerate}
\item Then $\op{H}^\bullet(G;\mathbb{F}_\ell)$ is an unstable noetherian algebra and there is a canonical equivalence of categories 
\[
\mathcal{A}(G)\to\mathcal{R}(\op{H}^\bullet(G;\mathbb{F}_\ell)): A\mapsto (A,\op{res}_G^A)
\]
from the Quillen category of elementary abelian $\ell$-subgroups of $G$ to the spectral category of the cohomology ring $\op{H}^\bullet(G;\mathbb{F}_\ell)$. 
\item There are isomorphisms
\[
T_A(\op{H}^\bullet(G);\op{res}_G^A)\cong\op{H}^\bullet(\op{C}_G(A))
\]
which are natural in $A\in\mathcal{A}(G)$.
\end{enumerate}
\end{proposition}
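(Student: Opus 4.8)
\emph{Overview of the plan.} The proposition asserts that the general equivariant mod-$\ell$ cohomology machinery of Henn and Henn--Lannes--Schwartz, which is the substance of \cite[Theorem~0.2]{henn}, applies to $G=\op{GL}_n(k[C])$. The one thing that needs to be checked is that $G$ satisfies the finiteness hypotheses of that theory, since the standard references phrase everything for $S$-arithmetic groups over number fields, where the Borel--Serre bordification is available. So the plan is: first produce a finite-dimensional contractible $G$-CW-complex with finite cell stabilizers; then invoke the reduction theory of Harder, Stuhler and Soul\'e recalled in Appendix~\ref{sec:reduction} to see that the $G$-action is cocompact up to $G$-equivariant $\mathbb{F}_\ell$-cohomology; then quote Henn's theorem to get (1); and finally derive (2) from Lannes' computation of the $T$-functor on equivariant cohomology of fixed-point sets, together with an induction on $n$ running through the centralizers.

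\emph{The finiteness setup.} Take $X$ to be the Bruhat--Tits building for $\op{GL}_n(k(C))$, i.e.\ the product of the local buildings over the places of $\overline{C}$ not lying on $C$; it is contractible and of finite dimension. If $g\in\op{GL}_n(k[C])$ stabilizes a lattice class then $g$ and $g^{-1}$ are simultaneously bounded at all those places, so the entries of $g$ are regular on all of $\overline{C}$, hence constant; thus every cell stabilizer injects into $\op{GL}_n(k)$ and is finite, and in particular $G$ has finite virtual cohomological dimension. For the cocompactness I would use Appendix~\ref{sec:reduction}: there is an $\mathcal{H}_G^\ast$-reduction of $X/G$ to a finite subcomplex (equivalently, a $G$-stable subcomplex $\mathfrak{X}\subseteq X$ with $G\backslash\mathfrak{X}$ finite and $\mathfrak{X}\hookrightarrow X$ a $G$-equivariant $\mathbb{F}_\ell$-cohomology isomorphism). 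Hence $\op{H}^\bullet(G;\mathbb{F}_\ell)\cong\op{H}^\bullet_G(X;\mathbb{F}_\ell)$ is computed by an isotropy spectral sequence with only finitely many contributing cell stabilizers, each of them finite.

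\emph{Proof of (1).} With a finite-dimensional contractible complex, finite stabilizers, and cocompactness up to $\mathbb{F}_\ell$-equivariant cohomology established, the hypotheses of \cite[Theorem~0.2]{henn} are met, so $\op{H}^\bullet(G;\mathbb{F}_\ell)$ is an unstable noetherian algebra and $A\mapsto(A,\op{res}_G^A)$ is an equivalence $\mathcal{A}(G)\to\mathcal{R}(\op{H}^\bullet(G;\mathbb{F}_\ell))$. Noetherianity can alternatively be read off directly from the reduced isotropy spectral sequence, whose $E_1$-page is a finite direct sum of noetherian rings $\op{H}^\bullet(G_\sigma;\mathbb{F}_\ell)$ (cf.\ \cite{adem:milgram}), so that $\op{H}^\bullet(G;\mathbb{F}_\ell)$ is a module-finite extension of a noetherian subring.

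\emph{Proof of (2), and the main obstacle.} For (2) I would invoke the fixed-point theorem for Lannes' $T$-functor in its group-cohomology form (due to Lannes, used in \cite{henn}, see also Dwyer--Wilkerson): for $H$ acting with suitable finiteness on $Y$ one has $T_A(\op{H}^\bullet_H(Y;\mathbb{F}_\ell))\cong\op{H}^\bullet_{\op{C}_H(A)}(Y^A;\mathbb{F}_\ell)$, naturally in $A$. Apply this with $H=G$ and $Y=X$; since $\ell\neq\op{char}k$, the fixed set $X^A$ is again a contractible building-type complex, namely the Bruhat--Tits building of the centralizer $\op{C}_G(A)$, and $\op{C}_G(A)$ is commensurable with a product $\prod_i\op{GL}_{n_i}(k[C])$ with $\sum_i n_i=n$ (from the isotypic decomposition of $k[C]^n$ under $A$), hence an arithmetic group of exactly the type treated here. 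Thus the reduction theory of Appendix~\ref{sec:reduction} applies to it as well, the right-hand side becomes $\op{H}^\bullet(\op{C}_G(A);\mathbb{F}_\ell)$, and the argument closes by induction on $n$; naturality in $A\in\mathcal{A}(G)$ is inherited from functoriality of $T_A$ and of $Y\mapsto Y^A$. The main obstacle throughout is precisely the non-cocompactness of the $G$-action on $X$: one must make sure the Harder--Stuhler reduction is compatible with the $T$-functor computation and, crucially, that it applies \emph{uniformly} to all centralizers $\op{C}_G(A)$ appearing in the induction — which is why recognizing these centralizers as arithmetic groups of the same shape is the load-bearing step.
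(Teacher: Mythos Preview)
Your proposal is essentially correct and follows the same line as the paper: act on the Bruhat--Tits building, check finite isotropy, invoke the $\mathcal{H}_G^\ast$-reduction of Appendix~\ref{sec:reduction} to pass to a $G$-cocompact subcomplex $\mathfrak{X}$, and then feed this into Henn's machinery. For part~(1) this is exactly what the paper does.

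For part~(2) your route is slightly more roundabout than the paper's. The paper simply observes that once one has the cocompact $\mathfrak{X}$ with $\op{H}^\bullet_G(\mathfrak{X})\cong\op{H}^\bullet_G(\mathfrak{B})\cong\op{H}^\bullet(G)$, Henn's Theorem~A.3 applies \emph{directly} to $\mathfrak{X}$ and yields both (1) and (2) simultaneously; no separate fixed-point/induction argument is needed. You instead return to the full building, apply the $T$-functor fixed-point formula there, and then have to argue inductively that the finiteness hypotheses hold for each centralizer $\op{C}_G(A)$. This works, but it reintroduces exactly the non-cocompactness problem you already solved, now for every centralizer. One small inaccuracy: your description of $\op{C}_G(A)$ as commensurable with $\prod_i\op{GL}_{n_i}(k[C])$ is only correct when all characters of $A$ take values in $k^\times$ (e.g.\ when $\ell\mid q-1$); in general the isotypic pieces are defined over finite extensions $L/k$ and the factors are of the form $\op{GL}_m(L[C\times_kL])$. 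These are still arithmetic groups of the same shape, so your induction closes, but the statement should be adjusted.
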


\begin{proof}
Note that Theorem A.1 and Corollary A.2 of \cite{henn} apply since the group $\op{GL}_n(k[C])$ acts on the associated Bruhat--Tits building with finite orbit type and finite isotropy. What is needed to complete the proof of Theorem 0.2(b) on p. 214 of \cite{henn} is an analogue of Theorem A.3. For the action of $G=\op{GL}_n(k[C])$ on the associated Bruhat--Tits building $\mathfrak{B}$, one can find a $G$-subcomplex $\mathfrak{X}$ of $\mathfrak{B}$ with finitely many $G$-cells such that the inclusion $\mathfrak{X}\hookrightarrow \mathfrak{B}$ induces an isomorphism 
\[
\op{H}^\bullet_G(\mathfrak{B};\mathbb{F}_\ell)\to\op{H}^\bullet_G(\mathfrak{X};\mathbb{F}_\ell),
\]
cf. the discussion in Section~\ref{sec:reduction}. 
Applying \cite[Theorem A.3]{henn} to the $G$-CW-complex $\mathfrak{X}$ implies the result.
\end{proof}

In particular, the general theory developed in \cite{henn} also applies to $S$-arithmetic groups of function field type.


\subsection{Henn's argument} 
We now provide an analogue of Henn's argument, cf. \cite[Section 4]{henn}, for the function field situation. The spectral category 
\[
\mathcal{R}_n=\mathcal{R}(\op{H}^\bullet(\op{GL}_n(k[E]);\mathbb{F}_\ell))
\]
of the Steenrod module given by cohomology of $\op{GL}_n(k[E])$ can be identified with the Quillen category of elementary abelian $\ell$-subgroups of $\op{GL}_n(k[E])$. In the function field situation at hand, every elementary abelian $\ell$-subgroup appears as automorphism group of a rank $n$ vector bundle $\mathcal{V}$ on $\overline{E}$. 

To state the main result, let $k=\mathbb{F}_q$ be a finite field, let $\overline{E}$ be an elliptic curve over $k$ with a $k$-rational point $\op{O}$ and set $E=\overline{E}\setminus\{\op{O}\}$. Let $\ell\geq 5$ be a prime different from the characteristic of $k$, and denote by 
\[
Q_n=\op{H}^\bullet(\op{GL}_n(k[E]);\mathbb{F}_\ell)\to \lim_A\op{H}^\bullet(A;\mathbb{F}_\ell)
\]
the kernel of the Quillen homomorphism for $\op{GL}_n(k[E])$. The following is the function field analogue of \cite[Theorem 4.2]{henn}:

\begin{theorem}
\label{thm:henn}
For $n\geq 3$, the Hilbert--Poincar{\'e} series $\op{HP}(Q_n,T)$ of the kernel of the Quillen homomorphism has a pole of order $n-1$ at $T=1$. 
\end{theorem}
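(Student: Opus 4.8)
The plan is to pin down the pole order by a two-sided estimate: the upper bound $\leq n-1$ comes directly from the isotropy spectral sequence together with the reduction of Appendix~\ref{sec:reduction}, while the lower bound $\geq n-1$ is obtained by transporting Henn's argument from \cite[Section~4]{henn} to the function field setting, whose prerequisites have already been verified in Proposition~\ref{prop:a1}. Throughout, write $G_n=\op{GL}_n(k[E])$ and recall from Section~\ref{sec:quillenconj} that $Q_n$ is exactly the sub-$\mathbb{F}_\ell[\op{c}_1,\dots,\op{c}_n]$-module $\op{R}_{n-1}\op{H}^\bullet(G_n;\mathbb{F}_\ell)$, so that the associated graded of $Q_n$ for the isotropy filtration is the collection of columns $E^{j,\bullet}_\infty$ of the isotropy spectral sequence for the $G_n$-action on $\mathfrak{B}(E/k,n)$ with $j\geq 1$.

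For the upper bound, note first that by Proposition~\ref{prop:reduction} each $E^{j,\bullet}_\infty$ is a finitely generated graded $\mathbb{F}_\ell[\op{c}_1,\dots,\op{c}_n]$-module, and by Proposition~\ref{prop:multiplicative} the module structure is the obvious one on each stabilizer factor. The same argument as in \ref{prop:simplee1} (a rank $m$ elementary abelian $\ell$-subgroup of $G_n$ is diagonalizable, hence can stabilize a $j$-cell only if $m\leq n-j$) shows that every class in $E^{j,\bullet}_\infty$ is annihilated by $\op{c}_{n-j+1},\dots,\op{c}_n$, so $E^{j,\bullet}_\infty$ is a finitely generated module over $\mathbb{F}_\ell[\op{c}_1,\dots,\op{c}_{n-j}]$. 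Its Hilbert--Poincar{\'e} series therefore has a pole of order at most $n-j\leq n-1$ at $T=1$, and summing over $j\geq 1$ shows $\op{HP}(Q_n,T)$ has a pole of order at most $n-1$.

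For the lower bound I would argue by induction on $n$. The base case $n=3$ is Theorem~\ref{thm:kernel}: there $\op{R}_2/\op{R}_1\cong E^{1,\bullet}_\infty$ is the restriction of a free $\mathbb{F}_\ell[\op{c}_1,\op{c}_2]$-module of rank $4(N_a+\op{rk}_\ell\overline{E})$, and this rank is positive under the standing hypotheses (one checks from Lemma~\ref{lem:ellform} and the Hasse bound $\#\overline{E}(\mathbb{F}_q)\geq q+1-2\sqrt{q}>5$ that $N_a>0$), so its Hilbert--Poincar{\'e} series has a pole of order exactly $2$. For the inductive step, the crucial observation is that the argument of \cite[Section~4]{henn} uses only three structural inputs: that $\op{H}^\bullet(G_n;\mathbb{F}_\ell)$ is a noetherian unstable algebra whose Quillen category is equivalent to its spectral category; that the Lannes $T$-functor at an elementary abelian subgroup $A$ computes $\op{H}^\bullet(\op{C}_{G_n}(A);\mathbb{F}_\ell)$ naturally in $A$; and the explicit description of the objects of the Quillen category (here: automorphism groups of completely decomposable rank $n$ bundles). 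The first two are exactly the content of Proposition~\ref{prop:a1}, the non-cocompactness having been handled there via Appendix~\ref{sec:reduction}, and the third is the description recalled in Appendix~\ref{sec:henn}. Concretely, the block inclusion $G_{n-1}\times k^\times\hookrightarrow G_n$ and the induced functor of Quillen categories let one promote a nonzero free $\mathbb{F}_\ell[\op{c}_1,\dots,\op{c}_{n-2}]$-submodule of $Q_{n-1}$, tensored with the degree-$2$ polynomial generator of $\op{H}^\bullet(k^\times;\mathbb{F}_\ell)$ (which is not in the image of $\delta$ for $G_n$), to a nonzero free module over a Chern subalgebra of Krull dimension $n-1$ inside $Q_n$; running \cite[Theorem~4.2]{henn} then gives a pole of order at least $n-1$. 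Combining the two bounds yields the pole of order exactly $n-1$.

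The main obstacle is the inductive step, i.e. checking line-by-line that no step of \cite[Section~4]{henn} secretly uses a feature special to $\op{GL}_n(\mathbb{Z}[1/2])$ (such as cocompactness of the symmetric space or the Borel--Serre boundary) rather than the three inputs isolated above; the genuinely new content is supplying these inputs in the function field case, which is done in Appendix~\ref{sec:henn}, plus the verification that the failure is already witnessed at rank $n-1$ with a pole of the predicted order, which is the $n=3$ base case. A secondary point is to confirm that the finite-subcomplex truncation of Appendix~\ref{sec:reduction} is compatible with the Chern-class module structure, so that the finite-generation and annihilation statements in the upper bound are legitimate; this follows from Proposition~\ref{prop:reduction} together with Proposition~\ref{prop:multiplicative}.
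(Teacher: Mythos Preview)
Your two-sided bound strategy is the right shape, and the base case $n=3$ via Theorem~\ref{thm:kernel} is fine. But both halves have gaps, and the lower bound one is substantive.

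For the upper bound, your appeal to ``the same argument as in \ref{prop:simplee1}'' does not carry over: the statement there, that an elementary abelian $\ell$-subgroup of rank $m$ can only stabilize a $j$-cell with $m\leq n-j$, is a feature of the explicit $\mathcal{H}_G^\ast$-reduction computed in Theorem~\ref{thm:parabolic} for $n=3$, not a general fact about Bruhat--Tits buildings. The paper's upper bound avoids this entirely: since $Q_n$ lies in the kernel of restriction to every elementary abelian subgroup, pole order $n$ would force classes in essential ideals of rank-$n$ subgroups to survive in $Q_n$, which is impossible.

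The real problem is your lower bound. The block inclusion $G_{n-1}\times k^\times\hookrightarrow G_n$ induces \emph{restriction} $\op{H}^\bullet(G_n)\to\op{H}^\bullet(G_{n-1})\otimes\op{H}^\bullet(k^\times)$, not a way to ``promote'' classes from $Q_{n-1}$ into $Q_n$; there is no map in that direction. Henn's argument does not produce elements of $Q_n$---it bounds the Krull dimension of $Q_n$ from below by exhibiting an elementary abelian $A\hookrightarrow G_n$ for which $T_A(Q_n;\phi^\ast)$ has large pole order, and then invoking the general fact that the pole order of a module is the supremum over such $T_A$-localizations. The paper does this directly (not inductively): take $A$ inside the automorphism group of a bundle $\mathcal{V}(3,1)\oplus\mathcal{O}^{n-3}$ so that $\op{C}_{G_n}(A)\cong\op{GL}_3(k[E])\times(k^\times)^{n-3}$; then Proposition~\ref{prop:a1}(2) gives $T_A(\op{H}^\bullet(G_n))\cong\op{H}^\bullet(\op{GL}_3(k[E]))\otimes\op{H}^\bullet((k^\times)^{n-3})$, and exactness of $T_A$ on the Quillen sequence identifies $T_A(Q_n;\phi^\ast)\supseteq Q_3\otimes\op{H}^\bullet((k^\times)^{n-3})$, whose pole order is $2+(n-3)=n-1$. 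Your inductive scheme could be rescued by replacing ``block inclusion'' with ``apply $T_A$ at the central $\mu_\ell$ in the last factor'', but as written the mechanism is wrong.
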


\begin{proof}
The fact that $n-1$ is an upper bound for the pole order is almost clear. 
The pole order is certainly smaller than $n$ because this is the maximal rank of an elementary abelian $\ell$-subgroup. If the pole order was equal to $n$, then there would be classes from essential ideals of rank $n$ elementary abelian $\ell$-subgroups. From the local structure of the action on the building, this is impossible. 

Now we need to show that $n-1$ is also a lower bound for the pole order. 
As in the proof of \cite[Theorem 4.2]{henn}, it suffices to find an elementary abelian $\ell$-subgroup $\phi:A\to\op{GL}_n(k[E])$ such that the pole order for $T_A(Q_n;\phi^\ast)$ is $n-1$. An elementary abelian subgroup with a character $n_0\chi+\sum \chi_i$ can be obtained as subgroup of a vector bundle of the form $\mathcal{V}(n_0,1)\oplus\mathcal{O}^{n-n_0}$, i.e., by stabilization from a stable vector bundle in the rank where the first examples appear. Using \cite[Theorem A.1]{henn} (which applies to the action of $\op{GL}_n(k[E])$ on the associated Bruhat--Tits building), we get an exact sequence 
\[
0\to T_A(Q_n;\phi^\ast)\to \op{H}^\bullet(\op{GL}_n(k[E]);\mathbb{F}_\ell)\otimes \op{H}^\bullet((k^\times)^{n-n_0};\mathbb{F}_\ell)\to\prod_A\op{H}^\bullet(A;\mathbb{F}_\ell)
\]
where the product is taken over all elementary abelian subgroups conjugate to $\phi$. As in \cite{henn}, the kernel of this map is $Q_{n_0}\otimes\op{H}^\bullet((k^\times)^{n-n_0};\mathbb{F}_\ell)$. The results of the present paper establish that the Hilbert--Poincar{\'e} series $\op{HP}(Q_{n_0},T)$ has a pole of order $2$ at $T=1$  for $n_0=3$. Then 
\[
\op{HP}(Q_{n_0}\otimes\op{H}^{\bullet}((k^\times)^{n-n_0};\mathbb{F}_\ell),T)
\]
will have a pole of order $n-1$ for any $n\geq n_0$. 
\end{proof}

\begin{remark}
Note that the pole order is higher than the one given in \cite{henn} for $\op{H}^\bullet(\op{GL}_n(\mathbb{Z}[1/2]);\mathbb{F}_2)$. In the situations of $\op{GL}_n(\mathbb{Z}[1/2])$ or $\op{GL}_3(\mathbb{Z}[\zeta_3,1/3])$, the simplicity of the Quillen category seems to imply that there are no ``global'' counterexamples, i.e., any element in the kernel of the Quillen homomorphism is already in the kernel of the Quillen homomorphism for the centralizer of some elementary abelian $\ell$-subgroup. ``Local'' counterexamples must then be supported on elementary abelian $\ell$-groups of rank $\leq n-n_0+1$, giving the estimate in Henn's paper. In the function field situation with non-trivial class groups, the results of the present paper show that ``global'' counterexamples to the Quillen conjecture can indeed appear. Hence, even though the possible non-triviality of the kernel of the Quillen homomorphism was already well-known, there are new types of counterexamples appearing in the elliptic curve computations. 

The reason for the propagation of non-triviality of the kernel to higher ranks stems from the fact that the counterexamples from $\op{GL}_{n_0}$ appear in the kernels of  centralizers of suitable elementary abelian $\ell$-subgroups. Hence, while counterexamples to the Quillen conjecture in any given rank do not need to be local, they persist as local counterexamples in higher ranks.
\end{remark}

\begin{proofof}{Corollary~\ref{cor:henn}}
\label{proof:henn}
This follows directly from Theorem~\ref{thm:henn}. For $n\geq 3$, we have a pole order $\geq 2$ for the kernel of the Quillen homomorphism. Therefore, the kernel has to be non-trivial, and Quillen's conjecture fails.
\end{proofof}

\begin{remark}
The possibility of generalizations of Henn's arguments to other $S$-arithmetic groups was already remarked in \cite{henn} as well as in \cite{knudson:book}. However, as opposed to the case $\op{GL}_n(\mathbb{Z}[1/2])$ discussed in \cite{henn}, in more general cases one has to consider the kernel of the Quillen homomorphism (not just the kernel of restriction to the diagonal). Moreover, the size of the kernel may differ from Henn's bound $n-n_0+1$ due to the possibility of ``global'' counterexamples to the Quillen conjecture. 
\end{remark}

\subsection{Generic non-triviality of the kernel}

We finally sketch arguments towards a general failure of the Quillen conjecture due to non-triviality of the kernel of the Quillen homomorphism. In the function field cases, a sufficient amount of hyperbolicity for the open curve should imply the failure of the Quillen conjecture for all $\op{GL}_n$, $n\geq 2$:

\begin{conjecture}
Let $k=\mathbb{F}_q$ be a finite field, let $\overline{C}$ be a smooth projective curve over $k$, let $D$ be an effective divisor on $\overline{C}$ and set $C=\overline{C}\setminus D$. Assume that $\deg D\geq 4-2g(\overline{C})$. Then the following hold:
\begin{enumerate}
\item $\dim \op{H}^{S}(\op{GL}_2(k[C]);\mathbb{Q})>0$, where $S$ denotes the number of points in the support of $D$.
\item The Quillen conjecture for $\op{GL}_n(k[C])$ fails for all $n\geq 2$.
\item The Hilbert--Poincar{\'e} series of the kernel of the Quillen homomorphism for $\op{GL}_n(k[C])$ has a pole of order $n-1$ at $T=1$.
\end{enumerate}
\end{conjecture}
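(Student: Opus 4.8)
The plan is to deduce all three statements from part (1) together with the Henn-type propagation of Appendix~\ref{sec:henn}, using the isotropy spectral sequence throughout. Write $G_2=\op{GL}_2(k[C])$, set $S=\#\op{supp}(D)$, and let $X=\prod_{v\in\op{supp}(D)}T_v$ be the product of the $S$ Bruhat--Tits trees at the places of $D$. Then $X$ is contractible of dimension $S$, and $G_2$ acts on it properly with \emph{finite} cell stabilizers (the reductive part of a stabilizer is the automorphism group of a rank-$2$ bundle on the projective curve $\overline{C}$, hence finite over $\mathbb{F}_q$; the unipotent part is a finite $p$-group). Consequently $\op{H}^\bullet_{G_2}(X;M)$ computes $\op{H}^\bullet(G_2;M)$ and vanishes above degree $S$; with $\mathbb{Q}$-coefficients the spectral sequence collapses to $\op{H}^\bullet(G_2;\mathbb{Q})\cong\op{H}^\bullet(G_2\backslash X;\mathbb{Q})$, and by Harder's reduction theory the orbit complex is homotopy equivalent to an explicit finite $S$-complex, a higher-rank analogue of the parabolic graph of the present paper.

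For part (1) one identifies $\op{H}^S(G_2;\mathbb{Q})$ with the top cohomology of that finite complex and shows it is non-zero exactly under the stated bound. Via the cusp-form dictionary (Harder; cf.~\cite{harder}), this top cohomology is the space of cusp forms for $\op{GL}_2$ over $K=k(\overline{C})$ which are unramified outside $D$ and of Iwahori (Steinberg) type at the places of $D$; its dimension is computed by the function-field trace formula, and the inequality $\deg D\geq 4-2g(\overline{C})$ --- which says precisely that $\chi(C)=2-2g-\deg D\leq -2$, i.e.\ enough hyperbolicity --- is the threshold making this count positive. For $\overline{C}=\mathbb{P}^1$ this is the computation underlying \cite{sl2parabolic} (the case $s\geq 4$ in the list in Section~\ref{known}); in general it is Harder's Euler-characteristic/Eisenstein-cohomology calculation. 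This establishes (1).

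To pass to (2) and (3): the non-zero rational class in degree $S$ is integral and reduces non-trivially mod $\ell$ for all but finitely many $\ell$, so choosing $\ell\geq 5$ with $\ell\mid q-1$ in this cofinite set (which tacitly requires $q-1$ to have a suitable large prime factor) gives a non-zero $\eta\in\op{H}^S(G_2;\mathbb{F}_\ell)$. In the mod-$\ell$ isotropy spectral sequence $\eta$ lies in the filtration step $E^{\geq 1,\bullet}_\infty$, hence in the kernel $Q_2$ of the Quillen homomorphism, and it is supported on the cells ``in the cusps'' whose stabilizer has central reductive part $k^\times$; since $\op{c}_1$ is a permanent cycle acting on $\op{H}^\bullet(k^\times;\mathbb{F}_\ell)$ by the polynomial generator, $E^{S,\bullet}_\infty$ is a free $\mathbb{F}_\ell[\op{c}_1]$-module of positive rank and $\op{HP}(Q_2,T)$ has a pole of order $1$ at $T=1$; in particular $Q_2\neq 0$, so the Quillen conjecture fails for $\op{GL}_2(k[C])$. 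For $n\geq 3$ one repeats the proof of Theorem~\ref{thm:henn} verbatim: a stable rank-$2$ bundle on $\overline{C}$ tensored with $\mathcal{O}^{\,n-2}$ yields $\phi\colon A\hookrightarrow\op{GL}_n(k[C])$ with centralizer $\op{GL}_2(k[C])\times(k[C]^\times)^{n-2}$, and \cite[Theorem A.1]{henn} (valid by Proposition~\ref{prop:a1}) identifies $T_A(Q_n;\phi^\ast)$ with $Q_2\otimes\op{H}^\bullet((k[C]^\times)^{n-2};\mathbb{F}_\ell)$; since $k[C]^\times\cong k^\times\times\mathbb{Z}^r$, the mod-$\ell$ cohomology of $(k[C]^\times)^{n-2}$ has a pole of order $n-2$ at $T=1$, whence $\op{HP}(Q_n,T)$ has a pole of order $\geq n-1$, and the matching upper bound is the local-structure argument in the first paragraph of the proof of Theorem~\ref{thm:henn}. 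This proves (3) for all $n\geq 2$, and (2) follows since a pole of order $n-1\geq 1$ forces $Q_n\neq 0$.

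The crux is part (1), and within it two points are delicate. First, making the function-field trace formula (equivalently, Harder's Euler-characteristic computation) yield a \emph{sharp} positivity criterion for the cuspidal cohomology in the top degree, correctly bookkeeping all $S$ places, the genus of $\overline{C}$, the class group, and the finite torsion in $G_2$, and matching it to the clean bound $\deg D\geq 4-2g$. Second, the passage from $\mathbb{Q}$- to $\mathbb{F}_\ell$-coefficients, which needs either $\ell$-torsion-freeness of this cohomology in degree $S$ or a direct mod-$\ell$ construction of the class, carried out under the standing constraints $\ell\geq 5$, $\ell\mid q-1$ (and a corresponding mild hypothesis on $q$ that the statement should really carry). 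Once (1) holds with $\mathbb{F}_\ell$-coefficients, parts (2) and (3) are a routine repetition of the appendix, insensitive to the size of $Q_2$ beyond its non-triviality.
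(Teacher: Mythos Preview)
The statement you are attempting to prove is explicitly presented in the paper as a \emph{conjecture}, not a theorem; the paper offers no proof and in fact flags the central step as open. Your proposal does not close that gap.

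Concretely, your argument for part (1) reduces to an assertion that a function-field trace formula / Gau\ss{}--Bonnet computation yields $\dim\op{H}^S(\op{GL}_2(k[C]);\mathbb{Q})>0$ precisely when $\deg D\geq 4-2g$. But the paper says plainly that ``such a formula seems not currently known, cf.\ the unanswered MO-question 167408,'' and lists only special cases where (1) is established ($\overline{C}=\mathbb{P}^1$ with $S=1$; $\mathbb{P}^1$ with rational support and large $q$; genus $\geq 2$ with $\deg D=1$). Your sentence ``the inequality \dots\ is the threshold making this count positive'' is exactly the missing result; it is not proved by invoking Harder's paper or \cite{sl2parabolic}, which handle only those partial cases. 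Your final paragraph even concedes this (``making the function-field trace formula \dots\ yield a \emph{sharp} positivity criterion'' is listed as ``delicate''), so the proposal is a sketch of a strategy, not a proof.

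A second genuine gap is the passage from (1) to (2) and (3) when $S>1$. The paper explicitly warns: ``The failure of the Quillen conjecture for $\op{GL}_2(k[C])$ and arbitrary $S$ is more complicated, since \dots\ for $S>1$ higher differentials could kill the classes coming from the non-triviality of the quotient implied by (1).'' Your claim that the rational top-degree class reduces mod $\ell$ to a class in $E^{S,\bullet}_\infty$ which is $\op{c}_1$-free does not address this: you need the whole $\mathbb{F}_\ell[\op{c}_1]$-tower in $E^{S,\bullet}$ to survive, and for $S>1$ the columns $E^{S,t}$ with $t>0$ can be hit by differentials $d_r$ from $E^{S-r,t+r-1}$. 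In the paper's $S=1$ elliptic case this is vacuous, and your outline there (and the subsequent Henn propagation) agrees with what the paper sketches; but for general $S$ this is a real obstruction you have not removed. The reduction from $\mathbb{Q}$- to $\mathbb{F}_\ell$-coefficients is a further unproved step, as you yourself note.
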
 

Part (1) of the conjecture is true in the following cases:
\begin{itemize}
\item for $\overline{C}=\mathbb{P}^1$ and $S=1$ by the computations in \cite{koehl:muehlherr:struyve},
\item for $\overline{C}=\mathbb{P}^1$, a divisor $D$ consisting only of $k$-rational points and a sufficiently big field $k$ by a cell count in the quotient of a product of Bruhat--Tits trees, similar to computations in \cite{sl2parabolic},
\item for curves $\overline{C}$ of genus $\geq 2$ and $\deg D=1$ because the existence of stable bundles on $\overline{C}$ implies the non-triviality of the fundamental group of the quotient of the Bruhat--Tits tree modulo $\op{GL}_2(k[C])$.
\end{itemize}
The preferred way of proving part (1) of the above conjecture in full generality would be to establish a Gau\ss{}--Bonnet formula computing $\dim \op{H}^{S}(\op{GL}_2(k[C]);\mathbb{Q})$ as suggested in \cite[p. 136]{harder}. Such a formula seems not currently known, cf. the unanswered MO-question 167408. 

In the case $S=1$, part (1) of the conjecture implies the failure of the Quillen conjecture for $\op{GL}_2(k[C])$. In this case, the quotient of the Bruhat--Tits tree has a non-trivial fundamental group. The simple evaluation of the isotropy spectral sequence implies that the non-trivial loops lead to classes in $E^{1,\bullet}_2=E^{1,\bullet}_\infty$ which are annihilated by $\op{c}_2$. Morevoer, in this case we also see that the classes in $E^{1,\bullet}_\infty$ are non-torsion for $\op{c}_1$, implying that the pole order for the Hilbert--Poincar{\'e} series is $1$ as claimed in part (3). 
The failure of the Quillen conjecture for $\op{GL}_2(k[C])$ and arbitrary $S$ is more complicated, since the dimension of the product of Bruhat--Tits trees equals $S$ and for $S>1$ higher differentials could kill the classes coming from the non-triviality of the quotient implied by (1). 

In any case, the failure of the Quillen conjecture for $\op{GL}_2(k[C])$ implies part (2) of the conjecture, by arguments similar to the ones used to prove Theorem~\ref{thm:henn}. This argument also establishes part (3) of the above conjecture whenever we have a non-trivial kernel for $\op{GL}_2(k[C])$. 

In light of the above arguments, the cases $\mathbb{Z}[1/2]$ and $\mathbb{Z}[\zeta_3,1/3]$ seem very special, analogues of the affine curves $\mathbb{A}^1$ or $\mathbb{P}^1\setminus\{0,1\}$ in the function field situation.


\begin{thebibliography}{KMS15}

\bibitem[AM04]{adem:milgram}
A. Adem and R.J. Milgram. Cohomology of finite groups.
Second edition. Grundlehren der Mathematischen Wissenschaften 309. 
Springer, 2004.

\bibitem[Ati56]{atiyah}
M.F. Atiyah. On the Krull--Schmidt theorem with application to sheaves. Bull. Soc. Math. France 84 (1956), 307--317.


\bibitem[Bro94]{brown:book}
K.S. Brown. Cohomology of groups. Corrected reprint of the 1982 original. 
Graduate Texts in Mathematics, 87. Springer-Verlag, New York, 1994.

\bibitem[Dup01]{dupont:book}
J.L. Dupont. Scissors congruences, group homology and characteristic classes. 
Nankai Tracts in Mathematics 1, World Scientific, 2001.

\bibitem[Har77]{harder}
G. Harder. Die Kohomologie S-arithmetischer Gruppen \"uber Funktionenk\"orpern. Inventiones math. 42 (1977), 135--175.

\bibitem[Hat04]{hatcher}
A. Hatcher. Spectral sequences in algebraic topology. Chapter 1 of book project, 2004. \verb!http://www.math.cornell.edu/~hatcher/SSAT/SSATpage.html!

\bibitem[Hen96]{henn}
H.-W. Henn. Commutative algebra of unstable K-modules, Lannes' T-functor and equivariant mod $p$ cohomology. J. Reine Angew. Math. 478 (1996), 189--215.

\bibitem[KMS15]{koehl:muehlherr:struyve}
R. K\"ohl, B. M\"uhlherr and K. Struyve. Quotients of trees for arithmetic subgroups of $\op{PGL}_2$ over a rational function field. J. Group Theory 18 (2015), 61--74. 

\bibitem[Knu01]{knudson:book}
K.P. Knudson. Homology of linear groups. Progress in Mathematics,
193. Birkh{\"a}user Verlag, Basel, 2001. 

\bibitem[McC01]{mccleary}
J. McCleary. A user's guide to spectral sequences. Second edition. Cambridge Studies in Advanced Mathematics 58. Cambridge University Press 2001.

\bibitem[Qui71]{quillen:spectrum}
D. Quillen. The spectrum of an equivariant cohomology ring. I, II.
Ann. of Math. (2) 94 (1971), 549--572; ibid. (2) 94 (1971), 573--602. 

\bibitem[Qui72]{quillen:ktheory}
D. Quillen. On the cohomology and K-theory of the general linear groups over finite fields. Ann. of Math. (2) 96 (1972), 552--586. 

\bibitem[Rah14]{rahm}
A.D. Rahm. Accessing the cohomology of discrete groups above their virtual cohomological dimension. J. Alg. 404 (2014), 152--175.

\bibitem[RW14]{quillen-note}
A.D. Rahm and M. Wendt. A refinement of a conjecture of Quillen. C. R. Math. Acad. Sci. Paris 353 (2015), 779--784.

\bibitem[Ser80]{serre:book}
J.-P. Serre. Trees. Springer, 1980.



\bibitem[Wei13]{weibel:kbook}
C.A. Weibel. The K-Book: an introduction to algebraic K-theory. Graduate Studies in Math. 145, AMS, 2013.

\bibitem[Wen15]{sl2parabolic}
M. Wendt. Homology of $\op{SL}_2$ over function fields I: parabolic subcomplexes. J. Reine Angew. Math. 2015. 


\bibitem[Wen16]{ell-dilog}
M. Wendt. The elliptic dilogarithm complex. In preparation. Preliminary note without proofs ``Homology of $\op{GL}_3$ of function rings of elliptic curves'' available on arXiv:1501.02613v1.

\end{thebibliography}
\end{document}